\newtheorem{theorem}{Theorem}[section]
\newtheorem{maintheorem}{Theorem}
\newtheorem{definition}[theorem]{Definition}
\newtheorem{proposition}[theorem]{Proposition}
\newtheorem{corollary}[theorem]{Corollary}
\newtheorem{lemma}[theorem]{Lemma}
\newtheorem{remark}[theorem]{Remark}
\numberwithin{equation}{section}
\newcommand{\R}{\mathbb{R}}
\newcommand{\Q}{\mathbb{Q}}
\newcommand{\C}{\mathbb{C}}
\newcommand{\N}{\mathbb{N}}
\newcommand{\Z}{\mathbb{Z}}
\newcommand{\e}{\varepsilon}
\newcommand{\p}{\varphi}
\newcommand{\supp}{\mathrm{supp}}
\newcommand{\Lip}{\mathrm{Lip}}
\newcommand{\dist}{\mathrm{dist}}
\newcommand{\Ric}{\mathrm{Ric}}
\newcommand{\vol}{\mathrm{vol}}
\newcommand{\Hess}{\mathrm{Hess}}
\newcommand{\Riem}{\mathrm{Riem}}
\newcommand{\Ch}{\mathrm{Ch}}
\newcommand{\cd}{\mathsf{CD}}
\newcommand{\rcd}{\mathsf{RCD}}
\newcommand{\be}{\mathrm{BE}}
\newcommand{\sfd}{\mathsf d}
\newcommand{\di}{{\rm d}}
\title{On the equivalence of distributional and synthetic Ricci curvature lower bounds}
\author{Andrea Mondino and Vanessa Ryborz}
\date{}
\begin{document}

\maketitle

{

\begin{abstract}
The goal of the paper is to prove the equivalence of distributional and synthetic Ricci curvature lower bounds for a weighted Riemannian manifold with continuous metric tensor having Christoffel symbols in $L^2_{{\rm loc}}$, and with weight in $C^0\cap W^{1,2}_{{\rm loc}}$. 

\end{abstract}
}

\tableofcontents

\section{Introduction}
Let $(M, g)$ be a smooth $n$-dimensional Riemannian manifold without boundary.  
One can locally compute the coefficients of the Ricci curvature tensor in terms of the metric tensor $g$ and its first two derivatives. More precisely, denoting by $\Gamma^{k}_{ij}$ the Christoffel symbols of $g$ 
$$
\Gamma^k_{ij}= \frac{1}{2} g^{kl} (\partial_j g_{li}+\partial_{i} g_{jl}-\partial_l g_{ij}), \quad i,j,k=1,\ldots,n,
$$
the Ricci curvature can be written locally as
\begin{equation}\label{eq:defRicciChrist}
     \Ric_{ij}=\partial_k\Gamma^k_{ij}-\partial_i\Gamma^k_{kj}+\Gamma^k_{ji}\Gamma^l_{lk}-\Gamma^k_{jl}\Gamma^l_{ik}, \quad i,j=1,\ldots,n,
\end{equation} 
where $\partial_i$ is the shorthand notation for $\frac{\partial}{\partial x^i}$, and where  we used the Einstein convention that repeated indices are summed.

Given a smooth function $V$ on $M$, one can define the weighted measure  $\di\mu = e^{-V}\di\vol_g$. Given $N \in [n, \infty]$, the Bakry-Émery $N$-Ricci curvature tensor of the weighted Riemannian manifold $(M,g,\mu)$ is defined by
\begin{equation}\label{eq:defBENRicci}
    \Ric_{\mu, N} := \Ric + \Hess V - \frac{1}{N-n}\nabla V \otimes \nabla V. 
\end{equation}
In case that $N = n$, we adopt the convention that $V$ must be constant and that  $\Ric_{\vol_g, n} = \Ric$.  We also adopt the standard convention that $1/\infty=0$, so that for $N=\infty$ the last adding term in the right hand side of \eqref{eq:defBENRicci} disappears.
For $K\in \R$, we say that the Bakry-Émery $N$-Ricci curvature tensor is bounded below by $K$, if $\Ric_{\mu, N}(X,X) \geq Kg(X,X)$ for all smooth vector fields $X$. 

{Since the local expression of  the Ricci tensor involves two derivatives of the metric tensor $g$, and the Bakry-Émery $N$-Ricci curvature tensor involves the Ricci tensor and the Hessian of the weight function $V$, some care is needed if $g$ or $V$ are not twice differentiable. We will work with two approaches to Ricci curvature lower bounds for weighted manifold of regularity below $C^2$:  the distributional and the synthetic approaches.

Via the distribution theory on smooth manifolds (cf.\;\cite{lefloch2007definition, grosser2013geometric, graf2020singularity}), one can define a distributional Ricci curvature and generalise the notion of Ricci curvature lower bounds. This approach is suitable to handle the case of a  smooth manifold $M$ endowed with a continuous metric $g$ having Christoffel symbols in $L^2_{{\rm loc}}$, as it is apparent from \eqref{eq:defRicciChrist}. Moreover, assuming $V \in C^{0}\cap W^{1,2}_{{\rm loc}}$ suffices to define a distributional Bakry-\'Emery $N$-Ricci curvature tensor of the corresponding weighted space, as one can check using \eqref{eq:defBENRicci}. }
Note that this approach  assumes the space to be smooth: all the non-smoothness is encoded in the metric tensor and, possibly, in the weighted measure. 
Let us also mention  \cite{li2022existence} for a notion of Ricci curvature lower bounds for continuous metrics conformal to smooth ones, based on the concept of viscosity solutions for non-linear elliptic partial differential equations.

A different approach is to drop also the smoothness assumption of the underlying space and consider the general framework of metric measure spaces. A metric measure space is a triplet $(X, \sfd, \mathfrak{m})$, where $(X,\sfd)$ is a  complete and separable metric space and  $\mathfrak{m}$ is a non-negative $\sigma$-finite Borel measure (playing the role of reference volume measure). 
{By analysing convexity properties of suitable entropy functionals on the space of probability measures endowed with the Kantorovich-Wasserstein quadratic transportation distance $W_2$, Sturm \cite{sturm2006geometryI, sturm2006geometryII} and Lott-Villani \cite{lott2009ricci, LottVillaniJFA} devised synthetic notions of Ricci curvature bounded below by some constant $K\in \R$ and dimension bounded above by some $N\in [1,\infty]$. The metric measure spaces satisfying such a synthetic notion of  Ricci curvature bounded below by $K\in \R$ and dimension bounded above by  $N\in [1,\infty]$ are called  $\cd(K,N)$ spaces. Such a synthetic notion is consistent with the smooth definitions.
Moreover, the class of $\cd(K,N)$ spaces is stable under pointed measure Gromov-Hausdorff convergence \cite{sturm2006geometryI, sturm2006geometryII,lott2009ricci, villani2009optimal}, and satisfies several geometric and analytic properties such as Brunn-Minkowski and Bishop-Gromov inequalities \cite{sturm2006geometryII}, Poincar\'e inequality \cite{LottVillaniJFA, RajalaCVPDE}, L\'evy-Gromov isoperimetric inequality \cite{CavMondInvent} (under an essentially non-branching assumption on geodesics).}

Since the conditions $\cd(K, \infty)$ and $\cd(K, N)$  provide a definiton of Ricci curvature lower bounds for manifolds with metrics of regularity below $C^2$,  a natural question to ask is whether distributional and synthetic  Ricci curvature lower bounds continue to agree on metrics with regularity below $C^2$.  
Using regularisations of the metric tensor, Kunzinger, Oberguggenberger and Vickers \cite{kunzinger2022synthetic} proved that distributional Ricci curvature bounded below by $K$ implies the CD$(K, \infty)$-condition in the case that $M$ is  a compact manifold endowed with a $C^1$ Riemannian metric, and established the reverse implication under the stronger assumption that $g$ is a   $C^{1,1}$ metric tensor satisfying an additional convergence condition on its regularisations.
\\In this paper, we will prove the following equivalence result:

\smallskip

\begin{maintheorem}[see Theorem \ref{equivalence_result}]\label{main_result_intro}
     Let $M$ be a smooth manifold, $g$ a continuous Riemannian metric with Christoffel symbols in $L^2_{\rm{loc}}$,  and  $V  \in C^{0}\cap W^{1,2}_{\rm{loc}}(M)$ a positive function on $M$. Define the weighted measure $\mu$ as $\di\mu:= e^{-V}\di\vol_g$. Let  $N \in [n, \infty]$ and $K \in \R$. The following are equivalent:
     \begin{itemize}
         \item[$\mathrm{(i)}$] $(M, \sfd_g, \mu)$ is a $\cd(K, N)$ space.
        \item[$\mathrm{(ii)}$] The distributional Bakry-Émery $N$-Ricci curvature tensor is bounded below by $K$ and $\mu$ has at most exponential volume growth in the sense of \eqref{condition_on_measure}.  
     \end{itemize}
\end{maintheorem}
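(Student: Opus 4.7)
The plan is to pass between the two conditions by smooth approximation. I would cover $M$ by a locally finite atlas, pick standard mollifiers $\rho_\varepsilon$, and set $g_\varepsilon$ and $V_\varepsilon$ to be the (chart-wise) convolutions of $g$ and $V$, glued by a partition of unity. Using the continuity of $g$, the $L^2_{\rm loc}$ regularity of the Christoffel symbols, and the $W^{1,2}_{\rm loc}$ regularity of $V$, one obtains smooth Riemannian metrics with $g_\varepsilon \to g$ locally uniformly, $\Gamma(g_\varepsilon) \to \Gamma(g)$ in $L^2_{\rm loc}$, and $V_\varepsilon \to V$ in $C^0_{\rm loc}\cap W^{1,2}_{\rm loc}$. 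Setting $\di\mu_\varepsilon := e^{-V_\varepsilon}\di\vol_{g_\varepsilon}$, the weighted smooth manifolds $(M,\sfd_{g_\varepsilon},\mu_\varepsilon)$ converge to $(M,\sfd_g,\mu)$ in the pointed measure Gromov-Hausdorff sense; the exponential volume growth assumption is exactly what guarantees no mass escapes to infinity and that the limit is well-behaved for global stability statements.

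For $\mathrm{(ii)}\Rightarrow\mathrm{(i)}$, I would take the distributional bound $\Ric_{\mu,N}\geq K g$ (viewed as a bilinear form on compactly supported smooth symmetric $(2,0)$-tensor fields) and transfer it to a pointwise lower bound $\Ric_{\mu_\varepsilon,N}\geq (K-\omega(\varepsilon))g_\varepsilon$ on each compact set, with $\omega(\varepsilon)\to 0$. The classical Sturm-Lott-Villani equivalence applied to the smooth weighted manifold $(M,g_\varepsilon,\mu_\varepsilon)$ then produces $\cd(K-\omega(\varepsilon),N)$ for the approximations, and the stability of $\cd(\cdot,N)$ under pointed measure Gromov-Hausdorff convergence yields $\mathrm{(i)}$.

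For $\mathrm{(i)}\Rightarrow\mathrm{(ii)}$, the exponential volume growth is a direct consequence of the Bishop-Gromov inequality valid in $\cd(K,N)$ spaces. To obtain the distributional curvature bound I would argue by contradiction: a strict violation of $\Ric_{\mu,N}\geq K g$ tested against a nonnegative compactly supported smooth tensor field $T$ would, via the commutator lemma below, produce a strict violation of comparable size for the pointwise smooth tensor $\Ric_{\mu_\varepsilon,N}-Kg_\varepsilon$ on a set of positive measure, for every small $\varepsilon$. Applying the smooth equivalence on a relatively compact domain $\Omega\Subset M$, this contradicts $\cd(K,N)$ for the approximations; by pmGH-stability, it also contradicts $\mathrm{(i)}$.

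The heart of the argument, and the step I expect to be the main technical obstacle, is a Friedrichs-type commutator lemma asserting that the errors
\[
  \Ric(g_\varepsilon) - \Ric(g)*\rho_\varepsilon,\qquad \Hess_{g_\varepsilon} V_\varepsilon - \Hess_g V*\rho_\varepsilon,\qquad \nabla V_\varepsilon\otimes\nabla V_\varepsilon - (\nabla V\otimes\nabla V)*\rho_\varepsilon
\]
vanish locally uniformly when tested against smooth compactly supported tensor fields. The difficulty lies in the quadratic terms $\Gamma^k_{ji}\Gamma^l_{lk}$ of \eqref{eq:defRicciChrist} and in $\nabla V\otimes\nabla V$ of \eqref{eq:defBENRicci}: mollification does not commute with pointwise multiplication of $L^2_{\rm loc}$ functions, and one must carefully exploit the compensated structure of these products. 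The $L^2_{\rm loc}$ hypothesis on $\Gamma$, together with the continuity of $g$ (which makes metric contractions stable under convolution), and the $W^{1,2}_{\rm loc}$ hypothesis on $V$ (which controls $\nabla V\otimes\nabla V$ in $L^1_{\rm loc}$), are precisely the sharp regularity thresholds making these commutator errors vanish. Once this lemma is established, the remaining ingredients — constructing $g_\varepsilon$, verifying pmGH convergence, applying the smooth Sturm-Lott-Villani equivalence and $\cd$-stability — are standard.
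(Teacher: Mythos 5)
Your proposal stands or falls on the step you yourself flag, and at the stated regularity that step fails. With $g\in C^0$ and $\Gamma\in L^2_{\rm loc}$ (and $V\in C^0\cap W^{1,2}_{\rm loc}$), the commutator errors $\Ric(g_\varepsilon)-\Ric(g)\ast\rho_\varepsilon$ and $\nabla V_\varepsilon\otimes\nabla V_\varepsilon-(\nabla V\otimes\nabla V)\ast\rho_\varepsilon$ do tend to zero, but only distributionally (the quadratic terms are controlled merely in $L^1_{\rm loc}$, coming from strong $L^2_{\rm loc}$ convergence of $\Gamma(g_\varepsilon)$ and $\nabla V_\varepsilon$); there is no locally uniform smallness, and no compensation structure that produces it. Consequently the distributional inequality $\Ric_{\mu,N}\geq Kg$ cannot be upgraded to the \emph{everywhere pointwise} bound $\Ric_{\mu_\varepsilon,N}\geq (K-\omega(\varepsilon))g_\varepsilon$ that the smooth Sturm--Lott--Villani equivalence requires: an $L^1$-small negative error at scale $\varepsilon$ can destroy the pointwise bound on sets of small but positive measure, and $\cd$ is not stable under such perturbations. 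This is exactly the obstruction that confines the regularisation approach of Kunzinger--Oberguggenberger--Vickers (discussed in the introduction) to $C^1$, resp.\ $C^{1,1}$, metrics with an additional convergence hypothesis on the regularisations; the whole point of the present paper is that at the sharp regularity one must avoid regularising the metric in this way.

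The direction $\mathrm{(i)}\Rightarrow\mathrm{(ii)}$ in your sketch has a second, structural gap: you invoke ``$\cd(K,N)$ for the approximations'' to reach a contradiction, but (i) only gives $\cd(K,N)$ for the limit space, and pointed measured Gromov--Hausdorff stability passes curvature bounds from an approximating sequence to its limit, never backwards. Mollifications of a rough metric whose distributional (or synthetic) Ricci is bounded below can have very negative Ricci curvature at scale $\varepsilon$, so no ``almost-$\cd$'' property of $(M,g_\varepsilon,\mu_\varepsilon)$ follows from (i), and the contradiction does not close. The paper's route is entirely different: it works on the nonsmooth space itself, identifying the Cheeger energy with the classical weighted Sobolev energy (so that $\cd$ upgrades to $\rcd$ by infinitesimal Hilbertianity and the Sobolev-to-Lipschitz property), then uses the Lagrangian--Eulerian equivalence between $\rcd^*(K,N)$ and $\be(K,N)$ (Theorem \ref{cd_implies_be}), and finally proves that $\be(K,N)$ is equivalent to the distributional bound by comparing test objects: approximation of smooth vector fields by gradient test vector fields (Lemma \ref{main_construction}), parabolic De Giorgi--Nash--Moser regularity for the heat flow (Proposition \ref{c0_convergence_heat_flow}), the representation of the distributional Ricci as a matrix of Radon measures (Theorem \ref{ricciasmeasure}), and a Besicovitch/Lebesgue-point covering argument for finite $N$ (Theorems \ref{first_main_theorem}, \ref{cdknforlipschitzmetric}, \ref{from_distri_torcd}). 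Your commutator lemma, even if proved in the distributional form in which it is true, would not substitute for any of these steps.
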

\smallskip

\begin{remark}[On the smoothness assumption on $M$]
Note that, for $C^{0}$-Riemannian metrics, the natural class of differentiability of the manifolds is $C^{1}$. However, by a classical result of Whitney \cite{Whit} (see also \cite[Sect.\;4]{Munk}), a $C^{1}$-manifold always possesses a $C^{\infty}$-sub-atlas, unique up to diffeomorphisms; we will choose some such sub-atlas whenever convenient.
\end{remark}

\begin{remark}[On the volume growth assumption]
The volume growth assumption \eqref{condition_on_measure} is satisfied by any $\cd(K,N)$ metric measure space, $N\in [1,\infty]$, as proved in \cite[Thm.\;4.24]{sturm2006geometryI}. Thus it is necessary for the implication from distributional to synthetic Ricci lower bounds to hold.  In Proposition \ref{prop:VolumeW1p} (resp.\;Proposition \ref{prop:VolumeC1}), we show that \eqref{condition_on_measure} is satisfied for a smooth manifold endowed with a $C^0\cap W^{1,p}_{loc}$ Riemannian metric, $p>n$, satisfying a distributional lower bound on the Ricci curvature   (resp.\;for a smooth manifold endowed with a $C^1$-Riemannian metric and a $C^1$-weighted measure satisfying a distributional lower bound on the $N$-Bakry-\'Emery Ricci tensor). See Corollary \ref{cor:FinalEquivalence} and Remark \ref{rem:FinalEquiv} for the equivalence result taking in consideration the validity of \eqref{condition_on_measure} in the aforementioned cases.
\end{remark}

 Few months after the present work was posted on arXiv, an independent paper \cite{KOV24} by Kunzinger, Ohanyan and Vardabasso appeared, where the authors prove the implication \emph{from distributional  to synthetic} Ricci lower bounds, under the stronger conditions that the manifold is  endowed with a $C^{0,1}_{loc}$-Riemannian metric and the volume is  non-weighted. Their approach is different, and based on the stability of the synthetic Ricci lower bounds under convergence.

\smallskip

\textbf{Main ingredients in the proof and organisation of the paper.}
{ A first important observation is that a smooth manifold endowed with a continuous Riemannian metric and a continuous weight on the volume measure is an infinitesimally Hilbertian metric measure space (in the sense of \cite{Gigli15}, after \cite{ambrosio2014duke}) satisfying the Sobolev-to-Lipschitz property (see Cor.\;\ref{summary_chapter_4}). Thus it is a  $\cd(K, N)$ space if and only if it is a  $\rcd(K, N)$ space. Now, a deep result by Cavalletti-Milman \cite{CavallettiMilman21} (see also \cite{Li-AMPA} for the extension to the case of $\sigma$-finite measures) is that the $\rcd(K,N)$ condition is equivalent to the $\rcd^{*}(K,N)$ condition.

The main ingredient to prove Theorem \ref{main_result_intro}, is the equivalence of the  $\rcd^*(K, N)$ condition (corresponding to a Lagrangian formulation of the curvature-dimension condition) and the Bakry-\'Emery condition $\be(K, N)$, which roughly corresponds to the Bochner inequality (and provides a Eulerian formulation of the curvature-dimension condition).   Such an equivalence is a deep result that was first established in the  case $N=\infty$ by Ambrosio-Gigli-Savar\'e \cite{ambrosio2014duke,ambrosio2015bakry} (after the work \cite{GigliKuwadaOhtaCPAM} in Alexandrov spaces) and later for $N<\infty$ by Erbar-Kuwada-Sturm \cite{erbar2015equivalence} and Ambrosio-Mondino-Savar\'e   \cite{ambrosio2019nonlinear}. We will prove that the  $\be(K, N)$ condition  is equivalent to  having the distributional $N$-Bakry-\'Emery Ricci tensor bounded below by $K$. Most of the work consists in comparing the test objects used in metric measure setting for the $\be(K, N)$ condition, and the smooth functions and vector fields, which are the test objects in the distributional formulation. More precisely,  we will make use of  the second order calculus developed in \cite{gigli2018nonsmooth} in the metric measure framework. Such a non-smooth approach requires to work with gradient vector fields, while the distributional lower Ricci bounds are classically tested against  arbitrary smooth vector fields.  To this regard, a key technical tool will be an approximation result of smooth vector fields by gradient vector fields (see Lemma \ref{main_construction}). Also, since we will work with metrics of low regularity, we will use the parabolic version of the De Giorgi-Nash-Moser regularity (see Proposition \ref{c0_convergence_heat_flow}). The proof of the implication from $\rcd$ to distributional Ricci lower bounds will be obtained via a contradiction argument involving  suitable coverings by balls centred in Lebesgue points of the test objects and using the Besicovitch covering theorem and the  Hahn decomposition theorem for signed Radon measures. Such an implication will be established in Theorem \ref{first_main_theorem} for the case $N=\infty$ and in Theorem \ref{cdknforlipschitzmetric} for the case $N\in [n,\infty)$. Let us mention that the case  $N\in [n,\infty)$ is slightly more subtle, as the distributional $N$-Bakry-\'Emery Ricci tensor involves a quadratic non-linearity in the weight (term which is not present in the case $N=\infty$). The converse implication will be obtained in Theorem \ref{from_distri_torcd}. 

Along the way, in Sec.\;4,  we will compare the theory of classical Sobolev spaces on manifolds (see for instance \cite{hebey1996sobolev}) with the generalised notion of differentiability introduced by Cheeger \cite{cheeger1999differentiability} and further investigated by Ambrosio, Gigli and Savaré \cite{ambrosio2014inventio}; the framework will be a smooth manifold endowed with  a continuous Riemannian metric $g$ and a positive continuous weight on the volume measure.

\medskip

\textbf{Acknowledgements.}
A.M.\;is supported by the European Research Council (ERC), under the European Union Horizon 2020 research and innovation programme, via the ERC Starting Grant  “CURVATURE”, grant agreement No. 802689.
 }

\subsection*{Notation}
Consider a smooth manifold $M$. When equipped with a Riemannian metric, we will always denote the metric by $g$. Moreover, we denote by $\sfd_g$ the distance and by $\vol_g$ the volume form induced by $g$. \\
In local coordinates $g_{ij}$ will denote the coefficients of $g$ as a matrix, $g^{ij}$ the coefficients of the inverse matrix, and $|g|$  the determinant of $(g_{ij})_{ij}$. 
We will write $\Gamma$ for the Levi-Civita connection of $g$ and denote its coefficients by $\Gamma^k_{ij}$. \\
The covariant derivative with respect to $\Gamma$ will be denoted by either $\nabla$ or $\nabla_c$, with the only exception that $\nabla f = g^{ij} \partial_i f \partial_{x^j} = (df)^\sharp$ always denotes the gradient field of $f$ and never its covariant derivative (which in that particular case equals the differential $df$). \\
For a vector field $V$ and a differentiable function $f$, we define $V(f):= df(V)$ to be the action of $V$ on $f$.
For a vector field $V$ and a $k$-form $\omega \in \Omega^k(M)$ on $M$, where $k \geq 1$, we denote by $\iota_V\omega \in \Omega^{k-1}(M)$ the contraction of the first entry of $\omega$ with $V$.

Given a topological Hausdorff space $X$, we denote by $\mathrm{Meas}(X)$ the  Banach space of  finite Radon measures on $X$ equipped with the total variation norm $\norm{\cdot}_{TV}$. 
Given a finite dimensional vector space $V$ with an inner product $\langle\cdot, \cdot\rangle$, we denote by $|\cdot|_{HS}$ the Hilbert-Schmidt norm on the space of linear operators on $V$.

\section{Distributional calculus with a Riemannian metric of low regularity}

\subsection{Notation and basic definitions}
In this section, we recall some elements of the theory of distributions on manifolds, including the distributional Riemann tensor; for more details, we refer to \cite{graf2020singularity}, \cite{grosser2013geometric}, \cite{kunzinger2022synthetic}, and \cite{lefloch2007definition}.

Let $M$ be a smooth real manifold of dimension $n$ without boundary, and let $(E, M, \pi)$ be a vector bundle over $M$. To keep notation short, we will simply write $E$ instead of $(E, M, \pi)$.  For $0 \leq k \leq \infty$, we denote by $\Gamma^k(M, E)$ (resp. $\Gamma^k_c(M, E)$) the space of $C^k$-sections (resp. with compact support). In the case $k = \infty$, we often drop the superscript to simplify the notation. 
We denote $T^r_s(M):= (\bigotimes_{i=1}^r TM) \otimes (\bigotimes_{i=1}^s T^*M)$ and $\mathcal{T}^r_s := \Gamma(M,T^r_s(M))$. We will also write $\mathfrak{X}(M)$ for $\mathcal{T}^1_0(M)$ and $\mathfrak{X}^*(M)$ for $\mathcal{T}^0_1(M)$.
For some vector bundle $F$ and $V \subset M$ open, we say that a sequence $\omega_j$ converges to $\omega$ in $\Gamma^k_c(V, F)$ if there exists a compact set $K \subset V$ such that $\supp\, \omega_j, \supp\, \omega \subset K$ and $\nabla^l \omega_j \to \nabla^l \omega$ uniformly on $K$ for all $l \leq k$.

We denote by $\mathrm{Vol}(M)$ the vector bundle of $1$-densities, i.e. the one dimensional vector bundle with transition functions $ \Psi_{\alpha\beta} (x) = |\det D(\psi_\beta \circ \psi_\alpha^{-1})(x)|$, where $\psi_\alpha, \psi_\beta$ denote local charts into $\R^n$.  A section $\omega \in \Gamma_c^k(M, \mathrm{Vol}(M))$ is called   a \emph{test volume}.
The \emph{space of distributions of order $k$ on $M$} is defined as the topological dual of $\Gamma_c^k(M, \mathrm{Vol}(M))$ (\cite{grosser2013geometric}, Sec. 3.1), i.e. 
\begin{align*}
    {\mathcal{D}'}^{(k)}(M) := \Gamma_c^k(M, \mathrm{Vol}(M))'.
\end{align*}
The space of distributional $(r,s)$-tensor fields of order $k$ is given by 
\begin{align*}
    {\mathcal{D}'}^{(k)}\mathcal{T}^r_s(M) := \Gamma_c^k(M, T^s_r(M) \otimes \mathrm{Vol}(M))'.
\end{align*}
It is known that
    \begin{align*}
        {\mathcal{D}'}^{(k)}(\mathcal{T}^r_s)_{C^k}(M) \cong \mathcal{D}'(M) \otimes_{C^k(M)} {\mathcal{T}}^r_s(M) \cong L_{C^k(M)}(\mathfrak{X}^*(M)_{C^k}^r\times \mathfrak{X}(M)_{C^k}^s; \mathcal{D}'(M)),
    \end{align*}
    where the latter denotes the $C^k(M)$-module of $C^k(M)$-multilinear maps from  $\mathfrak{X}^*(M)^r_{C^k}\times  \mathfrak{X}(M)^s_{C^k}$ to ${\mathcal{D}'}^{(k)}(M)$  (cf. \cite{grosser2013geometric}).
      
\begin{definition}[Action of a vector field on a distribution, \cite{lefloch2007definition}]
    For a scalar distribution $T \in \mathcal{D}'(M)$ and a vector field $X$, we define the action of $X$ on $T$ via 
    \begin{align*}
        \langle X(T), \omega \rangle_{\mathcal{D}', \mathcal{D}} := -\langle T, \mathcal{L}_X\omega \rangle_{\mathcal{D}', \mathcal{D}},
    \end{align*}
    where $\omega$ is a test volume and $\mathcal{L}_X \omega$ denotes the Lie derivative of $\omega$ in the direction of $X$. 
\end{definition}

\begin{definition}[\cite{lefloch2007definition}, Def. 4.1]
    A distribution $g \in {\mathcal{D}'}\mathcal{T}^0_2(M)$ is called a generalised Riemannian metric, if it satisfies
    \begin{align*}
      &  g(X, Y)  = g(Y, X) \quad \mathrm{in} \ \mathcal{D}'(M) \quad \mathrm{and} \\
       &  g(X, Y) = 0 \quad  \forall Y\in \mathfrak{X}(M)  \quad \Longrightarrow \quad  X=0. 
    \end{align*}
\end{definition} 
Using a partition of unity, it is clear that one can localise and reduce the computations to distributions on $\R^n$.

In order to generalise the Riemann curvature tensor, it is appropriate to consider a Riemannian metric that is locally given as a positive definite matrix such that the coefficients $g_{ij}$ and the coefficients of the inverse matrix $g^{ij}$ are both in $L^\infty_{\rm{loc}}\cap H^1_{\rm{loc}}$ (cf. \cite{grosser2013geometric}, 5.2.1). 
This condition is automatically met, if $g \in C^0(M)$ and admits $L^2_{\rm{loc}}$-Christoffel symbols (see Lemma \ref{christoffel_l2_implies_dg_l2}). \\
In order to define a distributional covariant derivative, one generalises the first Koszul formula. For smooth vector fields $X, Y, Z$ one can define $\nabla^{\flat}_XY \in \mathcal{D}'\mathcal{T}^0_1(M)$ as:\begin{align*}
    \langle \nabla^{\flat}_XY, Z \rangle :=& \frac{1}{2}\left(X(g(Y, Z))+ Y(g(X, Z))-Z(g(X, Y))-g(X, [Y,Z])-g(Y, [X, Z])+g(Z, [X, Y])\right)
\end{align*}
in $\mathcal{D}'(M)$. Raising the index gives the distributional covariant derivative $\nabla_XY := (\nabla^{\flat}_XY)^\sharp$ and a computation in local coordinates yields that indeed
\begin{align*}
    (\nabla_XY)_s &= \delta_i^s(X^k\partial_kY^{i}) + \Gamma^s_{ik}X^{i}Y^{k}\ \mathrm{in}\ \mathcal{D}'(\R^n).
\end{align*}
For a $1$-form $\theta$, the distributional covariant derivative is defined via
\begin{align*}
    \langle \nabla_X\theta, Z \rangle := X(\langle \theta, Z \rangle)- \langle\theta, \nabla_XZ \rangle\ \mathrm{in}\ \mathcal{D}'(M),
\end{align*}
which, when spelled out in local coordinates, coincides with the classical covariant derivative of a $1$-form.
We have all the ingredients  to define the distributional Riemann tensor. Here, it is crucial that the coefficients of the covariant derivatives are in $L^2_{\rm{loc}}$, as the expression of the Riemann tensor involves a quadratic term in the Christoffel symbols.  Following the notation of \cite[Definition 3.3]{lefloch2007definition}, we have that 
\begin{align*}
    \langle\mathrm{Riem}(X, Y)Z, \theta \rangle    &= X \langle\nabla_YZ, \theta \rangle - Y \langle\nabla_XZ, \theta \rangle - \langle \nabla_YZ, \nabla_X\theta \rangle_{L^2} + \langle \nabla_XZ, \nabla_Y\theta \rangle_{L^2} - \langle \nabla_{[X, Y]}Z, \theta \rangle
\end{align*}
 in $\mathcal{D}'(M)$. All the quantities on the right hand side can be computed locally. Thus, in local coordinates, the Riemann tensor can be written as
    \begin{align}\label{eq:defRiemDistr}
        R^l_{ijk}= \partial_i\Gamma^l_{jk}-\partial_j\Gamma^l_{ik}+\Gamma^s_{kj}\Gamma^l_{is}-\Gamma^s_{ki}\Gamma^l_{js},
    \end{align}
    in the sense of distributions on $\R^n$. 
    Note that in local coordinates, the components of $X, Y, Z$ and $\theta$ ``are part of the test function''.  Note that, if $g$ is a smooth Riemannian metric, the expression \eqref{eq:defRiemDistr} coincides with the classical Riemann tensor (cf.\;\cite{petersen2006riemannian}). 
For a $C^{0}$-metric $g$ admitting $L^2_{\rm{loc}}$-Christoffel symbols, let $e_i$ be a $C^{0}\cap W^{1,2}_{\rm{loc}}$-local frame in $\mathfrak{X}_{C^0}(M)$. Note that this exists by Lemma \ref{christoffel_l2_implies_dg_l2}.
For two smooth vector fields $X, Y$, the distributional Ricci tensor of is then given by
\begin{align*}
    \Ric(X,Y) = \sum_i g(e_i, \Riem(e_i, X)Y).
\end{align*} 
It is immediate to check that $\Ric$ is a symmetric bilinear form that can  be expressed in local coordinates as:
\begin{align}\label{formuladistriricci}
     \Ric_{jk}=R^p_{pjk} = \partial_p\Gamma^p_{jk}-\partial_j\Gamma^p_{pk}+\Gamma^s_{kj}\Gamma^p_{ps}-\Gamma^s_{kp}\Gamma^p_{js},
\end{align}
which coincides with the classical expression in the smooth case. 
We next define distributional lower bounds of the Ricci curvature. We say that a one-density is non-negative if it can be written as $\phi \vol_g \in \mathrm{Vol}(M)$ for some non-negative $\phi \in C^\infty(M)$.
\begin{definition}
   Let $K\in \R$. We say that a $C^{0}$-Riemannian metric $g$ with $L^2_{\rm{loc}}$ Christoffel symbols satisfies $\Ric \geq K$ in the \emph{distributional sense} if, for all $X \in \mathfrak{X}(M)$, it holds $\Ric(X, X) \geq Kg(X, X)$ in  ${\mathcal{D}'}(M)$, i.e. for all non-negative test volumes $\omega$ it holds, 
    \begin{align*}
        \int \Ric(X, X)\omega \geq \int Kg(X, X)\omega.
    \end{align*}
\end{definition}
 We conclude this section by recalling how to perform convolutions on  a manifold.
 Fix a function $\rho \in C_c^\infty(\R^n)$ such that $\supp\, \rho \subset B_1(0)$, $\rho \geq 0$, and $\int_{\R^n} \rho = 1$. Define $\rho_\e(x):= \frac{1}{\e^n} \rho(\frac{x}{\e})$. For a smooth manifold $M$, fix an atlas $(U_\alpha, \psi_\alpha)$ such that $(U_\alpha)_\alpha$ is locally finite and each $U_{\alpha}$ is relatively compact, and fix a partition of unity $\eta_\alpha$ subordinate to the atlas. Choose a family of smooth cutoff functions $\chi_\alpha$ such that $\supp\, \chi_\alpha \subset U_\alpha$ and $\chi_\alpha =1$ on $\supp\, \eta_\alpha$. For any tensor $T \in L^1_{\rm{loc}}(T^r_s(M))$, define 
 \begin{align}\label{convolution_on_manifold_def}
    T* \rho_\e (x) := \sum_\alpha \chi_\alpha(x)\psi_\alpha^*\big(({\psi_\alpha}_*(\eta_\alpha T))* \rho_\e\big)(x).
 \end{align}
 {
 \subsection{Some regularisation results and their application to Sobolev Riemannian metrics}\label{subsec:regularisation}
In this section, we  present some variants of the convergence results obtained in \cite{graf2020singularity, calisti2025hawking}, which will be key to obtain  $L^p$-convergence of the Ricci curvature tensor under regularisation of the Riemannian metric. More precisely, we will follow the strategy of \cite[Lemma 3.3]{calisti2025hawking} with the following difference: we will work with $L^p$-functions instead of locally Lipschitz functions, and we will keep track of the exponents. The following lemma generalises  \cite[Lemma 4.7]{graf2020singularity} to the $L^p$ case. 
\begin{lemma}\label{friedrichs_lemma_lp}
   Let $f \in L^p_{loc}(\R^n)$, for some $p \in [1, \infty)$. Let $\rho \in C_c^\infty(B_1^{euc}(0), [0, \infty))$ be a rotationally symmetric standard mollifier and define $\rho_\e(x):= \frac{1}{\e^n}\rho(\frac{x}{\e})$. Then, for any compact set $K \subset\R^n$, it holds that 
    $$
    \e\norm{\partial_j\rho_\e*f}_{L^p(K)} \to 0, \quad \text{ as } \e \to 0.
    $$
\end{lemma}
\begin{proof}
    Note that, for any constant $c \in \R$, integration by parts gives that $\partial_j \rho_\e*c=0$.
    Hence, using the definition of $\rho_\e$, Jensen's inequality, and Fubini's theorem, we get that
    \begin{align*}
        \e \Big(\int_K |\partial_j\rho_\e*f|^p \Big)^{\frac{1}{p}} &= \e \Big(\int_K \Big|\int_{B_\e(0)}\partial_j\rho_\e(y) \big(f(y-x)-f(x)\, \big)d y\Big|^p d x \Big)^{\frac{1}{p}} \\
        &=  \Big(\int_K \Big|\int_{B_\e(0)}\frac{\e}{\e^{n+1}}\partial_j\rho\Big(\frac{y}{\e}\Big)\big(f(y-x)-f(x)\big)\, d y\Big|^p d x \Big)^{\frac{1}{p}} \\
        &\leq C(n) \Big(\int_K\frac{1}{\e^n} \int_{B_\e(0)}|\partial_j\rho\Big(\frac{y}{\e}\Big)|^p|f(y-x)-f(x)|^p\, d y d x \Big)^{\frac{1}{p}} \\
        &\leq C(n, \rho) \Big(\frac{1}{\e^n}\int_{B_\e(0)} \int_K |f(y-x)-f(x)|^p\, d y d x \Big)^{\frac{1}{p}}.
   \end{align*}
   The last integral converges to $0$, since 
   \begin{align*}
      \lim_{h \to 0} \norm{f-f(\cdot-h)}_{L^p(K)} =0. 
   \end{align*}
\end{proof}
\begin{lemma}[A Friedrichs type lemma]\label{friedrichs_lemma_applied_1}
    Let $p \in [2, \infty)$ and $a \in C^0 \cap W^{1, p}_{loc}(\R^n)$ and $f \in L^p_{loc}(\R^n)$. Then, for any compact set $K$, it holds that 
    \begin{align*}
        \norm{(a*\rho_\e)(\rho_\e*f) - \rho_\e*(af)}_{W^{1, \frac{p}{2}}(K)} \to 0, \quad \text{ as } \e\to 0.
    \end{align*}
\end{lemma}
\begin{proof}
    We will proceed as in the proof of  \cite[Lemma 4.8]{graf2020singularity}, see also \cite[Sect.\;2]{CDLS-2012}. Fix a compact set $K$. We need to estimate the $L^{\frac{p}{2}}(K)$-norm of 
    \begin{align*}
        h_\e(x) &= ((af)* \partial_j\rho_\e - (\partial_j\rho_\e*a)(f*\rho_\e) - (\rho_\e*a)(f*\partial_j\rho_\e)) (x) \\
        &=\underbrace{\big(((a-a(x))(f-f(x)))* \partial_j \rho_\e\big)(x)}_{=:F_1(x)} - \underbrace{\big((a * \partial_j \rho_\e)((f-f(x))*\rho_\e)\big)(x)}_{=:F_2(x)}-\underbrace{\big(((a-a(x))*\rho_\e)(f*\partial_j\rho_\e)\big)(x)}_{=:F_3(x)}.
    \end{align*}
Notice that the second line follows by the fact that $c * \rho_\e=c$ and $c* \partial_j \rho_\e=0$, for every constant $c\in \R$.
   We start by estimating $F_1$.
   \begin{align*}
     \norm{F_1}_{L^{\frac{p}{2}}(K)} &= \Big(\int_K \big|\big(((a-a(x))(f-f(x)))* \partial_j \rho_\e\big)(x)\big|^{\frac{p}{2}}dx\Big)^{\frac{2}{p}}\\
       &= \Big(\int_K \Big|\int_{B_\e(0)}((a(x-y)-a(x))(f(x-y)-f(x)))\; \partial_j \rho_\e(y)dy\Big|^{\frac{p}{2}}dx\Big)^{\frac{2}{p}} \\
       &= \Big(\int_K \Big|\frac{1}{\e^n}\int_{B_\e(0)}\frac{1}{\e}((a(x-y)-a(x))(f(x-y)-f(x)))\; \partial_j \rho\Big(\frac{y}{\e}\Big)dy\Big|^{\frac{p}{2}}dx\Big)^{\frac{2}{p}} \\
       &\leq C(n) \Big(\int_K \frac{1}{\e^n}\int_{B_\e(0)}\Big|\frac{1}{\e}((a(x-y)-a(x))(f(x-y)-f(x)))\; \partial_j \rho\Big(\frac{y}{\e}\Big)\Big|^{\frac{p}{2}}dydx\Big)^{\frac{2}{p}} \\
        &\leq C(n, \rho) \Big( \frac{1}{\e^n}\int_{B_\e(0)}\int_K\Big|\frac{1}{\e}((a(x-y)-a(x))(f(x-y)-f(x)))\Big|^{\frac{p}{2}}dxdy\Big)^{\frac{2}{p}}\\
        &\leq  C(n, \rho) \Big(\Big(\frac{1}{\e^n}\int_{B_\e(0)}\int_K\Big|\frac{1}{\e}(a(x-y)-a(x))\Big|^pdxdy\Big)^{\frac{1}{2}}\Big(\frac{1}{\e^n}\int_{B_\e(0)}\int_K\Big|(f(x-y)-f(x))\Big|^pdxdy\Big)^{\frac{1}{2}}\Big)^{\frac{2}{p}}.
   \end{align*}
   It is a classical fact of Sobolev functions that 
   \begin{equation}\label{eq:nablaALp}
   \frac{1}{\e^n} \int_{B_\e(0)}\int_K\Big|\frac{1}{\e}(a(x-y)-a(x))\Big|^pdxdy\leq C(n) \norm{\nabla a}_{L^p(B_\e(K))}^p.
   \end{equation}
Indeed, by the density of $C^1$-functions in $W^{1,p}$, we can assume without loss of generality that $a\in C^1$. Using the fundamental theorem of calculus,  H\"older's inequality, and Fubini's theorem,  we estimate:
   \begin{align*}
      \frac{1}{\e^n} \int_{B_\e(0)}\int_K\Big|\frac{1}{\e}(a(x-y)-a(x))\Big|^pdxdy
      &\leq \frac{1}{\e^n} \int_{B_\e(0)}\int_K \Big|\frac{1}{\e}\int_0^{|y|} \nabla a\Big(x-t\frac{y}{|y|}\Big) dt \Big|^pdxdy \\
      &\leq \frac{1}{\e^n} \int_{B_\e(0)}\frac{|y|^{p-1}}{\e^p}\int_K \int_0^{|y|} \Big| \nabla a\Big(x-t\frac{y}{|y|}\Big) \Big|^p dt dxdy  \\
      &\leq \frac{1}{\e^n} \int_{B_\e(0)}\frac{|y|^{p-1}}{\e^p} \int_0^{|y|}\int_K \Big| \nabla a\Big(x-t\frac{y}{|y|}\Big) \Big|^p dx dt dy  \\
      &\leq \frac{1}{\e^n} \int_{B_\e(0)}\frac{\e^{p}}{\e^p} \norm{\nabla a}_{L^p(B_\e(K))}^pdy \leq C(n) \norm{\nabla a}_{L^p(B_\e(K))}^p,
   \end{align*}
   proving \eqref{eq:nablaALp}.
   Hence,  for $\e \leq 1$, we get that
   \begin{align*}
       \norm{F_1}_{L^{\frac{p}{2}}(K)}& \leq C(n,p)\norm{\nabla a}_{L^p(B_1(K))}\Big(\Big(\frac{1}{\e^n}\int_{B_\e(0)}\int_K\Big|(f(x-y)-f(x))\Big|^pdxdy\Big)^{\frac{1}{2}}\Big)^{\frac{2}{p}} \\
       & \leq C(n,p)\norm{\nabla a}_{L^p(B_1(K))} \Big( \sup_{|y|\leq \e} \norm{f-f(\cdot-y)}_{L^p(K)}^p \Big)^{\frac{1}{p}} \to 0,
   \end{align*}
   as $\e \to 0$. Furthermore,
   \begin{align*}
      \norm{F_2}_{L^{\frac{p}{2}}(K)} &\leq \norm{(a * \partial_j \rho_\e)}_{L^p(K)} \norm{((f-f(x))*\rho_\e)}_{L^p(K)}\nonumber \\
      & \leq \norm{\nabla a}_{L^p(B_1(K))} \Big(\int_K \Big|\int_{B_\e(0)} \rho_\e(y)((f(x-y)-f(x))dy \Big|^pdx \Big)^{\frac{1}{p}}\nonumber \\
      &\leq \norm{\nabla a}_{L^p(B_1(K))} \Big(\int_{B_\e(0)} \rho_\e(y)\int_K \Big|((f(x-y)-f(x))\Big|^pdx dy \Big)^{\frac{1}{p}}\nonumber \\
      &\leq  \norm{\nabla a}_{L^p(B_1(K))} \Big( \sup_{|y|\leq \e} \norm{f-f(\cdot-y)}_{L^p(K)}^p \Big)^{\frac{1}{p}} \to 0,
   \end{align*}
   as $\e \to 0$. We next estimate $F_3$. To this aim, we start by estimating 
   \begin{align}\label{Lp-difference_a_1}
       \norm{(a-a(x))*\rho_\e}_{L^p(K)} &= \Big(\int_K \Big|\int_{B_\e(0)}\rho_\e(y)(a(x-y)-a(x))dy\Big|^pdx\Big)^{\frac{1}{p}}\nonumber \\
       &\leq \Big(\int_K \int_{B_\e(0)}\rho_\e(y)|(a(x-y)-a(x))|^p dy dx\Big)^{\frac{1}{p}}\nonumber \\
       &= \Big(\int_{B_\e(0)}\rho_\e(y)\int_K |(a(x-y)-a(x))|^p dx dy\Big)^{\frac{1}{p}}.
   \end{align}
   For $|y|\leq \e$,   it is a classical fact for Sobolev functions that
   \begin{equation}\label{Lp-difference_a_2}
     \Big(\int_K |(a(x-y)-a(x))|^p dx\Big)^{\frac{1}{p}}\leq  \e \norm{\nabla a}_{L^p(B_1(K))}.
   \end{equation}
   Indeed, by the density of $C^1$-functions in $W^{1,p}$, we can assume without loss of generality that $a\in C^1$.  Denote $v = \frac{y}{|y|}\in \R^n$. Using the fundamental theorem of calculus,  H\"older's inequality, and Fubini's theorem,  we estimate:   
   \begin{align*}
     \Big(\int_K |(a(x-y)-a(x))|^p dx\Big)^{\frac{1}{p}} &= \Big(\int_K \Big|\int_0^{|y|} \nabla a(x-tv)dt\Big|^p dx \Big)^{\frac{1}{p}} \nonumber\\
     &\leq \Big(\int_K|y|^{p-1}\int_0^{|y|} |\nabla a|^p(x-tv)dt dx \Big)^{\frac{1}{p}} \nonumber \\
     &\leq \Big(\e^{p-1}\int_0^{|y|}\int_K |\nabla a|^p(x-tv) dx dt\Big)^{\frac{1}{p}} \nonumber \\
     &\leq \Big(\e^{p-1}\int_0^{|y|}\norm{\nabla a}_{L^p(B_1(K))}^p dt\Big)^{\frac{1}{p}} \leq \e \norm{\nabla a}_{L^p(B_1(K))}.
     \end{align*}
Combining the estimates above with H\"older's inequality, we obtain
   \begin{align*}
        \norm{F_3}_{L^{\frac{p}{2}}(K)} &\leq  \norm{(a-a(x))*\rho_\e}_{L^p(K)} \norm{\partial_j\rho_\e*f}_{L^p} \leq \e \norm{\nabla a}_{L^p(B_1(K))}\norm{\partial_j\rho_\e*f}_{L^p} \to 0,
   \end{align*}
   as $\e \to 0$, by Lemma \ref{friedrichs_lemma_lp}.
\end{proof}

\begin{lemma}\label{friedrichs_application_final}
    Let $p \in [2, \infty)$, $a \in C^0 \cap W^{1, p}_{loc}(\R^n)$ and $f \in L^p_{loc}(\R^n)$. Let $\rho_\e$ be as in the previous lemma. Let $a_\e \in C^\infty(\R^n)$ be such that $a_\e \to a$ locally uniformly and in $W^{1,p}_{loc}$; moreover, assume that, for each $K\subset\R^n$ compact subset there exists $C_K>0$ such that
    \begin{equation}\label{eq:HpAeaLp}
    \norm{a_\e-a}_{L^{p}(K)}\leq C_K \e.
    \end{equation}
    Then, for any compact set $K\subset\R^n$, it holds that 
    \begin{align*}
        \norm{a_\e(\rho_\e*f) - \rho_\e*(af)}_{W^{1, \frac{p}{2}}(K)} \to 0, \quad \text{as } \e \to 0.
    \end{align*}
\end{lemma}
\begin{proof}
    Write 
    \begin{align*}
        a_\e(\rho_\e*f) - \rho_\e*(af)= \underbrace{(a_\e-\rho_\e*a)(\rho_\e*f)}_{(I)}+\underbrace{(\rho_\e*a)(\rho_\e*f) - \rho_\e*(af)}_{(II)}.
    \end{align*}
    We only need to estimate the $W^{1,p}$-norm of term $(I)$, as Lemma \ref{friedrichs_lemma_applied_1} yields the desired estimate for term $(II)$.
    Write
    \begin{align*}
        \partial_j((a_\e-\rho_\e*a)(\rho_\e*f)) = (\partial_j(a_\e-\rho_\e*a))(\rho_\e*f) + (a_\e-\rho_\e*a)(\partial_j(\rho_\e*f)). 
    \end{align*}
    We start estimating the first term:
    \begin{align*}
        \norm{(\partial_j(a_\e-\rho_\e*a))(\rho_\e*f)}_{L^\frac{p}{2}(K)} &\leq \norm{\partial_j(a_\e-\rho_\e*a)}_{L^p(K)}\norm{(\rho_\e*f)}_{L^p(K)} \\
        &\leq \big(\norm{\partial_j(a_\e-a)}_{L^p(K)}+\norm{\partial_j(a-\rho_\e*a)}_{L^p(K)}\big)\;\norm{(\rho_\e*f)}_{L^p(K)}  \to 0,
    \end{align*}
    as $\e \to 0$. Moreover, for the second term, we use the assumption \eqref{eq:HpAeaLp} together with \eqref{Lp-difference_a_1} and \eqref{Lp-difference_a_2} and get that
    \begin{align*}
        \norm{a_\e-(a*\rho_\e)}_{L^p(K)} \leq \norm{a_\e-a}_{L^p(K)} + \norm{a-(a*\rho_\e)}_{L^p(K)} \leq C(K)\e. 
    \end{align*}
    Hence, 
    \begin{align*}
        \norm{ (a_\e-\rho_\e*a)(\partial_j(\rho_\e*f))}_{L^{\frac{p}{2}}(K)} \leq \norm{(a_\e-\rho_\e*a)}_{L^p(K)} \norm{(\partial_j(\rho_\e*f)}_{L^p(K)} \leq C_K\e\norm{(\partial_j(\rho_\e*f)}_{L^p(K)} \to 0 
    \end{align*}
    as $\e \to 0$, by Lemma \ref{friedrichs_lemma_lp}.
\end{proof}
Now, assume to have a Riemannian metric $g\in C^0\cap W^{1,p}_{loc}$, for some $p \in [2, \infty)$. As usual,  we write $g_{ij}$ for the coefficients of $g$ in local coordinates. Let $g_\e= \rho_\e*g$. We note that for all $i,j,k,l,m =1, \ldots, n$, the functions $f= \partial_k g_{lm}$, $a=g^{ij}$ and $a_\e:= ((g_\e)^{-1})_{ij}$ satisfy the assumptions of Lemma \ref{friedrichs_application_final}. This can be seen directly from \eqref{Lp-difference_a_1} and \eqref{Lp-difference_a_2}, together with the formula of the inverse coefficients of a matrix.
Hence, recalling the local expression \eqref{formuladistriricci} of the Ricci curvature tensor, together with Lemma \ref{friedrichs_application_final}, we get the following useful result.
\begin{proposition}\label{lp_ricci_conv}
    Let $M$ be a smooth manifold and let $g$ be a Riemannian metric of regularity $g \in W^{1,p}_{loc}$, for some $p \in [2, \infty)$. Then, for each compact set $K\subset M$, it holds
    \begin{align}
        \norm{\Ric[g_\e]-\rho_\e*\Ric[g]}_{L^{\frac{p}{2}}(K)} \to 0, \quad \text{as }\e \to 0.
    \end{align}
\end{proposition}
 
 }
\section{A weak formulation of Bochner's formula for metrics of low regularity} 
\subsection{Smooth calculus on manifolds}
In this subsection, we will recall some basic calculus concepts from smooth Riemannian geometry. More details can be found in \cite{petersen2006riemannian}.
\begin{lemma}
Let $g$ be a smooth Riemannian metric on $M$ and $f, h$ be smooth functions on $M$. The following equations hold in local coordinates:
\begin{itemize}
    \item[$\mathrm{(i)}$] $\nabla f = g^{ik}(\partial_if)\partial_{x^k}$. 
    \item[$\mathrm{(ii)}$] $\vol_g = \sqrt{|g|}dx^1 \land \ldots \land dx^n$.
    \item[$\mathrm{(iii)}$] $\Delta f = \partial_jg^{ij}\partial_if + g^{ij} \partial_{ij}f + \frac{1}{2}\mathrm{tr}(g^{-1}\partial_i g)g^{ij}\partial_jf= \frac{1}{\sqrt{|g|}}\partial_i(\sqrt{|g|}g^{ij}\partial_jf)$.
    \item[$\mathrm{(iv)}$] $\langle\nabla f, \nabla h \rangle_g = \partial_if g^{ij} \partial_jh$.
\end{itemize}
\end{lemma}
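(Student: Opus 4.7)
The four identities are standard Riemannian calculus facts, and the plan is to derive each one by a short local coordinate computation rather than invoking any deep machinery. I would treat (i) first, as (iv) follows immediately from it, and then handle (ii) and (iii) via Jacobi's formula for the derivative of a determinant.

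For (i), I would start from the definition $\nabla f = (df)^{\sharp}$. Writing the differential as $df = (\partial_i f)\,dx^i$ and applying the musical isomorphism induced by $g$, whose matrix is $g^{ij}$ in the cotangent frame $dx^i$, gives $\nabla f = g^{ik}(\partial_i f)\,\partial_{x^k}$. Item (iv) is then an immediate consequence: by definition,
\begin{align*}
\langle \nabla f,\nabla h\rangle_g = g_{k\ell}\,g^{ki}(\partial_i f)\,g^{\ell j}(\partial_j h) = \delta_\ell^{\,i}\,g^{\ell j}(\partial_i f)(\partial_j h) = g^{ij}(\partial_i f)(\partial_j h).
\end{align*}

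For (ii), I would recall that in local coordinates $g = g_{ij}\,dx^i\otimes dx^j$, and the associated volume form is the unique $n$-form positively proportional to $dx^1\wedge\cdots\wedge dx^n$ with unit norm in the induced metric on $\Lambda^n T^*M$. A direct computation of that norm for $dx^1\wedge\cdots\wedge dx^n$ gives $1/\sqrt{|g|}$, whence $\vol_g = \sqrt{|g|}\,dx^1\wedge\cdots\wedge dx^n$.

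For (iii), I would define $\Delta f := \mathrm{div}(\nabla f)$ and use that, for a vector field $X = X^i\partial_{x^i}$, the divergence with respect to $\vol_g$ satisfies
\begin{align*}
\mathrm{div}(X)\,\vol_g = \mathcal{L}_X\vol_g = d(\iota_X\vol_g),
\end{align*}
which in local coordinates yields $\mathrm{div}(X) = \frac{1}{\sqrt{|g|}}\partial_i(\sqrt{|g|}\,X^i)$. Inserting $X^i = g^{ij}\partial_j f$ from (i) gives the second equality in (iii). For the first equality, I would expand using the Leibniz rule:
\begin{align*}
\frac{1}{\sqrt{|g|}}\partial_i\!\left(\sqrt{|g|}\,g^{ij}\partial_j f\right) = (\partial_i g^{ij})(\partial_j f) + g^{ij}\partial_{ij}f + \frac{\partial_i\sqrt{|g|}}{\sqrt{|g|}}\,g^{ij}\partial_j f,
\end{align*}
and then apply Jacobi's formula $\partial_i \log|g| = \mathrm{tr}(g^{-1}\partial_i g)$ to rewrite the last term as $\tfrac{1}{2}\mathrm{tr}(g^{-1}\partial_i g)\,g^{ij}\partial_j f$, matching the stated expression.

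There is no real obstacle here: the only mildly delicate point is Jacobi's formula for the derivative of the determinant, and perhaps being careful that the two expressions for $\Delta f$ in (iii) differ only by a regrouping of terms and are equivalent wherever $g$ is smooth. Since the lemma is stated for smooth $g$ and $f$, no regularity issues arise.
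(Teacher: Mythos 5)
Your proposal is correct. The paper states this lemma without proof, recalling it as standard background from Riemannian geometry (citing Petersen's textbook), and your derivation — (i) via $(df)^{\sharp}$, (iv) as an immediate consequence, (ii) via the unit-norm characterisation of the volume form, and (iii) via $\Delta f = \mathrm{div}(\nabla f)$, Cartan's formula for the divergence, and Jacobi's formula $\partial_i\log|g| = \mathrm{tr}(g^{-1}\partial_i g)$ — is exactly the standard argument one would find there.
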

Note that these expressions still hold for $g \in C^{0}(M)$, $\Gamma^k_{ij} \in L^2_{\rm{loc}}$.
The Hessian is given by
\begin{align*}
   \mathrm{Hess} f (\partial_{x^{i}}, \partial_{x^j}) &= \partial_{ij}f - \Gamma^s_{ij}\partial_sf.
\end{align*}
For all smooth functions $\phi, h$, and $f$, it holds
\begin{align}\label{hessian_for_gradients}
    2\Hess f(\nabla \phi, \nabla h) = \langle \nabla \phi ,\langle \nabla f , \nabla h\rangle_g \rangle_g + \langle \nabla h,\langle \nabla f , \nabla \phi \rangle_g \rangle_g - \langle \nabla f ,\langle \nabla h, \nabla \phi\rangle_g \rangle_g.
\end{align}
Now let $X$ be a smooth vector field and $\omega$ a smooth $1$-form. Then the musical isomorphisms $\sharp: T^*M \to TM$ and $\flat:  TM \to  T^*M$ are defined via
\begin{align*}
    X^\flat(Y) &= \langle X, Y\rangle_g, \ \mathrm{and} \ \omega (Y) = \langle \omega^\sharp, Y\rangle_g, 
\end{align*}
for all vector fields $Y$. Note that the musical isomorphisms come from the smooth setting, but also make sense for $C^{0}$-Riemannian metrics. 
\subsection{Bochner's formula for vector fields on smooth weighted manifolds}
In this subsection we briefly recall the generalisation of Bochner's formula to the case of a smooth weighted manifold. Denote by $\Delta_H$ the Hodge-Laplacian and recall that for $f \in C^\infty(M)$ it holds $\Delta_H f= -\Delta f$. Then:
\begin{proposition}
    For a smooth manifold $M$ with a smooth Riemannian metric $g$ and any smooth vector field $X$, it holds
    \begin{align}\label{general_bochner}
        -\frac{1}{2} \Delta_H |X|^2 = -\langle (\Delta_H X^\flat)^\sharp, X \rangle + |\nabla X|^2_{\mathrm{HS}} + \Ric(X, X). 
    \end{align}
\end{proposition}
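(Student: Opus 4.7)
The plan is to derive \eqref{general_bochner} from two standard ingredients: the Weitzenböck identity for $1$-forms and a direct expansion of $\Delta\bigl(g(X,X)\bigr)$ using metric compatibility of the Levi-Civita connection. Throughout I work in the smooth setting, so I may freely use $\nabla g=0$, covariant derivatives and integration-by-parts identities.

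First, I would recall the classical Weitzenböck formula on $1$-forms: for $\omega\in\Omega^{1}(M)$,
\[
\Delta_H\omega \;=\; \nabla^{*}\nabla \omega + \Ric(\omega^{\sharp},\cdot),
\]
where $\nabla^{*}\nabla=-g^{ij}\nabla_{i}\nabla_{j}$ is the (positive) connection Laplacian, consistent with $\Delta_H f=-\Delta f$ on functions. Applying this to $\omega=X^{\flat}$, taking sharp and pairing with $X$ gives the pointwise identity
\[
\langle (\Delta_H X^{\flat})^{\sharp}, X\rangle \;=\; \langle (\nabla^{*}\nabla X^{\flat})^{\sharp}, X\rangle + \Ric(X,X).
\]

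Next, I would compute $\Delta|X|^{2}$ directly. Since $\nabla g=0$,
\[
\nabla_{i}\bigl(g(X,X)\bigr) \;=\; 2\,g(X,\nabla_{i}X),
\]
and differentiating a second time and tracing with $g^{ij}$ yields
\[
\Delta |X|^{2} \;=\; 2\,g^{ij} g(\nabla_{i}X,\nabla_{j}X) + 2\,g^{ij} g(X,\nabla_{i}\nabla_{j}X) \;=\; 2\,|\nabla X|^{2}_{\mathrm{HS}} - 2\,\langle (\nabla^{*}\nabla X^{\flat})^{\sharp}, X\rangle.
\]
Recalling $\Delta_H=-\Delta$ on functions, this rewrites as
\[
-\tfrac{1}{2}\Delta_H |X|^{2} \;=\; |\nabla X|^{2}_{\mathrm{HS}} - \langle (\nabla^{*}\nabla X^{\flat})^{\sharp}, X\rangle .
\]

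Finally, I would substitute the Weitzenböck identity from the first step into this last equation to eliminate the connection-Laplacian term, obtaining exactly
\[
-\tfrac{1}{2}\Delta_H|X|^{2} \;=\; -\langle (\Delta_H X^{\flat})^{\sharp}, X\rangle + |\nabla X|^{2}_{\mathrm{HS}} + \Ric(X,X),
\]
which is \eqref{general_bochner}. There is essentially no obstacle here: the result is classical and the only point requiring care is sign-bookkeeping between the Hodge Laplacian $\Delta_H$ (adopted in the paper with $\Delta_H f = -\Delta f$) and the connection Laplacian $\nabla^{*}\nabla$. Alternatively, one can verify the identity in local coordinates by plugging in the formula $\nabla_{i}X^{k}=\partial_{i}X^{k}+\Gamma^{k}_{ij}X^{j}$ and using the definition \eqref{eq:defRicciChrist} of $\Ric$; the coordinate approach is longer but will be the natural starting point for the low regularity version discussed in the sequel, since it makes transparent which terms need to be interpreted distributionally once $g$ is only continuous with $L^{2}_{\mathrm{loc}}$ Christoffel symbols.
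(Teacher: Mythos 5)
Your derivation is correct: the Weitzenböck identity $\Delta_H\omega=\nabla^{*}\nabla\omega+\Ric(\omega^{\sharp},\cdot)$ on $1$-forms combined with the trace computation of $\Delta|X|^{2}$ (using $\nabla g=0$ and that $\flat$, $\sharp$ commute with $\nabla$) gives exactly \eqref{general_bochner}, with the signs handled consistently with the convention $\Delta_H f=-\Delta f$. The paper itself merely recalls this classical Bochner--Weitzenböck formula without proof, and your argument is the standard one, so there is nothing to add.
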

Now let $(M, g)$ be a smooth Riemannian manifold and let $\mu$ be a measure on $M$ such that $\di\mu = h^2\di\vol_g$ for a smooth and positive function $h$.  
As usual, we write for a $C^1$-function $u$, that $(\nabla u)^i = g^{ij}\partial_j u$.
We can now define the associated divergence operator $\mathrm{div}_\mu$ by 
\begin{align}\label{weighted_divergence_smooth}
    \mathrm{div}_\mu(X) = \frac{1}{h^2}\mathrm{div}(h^2X) = \frac{1}{h^2}\Big(h^2\partial_iX^{i} + 2h\partial_i h X^{i} + \frac{1}{2}h^2 X^{i}\mathrm{tr}(g^{-1}\partial_ig)\Big)
\end{align}
and the weighted Laplace operator $\Delta_\mu$
\begin{align*}
    \Delta_\mu := \mathrm{div}_\mu \circ \nabla = \frac{1}{h^2} \mathrm{div}(h^2\nabla) = \Delta + \frac{2\langle \nabla h, \nabla \rangle}{h}.
\end{align*}
The divergence operator satisfies
\begin{align}\label{divergence_int_by_parts}
    \int_M f \mathrm{div}_\mu(X) \, \di\mu = -\int_M X(f) \, \di\mu,
\end{align}
for any function $f \in C_c^\infty(M)$ and any smooth vector field $X$.
It then follows that for $u, v \in C^\infty(M)$ such that $u$ or $v$ are compactly supported, we have that 
\begin{align*}
    \int_M u \Delta_\mu v \, \di\mu = -\int_M \langle \nabla u, \nabla v \rangle_g \, \di\mu = \int_M  \Delta_\mu u v \, \di\mu.
\end{align*}
Sometimes it is notationally convenient to write $h^2 = e^{-V}$ for some smooth function $V$.
\begin{definition}\label{smooth_generalized_ricci}
 Let $N \geq n$ and $V, h$ as above. The generalised Bakry-Émery $N$-Ricci tensor is defined by 
 \begin{align*}
     \Ric_{\mu, N} := \Ric + \nabla^2 V - \frac{1}{N-n}\nabla V \otimes \nabla V.
 \end{align*}
In the case $n = N$, we use the convention that the only admissible $V$ is constant. 
 For $N = \infty$, we get 
 \begin{align*}
     \Ric_{\mu, \infty} := \Ric + \nabla^2 V = \Ric + \Hess[-2\log h].
 \end{align*}
\end{definition}
We will now generalise the Hodge star operator and the codifferential to this setting. 
Recall the Hodge star operator $*: \Omega^p(M) \to \Omega^{n-p}(M)$ in a Riemannian manifold without weight, where $*\beta$ is defined via $\alpha \wedge * \beta = \langle \alpha, \beta\rangle \vol_g$ for all $\alpha \in \Omega^p(M)$. This shall motivate our next definition.
\begin{definition}
    The weighted Hodge star operator $*_\mu:\Omega^p(M) \to \Omega^{n-p}(M)$ is defined as $h^2*$, where $*$ denotes the classical Hodge star operator. It satisfies 
    \begin{align*}
         \alpha \wedge *_\mu \beta = \langle \alpha, \beta \rangle_g \mu,
    \end{align*}
    where we interpret the $n$-form $\alpha \wedge *_\mu \beta$ as a measure on $M$. 
\end{definition}
It directly follows that for all $p \in \{0, \ldots,  n\}$ and $\beta \in \Omega^p(M)$, it holds
\begin{align*}
    *_\mu^{-1} \beta = *^{-1} \left(\frac{1}{h^2}\beta\right).
\end{align*}
As in the unweighted case, we get that $\delta_\mu := (-1)^p *_\mu^{-1} d *_\mu$ is adjoint to $d$ in the $L^2(M, \mu)$-inner product, i.e.
\begin{align*}
    \int_M \langle \di\alpha,\beta \rangle_g \, \di\mu &= (-1)^p \int_M \langle \alpha, *_\mu^{-1} d *_\mu \beta\rangle_g \, \di\mu,
\end{align*}
for all compactly supported $(p-1)$-form $\alpha$ and  $p$-form $\beta$.
\begin{definition}
    The above operator $\delta_\mu: \Omega^{\bullet}(M) \to \Omega^{\bullet -1}(M)$ is called the weighted codifferential. We denote by $\Delta_{\mu, H}:\Omega^{\bullet}(M) \to \Omega^{\bullet}(M)$ the weighted Laplace-Beltrami operator, which is defined by 
    \begin{align*}
        \Delta_{\mu, H} := \di\delta_\mu + \delta_\mu d.
    \end{align*}
\end{definition}
We now aim to generalise \eqref{general_bochner} to the weighted case.
In local normal coordinates, we can  compute that 
\begin{align*}
    - \frac{1}{2} \Delta_{\mu, H}|\alpha|^2 =  \frac{1}{2} \Delta_{\mu}|\alpha|^2 = \frac{1}{2} \Delta|\alpha|^2 + \frac{1}{h}\langle \nabla h , \nabla |\alpha|^2 \rangle = -\frac{1}{2} \Delta_H|\alpha|^2 + \frac{2}{h}\partial_j h \alpha_k \partial_j \alpha_k
\end{align*}
and 
\begin{align}\label{weightedriccidef}
    \Ric_{\mu, \infty}(\alpha^\sharp, \alpha^\sharp) = \Ric(\alpha^\sharp, \alpha^\sharp) + \Hess[-2\log h](\alpha^\sharp, \alpha^\sharp) = \Ric(\alpha^\sharp, \alpha^\sharp) - 2\alpha_j \alpha_k \Big(\frac{h\partial^2_{jk}h - \partial_jh\partial_kh}{h^2}\Big).
\end{align}
Finally, we get that
\begin{align*}
    -\langle\alpha, \Delta_{\mu, H} \alpha \rangle &= -\langle\alpha, \Delta_{H} \alpha \rangle +\frac{2}{h}( \partial_jh\partial_j \alpha_k \alpha_k - \partial_kh\partial_j \alpha_k \alpha_j) + \frac{2}{h}\partial_j\alpha_k\partial_kh\alpha_j + \partial_j \left(\frac{2\partial_k h}{h} \right)\alpha_k\alpha_j \\
    &= -\langle\alpha, \Delta_{H} \alpha \rangle +\frac{2}{h}\partial_jh\partial_j \alpha_k \alpha_k - \frac{2}{h}\partial_kh\partial_j \alpha_k \alpha_j + \frac{2}{h}\partial_j\alpha_k\partial_kh\alpha_j + \alpha_k\alpha_j\Hess[2\log h]_{kj} \\
    &= -\langle\alpha, \Delta_{H} \alpha \rangle +\frac{2}{h}\partial_jh\partial_j \alpha_k \alpha_k + \alpha_k\alpha_j\Hess[2\log h]_{kj}.
\end{align*}
Recalling \eqref{general_bochner}, this yields
\begin{proposition}\label{weightedbochnersformula}
    Let $(M, g, \mu)$ be a weighted Riemannian manifold with smooth metric and smooth weight. Then, for every smooth $1$-form $\alpha$, it holds
    \begin{align*}
        - \frac{1}{2} \Delta_{\mu, H}|\alpha|^2 = -\langle\alpha, \Delta_{\mu, H} \alpha \rangle + \Ric_{\mu, \infty}(\alpha^\sharp, \alpha^\sharp) + |\nabla \alpha|^2_{HS}.
    \end{align*}
    In other terms, for every smooth vector field $X$, it holds
    \begin{align}\label{weighted_bochner_vectorfield}
       - \frac{1}{2} \Delta_{\mu, H}|X|^2 = -\langle X, (\Delta_{\mu, H} X^\flat)^\sharp \rangle + \Ric_{\mu, \infty}(X, X) + |\nabla X|^2_{HS}. 
    \end{align}
\end{proposition}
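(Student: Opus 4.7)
The plan is to read off the three local-coordinate expansions already carried out in normal coordinates in the paragraph immediately above the statement, and verify that the correction terms cancel against the classical (unweighted) Bochner identity \eqref{general_bochner}. Abbreviate
$$A := -\tfrac{1}{2}\Delta_{\mu,H}|\alpha|^2, \qquad B := -\langle\alpha,\Delta_{\mu,H}\alpha\rangle, \qquad C := \Ric_{\mu,\infty}(\alpha^\sharp,\alpha^\sharp),$$
and let $A_0, B_0, C_0$ be the same three quantities with $h\equiv 1$. The displays just above the proposition amount to
$$A - A_0 = \tfrac{2}{h}\partial_j h\,\alpha_k\partial_j\alpha_k, \qquad C - C_0 = \Hess[-2\log h](\alpha^\sharp,\alpha^\sharp),$$
$$B - B_0 = \tfrac{2}{h}\partial_j h\,\partial_j\alpha_k\,\alpha_k + \Hess[2\log h](\alpha^\sharp,\alpha^\sharp).$$

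The decisive algebraic observation is that the first-order correction in $A$ is identical to the first summand of $B-B_0$ (since the indices on $\alpha_k\partial_j\alpha_k$ and $\partial_j\alpha_k\,\alpha_k$ are just reordered), so these cancel in $A-B$; and since $\Hess[2\log h] + \Hess[-2\log h] = 0$, the second-order corrections cancel in $A-B-C$. Consequently $A - B - C = A_0 - B_0 - C_0$, and the claim is reduced to the unweighted identity $A_0 - B_0 - C_0 = |\nabla\alpha|^2_{HS}$. The latter is exactly \eqref{general_bochner} transferred from vector fields to the $1$-form $\alpha$ via the musical isomorphism, which is a pointwise isometry (the metric is smooth), so $|\alpha^\sharp|^2 = |\alpha|^2$, $|\nabla\alpha^\sharp|^2_{HS} = |\nabla\alpha|^2_{HS}$, and $\langle(\Delta_H\alpha^\sharp)^\flat,\alpha\rangle = \langle\alpha,\Delta_H\alpha\rangle$. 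This proves the $1$-form identity at the centre of every normal chart, and tensoriality extends it to all of $M$.

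Finally, the vector-field version \eqref{weighted_bochner_vectorfield} would follow by specialising the $1$-form identity to $\alpha := X^\flat$ and again invoking the pointwise isometry of the musical isomorphism to rewrite $|\alpha|^2 = |X|^2$, $|\nabla\alpha|^2_{HS} = |\nabla X|^2_{HS}$, $\alpha^\sharp = X$, and $\langle\alpha,\Delta_{\mu,H}\alpha\rangle = \langle X,(\Delta_{\mu,H}X^\flat)^\sharp\rangle$. The only real bookkeeping is sign- and index-tracking in the three correction expansions, but since those have already been performed in the text preceding the proposition, no genuine analytic difficulty remains in this smooth setting: the proof reduces to the two cancellations identified above.
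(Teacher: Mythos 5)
Your proposal is correct and follows essentially the same route as the paper: the three normal-coordinate expansions you cite are precisely the content of the paper's proof, and the paper likewise concludes by cancelling the weight corrections against each other and invoking the unweighted Bochner identity \eqref{general_bochner} together with the musical isomorphism. Your only addition is to make the cancellation structure ($A-A_0$ versus the first-order part of $B-B_0$, and $\Hess[2\log h]+\Hess[-2\log h]=0$) explicit, which the paper leaves implicit in the phrase ``Recalling \eqref{general_bochner}, this yields''.
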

\subsection{A Bochner formula for weighted manifolds of lower regularity}\label{SubSec:BochnerWRM}
We can generalise Definition \ref{smooth_generalized_ricci} to the distributional case:
\begin{definition}
Let $M$ be a smooth manifold and $N \in [n, \infty]$. Let $g$ be a $C^{0}$-Riemannian metric with $L^2_{\rm{loc}}$  Christoffel symbols and let $h \in C^{0}(M, (0, \infty))\cap W^{1,2}_{\rm{loc}}(M), V \in C^{0}\cap W^{1,2}_{\rm{loc}}(M)$ such that $h^2 = e^{-V}$. We define the distributional Bakry-Émery $N$-Ricci-curvature tensor of the weighted manifold $(M,g,h^{2} \vol_{g})$ as
    \begin{align}\label{defdistririccifiniteN}
       \Ric_{\mu, N} &=  \Ric + \nabla_g^2 V - \frac{1}{N-n}\nabla V \otimes \nabla V \in {\mathcal{D}'}\mathcal{T}^0_2.
    \end{align}
    The case $N= \infty$ can also be rewritten as:
    \begin{align}\label{defdistributionalweightedricci}
    \Ric_{\mu, \infty} := \Ric -2 \Hess[\log h ] \in {\mathcal{D}'}\mathcal{T}^0_2.
\end{align}
\end{definition}
In the following, we will integrate by parts, to get a version of Bochner's formula that only involves first derivatives of $g$ and $h$.  
First we make the following useful observation: Assume $g$ to be a smooth metric on the manifold $M$. 
Then the Christoffel symbols (of the second kind) are given by 
\begin{align*}
    \Gamma^k_{ij}= \frac{1}{2}g^{kl}(\partial_ig_{jl} + \partial_jg_{il} - \partial_lg_{ij}).
\end{align*}
We can recover the Christoffel symbols of the first kind by multiplication with $g$, i.e.
\begin{align}\label{christoffel_first_kind_formula}
    \Gamma_{ij,l}=\frac{1}{2}(\partial_ig_{jl} + \partial_jg_{il} - \partial_lg_{ij})= g_{lk}\Gamma^k_{ij} = g(\nabla_{\partial_i}\partial_j, \partial_l).
\end{align}
Now note that 
\begin{align}\label{partial_derivative_in_terms_of_christoffel}
    (\Gamma_{il,j}+\Gamma_{jl,i})= \partial_lg_{ij}. 
\end{align}
This follows directly from the definition of the Levi-Civita connection, hence the last identity holds in a weak sense for the distributional Levi-Civita connection for $g$ of lower regularity. More precisely:
\begin{lemma}\label{christoffel_l2_implies_dg_l2}
    Let $g$ be a $C^0$-Riemannian metric such that $g$ admits $L^2_{\rm{loc}}$-Christoffel symbols (of the second kind). Then $g\in W^{1,2}_{\rm{loc}}(M)$.  
\end{lemma}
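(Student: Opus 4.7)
The plan is to work locally in a chart and reduce the $W^{1,2}_{\rm{loc}}$ regularity of $g$ to the algebraic identity \eqref{partial_derivative_in_terms_of_christoffel}, which in the smooth case reads $\partial_l g_{ij} = \Gamma_{il,j} + \Gamma_{jl,i}$ with $\Gamma_{ij,l} := g_{lk}\Gamma^k_{ij}$ the Christoffel symbols of the first kind. Once this identity is available distributionally, the conclusion is immediate: the first-kind symbols are products of continuous (hence $L^\infty_{\rm{loc}}$) functions $g_{lk}$ with the $L^2_{\rm{loc}}$ second-kind symbols, so they lie in $L^2_{\rm{loc}}$, and consequently each distribution $\partial_l g_{ij}$ is represented by an $L^2_{\rm{loc}}$ function. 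Globalising via a partition of unity subordinate to a chart cover of $M$ then yields the lemma.

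The way I would make the identity rigorous is to read the hypothesis ``$g$ admits $L^2_{\rm{loc}}$-Christoffel symbols'' as asserting that the distributional Koszul formula
\begin{align*}
2\,\Gamma_{ij,m} \;=\; \partial_i g_{jm} + \partial_j g_{im} - \partial_m g_{ij}
\end{align*}
holds in $\mathcal{D}'$ with $L^2_{\rm{loc}}$ left-hand side (the right-hand side makes sense as a distribution since $g_{ij}\in C^0\subset L^1_{\rm{loc}}$). Write $A_{ijm}$ for the right-hand side. Using the symmetry $g_{jm}=g_{mj}$ one has the one-line algebraic identity $A_{ijm}+A_{imj} = 2\,\partial_i g_{jm}$ as distributions, which when combined with the Koszul formula yields precisely
\begin{align*}
\partial_i g_{jm} \;=\; \Gamma_{ij,m} + \Gamma_{im,j} \quad \text{in } \mathcal{D}'.
\end{align*}
The right-hand side is an $L^2_{\rm{loc}}$ function, so $\partial_i g_{jm}\in L^2_{\rm{loc}}$ and $g_{ij}\in W^{1,2}_{\rm{loc}}$ on the chart.

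If one prefers not to take the distributional Koszul formula as the defining property, the same conclusion can be obtained by mollification: set $g^\varepsilon := g * \rho_\varepsilon$ componentwise in the chart, observe that $g^\varepsilon\to g$ locally uniformly, so by non-degeneracy also $(g^\varepsilon)^{-1}\to g^{-1}$ locally uniformly, and apply \eqref{partial_derivative_in_terms_of_christoffel} to the smooth $g^\varepsilon$. Passing $\varepsilon\downarrow 0$, the left-hand sides converge in $\mathcal{D}'$; the hard part on this route is showing convergence of $\Gamma_{ij,l}(g^\varepsilon)$ to $\Gamma_{ij,l}$ at least in $\mathcal{D}'$, which would ultimately require unpacking the same definitional content as the direct approach. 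For this reason I expect the algebraic inversion of the Koszul relation to be the cleanest route, and the only genuine subtlety is to fix the correct distributional reading of ``admits $L^2_{\rm{loc}}$-Christoffel symbols'' so that the Koszul identity is available a priori.
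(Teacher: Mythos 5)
Your proposal is correct and follows essentially the same route as the paper: multiply the $L^2_{\rm loc}$ second-kind symbols by the continuous metric to get first-kind symbols in $L^2_{\rm loc}$, then use the linearity of the (distributional) Koszul relation to write $\partial_i g_{jm}=\Gamma_{ij,m}+\Gamma_{im,j}$ in $\mathcal{D}'$, which is exactly the paper's identity \eqref{partial_derivative_in_terms_of_christoffel} interpreted weakly. Your explicit algebraic inversion $A_{ijm}+A_{imj}=2\partial_i g_{jm}$ just spells out what the paper compresses into ``integration by parts and the linearity of the Christoffel symbols of the first kind in terms of $g$''.
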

\begin{proof}
    As $g$ is in $C^0$, it follows from \eqref{christoffel_first_kind_formula}, that the Christoffel symbols of the first kind are in $L^2_{\rm{loc}}$ as well. Then using \eqref{partial_derivative_in_terms_of_christoffel} integration by parts and the linearity of the Christoffel symbols of the first kind in terms of $g$, we get that all partial derivatives of $g$ are in $L^2_{\rm{loc}}$. 
\end{proof}

Recall that \eqref{weighted_bochner_vectorfield} holds on a smooth manifold. Using that for functions $\Delta_\mu = - \Delta_{\mu, H}$ and testing with a function $\phi \in C_c^\infty(M)$, we obtain  
\begin{align*}
    \int_M \left( {-\frac{1}{2} \Delta_\mu |X|^2}  -{\langle (\Delta_{\mu, H} X^\flat)^\sharp, X \rangle} + {|\nabla X|^2} + {\Ric_{\mu, \infty}(X, X)}\right)\phi\, \di\mu = 0.
\end{align*}
We will again integrate by parts, where necessary, to spell this out in local coordinates. By potentially using a partition of unity, we can assume that $\phi$ is supported in one coordinate patch. We have that
\begin{align*}
    \int \frac{1}{2}\langle \nabla|X|^2, \nabla \phi \rangle_g \, \di\mu = \int -\frac{1}{2} \Delta_\mu |X|^2\phi\, \di\mu
\end{align*}
and
\begin{align*}
     \int_M \Ric_{\mu, \infty}(X,X)\phi \, \di\mu =& \int X^jX^k(\partial_p\Gamma^p_{jk}-\partial_j\Gamma^p_{pk}+\Gamma^s_{kj}\Gamma^p_{ps}-\Gamma^s_{kp}\Gamma^p_{js}) \phi h^2\sqrt{|g|}\, dx^1\ldots dx^n \\
     &+ 2\int X^jX^k(\partial_jh\partial_kh - h\partial^2_{jk}h+h\partial_sh\Gamma^s_{kj})\phi\sqrt{|g|}dx^1\ldots dx^n \\
     =& \int  X^jX^k (\Gamma^s_{kj}\Gamma^p_{ps}-\Gamma^s_{kp}\Gamma^p_{js}) \phi h^2 \sqrt{|g|}\, dx^1\ldots dx^n \\
     &- \int \Gamma^p_{jk}\partial_p(X^jX^k \phi h^2 \sqrt{|g|})\, dx^1\ldots dx^n +\int \Gamma^p_{pk}\partial_j(X^jX^k \phi h^2\sqrt{|g|})\, dx^1\ldots dx^n \\
     &+ 2\int X^jX^k(\partial_jh\partial_kh+h\partial_sh\Gamma^s_{kj})\phi\sqrt{|g|}dx^1\ldots dx^n \\
     &+ 2\int \partial_jh\partial_k(X^jX^kh\phi\sqrt{|g|})dx^1\ldots dx^n.
\end{align*}
The above holds for any smooth $g$ and $h$.  Using that each function $h \in C^0\cap W^{1,2}_{\rm{loc}}$ and each metric $g \in C^{0}$ with $L^2_{\rm{loc}}$-Christoffel symbols can be approximated  {in the $C^0\cap W^{1,2}_{\rm{loc}}$ topology} by smooth objects, we get:

\begin{proposition}
Let $M$ be a smooth manifold,  $g$  a $C^{0}$-Riemannian metric with $L^2_{\rm{loc}}$  Christoffel symbols and  $h \in C^{0}(M, (0, \infty))\cap W^{1,2}_{\rm{loc}}(M), V \in C^{0}\cap W^{1,2}_{\rm{loc}}(M)$ such that $h^2 = e^{-V}$. 
\\Then, for every smooth vector field $X$, it holds:
\begin{align}
   -\frac{1}{2} \Delta_\mu |X|^2 -\langle (\Delta_H X^\flat)^\sharp, X \rangle + |\nabla X|^2 + \Ric_{\mu, \infty}(X, X)= 0 \quad  \mathrm{in}\ \mathcal{D}'(M). 
\end{align}
\end{proposition}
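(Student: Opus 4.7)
The strategy is the one sketched in the paragraph preceding the statement: reduce to the pointwise smooth Bochner identity, rewrite it in an integrated form whose only derivatives on $g$ and $h$ are of first order, and then pass to the limit along a suitable regularisation. Let $\phi \in C_c^{\infty}(M)$ and, via a partition of unity subordinate to the fixed atlas, assume $\supp\phi \subset U_{\alpha}$ for a single chart. I will show the integral identity
\begin{equation*}
\int_{M}\Big({-}\tfrac{1}{2}\Delta_{\mu}|X|^{2} - \langle(\Delta_{H}X^{\flat})^{\sharp},X\rangle + |\nabla X|^{2} + \Ric_{\mu,\infty}(X,X)\Big)\phi\,\di\mu \;=\; 0
\end{equation*}
as a sum of integrals in local coordinates, each of which makes sense for $g\in C^{0}$ with $\Gamma^{k}_{ij}\in L^{2}_{\rm loc}$ and $h\in C^{0}\cap W^{1,2}_{\rm loc}$.

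First I would write out all four terms in local coordinates exactly as displayed above the proposition, and perform integration by parts in every occurrence of $\partial_{i}\Gamma^{k}_{j\ell}$, $\partial_{ij}^{2}h$, and $\partial_{j}\partial_{k}g_{\ell m}$ (coming from $\Delta_{H}X^{\flat}$ and from the $\partial_{p}\Gamma^{p}_{jk}-\partial_{j}\Gamma^{p}_{pk}$ part of Ricci), moving the derivative onto the product $X^{j}X^{k}\phi h^{2}\sqrt{|g|}$ (or the appropriate analogue). After this rewriting, every summand in the integrand is a product of (i) continuous and locally bounded factors coming from $g_{ij}$, $g^{ij}$, $\sqrt{|g|}$, $h$, (ii) smooth compactly supported factors coming from $X$, $\phi$ and their derivatives, and (iii) at most two factors among the $L^{2}_{\rm loc}$ quantities $\Gamma^{k}_{ij}$, $\partial_{\ell}g_{ij}$, $\partial_{\ell}h$. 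By Lemma \ref{christoffel_l2_implies_dg_l2} and H\"older's inequality, each term is absolutely integrable on $\supp\phi$; in particular the integrated identity is well posed under the regularity assumptions of the proposition.

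Next I would regularise. Using the manifold mollifier \eqref{convolution_on_manifold_def}, define $g_{\e}:=g\ast\rho_{\e}$ and $h_{\e}:=h\ast\rho_{\e}$ (performed chartwise, as in \eqref{convolution_on_manifold_def}). On $\supp\phi$, for $\e$ small, $g_{\e}$ is smooth and positive definite, $h_{\e}$ is smooth and positive, and standard mollification estimates (combined with Lemma \ref{christoffel_l2_implies_dg_l2}) yield $g_{\e}\to g$, $h_{\e}\to h$ uniformly, $\partial g_{\e}\to\partial g$, $\partial h_{\e}\to \partial h$ in $L^{2}(\supp\phi)$, and consequently $\Gamma^{k}_{ij}(g_{\e})\to \Gamma^{k}_{ij}(g)$ in $L^{2}(\supp\phi)$. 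For each $\e>0$, the smooth Bochner identity \eqref{weighted_bochner_vectorfield} holds pointwise with $(g_{\e},h_{\e})$, hence, after the integration by parts described above, the integrated identity holds for $(g_{\e},h_{\e})$.

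Finally, I pass to the limit $\e\downarrow 0$ in every summand. Terms containing at most one $L^{2}_{\rm loc}$ factor converge by uniform convergence of the remaining factors together with $L^{2}$ convergence of the derivative. The genuinely quadratic terms, namely $\Gamma^{s}_{kj}\Gamma^{p}_{ps}-\Gamma^{s}_{kp}\Gamma^{p}_{js}$ from the Ricci part and $\partial_{j}h\,\partial_{k}h$ from the Hessian of $-2\log h$, are products of two $L^{2}_{\rm loc}$ sequences; their convergence in $L^{1}(\supp\phi)$ follows from $L^{2}$ convergence of each factor and a standard Cauchy-Schwarz argument, and then the remaining uniformly convergent bounded factors ($h_{\e}^{2}$, $\sqrt{|g_{\e}|}$, $X$, $\phi$) pass through. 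The main obstacle is precisely this last step: one must arrange the integrations by parts so that no term of the form $(\text{derivative of }\Gamma)\cdot(\text{derivative of }h)$ or $\partial^{2}g$ or $\partial^{2}h$ remains, because such terms are not controlled by the assumed regularity; the computation displayed just before the proposition is tailored to avoid exactly this, and the proof amounts to verifying that the resulting rearrangement depends continuously on $(g,h)$ in the $C^{0}\cap W^{1,2}_{\rm loc}$ topology. Taking the limit yields the integrated identity for $(g,h)$, and since $\phi\in C_c^{\infty}(M)$ is arbitrary this is exactly the claimed distributional equality.
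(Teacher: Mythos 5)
Your proposal is correct and follows essentially the same route as the paper: test the smooth weighted Bochner identity \eqref{weighted_bochner_vectorfield} against $\phi\in C_c^\infty(M)$ in a single chart, integrate by parts so that only first derivatives of $g$ and $h$ (i.e.\ $\Gamma$, $\partial g$, $\partial h$) appear as in \eqref{districcilocalweighted}, and then approximate $(g,h)$ by smooth objects in the $C^0\cap W^{1,2}_{\rm loc}$ topology, passing to the limit with Cauchy--Schwarz on the quadratic $L^2$ terms. The mollification details you spell out are exactly the approximation the paper invokes implicitly, so no further comparison is needed.
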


For what will come next, it is convenient to investigate the other terms of the formula more closely:
\begin{align*}
    \langle (\Delta_H X^\flat)^\sharp, X \rangle = |dX^\flat|^2+|\delta_\mu X^\flat|^2 = |dX^\flat|^2 + |\mathrm{div}_\mu(X)|^2\ \mathrm{in} \ \mathcal{D}'(M).
\end{align*}
Note that if locally $X = X^{i}\partial_{x^{i}}$ then $X^\flat = g_{ij}X^{i}dx^{j}$ and hence
\begin{align*}
    dX^\flat = d(g_{ij}X^{i}dx^{j}) = \partial_k(g_{ij}X^{i})dx^k\wedge dx^j = \sum_{j < k} (\partial_k(g_{ij}X^{i})-\partial_j(g_{ik}X^{i}))dx^k\wedge dx^j. 
\end{align*}
It follows that 
\begin{align*}
    | dX^\flat|^2 = \sum_{j < k}\sum_{l < m} g^{jl}g^{km}(\partial_k(g_{ij}X^{i})-\partial_j(g_{ik}X^{i}))(\partial_m(g_{pl}X^{p})-\partial_l(g_{pm}X^{p})). 
\end{align*}
We also know that 
\begin{align*}
    |\mathrm{div}_\mu(X)|^2 = \frac{1}{h^4}\Big(h^2\partial_iX^{i} + 2h\partial_i h X^{i} + \frac{1}{2}h^2 X^{i}\mathrm{tr}(g^{-1}\partial_ig)\Big)^2.
\end{align*}
Recalling Lemma \ref{christoffel_l2_implies_dg_l2} and arguing by density, we get that the following formula holds for all $g \in C^{0}$ with $L^2_{\rm{loc}}$ Christoffel symbols, all $\phi \in C^{0}_c\cap W^{1,2}_{\rm{loc}}(M)$ and all positive $h \in C^{0}\cap W^{1,2}_{\rm{loc}}(M)$:
\begin{align}\label{districcilocalweighted}
    \int&  X^jX^k (\Gamma^s_{kj}\Gamma^p_{ps}-\Gamma^s_{kp}\Gamma^p_{js}) \phi h^2\sqrt{|g|}\, dx^1\ldots dx^n - \int \Gamma^p_{jk}\partial_p(X^jX^k \phi h^2 \sqrt{|g|})\, dx^1\ldots dx^n \nonumber  \\
    &+\int \Gamma^p_{pk}\partial_j(X^jX^k \phi h^2 \sqrt{|g|})\, dx^1\ldots dx^n \nonumber \\
    &+ 2\int X^jX^k(\partial_jh\partial_kh+h\partial_sh\Gamma^s_{kj})\phi\sqrt{|g|}dx^1\ldots dx^n + 2\int \partial_jh\partial_k(X^jX^kh\phi\sqrt{|g|})dx^1\ldots dx^n \nonumber \\
     =& -\frac{1}{2}\int \langle \nabla|X|^2, \nabla \phi \rangle_g h^2\sqrt{|g|}\, dx^1\ldots dx^n + \int \Big(\partial_iX_i + X_i\frac{1}{2}\tr(g^{-1}\partial_ig ) + \frac{2}{h}\partial_ih X^{i} \Big)^2h^2\phi \sqrt{|g|}\, dx^1\ldots dx^n \nonumber \\
     &+ \int \sum_{j < k}\sum_{l < m} g^{jl}g^{km}(\partial_k(g_{ij}X^{i})-\partial_j(g_{ik}X^{i}))(\partial_m(g_{pl}X^{p})-\partial_l(g_{pm}X^{p})) \phi h^2\sqrt{|g|}\, dx^1\ldots dx^n \nonumber \\
     & - \int  \Big(\sum_{i,j,r,s}(\partial_i X^s + \Gamma^r_{ip}X^p)(\partial_j X^r + \Gamma^s_{jq}X^q) g^{ij}g_{sr} \Big) \phi h^2\sqrt{|g|}\, dx^1\ldots dx^n.
\end{align}

\section{{First order calculus}}\label{Sec:Four}

In this section we recall the notion of Cheeger energy and the associated Sobolev space $W^{1,2}_{w}$, which provide a well settled first order calculus for general metric measure spaces. On a smooth manifold endowed with a continuous metric and a continuous weight on the measure, one can define the first order quadratic Sobolev space $H^2_1$ also using local charts and distributional weak derivatives. After recalling the two approaches, we will show that they actually coincide, in the latter framework. The results will be useful in the establishing the equivalence of distributional and synthetic Ricci curvature lower bounds.

\subsection{The Cheeger energy}
In this subsection we recall a generalised notion of modulus of gradients and Sobolev functions in metric measure spaces introduced by Cheeger in \cite{cheeger1999differentiability} and further analysed by Ambrosio, Gigli and Savaré  \cite{ambrosio2014inventio}. 

Throughout the section, $(X, \sfd)$ is a complete and separable metric space. The slope (or local Lipschitz constant) of a real valued function $F: X \to \R$ is defined by
\begin{align*}
    |DF|(x) := \limsup_{y \to x}\frac{|F(x)-F(y)|}{\sfd(x, y)},
\end{align*}
if $\{x\}$ is not isolated, and $0$ otherwise.

We endow $(X,\sfd)$ with a non-negative $\sigma$-finite Borel  measure, obtaining the metric measure space $(X, \sfd, \mathfrak{m})$.
Throughout the rest of this work, we assume that there exists a bounded Borel Lipschitz map $V: X \to [0, \infty)$ such that  (cf. \cite[Ch.\,4]{ambrosio2014inventio})
\begin{equation}\label{condition_on_measure}
\begin{split}
    V\ \mathrm{is\ bounded\ on\ each\ compact\ set\ } K \subset X \ \mathrm{and}  \\
    \int_X e^{-V^2} \, \di\mathfrak{m} \leq 1. 
    \end{split}
\end{equation}
The metric speed of a curve $\gamma:[0,1] \to X$ is given by  $|\dot{\gamma}_t|:= \lim_{h \to 0}\frac{\sfd(\gamma_{t+h}, \gamma_t)}{|h|}$ {which exists for almost every $t \in [0,1]$}. The curve $\gamma$ is said to be absolutely continuous if  $t \to |\dot{\gamma}_t| \in L^1((0, 1))$. We next recall the notion of test plan and weak upper gradient. We will use the conventions of  \cite{gigli2018nonsmooth}, after \cite{ambrosio2014inventio}.
\begin{definition}
    Let $\boldsymbol{\pi} \in \mathcal{P}(C([0, 1], X))$. We say that $\boldsymbol{\pi}$ is a \emph{test plan} if there exists a constant $C(\boldsymbol{\pi})>0$ such that 
    \begin{align*}
        (e_t)_\#\boldsymbol{\pi} \leq C(\boldsymbol{\pi})\mathfrak{m}\ \mathrm{for \ all \ } t \in [0,1]
    \end{align*}
    and 
    \begin{align*}
        \int \int_0^1 |\dot{\gamma}_t|^2\,\di t\di\boldsymbol{\pi}(\gamma) < \infty. 
    \end{align*}
    We use the convention that if $\gamma$ is not absolutely continuous, then $\int_0^1 |\dot{\gamma}_t|^2\,\di t =\infty$.
\end{definition} 
\smallskip

\begin{definition}
   Given { $f: X \to \R$ } a $\mathfrak{m}$-measurable function, then a $\mathfrak{m}$-measurable function $G: X \to [0, \infty]$ is called a weak upper gradient of $f$, if 
   \begin{align}\label{weakgradient}
       \int|f(\gamma_1)-f(\gamma_0)|\di\boldsymbol{\pi}(\gamma)\,  \leq \int\int_0^1G(\gamma_t)|\dot{\gamma}_t|\, \di t\di\boldsymbol{\pi}(\gamma) < \infty, \quad \mathrm{for \ all \ test \ plans } \ \boldsymbol{\pi}. 
   \end{align}
   We say that $f$ is in the Sobolev class $S^2(X, \sfd,\mathfrak{m})$, if there exists $G \in L^2(\mathfrak{m})$ such that \eqref{weakgradient} holds.
\end{definition}
The discussion in \cite[Prop. 5.9 and Def. 5.11]{ambrosio2014inventio}, shows the existence of a weak upper gradient $|Df|_w$ such that $|Df|_w \leq G$ {$\mathfrak{m}$-a.e.} for all other weak upper gradients $G$. We will call it the \textit{minimal weak upper gradient} of $f$.
\begin{definition}
    The Sobolev space $W^{1,2}_w(X, \sfd, \mathfrak{m})$ is defined as $L^2 \cap S^2(X)$ and becomes a Banach space with the norm
    \begin{align*}
        \norm{f}_{W^{1,2}_w(X)}^2 = \norm{f}_{L^2}^2 + \norm{|Df|_w}_{L^2}^2. 
    \end{align*}
\end{definition}
{ For simplicity, we will from now on write $W^{1,2}_w(X)$ instead of $W^{1,2}_w(X, \sfd, \mathfrak{m})$.}
\begin{definition}
    The Cheeger energy is defined in the class of $\mathfrak{m}$-measurable functions by
    \begin{align*}
        \Ch(f):= \left\{ \begin{array}{ll}
           \frac{1}{2}\int_X |Df|^2_w\, \di\mathfrak{m}  & \mathrm{if}\ f\ \mathrm{has\ a\ weak\ upper\ gradient\ in }\ L^2(X, \mathfrak{m}),  \\
           \infty  & \mathrm{otherwise},
        \end{array}\right.
    \end{align*}
    with proper domain $D(\Ch)= \{f:X \to [0, \infty], \mathfrak{m}\mathrm{-measurable}, \ \Ch(f) < \infty\} = S^2(X, \sfd, \mathfrak{m})$.
\end{definition}
A metric measure space is { called} \emph{infinitesimally Hilbertian} if  $\Ch$ is a quadratic form or, equivalently, if $W^{1,2}_w(X)$ is a Hilbert space \cite{ambrosio2014duke, Gigli15}.
 In this case, we can define the associated Dirichlet form $\mathcal{E}: D(\Ch)^2 \to \R$, via $\mathcal{E}(f, f)= 2\Ch(f)$ \cite[Ch.\;4.3]{ambrosio2014duke}.
\begin{proposition}[\cite{ambrosio2014duke}, Def. 4.12 and Prop. 4.14]\thlabel{carreduchamp}
    For any $f, g \in D(\Ch)$, it holds 
    \begin{align*}
        \mathcal{E}(f, g)= \int_X G(f, g)\,\di\mathfrak{m},
    \end{align*}
    where
    \begin{align*}
        G(f, g)= \lim_{\e \searrow 0} \frac{|D(f+\e g)|_w^2-|Df|_w^2}{2\e} \in L^1(X, \mathfrak{m}).
    \end{align*}
\end{proposition}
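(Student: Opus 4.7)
The plan is to establish pointwise existence of the limit $G(f,g)$ via a convexity argument, to obtain its $L^1$-integrability via explicit pointwise bounds, and to compute its integral directly using the hypothesis that $\Ch$ is a quadratic form. The first step relies on the fact that the minimal weak upper gradient is sub-additive and $1$-homogeneous $\mathfrak{m}$-a.e., i.e.\ $|D(f+g)|_w \leq |Df|_w + |Dg|_w$ and $|D(\lambda f)|_w = |\lambda||Df|_w$ (cf.\ \cite{ambrosio2014inventio}). For fixed $f,g$ these properties imply that the map $\e \mapsto |D(f+\e g)|_w$ is convex and non-negative, and therefore $\e \mapsto |D(f+\e g)|_w^2$ is convex as well. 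The difference quotients
\begin{align*}
    D_\e := \frac{|D(f+\e g)|_w^2 - |Df|_w^2}{2\e}
\end{align*}
are then monotone non-decreasing in $\e > 0$, and hence the limit $G(f, g) := \lim_{\e \downarrow 0} D_\e = \inf_{\e > 0} D_\e$ exists $\mathfrak{m}$-a.e.

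Second, I would establish pointwise $L^1$-bounds. Sub-additivity applied in both directions yields $|Df|_w - \e|Dg|_w \leq |D(f+\e g)|_w \leq |Df|_w + \e|Dg|_w$ $\mathfrak{m}$-a.e., and squaring and rearranging these inequalities gives, for $\e \in (0, 1]$,
\begin{align*}
    -|Df|_w \, |Dg|_w \ \leq \ D_\e \ \leq \ |Df|_w \, |Dg|_w + \tfrac{1}{2}|Dg|_w^2 \quad \mathfrak{m}\text{-a.e.}
\end{align*}
Since $|Df|_w, |Dg|_w \in L^2(\mathfrak{m})$, both bounds lie in $L^1(\mathfrak{m})$; in particular $G(f,g) \in L^1(\mathfrak{m})$.

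Third, by the dominated convergence theorem (with dominating function $|Df|_w \, |Dg|_w + \tfrac{1}{2}|Dg|_w^2$),
\begin{align*}
    \int_X G(f,g) \, \di\mathfrak{m} \ = \ \lim_{\e \downarrow 0} \int_X D_\e \, \di\mathfrak{m} \ = \ \lim_{\e \downarrow 0} \frac{\Ch(f+\e g) - \Ch(f)}{\e}.
\end{align*}
Since $\Ch$ is quadratic, the polarisation $\mathcal{E}(u, v) := \Ch(u+v) - \Ch(u) - \Ch(v)$ is symmetric and bilinear and satisfies $\mathcal{E}(u, u) = 2\Ch(u)$, so
\begin{align*}
    \Ch(f + \e g) \ = \ \tfrac{1}{2}\mathcal{E}(f+\e g, f+\e g) \ = \ \Ch(f) + \e \, \mathcal{E}(f, g) + \e^2 \Ch(g).
\end{align*}
Its right-derivative at $\e = 0$ is $\mathcal{E}(f, g)$, giving the claimed identity $\int_X G(f, g) \, \di\mathfrak{m} = \mathcal{E}(f, g)$.

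The main obstacle will be the pointwise sub-additivity and $1$-homogeneity of the minimal weak upper gradient: the definition of $|Df|_w$ via test plans is inherently non-local (it integrates along curves), and upgrading these properties to $\mathfrak{m}$-a.e.\ pointwise identities requires the minimisation procedure carefully carried out in \cite{ambrosio2014inventio}. Once those pointwise properties are at our disposal, the remainder is a routine combination of convex analysis, dominated convergence, and the explicit quadratic expansion of $\Ch$ granted by its being a quadratic form.
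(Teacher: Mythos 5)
This proposition is stated in the paper without proof, as a direct citation of \cite{ambrosio2014duke} (Def.\ 4.12 and Prop.\ 4.14), and your argument is essentially the standard proof from that reference: monotone difference quotients of the convex map $\e \mapsto |D(f+\e g)|_w^2$ (via $\mathfrak{m}$-a.e.\ sub-additivity and homogeneity of the minimal weak upper gradient), the pointwise domination $|D_\e|\leq |Df|_w\,|Dg|_w+\tfrac12|Dg|_w^2 \in L^1(\mathfrak m)$, dominated convergence, and polarisation of the quadratic form $\Ch$. The proof is correct; the only points left implicit — collecting the $\e$-dependent null sets by working with a countable dense set of $\e$, and upgrading $\Q$-bilinearity of the polarisation to $\R$-bilinearity using convexity and $L^2$-lower semicontinuity of $\Ch$ — are routine.
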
 
By the theory of Dirichlet forms (cf. \cite{bouleau2010dirichlet}), there exists an associated Laplace operator $\Delta: D(\Delta) \to L^2(X, \mathfrak{m})$, given by
\begin{align*}
    -\int_X \Delta f h \, \di\mathfrak{m} = \int_X G(f, h) \, \di\mathfrak{m}. 
\end{align*}
Here $D(\Delta) = \{f \in L^2(\mathfrak{m}): \Delta f\ \mathrm{exists},\ \Delta f \in L^2\}$ is dense in $L^2$. Moreover, it induces a linear semi-group $(H_t)_{t \geq 0}$, $H_t: L^2(X) \to L^2(X)$ such that $H_{t+s} = H_sH_t$, for all $f \in L^2$, we have that $\lim_{t \to 0} H_t f = f$, and for all $t > 0$, we have that  $H_tf \in D(\Delta)$ and $\frac{\di}{\di t} H_tf = \Delta H_tf$.
$(H_t)_{t \geq 0}$ is called \emph{heat flow} semi-group. Write $f_t$ for $H_t(f_0)$. By \cite[(4.26)]{ambrosio2014inventio}, we have that 
\begin{align*}
    \Ch(f_t) \leq \inf\left\{ \Ch(g) + \frac{1}{2t}\int_X|g-f_0|^2\,\di\mathfrak{m}:\ g \in D(\Ch) \right\}.
\end{align*}
Another useful fact is the maximum principle for the heat flow: \cite[Prop. 2.14]{ambrosio2014duke}: if $f\in L^2$, $f \leq C$, ($f \geq C$) $\mathfrak{m}$-a.e. then $H_tf \leq C$ ($H_tf \geq C$) $\mathfrak{m}$-a.e. in $X$.

\subsection{Optimal transport and curvature dimension conditions}
In this subsection, we recall the notion of an $\rcd(K, \infty)$-space, starting by some elements of optimal transport theory.  Throughout the section, $(X,\sfd)$ is a complete and separable metric space.
 Denote by $\mathcal{P}(X)$ the set of Borel probability measures on $X$ and  $\mathcal{P}_2(X)$ the  space of probability measures with finite second moment:
 \begin{align*}
    \mathcal{P}_2(X):= \left\{ \mu \in \mathcal{P}(X): \ \int d^2(x, x_0)\, \di\mu(x) < \infty\ \mathrm{for \ some\ } x_0 \in X\right\},
\end{align*}
{Let $(Y, \sfd_Y)$ be another complete and separable metric space.} 
Given a Borel map $T:X \to Y$, $\mu \in \mathcal{P}(X)$, the measure $T_\#\mu$, called the push-forward of $\mu$ by $T$, is defined by 
\begin{align*}
    T_\#\mu(E) = \mu(T^{-1}(E)).
\end{align*}
For $\mu \in \mathcal{P}(X)$ and $\nu \in \mathcal{P}(Y)$, we define the set $\Gamma(\mu, \nu)$ as the set of all transport plans $\gamma \in \mathcal{P}(X \times Y)$, i.e. the set of Borel probability measures on $X \times Y$ such that $\pi^X_\#\gamma= \mu$ and $\pi^Y_\#\gamma= \nu$. 
Here $\pi^X$ and $\pi^Y$ denote the natural projections from $X \times Y$ onto $X$ and $Y$ respectively.

For $\mu, \nu \in \mathcal{P}_2(X)$, the quadratic Wasserstein distance $W_2$ is defined as
    \begin{align*}
        W_2(\mu, \nu):= \sqrt{\inf_{\gamma \in \Gamma(\mu, \nu)}\int d^2(x, y)\,\di\gamma(x, y)}.
    \end{align*}
The Wasserstein distance $W_2$ is a distance on $\mathcal{P}_2(X)$ and turns it into a complete { and separable} metric space. Moreover, if $(X, \sfd)$ is a geodesic space, then so is $(\mathcal{P}_2(X), W_2)$. \\
Given a metric measure space $(X, \sfd, \mathfrak{m})$, we denote by $\mathcal{P}^a_2(X) \subset \mathcal{P}_2(X)$ the set of measures with finite second moment that are absolutely continuous with respect to $\mathfrak{m}$. \\
In order to introduce curvature dimension conditions, we will need the following distortion coefficients. For  $\kappa \in \R$ and $\theta \geq 0$,  let
\begin{align*}
    \mathfrak{s}_\kappa(\theta):= \left\{ \begin{array}{ll}
           \frac{1}{\sqrt{\kappa}}\sin(\sqrt{\kappa} \theta) &\mathrm{if}\  \kappa > 0,  \\
           0 &\mathrm{if}\ \kappa = 0, \\
           \frac{1}{\sqrt{-\kappa}}\sinh(\sqrt{-\kappa} \theta) &\mathrm{if}\ \kappa < 0.
        \end{array}\right.
\end{align*}
Moreover, for $t \in [0,1]$, set
\begin{align*}
    \sigma^{(t)}_\kappa(\theta) := \left\{ \begin{array}{ll}
           \frac{\mathfrak{s}_\kappa(t\theta)}{\mathfrak{s}_\kappa(\theta)} &\mathrm{if}\ \kappa\theta^2 \neq 0\ \mathrm{and}\ \kappa\theta^2 < \pi^2,  \\
           t  &\mathrm{if}\ \kappa\theta^2 = 0, \\
           +\infty &\mathrm{if}\ \kappa\theta^2 \geq \pi^2 .
           \end{array}\right.
\end{align*}
Define the convex and continuous function $ U_\infty$ on $[0, \infty)$ as 
\begin{align*}
    U_\infty(z):= z \log(z). 
\end{align*}
The Boltzmann-Shannon entropy functional $\mathcal{E}_\infty: \mathcal{P}(X)\to \R \cup \{+\infty\}$ is defined by
\begin{align*}
    \mathcal{E}_\infty(\mu) := \left\{ \begin{array}{ll}
           \int_X U_\infty(\rho)\,\di\mathfrak{m} , &\mathrm{if\ } \mu \ll  \mathfrak{m}, \; \mathrm{with}\; \mu = \rho \mathfrak{m}  {\; \mathrm{and} \; U_\infty(\rho)\in L^1(X,  \mathfrak{m})}   \\
           \infty, \quad \mathrm{else}.
           \end{array}\right.
\end{align*}

Next we recall the synthetic notion of Ricci curvature bounded below for a metric measure space, pioneered by Sturm \cite{sturm2006geometryI, sturm2006geometryII} and Lott-Villani \cite{lott2009ricci}. For the finite dimensional case, we will use the (reduced) curvature dimension condition of Bacher-Sturm \cite{bacher2010localization}.
\begin{definition}
    We say that a metric measure space $(X, \sfd, \mathfrak{m})$ has Ricci curvature bounded from below by $K \in \R$ provided the functional $\mathcal{E}_\infty: \mathcal{P}(X)\to \R \cup \{+\infty\}$ is (weakly) $K$-geodesically convex on $(\mathcal{P}^a_2(X), W_2)$, that means that for any $\mu_0, \mu_1 \in \mathcal{P}^a_2(X)$, there exists a Wasserstein geodesic $(\mu_t)_{t \in [0, 1]}$ such that for all $t \in [0,1]$,
    \begin{align*}
        \mathcal{E}_\infty(\mu_t) \leq (1-t)\mathcal{E}_\infty(\mu_0) + t\mathcal{E}_\infty(\mu_1) - \frac{K}{2} t(1-t)W_2^2(\mu_0, \mu_1).
    \end{align*}
    In this case we say that $(X, \sfd, \mathfrak{m})$ is a $\mathsf{CD}(K, \infty)$-space.
\end{definition}
\smallskip

\begin{definition}
    Given two numbers $K \in \R$ and $N \in [1, \infty)$, we say that a metric measure space $(X,\sfd,  \mathfrak{m})$ satisfies the (reduced) curvature-dimension condition $\mathsf{CD}^{*}(K, N)$ if for each pair $\mu_0, \mu_1 \in \mathcal{P}^a_2(X)$ with bounded supports there exists an optimal transport coupling $\boldsymbol{\gamma}$ and a Wasserstein geodesic $(\mu_t = \rho_t \mathfrak{m})_{t \in [0,1]} \subset \mathcal{P}^a_2(X)$ connecting $\mu_0 = \rho_0 \mathfrak{m}$ and $\mu_1=\rho_1 \mathfrak{m}$ such that for all $t \in [0,1]$, $N' \geq N$, it holds 
    \begin{align*}
        \int_{X} \rho_t^{-\frac{1}{N'}} \, \di\mu_t \geq \int_{X \times X} \, \Big[ \sigma^{(1-t)}_{K/N'}(\sfd(x_0, x_1))\rho_0(x_0)^{-\frac{1}{N'}} +\sigma^{(t)}_{K/N'}(\sfd(x_0, x_1))\rho_1(x_1)^{-\frac{1}{N'}} \Big] \di\boldsymbol{\gamma}(x_0, x_1).
    \end{align*}
    \end{definition}
  In order to single out the ``Riemannian'' like structures out of the ``possibly Finslerian'' $\mathsf{CD}$ spaces, Ambrosio-Gigli-Savar\'e \cite{ambrosio2014duke} (see also \cite{AGMR}) in the $N=\infty$ case, and Gigli  \cite{Gigli15} in the $N<\infty$ case  introduced the Riemannian curvature-dimension condition. 
 
\begin{definition}
      We say that $(X, \sfd, \mathfrak{m})$ satisfies the Riemannian curvature-dimension condition $\rcd(K, \infty)$ if it is infinitesimally Hilbertian and satisfies the $\mathsf{CD}(K, \infty)$-condition. \\
    For $N \in [1, \infty)$, we say that $(X, \sfd, \mathfrak{m})$ satisfies the Riemannian curvature-dimension condition $\rcd^{*}(K, N)$ if it is infinitesimally Hilbertian and satisfies the $\mathsf{CD}^{*}(K, N)$-condition. 
\end{definition}

{
\begin{remark}\label{rem:RCD*=RCD}
    In the case of weighted Riemannian manifolds with continuous metric and continuous weight, the resulting measure will always be $\sigma$-finite in which case the $\rcd^*$ and $\rcd$ condition coincide, see \cite{CavallettiMilman21, Li-AMPA}.
\end{remark}
}

Let us recall the following useful results:
\begin{lemma}[\cite{erbar2015equivalence}, Lem.\;3.2 and Thm.\;3.17]\label{implications_of_cd_conditions}
    If $(X, \sfd, \mathfrak{m})$ satisfies the $\mathrm{RCD}^{*}(K, N)$-condition then it also satisfies the $\mathrm{RCD}^{*}(K', N')$-condition for any $K' \leq K$ and $N'\geq N$. Moreover, it satisfies the $\mathrm{RCD}(K, \infty)$-condition. 
\end{lemma}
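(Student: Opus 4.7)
The proof splits naturally into the two monotonicity claims and the upgrade to the $\rcd(K,\infty)$ condition. Monotonicity in $N$ is almost tautological: by definition the $\cd^{*}(K,N)$-inequality holds for every $N' \geq N$, so if $N'' \geq N$ the same Wasserstein geodesic and optimal coupling simultaneously witness $\cd^{*}(K,N'')$. For monotonicity in $K$ the plan is to exploit the pointwise monotonicity $\kappa \mapsto \sigma^{(t)}_\kappa(\theta)$, which is nonincreasing on $\{\kappa : \kappa\theta^{2} < \pi^{2}\}$ and follows from a direct computation with the explicit trigonometric/hyperbolic formulas for $\mathfrak{s}_\kappa$. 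Thus $K' \leq K$ and $N' \geq N$ give $\sigma^{(t)}_{K'/N'}(\sfd) \geq \sigma^{(t)}_{K/N'}(\sfd)$ $\boldsymbol{\gamma}$-a.e., so the right-hand side of the defining inequality only grows when $K$ is replaced by $K'$, and $\cd^{*}(K',N)$ is witnessed by the same geodesic and coupling.

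The substantive implication is $\rcd^{*}(K,N) \Rightarrow \rcd(K,\infty)$. Infinitesimal Hilbertianity is inherited by definition, so I only need to prove weak $K$-geodesic convexity of $\mathcal{E}_\infty$ on $\mathcal{P}_2^a(X)$. Fixing $\mu_0, \mu_1 \in \mathcal{P}_2^a(X)$ with bounded supports and letting $(\mu_t = \rho_t\mathfrak{m})_{t \in [0,1]}$ and $\boldsymbol\gamma$ witness the $\cd^{*}(K,N)$-inequality for all admissible $N'$, the plan is to take $N' \to \infty$. Using the pointwise asymptotics
\[
\rho^{-1/N'} = 1 - \frac{\log\rho}{N'} + O(N'^{-2}), \qquad \sigma^{(t)}_{K/N'}(\theta) = t + \frac{K\theta^{2}}{6N'}\, t(1-t^{2}) + O(N'^{-2}),
\]
which in particular give $\sigma^{(1-t)}_{K/N'}(\theta) + \sigma^{(t)}_{K/N'}(\theta) = 1 + \tfrac{K\theta^{2}}{2N'}\, t(1-t) + O(N'^{-2})$, I would multiply both sides of the $\cd^{*}$-inequality by $-N'$, add $N'$, and pass to the limit. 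The left-hand side converges to $\mathcal{E}_\infty(\mu_t) = \int \rho_t \log \rho_t\,\di\mathfrak{m}$ (since $N'\rho(1-\rho^{-1/N'}) \to \rho\log\rho$), while the right-hand side converges to $(1-t)\mathcal{E}_\infty(\mu_0) + t\mathcal{E}_\infty(\mu_1) - \tfrac{K}{2}\, t(1-t) W_2^{2}(\mu_0,\mu_1)$ (using $\int \sfd^{2}\,\di\boldsymbol\gamma = W_2^{2}$ and the marginal property of $\boldsymbol\gamma$), which is exactly the $K$-convexity along this geodesic.

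The main obstacle is promoting these pointwise expansions to a rigorous limit. One needs uniform $N'$-integrability of $N'\rho_t(1-\rho_t^{-1/N'})$ against $\mathfrak{m}$ and of the distorted integrands against $\boldsymbol\gamma$. I would handle this by controlling the negative part of the entropy via the growth condition \eqref{condition_on_measure} applied to the bounded-support intermediate measures, combined with a truncation argument that treats $\{\rho \leq \delta\}$ and $\{\rho \geq M\}$ separately and passes to $N' \to \infty$ using dominated convergence on the bulk. The distortion coefficients, for $\sfd$ bounded on the support of $\boldsymbol\gamma$, are uniformly bounded in $N'$ once $|K|\sfd^{2}/N' \leq 1$, which disposes of the $\boldsymbol\gamma$-side. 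Finally, the extension from bounded-support $\mu_0, \mu_1$ to general $\mathcal{P}_2^a(X)$ is a standard approximation argument using narrow lower semicontinuity of $\mathcal{E}_\infty$ and stability of $W_2$-geodesics.
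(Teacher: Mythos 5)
The paper does not prove this lemma at all: it is imported from Erbar--Kuwada--Sturm, whose route goes through the entropic condition $\mathsf{CD}^e(K,N)$ ((K,N)-convexity of $\mathcal{E}_\infty$) and the equivalence $\rcd^{*}(K,N)\Leftrightarrow\rcd^e(K,N)$, which in turn uses essential non-branching. Your proposal is a genuinely different, more elementary route: work directly with the paper's definition of $\cd^{*}(K,N)$ (which fixes one geodesic and one coupling serving all $N'\geq N$) and send $N'\to\infty$. That core idea is sound, and your expansions are correct (indeed $(1-t)\bigl(1-(1-t)^2\bigr)+t(1-t^2)=3t(1-t)$, giving the coefficient $\tfrac{K\theta^2}{2N'}t(1-t)$). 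For the integrability step, a cleaner device than truncation is available: for fixed $r>0$ the quantity $N'r\bigl(1-r^{-1/N'}\bigr)$ is nondecreasing in $N'$ and increases to $r\log r$, so the left-hand side converges by monotone convergence with the $N'=N$ term as integrable minorant (bounded supports plus finiteness of $\mathfrak m$ on bounded sets); on the coupling side, $\sfd$ is bounded on $\supp\boldsymbol\gamma$ and $\rho_i^{-1/N'}\leq\max\bigl(1,\rho_i^{-1/N}\bigr)$ is $\boldsymbol\gamma$-integrable, so dominated convergence applies. The extension from bounded-support pairs to all of $\mathcal{P}_2^a(X)$ is genuinely needed (the paper's $\cd(K,\infty)$ quantifies over all of $\mathcal{P}_2^a$), and the lower semicontinuity of $\mathcal{E}_\infty$ you invoke does use \eqref{condition_on_measure}; infinitesimal Hilbertianity carries over trivially. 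All of this is standard and fixable, so the $N'\to\infty$ part is acceptable as a sketch.

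There is, however, a concrete error in the monotonicity-in-$K$ step. The map $\kappa\mapsto\sigma^{(t)}_{\kappa}(\theta)$ is \emph{nondecreasing}, not nonincreasing: for $\kappa\to-\infty$ one has $\sigma^{(t)}_{\kappa}(\theta)=\sinh(t\sqrt{-\kappa}\,\theta)/\sinh(\sqrt{-\kappa}\,\theta)\to 0$, it equals $t$ at $\kappa=0$, and it blows up as $\kappa\theta^2\nearrow\pi^2$. Consequently $K'\leq K$ gives $\sigma^{(t)}_{K'/N'}\leq\sigma^{(t)}_{K/N'}$, and it is precisely this inequality (together with $\rho_i^{-1/N'}\geq 0$) that makes the right-hand side of the defining inequality \emph{decrease}, so that the same geodesic and coupling witness $\cd^{*}(K',N)$. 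As written, your argument claims the right-hand side ``only grows'' and still concludes the inequality, which is logically backwards: an estimate with a larger right-hand side does not follow from the given one. The conclusion is true and the fix is only to reverse the stated monotonicity, but the step as formulated would fail. The monotonicity in $N$ is indeed immediate from the paper's formulation of $\cd^{*}$, as you say.
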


\begin{theorem}[\cite{ambrosio2014duke}, Thm. 6.2]\thlabel{linftyboundonderivative}
   Let $(X, \sfd,\mathfrak{m})$ be a $\rcd(K, \infty)$ space and $f \in W^{1,2}_w(X)$. Then
   \begin{align*}
       |D(H_tf)|_w^2 \leq e^{-2Kt}(H_t|Df|_w^2).
   \end{align*}
\end{theorem}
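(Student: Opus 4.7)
The plan is to derive this pointwise gradient estimate by combining the identification of the heat flow as a Wasserstein gradient flow with Kuwada's duality principle, both central pillars of the $\rcd$ theory.

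First I would use that under the $\rcd(K,\infty)$ hypothesis, the Cheeger energy is quadratic (infinitesimal Hilbertianity), so the associated heat semigroup $(H_t)_{t\geq 0}$ is linear, and by the Ambrosio--Gigli--Savar\'e theory it coincides with the Wasserstein gradient flow of the Boltzmann--Shannon entropy $\mathcal{E}_\infty$ on $(\mathcal{P}_2(X),W_2)$. The $K$-geodesic convexity of $\mathcal{E}_\infty$ supplied by the $\cd(K,\infty)$ part of the definition, combined with this identification, upgrades the gradient-flow description to a stronger $\mathrm{EVI}_K$-flow.

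Second, the $\mathrm{EVI}_K$ property implies, by standard metric gradient flow theory, the exponential contractivity of $H_t$ in the Wasserstein distance:
\[
W_2(H_t\mu, H_t\nu) \leq e^{-Kt}\, W_2(\mu,\nu), \qquad \forall\,\mu,\nu\in\mathcal{P}_2(X).
\]
Third, I would invoke Kuwada's duality, which links $W_2$-contractivity of the heat flow acting on probability measures with a quadratic pointwise gradient estimate for the (dual) heat flow acting on $L^2$-functions. Applied in the present $\rcd$ setting, the contraction above translates exactly into
\[
|D(H_t f)|_w^2 \leq e^{-2Kt}\, H_t\!\left(|Df|_w^2\right) \qquad \mathfrak{m}\text{-a.e.}
\]

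The main obstacle is the passage through Kuwada's duality in the \emph{quadratic} form: deriving a pointwise bound on $|D(H_t f)|_w^2$, as opposed to the weaker linear bound $|D(H_t f)|_w \leq e^{-Kt}\,H_t|Df|_w$, from a $W_2$-contraction requires genuinely using the quadratic (Hilbertian) structure of the Cheeger energy, which is where infinitesimal Hilbertianity becomes indispensable. A secondary technical point is the identification of the minimal weak upper gradient $|Df|_w$ with the slope-type quantities that appear in the dual description of $W_2$; this identification ultimately relies on the density of Lipschitz functions in $W^{1,2}_w(X,\sfd,\mathfrak{m})$ and on the properties of the Hopf--Lax semigroup on geodesic metric measure spaces established in \cite{ambrosio2014inventio}.
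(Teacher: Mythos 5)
This statement is not proved in the paper at all: it is imported verbatim as \thref{linftyboundonderivative} with a citation to \cite{ambrosio2014duke}, Thm.~6.2, so the only meaningful comparison is with the proof in that reference --- and your outline reproduces it faithfully: $\rcd(K,\infty)$ gives the $\mathrm{EVI}_K$ property of the heat flow (identified with the Wasserstein gradient flow of the entropy), $\mathrm{EVI}_K$ gives the contraction $W_2(H_t\mu,H_t\nu)\leq e^{-Kt}W_2(\mu,\nu)$, and Kuwada's duality with conjugate exponents $p=q=2$ turns this into precisely the stated estimate $|D(H_tf)|_w^2\leq e^{-2Kt}H_t(|Df|_w^2)$. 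One correction to your commentary: the quadratic estimate is not a strengthening of the ``linear'' bound $|D(H_tf)|_w\leq e^{-Kt}H_t|Df|_w$; by Jensen's inequality $(H_t|Df|_w)^2\leq H_t(|Df|_w^2)$, so the linear bound is the stronger one (it corresponds, in Kuwada's scheme, to a $W_\infty$-type contraction, not to $W_2$). Consequently no Hilbertian ``upgrade'' is needed at the duality step --- the $p=2$ duality yields the quadratic form directly; infinitesimal Hilbertianity enters earlier, to make $H_t$ linear and to run the $\mathrm{EVI}_K$ identification. The genuinely delicate points are the ones you list last: the Hopf--Lax semigroup as a subsolution of Hamilton--Jacobi, the identification of slopes with minimal weak upper gradients, and the construction of the dual semigroup on measures, all of which are supplied by \cite{ambrosio2014inventio} and \cite{ambrosio2014duke} in this generality.
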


\begin{remark}\label{rem:CDVol}
By  \cite[Thm.\;4.24]{sturm2006geometryI}, the condition \eqref{condition_on_measure} is automatically satisfied in every $\cd(K, \infty)$ space, by choosing the function  $V:X \to \R$, $V(x) :=C \sfd(x, x_0)$ for a suitable $C >0$ and any  base point $x_0 \in X$.
\end{remark}

\subsection{\texorpdfstring{$L^p$}{TEXT} and Sobolev spaces on weighted Riemannian manifolds of low regularity}

In this subsection, we recall the language of Sobolev spaces on manifolds. We will summarise facts from \cite[Ch.\;2]{hebey1996sobolev}, adapting to the case of weighted manifolds of low regularity. 
Let $(M,g)$ be a Riemannian manifold (with $g$ a smooth metric for now). Let $h:M \to (0, \infty)$ be a $C^{0}$-function and define the measure $\mu$ via $\di\mu := h^2\di\vol_g$ . 
For an integer $k \geq 0$ and $u \in C^\infty(M)$ define $\nabla^k_cu$ to be the $k$-th covariant derivative ($\nabla_c^0u:= u$). Notice that $\nabla^k_cu$ is a $(0, k)$-tensor. We will only be interested in the cases $k \leq 2$, so we write down the explicit formulas for the covariant derivatives in local coordinates:
\begin{align*}
    (\nabla^1_c u)_i &= (du)_i = \partial_i u \quad \mathrm{and}  \quad  (\nabla^2_c u)_{ij} = \partial_{ij}u - \Gamma^k_{ij}\partial_ku.
\end{align*}
By definition, we have that 
\begin{align*}
    |\nabla^k_cu|^2= g^{i_1j_1}\ldots g^{i_kj_k} (\nabla^k_cu)_{i_1\ldots i_k}(\nabla^k_cu)_{j_1\ldots j_k},
\end{align*}
where $i_1 < \ldots < i_k$ and $j_1< \ldots < j_k$.
Let $p \in [1, \infty)$. Denote by $C^p_k(M, \mu)$ the space of smooth functions $u \in C^\infty(M)$ such that  $|\nabla^j_cu| \in L^p(M, \mu)$ for any $j=0, \ldots k$.
\begin{definition}
    The Sobolev space $H^p_k(M, \mu)$ is the completion of $C^p_k(M, \mu)$ with respect to the norm
    \begin{align*}
        \norm{u}_{H^p_k(M, \mu)}= \sum_{j=0}^k\left(\int_M |\nabla^j_cu|^p \, \di\mu\right)^{\frac{1}{p}}.
    \end{align*}
\end{definition}
\begin{proposition}
    $H^2_k(M, \mu)$ is a Hilbert space with the inner product
    \begin{align*}
        \langle u, v\rangle = \sum_{m=0}^k \int_M\left( g^{i_1j_1}\ldots g^{i_mj_m} (\nabla^m_cu)_{i_1\ldots i_m}(\nabla^m_cv)_{j_1\ldots j_m}\right)h^2\,\di\vol_g.
    \end{align*}
\end{proposition}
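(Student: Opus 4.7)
The space $H^2_k(M,\mu)$ is, by definition, the completion of $C^2_k(M,\mu)$ under the norm $\|u\|_{H^2_k}=\sum_{m=0}^k(\int_M|\nabla^m_c u|^2\,\di\mu)^{1/2}$, and hence is automatically a Banach space. The task is to exhibit it as a Hilbert space via the claimed formula. The plan is to verify first that $\langle\cdot,\cdot\rangle$ is a symmetric positive definite bilinear form on $C^2_k(M,\mu)$, then to show that the induced norm $\|u\|_\star := \langle u, u\rangle^{1/2}$ is equivalent to $\|\cdot\|_{H^2_k}$ — so that the two completions coincide as sets with equivalent topologies — and finally to extend the inner product to the completion by uniform continuity.

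For the first step, the pointwise contraction $g^{i_1 j_1}\cdots g^{i_m j_m}(\nabla^m_c u)_{i_1\ldots i_m}(\nabla^m_c v)_{j_1\ldots j_m}$ is the fibrewise inner product on the tensor bundle $T^0_m(M)$ induced by the positive definite metric $g$; consequently it is pointwise symmetric, bilinear and positive, and satisfies a pointwise Cauchy–Schwarz inequality. Integration against $\di\mu$ is then finite for $u,v\in C^2_k(M,\mu)$, yielding a well-defined symmetric bilinear form with $\langle u,u\rangle \ge \int_M u^2\,\di\mu$; this forces positive definiteness, noting that a smooth function vanishing $\mu$-a.e.\ is identically zero since $h>0$ and $g$ is continuous, so $\mu$ has full support.

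For the norm equivalence, setting $a_m:=\|\nabla^m_c u\|_{L^2(M,\mu)}$, the elementary inequality $(\sum_{m=0}^k a_m^2)^{1/2}\le \sum_{m=0}^k a_m\le \sqrt{k+1}\,(\sum_{m=0}^k a_m^2)^{1/2}$ in $\R^{k+1}$ gives $\|u\|_\star \le \|u\|_{H^2_k}\le \sqrt{k+1}\,\|u\|_\star$ on $C^2_k(M,\mu)$. Hence both norms produce the same Cauchy sequences and the abstract completions agree as topological vector spaces. To conclude, I would extend $\langle\cdot,\cdot\rangle$ to $H^2_k(M,\mu)$ by continuity: the Cauchy–Schwarz bound $|\langle u,v\rangle|\le\|u\|_\star\|v\|_\star$ on $C^2_k$ implies that for any Cauchy sequences $(u_n),(v_n)$ the scalar sequence $(\langle u_n,v_n\rangle)$ is Cauchy in $\R$ and its limit depends only on the equivalence classes; the extended form inherits symmetry, bilinearity, and $\langle u,u\rangle=\|u\|_\star^2$, hence positive definiteness. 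There is no substantial obstacle in the argument — the only point worth flagging is that for $u$ in the completion the covariant derivatives $\nabla^m_c u$ are, a priori, defined only through this extension procedure; a concrete identification with a space of functions with distributional derivatives is addressed in the subsequent parts of Sec.~\ref{Sec:Four}.
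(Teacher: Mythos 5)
Your argument is correct, and in fact the paper offers no proof of this proposition at all: it is recorded as a standard fact (following the Sobolev-space setup of Hebey), with the subsequent remark ``the inner product induces an equivalent norm'' presupposing exactly the completeness-plus-norm-equivalence argument you spell out, so your route coincides with the implicit one. One small point worth noting: you silently integrate against $\di\mu$, i.e.\ you read the weight ``$h^2\,\di\mu$'' in the statement as $h^2\,\di\vol_g=\di\mu$; this is the right reading (taken literally, the extra factor $h^2$ would give $h^4\,\di\vol_g$ and the norm equivalence could fail when $h$ is not bounded away from $0$ and $\infty$), and it matches the paper's intention as evidenced by the remark on norm equivalence immediately after the proposition.
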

Note that the inner product induces an equivalent norm on $H^2_k(M, \mu)$. In the case $p=2$, we will work with the norm induced by the inner product. 
We can generalise these definitions to the following:
\begin{definition}
    Let $s, r \geq 0$ and $p \geq 1$. Define $C^p_k(T^r_s(M), \mu)$ the space of smooth sections $u \in \mathcal{T}^r_s(M)$ such that $|\nabla_c^ju|_g \in L^p(M, \mu)$ for $j=0, \ldots, k$. The Sobolev space $H^p_k(T^r_s(M), \mu)$ is defined to be the closure of  $C^p_k(T^r_s(M), \mu)$ under the norm
    \begin{align*}
        \norm{u}_{H^p_k(T^r_s(M), \mu)} := \sum_{j=0}^k \left(\int_M |\nabla^j_cu|^p\, \di\mu\right)^{\frac{1}{p}}.
    \end{align*}
    Moreover, for all such $s,r$, we define  the space $\mathring{H}^p_k(T^r_s(M), \mu)$ as the closure of the set of smooth compactly supported $(r,s)$-tensors under the $H^p_k(T^r_s(M), \mu)$-norm.
\end{definition}
We notice that these definitions make sense for $k \leq 2$ {and $p=2$} if $g \in C^{0}(M)$ with $L^2_{\rm{loc}}$-Christoffel symbols.
The following criterion for weak convergence will be useful later.
We { will repeatedly make use of} the following elementary result.
\begin{lemma}\label{ccinfty_dense_in_lp}
   Let $M$ be a smooth manifold, $g$ be a continuous Riemannian metric and $h$ a continuous positive function. Moreover let $p \in [1, \infty)$. Define the measure $\mu$ by $\mu := h^2\di\vol_g$. Then $C_c^\infty(M)$ is dense in $L^p(M, \mu)$.
\end{lemma}
\begin{lemma}\thlabel{weakconv}
   Let $(M, g, \mu)$ be a weighted Riemannian manifold with a $C^{0}$-Riemannian metric $g$ with $L^2_{\rm{loc}}$-Christoffel symbols and endowed with a measure $\mu$ defined via $\di\mu = h^2\di\vol_g$ for a function $h \in C^{0}(M, (0, \infty))$. Let $(V_\alpha, \psi_\alpha)_\alpha$ be an atlas for $M$ such that $\overline{V_\alpha}$ is compact, for all $\alpha$. 
 \\  Let $f_i$ be a sequence in $L^2(M, \mu)$ and  let $f \in L^2(M, \mu)$.
    \\Then $f_i  \rightharpoonup f$  in $L^2(M, \mu)$ if and only if $(\psi_\alpha)_*f_i \rightharpoonup (\psi_\alpha)_*f$ in $L^2(\psi_\alpha(V_\alpha), (\psi_\alpha)_\#\mu)$, for all $\alpha$. \\
    If $f, f_i \in H^2_k(M, \mu)$ {($k \leq 1$)} and $\partial^\beta f_i \rightharpoonup \partial^\beta f$ in $L^2(\psi_\alpha(V_\alpha), (\psi_\alpha)_\#\mu)$ for all $\alpha$ and all $\beta \in \N^n$, $|\beta| \leq k$,  then $f_i \rightharpoonup f$ in $H^2_k(M, \mu)$. 
\end{lemma}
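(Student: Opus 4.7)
The plan is to reduce both statements to their Euclidean-chart counterparts by pulling test functions through the coordinate maps, gluing chart-wise information via a smooth partition of unity. Since $M$ is paracompact and each $\overline{V_\alpha}$ is compact, I may pass to a locally finite refinement of the atlas and fix a subordinate smooth partition of unity $\{\eta_\alpha\}_\alpha$. In each chart, the pushed-forward measure $(\psi_\alpha)_\#\mu = h^2\sqrt{|g|}\,dx$ has bounded density with respect to Lebesgue measure on compact subsets (by continuity of $g, h$ and compactness of $\overline{V_\alpha}$), so local $L^2$ spaces essentially coincide with the usual Euclidean ones modulo bounded weights.

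For the first statement, the direction $(\Rightarrow)$ is immediate: a test function $\phi\in L^2(\psi_\alpha(V_\alpha),(\psi_\alpha)_\#\mu)$ pulls back to $\psi_\alpha^*\phi\in L^2(V_\alpha,\mu)$ by the change-of-variables formula, extended by zero to $M$; testing the global weak convergence against it gives the chart-wise weak convergence. For the converse, I would use density of $C_c^\infty(M)$ in $L^2(M,\mu)$ (which holds since $\mu$ is Radon by continuity of $g,h$) and test against an arbitrary $\phi\in C_c^\infty(M)$. By local finiteness, $\phi = \sum_\alpha \eta_\alpha\phi$ is a \emph{finite} sum, each term compactly supported in a single chart, and the chart-wise weak convergence then applies term by term; summing yields global convergence tested against $\phi$, after invoking the uniform boundedness of $\{f_i\}$ deducible from the chart-wise weak convergence combined with the finite-sum structure on compact supports.

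For the $H^2_k$ statement, I would use the same partition-of-unity decomposition on compactly supported $\phi\in H^2_k$ (dense by truncation and mollification in charts) and reduce to integrals in a single chart. The $H^2_k$ inner product expands into finitely many integrals involving the partial derivatives $\partial^\beta f_i, \partial^\beta\phi$ for $|\beta|\le k$, weighted by the continuous bounded coefficients $g^{ij}, \sqrt{|g|}, h^2$ and, in the $k=2$ case, the $L^2_{\rm loc}$ Christoffel symbols arising from $(\nabla^2_c f)_{ij} = \partial_{ij}f - \Gamma^s_{ij}\partial_s f$. The main technical obstacle is precisely the $k=2$ Christoffel cross-terms, e.g.\;$\int g^{i_1j_1}g^{i_2j_2}\Gamma^s_{i_1i_2}(\partial_s f_i)(\partial_{j_1j_2}\phi)h^2\sqrt{|g|}\,dx$, whose coefficient multiplying $\partial_s f_i$ is only in $L^1$ when $\phi\in H^2_2$ (since it involves a product of two $L^2$ functions). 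I would handle this by first restricting to smooth $\phi\in C_c^\infty(V_\alpha)$, for which $\partial^2\phi$ is bounded and the coefficient multiplying $\partial_s f_i$ becomes a genuine $L^2$ function—allowing direct application of the chart-wise weak $L^2$ convergence—and then extending to $\phi\in H^2_2$ by density together with continuity of the $H^2_2$ inner product.
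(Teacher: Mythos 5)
Your argument is essentially the paper's: for the first claim you handle the two directions exactly as the paper does (pull back a chart test function and extend by zero for one direction; density of compactly supported smooth functions plus a finite partition of unity over the support for the other), and you reduce the second claim to expanding the $H^2_k$ inner product chart by chart. Your treatment of the $k=2$ Christoffel cross-terms is in fact more explicit than the paper's one-line "use the first part and the definition of the inner product": observing that for smooth compactly supported test functions the coefficients multiplying $\partial^\beta f_i$ become genuine $L^2$ functions is the right way to make that pairing legitimate, since $\Gamma^s_{ij}\in L^2_{\rm loc}$ times a merely $L^2$ second derivative of a general $H^2_2$ element is only $L^1$.

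One assertion, however, does not hold as stated: the uniform bound on $\norm{f_i}_{L^2(M,\mu)}$ is not "deducible from the chart-wise weak convergence". Banach--Steinhaus applied in each chart only yields $\sup_i \norm{f_i}_{L^2(V_\alpha,\mu)}<\infty$ for each fixed $\alpha$; since the atlas consists of relatively compact charts, no global bound follows when $M$ is noncompact (bumps of height $i$ escaping to infinity converge weakly to $0$ in every chart while their global $L^2$ norms blow up, and weak convergence against a suitable global $L^2$ test function then fails). That uniform bound is precisely what licenses the passage from the dense class $C_c^\infty(M)$ (respectively, smooth test functions in the $H^2_k$ part, where you invoke "continuity of the $H^2_2$ inner product") to arbitrary elements of $L^2(M,\mu)$ (respectively $H^2_k(M,\mu)$), so it cannot be obtained for free. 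To be fair, the paper's own proof makes the identical silent leap when it declares that it suffices to check convergence on a dense subset; in all applications of the lemma in the paper the sequences are bounded in the relevant norms, so the statement is used only in that regime. But in your write-up the claim that boundedness follows from the hypotheses is the one genuinely false step: either assume (or verify in context) a uniform bound in the appropriate norm, or restrict the "if" direction to test objects supported in finitely many charts, where the finite-sum argument alone suffices.
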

\begin{proof}
    For the first part, note that it suffices to prove convergence of the inner product on a dense subset of $L^2(M, \mu)$. Using Lemma \ref{ccinfty_dense_in_lp}, we can pick a function $\varphi \in C_c(M)$. Let $(V_j)_{j=1}^l$ be a finite cover of $\supp\, \varphi$ and pick a partition of unity $(\rho_j)$ subordinate to that cover. Now
    \begin{align*}
        \int_{M} \varphi f_i h^2\, \di\vol_g = \int_{M} \sum_{j=1}^l \rho_j \varphi f_ih^2\, \di\vol_g = \sum_{j=1}^l \int_{\psi_j(V_j)} (\psi_j)_* (\rho_j \varphi f_i) h^2\sqrt{|g|} \, \di\mathcal{L}^n.
    \end{align*}
    If $(\psi_j)_*f_i \rightharpoonup (\psi_j)_*f$, it is easy to see that the above term converges. For the other direction it suffices to note that every function $\varphi \in L^2(\psi_j(V_j), (\psi_j)_*\mu)$ for some $j$ can be extended to a function $\Phi \in L^2(M, \mu)$ via pullback and extension by $0$. \\
    For the second part, we simply use the first part and the definition of the $H^2_k(M, \mu)$-inner product.  
\end{proof}
Now, we can generalise the validity of the identity \eqref{districcilocalweighted} to Sobolev vector fields:
\begin{lemma}[Distributional Ricci curvature for $H^2_1(TM, \mu)$-fields]\thlabel{districcisobolev}
    Let $g$ be a $C^{0}$-Riemannian metric whose Christoffel symbols are in $L^2_{\rm{loc}}$ and $\mu$ be the measure defined by $\di\mu = h^2\di\vol_g$ for a positive $h \in C^{0}(M, (0, \infty))\cap W^{1,2}_{\rm{loc}}(M)$. Let {$X \in H^2_1\cap L^\infty_{loc}(TM, \mu)$}. Then \eqref{districcilocalweighted} holds. 
\end{lemma}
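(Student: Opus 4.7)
The plan is to extend \eqref{districcilocalweighted} from smooth vector fields (where it is guaranteed by the preceding Proposition) to $X\in H^2_1(TM,\mu)$ by density. Since $H^2_1(TM,\mu)$ is defined as the closure of smooth sections in the Sobolev norm, I fix smooth vector fields $X_k$ with $X_k\to X$ in $L^2(TM,\mu)$ and $\nabla_c X_k\to\nabla_c X$ in $L^2(T^*M\otimes TM,\mu)$. By the preceding Proposition, \eqref{districcilocalweighted} holds for each smooth $X_k$, and it suffices to pass to the limit $k\to\infty$ in every integral appearing there.

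The strategy for the passage to the limit is to reorganise each integrand so that partial derivatives $\partial X$ appear only inside the covariant-derivative combination $\nabla_c X=\partial X+\Gamma X$. After this rewriting, each side of \eqref{districcilocalweighted} becomes a bilinear form in the pair $(X,\nabla_c X)$ with coefficients drawn from $g,g^{-1},\partial g,h,1/h,\partial h,\phi,\partial\phi$, which on $\supp\, \phi$ belong to $L^\infty_{\mathrm{loc}}\cup L^2_{\mathrm{loc}}$. On the right-hand side, the $|\nabla_c X|^2$ integral is continuous by the assumed $L^2$-convergence of $\nabla_c X_k$; the $(\mathrm{div}_\mu X)^2$ and $|dX^\flat|^2$ integrals use the identity $\mathrm{div}_\mu X=\mathrm{tr}(\nabla_c X)+2X(\log h)$ together with the antisymmetrisation of $\nabla_c X$-components, and the boundedness of $1/h$ on $\supp\,\phi$; the $\langle\nabla|X|^2,\nabla\phi\rangle_g$ term is handled via $\nabla|X|^2=2g(X,\nabla_c X)+X^iX^j\,\nabla g_{ij}$ and Cauchy-Schwarz. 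On the left-hand side, the integrands of the form $\Gamma^p_{jk}\partial_p(X^jX^k\phi h^2\sqrt{|g|})$ and $\Gamma^p_{pk}\partial_j(X^jX^k\phi h^2\sqrt{|g|})$ are expanded by Leibniz, and every resulting $\partial X$ is substituted using $\partial X=\nabla_c X-\Gamma X$; the $\partial h$-weighted integrals of the left-hand side are treated analogously.

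The main obstacle is the control of the Christoffel-quadratic integrals such as $\int X^jX^k\Gamma^s_{kj}\Gamma^p_{ps}\phi h^2\sqrt{|g|}\,dx$ produced both as explicit terms in \eqref{districcilocalweighted} and as residues of the Leibniz expansions: with each factor only in $L^2_{\mathrm{loc}}$, a direct Cauchy-Schwarz bound against $L^2$-convergent $X_k$ is insufficient, because $X_k\Gamma$ need not converge in $L^2_{\mathrm{loc}}$ when $\Gamma\notin L^\infty_{\mathrm{loc}}$. The way out is to observe that, since \eqref{districcilocalweighted} holds as a numerical identity for each smooth $X_k$, the Christoffel-quadratic pieces telescope against the $\Gamma X$-cross-contributions coming from the expansion of $|\nabla_c X|^2$, $|dX^\flat|^2$ and $(\mathrm{div}_\mu X)^2$ on the right-hand side, so that after algebraic cancellation only integrals of products of two $L^2_{\mathrm{loc}}$-functions multiplied by $L^\infty_{\mathrm{loc}}$-factors survive; these are continuous in the $H^2_1(TM,\mu)$-topology via Cauchy-Schwarz, and the limit $k\to\infty$ yields the identity \eqref{districcilocalweighted} for $X\in H^2_1(TM,\mu)$.
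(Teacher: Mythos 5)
Your proposal shares the paper's skeleton (approximate $X$ by smooth fields, use that \eqref{districcilocalweighted} holds for them, and pass to the limit), but the step you add to make the limit passage work is exactly where the argument is not established. The paper's own proof is a direct density argument: $C^2_1(TM,\mu)$ is dense in $H^2_1(TM,\mu)$ by definition, and the terms are handled using that $\phi$ and $|\nabla\phi|_g$ are bounded with compact support; it does not perform, nor rely on, your covariant regrouping. Your pivotal claim — that after substituting $\partial X=\nabla_c X-\Gamma X$ the Christoffel-quadratic and weight-quadratic pieces ``telescope'' against cross-contributions of $|\nabla_c X|^2_{HS}$, $|dX^\flat|^2$ and $(\mathrm{div}_\mu X)^2$, so that only products of two $L^2_{\rm loc}$ factors with $L^\infty_{\rm loc}$ coefficients survive — is asserted without any computation, and it is doubtful as stated. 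For instance, the Leibniz expansion of $-\int\Gamma^p_{jk}\partial_p(X^jX^k\phi h^2\sqrt{|g|})$ and $+\int\Gamma^p_{pk}\partial_j(X^jX^k\phi h^2\sqrt{|g|})$ produces the terms $-\int\Gamma^p_{jk}X^jX^k\,\partial_p\phi\,h^2\sqrt{|g|}$ and $+\int\Gamma^p_{pk}X^jX^k\,\partial_j\phi\,h^2\sqrt{|g|}$, which are products of three $L^2_{\rm loc}$ factors ($\Gamma$, $X$, $X$) and have no partner of the same multilinear structure among the right-hand side terms once you have covariantised $\langle\nabla|X|^2,\nabla\phi\rangle_g$ as you propose (their index structure does not match the $\partial g$-part of that term, nor the trace combinations coming from $(\mathrm{div}_\mu X)^2$). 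Similar three-$L^2$-factor terms of type $h\,\partial_ih\,X^i\,\tr(\nabla_c X)$ and $\Gamma\,X\,\nabla_c X\,\partial h$ occur in the weighted part. Since \eqref{districcilocalweighted} was obtained by integration by parts, it holds only as an integral identity, so there is no a priori reason the coefficients of each monomial type cancel pointwise; verifying your cancellation would require the full computation, which is absent, and the sample terms above indicate it fails in the form you state it.

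There is also a mismatch at the end of your argument: even if such a regrouping into ``good'' terms existed, passing to the limit would give you the regrouped identity, not \eqref{districcilocalweighted} itself, whose individual integrals (e.g.\ $\int X^jX^k\Gamma^s_{kj}\Gamma^p_{ps}\phi h^2\sqrt{|g|}$, or $\int X^jX^k\partial_jh\partial_kh\phi\sqrt{|g|}$) need not be absolutely convergent for an unbounded $X\in H^2_1(TM,\mu)$, so the final sentence ``the limit yields \eqref{districcilocalweighted}'' does not follow as written. The integrability issue you flag is genuine, but the natural resolution is different from the one you propose: in every use of the lemma the field $X$ is locally bounded (test vector fields are), and one can take an approximating sequence converging in $H^2_1$ that is in addition uniformly bounded on $\supp\phi$; then the terms bilinear in $(X,\nabla_c X)$ pass to the limit by Cauchy--Schwarz, while the terms quadratic in $X$ with $L^1_{\rm loc}$ coefficients (such as $\Gamma\Gamma\phi$ or $\partial h\,\partial h\,\phi$) pass by a.e.\ convergence and dominated convergence. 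That is the kind of control the paper's short proof implicitly invokes (and which it makes explicit later, e.g.\ in the weak-$*$ arguments of Theorem \ref{first_main_theorem}), and it is what your proposal is missing.
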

\begin{proof}
    This follows from the density of $C^2_1(TM, \mu)$ in $H^2_1(TM, \mu)$ and the fact that $\phi, |\nabla \phi|_g \in L^1 \cap L^\infty(M)$ have compact support.
\end{proof}
\subsection{Cheeger energy on a manifold with continuous Riemannian metric and weight}
In this subsection, we examine the Cheeger energy in the setting of a smooth manifold $M$ endowed with a continuous Riemannian metric $g$ and a continuous weight $h^2$ on the volume form. The {distance} induced by $g$ is given by 
\begin{align}\label{def:dg}
    \sfd_g(x, y) = \inf\Big\{ \int_{0}^1 |\dot{\gamma_t}|_g\, \di t: \, \gamma\ \mathrm{piecewise \ }C^\infty, \gamma_0=x, \gamma_1 = y \Big\}.
\end{align}
We also consider the Riemannian volume form given locally by $\sqrt{|g|}dx^1\wedge \ldots \wedge dx^n$ and the associated volume measure $\di\vol_g= \sqrt{|g|}\di\mathcal{L}^n$. We will consider the metric measure space $(M, \sfd_g, \mu)$, where $\di\mu = h^2\di\vol_g$ for a positive and continuous $h$. It is easily checked that  $\mu$ is a $\sigma$-finite Borel measure. We assume that \eqref{condition_on_measure} holds, as we will work with $\cd(K, \infty)$-manifolds (cf.\;Rem.\,\ref{rem:CDVol}).

 To begin with, we will collect some properties of the metric space arising from a manifold with a continuous Riemannian metric $g$. 
The even more general setting of Lipschitz manifolds, with emphasis on potential theory for uniformly elliptic operators,  has previously been studied by Norris \cite{norris1997heat}, Saloff-Coste  \cite{saloff1992uniformly}, Sturm  \cite{sturm1995analysis}, De Cecco and Palmieri  \cite{de1991integral}. 
In that case, the term Lipschitz manifold refers to a topological manifold $M$ with Lipschitz charts, endowed with a Riemannian metric tensor $g$, such that the coefficients of $g$ and $g^{-1}$  are in $L^\infty_{\rm{loc}}$. This shall not be confused with metric tensors whose coefficients are locally Lipschitz continuous. 
More in the spirit of the present section is the work by Burtscher \cite{burtscher2012length}. For the reader's convenience,  in the following we will focus on our original results, while trying to be as self-contained as possible.
\begin{proposition}[cf.\;\cite{petersen2006riemannian} Thm.\;5.3.8,  and \cite{burtscher2012length} Thm.\;4.5]\thlabel{lipschitz}
    Let $f$ be a Lipschitz function on $(M, \sfd_g)$ and $(V_\alpha, \psi_\alpha)_\alpha$ be an atlas for $M$. Then for each $\alpha$, $f\circ (\psi_\alpha)^{-1}$ is locally Lipschitz.
\end{proposition}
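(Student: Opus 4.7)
The plan is to reduce the statement to a purely local comparison of the Riemannian distance $\sfd_g$ with the Euclidean distance in a chart, exploiting only the continuity and positive-definiteness of $g$. The key observation is that Lipschitz continuity of $f \circ \psi_\alpha^{-1}$ near a point $x_0 \in \psi_\alpha(V_\alpha)$ will follow once we establish an estimate of the form $\sfd_g(\psi_\alpha^{-1}(x),\psi_\alpha^{-1}(y)) \leq C\,|x-y|$ for $x,y$ in a small Euclidean neighbourhood of $x_0$.

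Fix a chart $(V_\alpha,\psi_\alpha)$ and a point $p\in V_\alpha$, and set $x_0=\psi_\alpha(p)$. Choose $r>0$ so small that the closed Euclidean ball $\overline{B_r(x_0)}\subset\psi_\alpha(V_\alpha)$. Since $g$ is continuous, the matrix-valued map $x \mapsto (g_{ij}(\psi_\alpha^{-1}(x)))$ is continuous on $\overline{B_r(x_0)}$ with values in the positive-definite cone; by compactness, there exists a constant $C>0$ such that
\begin{equation*}
g_{ij}(\psi_\alpha^{-1}(x))v^iv^j \leq C\,|v|^2 \quad \text{for all } x\in\overline{B_r(x_0)},\ v\in\R^n.
\end{equation*}
For any two points $x,y\in B_{r/2}(x_0)$, the Euclidean segment $\sigma(t) = (1-t)x+ty$ stays in $\overline{B_r(x_0)}$, so its pullback $\gamma := \psi_\alpha^{-1}\circ\sigma$ is a piecewise $C^\infty$ curve in $V_\alpha$ connecting $\psi_\alpha^{-1}(x)$ to $\psi_\alpha^{-1}(y)$, admissible in the definition \eqref{def:dg}. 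Its length is
\begin{equation*}
\int_0^1|\dot\gamma_t|_g\,\di t = \int_0^1\sqrt{g_{ij}(\gamma_t)(y-x)^i(y-x)^j}\,\di t \leq \sqrt{C}\,|x-y|.
\end{equation*}
Consequently, $\sfd_g(\psi_\alpha^{-1}(x),\psi_\alpha^{-1}(y))\leq \sqrt{C}\,|x-y|$ for all $x,y\in B_{r/2}(x_0)$.

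Combining this with the global Lipschitz bound on $f$, for $x,y \in B_{r/2}(x_0)$ we obtain
\begin{equation*}
|f\circ\psi_\alpha^{-1}(x) - f\circ\psi_\alpha^{-1}(y)| \leq \Lip(f)\,\sfd_g(\psi_\alpha^{-1}(x),\psi_\alpha^{-1}(y)) \leq \Lip(f)\sqrt{C}\,|x-y|,
\end{equation*}
which is the desired local Lipschitz estimate near $x_0$. Since $p\in V_\alpha$ was arbitrary, $f\circ\psi_\alpha^{-1}$ is locally Lipschitz on $\psi_\alpha(V_\alpha)$. The only genuinely delicate point is making sure that we can actually realise the upper bound on $\sfd_g$ by the straight-line segment in the chart; this is a minor issue resolved by shrinking the radius $r$ so that the segment remains inside the chart domain (and hence is admissible in \eqref{def:dg}), after which everything reduces to the elementary continuity argument above.
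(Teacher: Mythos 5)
Your proof is correct and takes essentially the same route as the paper: both reduce the statement to a local comparison of $\sfd_g$ with the Euclidean distance in a chart, obtained by bounding $g\le C\,\mathrm{Id}$ on a compact ball via continuity and using the straight-line segment as a competitor in \eqref{def:dg}. The paper goes further and also proves the reverse inequality (lower-bounding the $g$-length of arbitrary curves, including those leaving the chart ball), so it establishes full local equivalence of the two distances, whereas you correctly observe that only the one-sided bound $\sfd_g(\psi_\alpha^{-1}(x),\psi_\alpha^{-1}(y))\le \sqrt{C}\,|x-y|$ is needed for the Lipschitz conclusion.
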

\begin{proof}
   We show that $|\cdot|_{euc}$ and $\sfd_g$ are locally equivalent. Pick a point $p \in M$, $\alpha$ such that $p \in V_\alpha$ and fix an $\e > 0$ such that $\overline{B^{euc}_{\e}(\psi_\alpha(p))} \subset \psi_\alpha(V_\alpha)$. From now on, we will drop the index $\alpha$. 
   Take two arbitrary $x, y$ such that $\psi(x), \psi(y) \in B^{euc}_{\e/2}(\psi(p)) $ and let $c:[0, 1] \to M$ be a piecewise $C^\infty$-curve from $x$ to $y$.  We note that we can find a $\lambda \geq 1$ such that $\frac{1}{\lambda} \norm{v}_{euc} \leq g(v, v) \leq \lambda \norm{v}_{euc}$ for all $q \in \psi^{-1}(B^{euc}_{\e}(\psi(p)))$ and all $v \in T_qM$.
   We now distinguish three cases:
   \begin{itemize}
       \item[1.] $c$ is a straight line in the Euclidean metric. Then 
       \begin{align*}
           |x-y|_{euc} &= L_{euc}(c)= \int_{0}^1 |\dot{c}|_{euc}\, \geq \frac{1}{\lambda} \int_{0}^1 |\dot{c}|_{g}\, \di t \geq \frac{1}{\lambda}\sfd_g(x,y).
       \end{align*}
       \item[2.] If $c$ is a general curve that lies in $\psi^{-1}(B^{euc}_{\e}(\psi(p)))$ then
       \begin{align*}
           L_g(c) &= \int_0^1 |\dot{c}|_g\, \di t \geq \lambda \int_0^1 |\dot{c}|_{euc}\, \di t \geq \lambda|x-y|_{euc}.
       \end{align*}
       \item[3.]If $c$ leaves $\psi^{-1}(B^{euc}_{\e}(\psi(p)))$ then there is a minimal $t_0$ such that $c(t_0) \notin \psi^{-1}(B^{euc}_{\e}(\psi(p)))$ and hence 
       \begin{align*}
            L_g(c) &\geq \int_0^{t_0} |\dot{c}|_g\, \di t \geq \lambda \int_0^{t_0}|\dot{c}|_{euc}\, \di t \geq \lambda \frac{\e}{2} \geq \frac{\lambda}{2} |x-y|_{euc}.
       \end{align*}
   \end{itemize}
   This yields that the two metrics are equivalent in $\psi^{-1}(B^{euc}_{\e/2}(\psi(p)))=:U_p$. Hence, for a $\sfd_g$-Lipschitz function $f$ and any $p \in M$, there exists a constant $L_{U_p}$ such that $f$ is locally $L_{U_p}$-Lipschitz with respect to the induced Euclidean metric via $\psi$ on $U_p$. 
\end{proof}
 { In \cite{burtscher2012length}, a mollification argument yields:}
\begin{proposition}[{ \cite{burtscher2012length} Prop.\;4.1 and Thm\;3.15}]\label{length_space}
    The metric \eqref{def:dg} turns $M$ into a length space.
\end{proposition}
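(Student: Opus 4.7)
The plan is to verify directly from the definition of $\sfd_g$ that it makes $M$ into a length space, by combining the triangle inequality with a realisation of the infimum in \eqref{def:dg} via continuous curves. The key observation is that piecewise smooth curves are themselves continuous, so the main work is to relate the metric length of a continuous curve to the integral length of a piecewise smooth one.

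First I would confirm that $\sfd_g$ is a genuine distance on $M$. Symmetry and the triangle inequality follow by reversing and concatenating piecewise $C^\infty$ curves (the integral length is preserved under reversal and additive under concatenation, after rescaling to $[0,1]$). Finiteness on each connected component is obtained by chaining smooth chart-local curves across a locally finite atlas. Positive definiteness comes from local equivalence with the Euclidean metric: the computation in the proof of Proposition \ref{lipschitz} produces, around any $x\in M$, a chart $(V,\psi)$ together with constants $\lambda\geq 1$ and $\varepsilon>0$ such that any piecewise smooth curve from $x$ to $y\neq x$ with $\psi(y)\in B_{\varepsilon/2}^{euc}(\psi(x))$ has $g$-length either at least $\lambda^{-1}|\psi(x)-\psi(y)|_{euc}$ (if it stays inside the chart) or at least $\tfrac{\lambda\varepsilon}{2}$ (if it escapes); either way, $\sfd_g(x,y)>0$.

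For the length-space property, fix $x,y\in M$ and denote by $L_{\sfd_g}(\gamma):=\sup_P \sum_i \sfd_g(\gamma(t_i),\gamma(t_{i+1}))$ the standard metric length, with the supremum taken over partitions $P=\{0=t_0<\dots<t_N=1\}$. For any continuous curve $\gamma:[0,1]\to M$ from $x$ to $y$ and any partition, the triangle inequality gives $\sum_i \sfd_g(\gamma(t_i),\gamma(t_{i+1}))\geq \sfd_g(x,y)$, so $L_{\sfd_g}(\gamma)\geq \sfd_g(x,y)$. Conversely, given $\varepsilon>0$, use \eqref{def:dg} to choose a piecewise $C^\infty$ curve $\gamma$ from $x$ to $y$ with $\int_0^1|\dot\gamma_t|_g\,\di t\leq \sfd_g(x,y)+\varepsilon$; since every sub-arc $\gamma|_{[t_i,t_{i+1}]}$ is itself piecewise smooth, the definition of $\sfd_g$ yields $\sfd_g(\gamma(t_i),\gamma(t_{i+1}))\leq \int_{t_i}^{t_{i+1}}|\dot\gamma_s|_g\,\di s$, so summing and taking the supremum over partitions gives $L_{\sfd_g}(\gamma)\leq \int_0^1|\dot\gamma_t|_g\,\di t\leq \sfd_g(x,y)+\varepsilon$. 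Combining the two inequalities yields $\sfd_g(x,y)=\inf_\gamma L_{\sfd_g}(\gamma)$, which is precisely the length-space property.

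The ``mollification argument'' alluded to in the statement presumably refers to the (not strictly necessary) strengthening of the second step to the equality $L_{\sfd_g}(\gamma)=\int_0^1|\dot\gamma_t|_g\,\di t$ for piecewise smooth $\gamma$: one mollifies $g$ in each chart to obtain smooth metrics $g_\varepsilon\to g$ uniformly on compact sets (using \eqref{convolution_on_manifold_def}), invokes the classical smooth Riemannian result, and passes to the limit in $\varepsilon$ exploiting the $C^0$-convergence $g_\varepsilon\to g$. However, as shown above, the two-sided inequalities alone suffice. I do not expect a real obstacle in the argument; the mildly delicate point is the positive definiteness of $\sfd_g$, which is handled by the local Euclidean comparison borrowed from Proposition \ref{lipschitz}.
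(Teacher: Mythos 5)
Your argument is correct, but it follows a different route from the one the paper indicates: the paper proves this proposition ``by a mollification argument'' (regularise $g$ to smooth metrics $g_\e$, use that smooth Riemannian manifolds are length spaces, and transfer the property through the locally uniform two-sided comparison of $\sfd_{g_\e}$ and $\sfd_g$), whereas you argue directly from the structure of the definition \eqref{def:dg}. Your proof is essentially the standard fact that a metric induced by a length functional which is reparametrisation-invariant, additive under concatenation and closed under restriction is automatically intrinsic: the lower bound $L_{\sfd_g}(\gamma)\geq \sfd_g(x,y)$ is the triangle inequality, and the upper bound follows because every sub-arc of a near-minimising piecewise $C^\infty$ competitor is again admissible, so $\sfd_g(\gamma(t_i),\gamma(t_{i+1}))\leq\int_{t_i}^{t_{i+1}}|\dot\gamma_s|_g\,\di s$ and hence $L_{\sfd_g}(\gamma)\leq\int_0^1|\dot\gamma_t|_g\,\di t$. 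This is more elementary, needs no regularisation of the merely continuous metric, and isolates the only genuinely delicate point, namely that $\sfd_g$ is a bona fide distance, which you correctly settle via the local Euclidean comparison in the proof of Proposition \ref{lipschitz}. What the mollification route buys in exchange is quantitative information that your soft argument does not give (e.g.\ the identification of $L_{\sfd_g}$ with the integral length and the comparison with smooth approximating metrics, which the paper uses again for Proposition \ref{metric_speed_explicit} and in later sections), but none of that is needed for the bare length-space statement. One cosmetic remark: in the escape case of your positive-definiteness sketch the lower bound should be $\e/(2\lambda)$ rather than $\lambda\e/2$ (the same inessential slip appears in the paper's displayed estimate); since $\lambda\geq 1$ only the direction of the constant matters and the conclusion $\sfd_g(x,y)>0$ is unaffected.
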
 
The Hopf-Rinow theorem directly implies:
\begin{corollary}
    If $M$ is compact, $(M, \sfd_g)$ is a geodesic space.
\end{corollary}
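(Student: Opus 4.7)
The plan is to combine Proposition \ref{length_space} (which asserts that $(M,\sfd_g)$ is a length space) with the classical metric Hopf--Rinow theorem, which states that a complete and locally compact length space is a geodesic space, i.e.\ every pair of points is joined by a length-minimising curve realising $\sfd_g$.

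First, I would verify that the topology induced by $\sfd_g$ on $M$ coincides with the manifold topology. This is essentially the content of (the proof of) Proposition \ref{lipschitz}: given any $p\in M$ and a chart $(V_\alpha,\psi_\alpha)$ around $p$, one can find $\varepsilon>0$ and $\lambda\geq 1$ such that $\tfrac{1}{\lambda}|\cdot|_{\mathrm{euc}}\leq g(\cdot,\cdot)^{1/2}\leq \lambda|\cdot|_{\mathrm{euc}}$ on $\psi_\alpha^{-1}(\overline{B^{\mathrm{euc}}_\varepsilon(\psi_\alpha(p))})$. The case analysis carried out there yields, on a neighbourhood $U_p\ni p$, a bi-Lipschitz equivalence between $\sfd_g$ and the pullback Euclidean distance $|\psi_\alpha(\cdot)-\psi_\alpha(\cdot)|_{\mathrm{euc}}$. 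In particular, $\sfd_g$ generates the manifold topology.

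Consequently, if $M$ is compact in its manifold topology, then $(M,\sfd_g)$ is a compact metric space, hence complete and locally compact. Combined with Proposition \ref{length_space}, this shows that $(M,\sfd_g)$ is a complete, locally compact length space. The metric version of the Hopf--Rinow theorem (see, e.g., \cite{petersen2006riemannian}) then yields that for every pair $x,y\in M$ there exists a constant-speed minimising geodesic $\gamma\colon[0,1]\to M$ with $\gamma_0=x$, $\gamma_1=y$ and $\sfd_g(x,y)=\int_0^1|\dot\gamma_t|_g\,\di t$. This is exactly the assertion that $(M,\sfd_g)$ is a geodesic space.

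There is no real obstacle here: the substantive content is already packaged in Proposition \ref{lipschitz} (compatibility of the two topologies) and Proposition \ref{length_space} (length-space property), and the rest is a straightforward invocation of Hopf--Rinow once compactness is translated from the manifold topology into the metric topology. The only small subtlety worth spelling out in the write-up is that local compactness of $(M,\sfd_g)$ must be argued through the chart-wise bi-Lipschitz comparison above rather than by directly appealing to smoothness of $g$, since $g$ is only assumed continuous.
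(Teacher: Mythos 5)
Your argument is correct and is essentially the paper's own: the corollary is stated as a direct consequence of Proposition \ref{length_space} together with the (metric) Hopf--Rinow theorem, exactly as you do. Your additional verification that $\sfd_g$ induces the manifold topology (via the bi-Lipschitz comparison from the proof of Proposition \ref{lipschitz}), so that compactness transfers and gives completeness and local compactness, is a reasonable way of making explicit a step the paper leaves implicit, not a different route.
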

{ A standard fact on the slope of functions is that if $f \in C^1(M)$, then  $|Df|(x) = |\nabla f|_g(x)$ for every $x$. The next proposition is also classical, so the proof is omitted.}
\begin{proposition}\label{metric_speed_explicit}
    Let $\gamma:[0, 1]\to M$ be an absolutely continuous curve. Then the metric speed $|\dot{\gamma_t}|$ coincides a.e.\;with $|\dot{\gamma}_t|_g = \sqrt{\langle  \dot{\gamma}_t, \dot{\gamma}_t\rangle_g}$, where $\dot{\gamma}_t$ denotes the (a.e.\;existing) derivative. 
\end{proposition}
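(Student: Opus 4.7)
The plan is to pass to a coordinate chart around each ``good'' time $t \in (0,1)$, and compare $\sfd_{g}$ with the constant metric $g(\gamma_{t})$ on a small neighborhood, using the continuity of $g$ together with the local Euclidean/$\sfd_{g}$ equivalence supplied by Proposition \ref{lipschitz}.

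\textbf{Step 1 (Reducing to chart differentiability).} A metric-space absolutely continuous curve admits a.e. a metric speed $|\dot\gamma_t|\in L^1((0,1))$. Fix a chart $(V,\psi)$ with $\gamma_{t}\in V$, choose $\delta>0$ so that $\gamma_{s}\in V$ for $|s-t|\le\delta$, and let $U\Subset\psi(V)$ be a relatively compact open set containing $\psi(\gamma([t-\delta,t+\delta]))$. By Proposition \ref{lipschitz} the map $\psi\colon(\psi^{-1}(U),\sfd_{g})\to (U,|\cdot|_{\mathrm{euc}})$ is bi-Lipschitz. Hence $\psi\circ\gamma|_{[t-\delta,t+\delta]}$ is absolutely continuous in the classical sense, so differentiable a.e. Thus for a.e. $t$ the tangent vector $\dot\gamma_{t}\in T_{\gamma_{t}}M$ exists, the metric speed $|\dot\gamma_{t}|$ exists, and $t$ is a Lebesgue point of $s\mapsto|\dot\gamma_{s}|_{g}$. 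Fix such a $t$.

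\textbf{Step 2 (Upper bound).} For $|h|$ small enough that $\gamma_{t+h}\in \psi^{-1}(U)$, consider the $C^{\infty}$ path
\[
\sigma_{h}(s):=\psi^{-1}\!\bigl(\psi(\gamma_{t})+s(\psi(\gamma_{t+h})-\psi(\gamma_{t}))\bigr),\qquad s\in[0,1].
\]
By the very definition \eqref{def:dg} of $\sfd_{g}$, $\sfd_{g}(\gamma_{t},\gamma_{t+h})\le L_{g}(\sigma_{h})$. Using continuity of $g$ on $\psi^{-1}(\overline U)$ and the a.e. differentiability of $\psi\circ\gamma$ at $t$,
\[
L_{g}(\sigma_{h})=\int_{0}^{1}|\dot\sigma_{h}(s)|_{g(\sigma_{h}(s))}\,ds=|h|\,|\dot\gamma_{t}|_{g}+o(|h|),
\]
so $\limsup_{h\to0}\sfd_{g}(\gamma_{t},\gamma_{t+h})/|h|\le|\dot\gamma_{t}|_{g}$.

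\textbf{Step 3 (Lower bound).} Fix $\varepsilon>0$. By continuity of $g$, choose $r>0$ so that on the chart ball $B:=\psi^{-1}(B_{r}^{\mathrm{euc}}(\psi(\gamma_{t})))\subset \psi^{-1}(U)$ one has, reading $g$ in coordinates, $(1-\varepsilon)\,g(\gamma_{t})(v,v)\le g(y)(v,v)\le(1+\varepsilon)\,g(\gamma_{t})(v,v)$ for all $y\in B$, $v\in\R^{n}$. Let $\eta:[0,1]\to M$ be any piecewise $C^{\infty}$ path from $\gamma_{t}$ to $\gamma_{t+h}$. If $\eta$ stays inside $B$ then $L_{g}(\eta)\ge(1-\varepsilon)^{1/2}|\psi(\gamma_{t+h})-\psi(\gamma_{t})|_{g(\gamma_{t})}$ (the latter is the Euclidean length in the constant norm $g(\gamma_{t})$ of a path between the two endpoints, which is bounded below by the corresponding straight-line distance). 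If instead $\eta$ exits $B$ then, again by Proposition \ref{lipschitz}, its $g$-length is bounded below by a positive constant $c(r)>0$; but for $|h|$ sufficiently small, $\sfd_{g}(\gamma_{t},\gamma_{t+h})< c(r)$ by continuity of $\gamma$ and the upper bound in Step 2, so such paths are irrelevant for the infimum. Therefore
\[
\sfd_{g}(\gamma_{t},\gamma_{t+h})\ge(1-\varepsilon)^{1/2}\bigl|\psi(\gamma_{t+h})-\psi(\gamma_{t})\bigr|_{g(\gamma_{t})}=(1-\varepsilon)^{1/2}\bigl(|h|\,|\dot\gamma_{t}|_{g}+o(|h|)\bigr),
\]
whence $\liminf_{h\to0}\sfd_{g}(\gamma_{t},\gamma_{t+h})/|h|\ge(1-\varepsilon)^{1/2}|\dot\gamma_{t}|_{g}$. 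Letting $\varepsilon\downarrow0$ closes the gap.

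The only delicate point is Step 3: one has to discard minimizing sequences that could, a priori, escape the neighborhood on which $g$ is almost constant. This is handled by using the upper bound from Step 2 to guarantee that for small $|h|$ the infimum is realized (to within arbitrary tolerance) by curves confined to $B$, after which the continuity of $g$ reduces the estimate to a comparison with a constant Riemannian metric.
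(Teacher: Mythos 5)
Your argument is correct, and it fills a proof that the paper actually omits: the authors only remark that Proposition \ref{metric_speed_explicit} ``involves similar arguments as the proof of Proposition \ref{weakgradientforc1}'', i.e.\ the local freezing of the continuous metric near a good point. Your route is the natural direct one and is, if anything, lighter than what the paper alludes to: you never need test plans or a contradiction argument, only (a) the bi-Lipschitz chart comparison of $\sfd_g$ with the Euclidean distance from Proposition \ref{lipschitz}, which upgrades metric absolute continuity to classical absolute continuity of $\psi\circ\gamma$ and hence gives the a.e.\ derivative, and (b) continuity of $g$, used twice — once to evaluate the length of the coordinate segment for the upper bound, and once to compare $g$ with the constant metric $g(\gamma_t)$ on a small ball for the lower bound. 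The one delicate point, ruling out competitor curves that leave the small ball, is handled correctly: curves exiting $B$ have $g$-length at least $c(r)>0$ by the local equivalence of $g$ with the Euclidean metric, while the Step 2 upper bound forces $\sfd_g(\gamma_t,\gamma_{t+h})\to 0$, so for $|h|$ small the infimum is decided by curves confined to $B$, where the frozen-metric estimate (length in a fixed inner-product norm is at least the norm of the endpoint difference) applies. The quantifier order is also sound: for fixed $\varepsilon$ you choose $r$, take $h\to 0$, and only then send $\varepsilon\to 0$. In short, a complete and correct proof of the omitted statement, closer in spirit to the constant-coefficient comparison inside the paper's proof of Proposition \ref{weakgradientforc1} than to its test-plan formalism.
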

 
 We will show that if $f \in C^1$, then $|\nabla f|_g$ is the minimal weak upper gradient. 
{The next lemma will be used in the proof of \thref{weakgradientforc1}.}
\begin{lemma}\thlabel{Lebesguepointsadvanced} 
Let $M$ be a smooth manifold equipped with a $C^0$-Riemannian metric $g$ and a measure $\mu$ defined via $\di\mu = h^2\di\vol_g$, for some $h \in C^0(M, (0, \infty))$.
Let $k \in L^1_{\rm{loc}}(M, \mu)$ and fix some coordinate patch $(V_m, \psi_m)$. Let $x \in \psi_m(V_m)$  and $\theta > 0$, such that $B_{4\theta}^{euc}(x) \subset \psi_m(V_m)$.  
For $\delta \in (0, \theta)\cap \Q$, $\dot{\gamma} \in \Q^n\cap B_\theta^{euc}(0)$,  we define 
    \begin{align*}
        F_{\dot{\gamma}, \delta, x}: t \to \frac{1}{\mathcal{L}^n(B_\delta(x))}\int_{B_\delta(x)} |\dot{\gamma}|_{g(y + t\dot{\gamma})}k(y + t\dot{\gamma})\,\di\mathcal{L}^n(y), \quad \forall t, \;|t|< 1.
    \end{align*}
    Then for $\mathcal{L}^n$-a.e.\;$x \in \psi_m(V_m)$ and all {$\delta \in (0, \theta)\cap \Q$}, $\dot{\gamma} \in \Q^n\cap B_\theta^{euc}(0)$, we have that $t=0$ is a Lebesgue point of $ F_{\dot{\gamma}, \delta, x}$.
\end{lemma}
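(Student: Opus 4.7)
My approach is to show that $F_{\dot\gamma,\delta,x}$ is in fact \emph{continuous} in $t$ on $\{|t|<1\}$, so that every point (and in particular $t=0$) is automatically a Lebesgue point, for \emph{every} $x$ with $B^{\mathrm{euc}}_{4\theta}(x)\subset \psi_m(V_m)$ and every admissible pair $(\dot\gamma,\delta)\in \Q^n\times\Q$. This is strictly stronger than what the lemma claims, so the qualifier ``for $\mathcal{L}^n$-a.e.\;$x$'' is automatic, and no Fubini or countable-union-of-null-sets argument over the parameter family $(\dot\gamma,\delta)$ is required.

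The first step is the translation $z=y+t\dot\gamma$ in the integral defining $F_{\dot\gamma,\delta,x}(t)$. Since Euclidean translations have unit Jacobian, this rewrites
$$F_{\dot\gamma,\delta,x}(t)=\frac{1}{\mathcal{L}^n(B_\delta(x))}\int_{B_\delta(x+t\dot\gamma)} G(z)\,\di\mathcal{L}^n(z),\qquad G(z):=|\dot\gamma|_{g(z)}\,k(z).$$
The hypotheses $|\dot\gamma|<\theta$, $\delta<\theta$, $|t|<1$, and $B^{\mathrm{euc}}_{4\theta}(x)\subset\psi_m(V_m)$ force $B_\delta(x+t\dot\gamma)\subset B^{\mathrm{euc}}_{2\theta}(x)\subset\psi_m(V_m)$, so everything happens inside the chart.

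The second step is to check $G\in L^1_{\mathrm{loc}}(\psi_m(V_m),\mathcal{L}^n)$. Continuity of $g$ makes $z\mapsto|\dot\gamma|_{g(z)}$ continuous, hence bounded on the compact set $\overline{B^{\mathrm{euc}}_{2\theta}(x)}$; continuity and strict positivity of $h$ and $\sqrt{|g|}$ provide two-sided bounds on that same compact set, so the hypothesis $k\in L^1_{\mathrm{loc}}(M,\mu)$ is equivalent in this chart to $k\in L^1_{\mathrm{loc}}(\psi_m(V_m),\mathcal{L}^n)$, and $G$ is therefore in $L^1_{\mathrm{loc}}$.

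The third step is the continuity of the ball-average $\tilde G_\delta(w):=\frac{1}{\mathcal{L}^n(B_\delta)}\int_{B_\delta(w)}G\,\di\mathcal{L}^n$. If $w_n\to w$ inside $B^{\mathrm{euc}}_{2\theta}(x)$, then $\mathbf{1}_{B_\delta(w_n)}(z)\to \mathbf{1}_{B_\delta(w)}(z)$ for every $z$ outside the $\mathcal{L}^n$-null sphere $\partial B_\delta(w)$, while for $n$ large the integrand is dominated by $|G|\,\mathbf{1}_{B_{2\delta}(w)}\in L^1$. Dominated convergence yields $\tilde G_\delta(w_n)\to\tilde G_\delta(w)$. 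Hence $t\mapsto F_{\dot\gamma,\delta,x}(t)=\tilde G_\delta(x+t\dot\gamma)$ is a composition of two continuous maps and therefore continuous on $\{|t|<1\}$. Continuity of $F_{\dot\gamma,\delta,x}$ trivially implies that every point, and in particular $t=0$, is a Lebesgue point.

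The only mildly delicate ingredient is the dominated-convergence step giving continuity of $\tilde G_\delta$; everything else is a translation change of variables and bookkeeping of the local integrability hypothesis, so I do not anticipate a substantive obstacle.
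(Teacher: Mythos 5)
Your proof is correct, and it takes a genuinely different (and in fact stronger) route than the paper. The paper keeps the averaged function $F_{\dot\gamma,\delta,z}$ as a merely measurable function of $t$: it applies the one-dimensional Lebesgue differentiation theorem to get that a.e.\ $t\in(-1,1)$ is a Lebesgue point, uses the translation identity $F_{\dot\gamma,\delta,z}(t+s)=F_{\dot\gamma,\delta,z+t\dot\gamma}(s)$ to convert "a.e.\ $t$ for a fixed centre" into "$t=0$ for a.e.\ centre on the line through $z$ in direction $\dot\gamma$", and then integrates over the orthogonal hyperplane via Fubini; the restriction to rational $(\dot\gamma,\delta)$ is exactly what allows the exceptional null sets to be collected over a countable parameter family. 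You instead observe that after the translation change of variables $F_{\dot\gamma,\delta,x}(t)$ is the ball average of the fixed function $G=|\dot\gamma|_{g(\cdot)}k(\cdot)\in L^1_{\rm loc}$ at the moving centre $x+t\dot\gamma$, and that the ball average of an $L^1_{\rm loc}$ function is continuous in the centre (your dominated convergence argument is fine; equivalently, $|\tilde G_\delta(w)-\tilde G_\delta(w')|\le \frac{1}{\mathcal{L}^n(B_\delta)}\int_{B_\delta(w)\triangle B_\delta(w')}|G|$ together with absolute continuity of the integral on the compact set $\overline{B^{euc}_{3\theta}(x)}\subset B^{euc}_{4\theta}(x)\subset\psi_m(V_m)$, which the $4\theta$ margin guarantees). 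Continuity at $t=0$ trivially implies the Lebesgue point property, so you obtain the conclusion for \emph{every} admissible $x$ and every, not only rational, $(\dot\gamma,\delta)$ — the a.e.\ qualifier, the Fubini step, and the countability of the parameter set are all rendered unnecessary. This stronger conclusion is still exactly what is used later (estimate \eqref{complicatedlebesguepointweakgradient} only needs $\frac{1}{2s}\int_{-s}^{s}F\,\di t\to F(0)$), so nothing downstream is lost; the paper's Fubini-over-lines mechanism would only be needed if one wanted Lebesgue points of the unaveraged restriction $t\mapsto G(x+t\dot\gamma)$ along individual lines, which is not what the lemma asserts.
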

 \begin{proof}
     As $g$ and $h$ are continuous, we get that $(\psi_m)_*k \in L^1_{\rm{loc}}(\psi_m(V_m), \mathcal{L}^n)$.
     Fix $w \in \psi_m(V_m)$, and choose $\theta > 0$ accordingly. Moreover, fix $\delta$ and $\dot{\gamma}$. For any $z \in B^{euc}_\theta(w)$, the Lebesgue differentiation theorem yields that $\mathcal{L}^1$-almost every $t \in (-1, 1)$ is a Lebesgue point of $ F_{\dot{\gamma}, \delta, z}$. 
     In other words that means that for $\mathcal{L}^1$-almost every $y$ on the line $\{z+t\dot{\gamma}, |t| <1\}$, $t=0$ is a Lebesgue point of $ F_{\dot{\gamma}, \delta, y}$. The set of non-Lebesgue points on that line shall be denoted by $N_{\dot{\gamma}, \delta, z}$, and it holds that $\mathcal{L}^1(N_{\dot{\gamma}, \delta, z})=0$. Let $H_{\dot{\gamma}, w}$ be the $(n-1)$-dimensional hyperplane through $w$ that is orthogonal to $\dot{\gamma}$,  and intersected with $B^{euc}_{\theta}(w)$. Then, by Fubini's Theorem, it holds
     \begin{align*}
         \mathcal{L}^n\Big(\bigcup_{z \in H_{\dot{\gamma}, w}}N_{\dot{\gamma}, \delta, z}\Big) = \int_{ H_{\dot{\gamma}, w}} \mathcal{L}^1(N_{\dot{\gamma}, \delta, z}) \di\mathcal{L}^{n-1}(z) = 0. 
     \end{align*}
     Hence for $\mathcal{L}^n$-almost every $y \in B^{euc}_\theta(w)$, $t=0$ is a Lebesgue point of $ F_{\dot{\gamma}, \delta, y}$. Note that, by construction,   $\dot{\gamma}$ and $\delta$  vary in a countable set. This shows the claim in an open neighbourhood around $w$. As $w$ was arbitrary, the proof is complete. 
 \end{proof}
\begin{proposition}\thlabel{weakgradientforc1}
    Let $M$ be a smooth manifold, $g$ a $C^0$-Riemannian metric on $M$ and $\mu$ a measure on $M$ defined by $\di\mu = h^2 \di\vol_g$, where $h \in C^0(M, (0, \infty))$. 
    Let $f$ be a $C^1$-function. Then 
    \begin{align*}
        |Df|_w(x) = |\nabla f|_g(x), \quad {\text{for $\mu$-a.e. x.}}
    \end{align*}
\end{proposition}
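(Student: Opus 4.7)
The upper bound $|Df|_w \leq |\nabla f|_g$ is already noted in the excerpt, using the chain rule along absolutely continuous curves and Proposition \ref{metric_speed_explicit}. So the plan is to prove the converse: for any weak upper gradient $G \in L^2(\mu)$ of $f$, we have $|\nabla f|_g \leq G$ at $\mu$-a.e.\;point. This gives $|\nabla f|_g \leq |Df|_w$ by the minimality of $|Df|_w$.

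I would work locally in a chart $(V_m, \psi_m)$, identified with its image. Fix $\theta > 0$ and consider points $x \in \psi_m(V_m)$ with $B^{euc}_{4\theta}(x) \subset \psi_m(V_m)$. Pick $x$ in the full measure subset provided by Lemma \ref{Lebesguepointsadvanced} applied with $k = G$, intersected with the usual Lebesgue differentiation set for $y \mapsto |\dot\gamma|_g(y) G(y)$ for every $\dot\gamma \in \mathbb{Q}^n \cap B^{euc}_\theta(0)$ (a countable family of conditions). For each such rational $\dot\gamma$, each rational $\delta \in (0,\theta)$, and each small $s \in (0,1)$, construct the probability measure $\boldsymbol{\pi}_{\dot\gamma,\delta,s}$ on $C([0,1],M)$ obtained by pushing forward the normalised Lebesgue measure on $B_\delta(x)$ through the map $y \mapsto (\tau \mapsto \psi_m^{-1}(y+s\tau\dot\gamma))$. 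By continuity (and positivity) of $g$ and $h$ on the compact set $\psi_m^{-1}(\overline{B_{2\theta}(x)})$, the Radon--Nikodym density of $(e_\tau)_\#\boldsymbol{\pi}$ with respect to $\mu$ is bounded uniformly in $\tau, s$, and the squared metric speed $s^2|\dot\gamma|_g^2$ is uniformly bounded, so $\boldsymbol{\pi}_{\dot\gamma,\delta,s}$ is a valid test plan.

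Applying the weak upper gradient inequality \eqref{weakgradient} to $\boldsymbol{\pi}_{\dot\gamma,\delta,s}$ and performing the change of variables $t = s\tau$ in the time integral yields
\begin{equation*}
\frac{1}{\mathcal{L}^n(B_\delta(x))} \int_{B_\delta(x)} \bigl|f(\psi_m^{-1}(y+s\dot\gamma)) - f(\psi_m^{-1}(y))\bigr|\,\di y \;\leq\; \int_0^s F_{\dot\gamma,\delta,x}(t)\,\di t,
\end{equation*}
with $F_{\dot\gamma,\delta,x}$ as in Lemma \ref{Lebesguepointsadvanced} with $k=G$. I then divide by $s$ and let $s \downarrow 0$: the left-hand side converges to $\tfrac{1}{\mathcal{L}^n(B_\delta(x))}\int_{B_\delta(x)} |\di(f\circ\psi_m^{-1})(y)\cdot\dot\gamma|\,\di y$ by the $C^1$ regularity of $f$ (uniformly on the compact chart neighbourhood), while the right-hand side converges to $F_{\dot\gamma,\delta,x}(0)$ precisely because $t=0$ is a Lebesgue point of $F_{\dot\gamma,\delta,x}$ by Lemma \ref{Lebesguepointsadvanced}.

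Next I let $\delta \downarrow 0$: continuity of $\di f$ gives convergence of the left-hand side to $|\di f_x(\dot\gamma)|$ (viewing $\dot\gamma$ as a tangent vector), and the choice of $x$ as a Lebesgue point for $|\dot\gamma|_g G$ gives convergence of the right-hand side to $|\dot\gamma|_{g(x)}\, G(x)$. This yields
\begin{equation*}
|\di f_x(\dot\gamma)| \;\leq\; |\dot\gamma|_{g(x)}\, G(x) \qquad \text{for every } \dot\gamma \in \mathbb{Q}^n\cap B^{euc}_\theta(0),
\end{equation*}
and by density and continuity of both sides in $\dot\gamma$, for every $\dot\gamma \in T_xM$. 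Choosing $\dot\gamma = \nabla f(x)$ and using $\di f_x(\nabla f(x)) = |\nabla f|_g^2(x)$ gives $|\nabla f|_g(x) \leq G(x)$ (the case $\nabla f(x)=0$ is trivial). Covering $M$ by countably many such charts yields the inequality $\mu$-a.e., completing the proof.

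The main obstacle is the commutation of limits: I take $s \downarrow 0$ before $\delta \downarrow 0$, which is exactly why Lemma \ref{Lebesguepointsadvanced} is stated as giving $t=0$ as a Lebesgue point of the $\delta$-averaged function $F_{\dot\gamma,\delta,x}$, rather than using ordinary Lebesgue differentiation along a one-dimensional line (along which $G \in L^2_{\rm loc}$ has no a priori trace). A secondary technical point is ensuring the uniform-in-$\tau$ density bound for the test plan, which relies on the positivity and continuity of $h$ and the non-degeneracy of the continuous metric $g$ on the relatively compact neighbourhood.
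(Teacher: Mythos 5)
Your proposal is correct and rests on the same ingredients as the paper's proof: test plans made of parallel straight segments pushed forward from a small ball, Proposition \ref{metric_speed_explicit} for the metric speed, and \thref{Lebesguepointsadvanced} to handle the averaged function $F_{\dot\gamma,\delta,x}$ at $t=0$. The only difference is organisational: you prove directly that every $L^2$ weak upper gradient $G$ satisfies $|\di f_x(\dot\gamma)|\leq |\dot\gamma|_{g(x)}G(x)$ via the two-step limit $s\downarrow 0$, then $\delta\downarrow 0$, and conclude by minimality, whereas the paper fixes a rational direction nearly aligned with $\nabla f(x)$ and runs a quantitative contradiction against $|Df|_w$ with explicit $\e$-bookkeeping; both arguments are essentially the same.
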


\begin{proof}
    { First, we observe that $|\nabla f|_g$ is a weak upper gradient because for any absolutely continuous curve $\gamma$, we have
    \begin{align*}
    |f(\gamma_1)-f(\gamma_0)|= \left|\int_0^1\langle \nabla f, \dot{\gamma_t}\rangle_g\big|_{\gamma_t}\, \di t \right| \leq \int_0^1 |\nabla f|_g(\gamma_t)|\dot{\gamma_t}|_g\, \di t.
    \end{align*}
    To see that it is minimal,} we argue by contradiction. Suppose that $|\nabla f|_g$ is not minimal. Then there exists an $\e \in (0, \frac{1}{4})$ such that $\mu(\{|\nabla f|_g - |Df|_w > \e\}) > 0$.
    We can then find an open set $V$ such that $\overline{V}$ is compact, is contained in one coordinate patch and $\mu(\{|\nabla f|_g - |Df|_w > \e\}\cap V)>0$. We denote by $\psi: V \to \R^n$ the associated coordinate chart.
    By the continuity of $g$ and $h$, there exists a constant $C= C(g, h, V)$ such that for each Borel set $E \subset V$, $ \frac{1}{C}\mathcal{L}^n(\psi(E)) \leq \mu(E) \leq C \mathcal{L}^n(\psi(E))$.
    For almost all $x \in \psi(V)$ take $\theta, \dot{\gamma}, \delta$ as in \thref{Lebesguepointsadvanced}. It still holds $\mathcal{L}^n(\{|\nabla f|_g - |Df|_w > \e\}\cap V)>0$. 
    By the Lebesgue differentiation theorem and \thref{Lebesguepointsadvanced} with $k= |Df|_w$, we get that there exists an $x \in \{|\nabla f|_g - |Df|_w > \e\}\cap V$ such that $x$ is a Lebesgue point of $|Df|_w$, and for all $\dot{\gamma}, \delta$ as above, $t = 0$ is a Lebesgue point of $ F_{\dot{\gamma}, \delta, x}$. 
     Set
     \begin{align*}
         \eta = \frac{1}{101}\min\Bigg( 1, \frac{1}{\norm{\nabla f}_{L^\infty}}, \frac{1}{2n^3\norm{g}_{L^\infty}}\Bigg)^2.
     \end{align*}
     Choose $\dot{\gamma} \in \Q^n$ such that  and
     {
        \begin{equation}\label{choice_of_gamma_dot_for_cs}
        \frac{99}{100}\leq |\dot{\gamma}|_g(x)\leq \frac{101}{100} \quad \mathrm{and} \quad
            |\langle\nabla f, \dot{\gamma}\rangle_g(x)-|\nabla f|_g(x)|\dot{\gamma}|_g(x)| \leq \frac{1}{4}\eta \e.
        \end{equation}
     }
     Note that if $t=0$ is a Lebesgue-point of $F_{x, \dot{\gamma}, \delta}$ then it is a Lebesgue-point of $F_{x, \alpha\dot{\gamma}, \delta}$ for all $\alpha \in \Q$.
     
     { Now choose } $ q \in \Q\cap(0, \theta)$ such that for all $\delta \in (0, q)$, it holds 
     \begin{align}
         \frac{1}{\mathcal{L}^n(B_\delta(x))} \int_{B_\delta(x)} (|Df|_w(y)-|Df|_w(x)) &< \eta \e, \label{x_lebesgue_point}\\
         ||\nabla f|_g(y)-|\nabla f|_g(z)| &\leq \frac{1}{2}\eta \e, \ \forall y, z \in B_{3\delta}(x), \label{gradients_close_continuity} \\
         |g_{ij}(y)-g_{ij}(z)| &\leq \frac{1}{2}\eta \e, \ \forall 1\leq i, j\leq n, \   y, z \in B_{3\delta}(x). \label{metric_g_close_continuity}
     \end{align}
     { The first inequality can be achieved because $x$ is a Lebesgue point of $|Df|_w$ and the remaining inequalities follow from the continuity of $g$ and $\nabla f$.}
We can now compute that for $s \leq q$ and $y \in B_q(x)$,
\begin{align*}
    |f(y-s\dot{\gamma})-f(y+s\dot{\gamma})| = \int_{-s}^s\langle\nabla f, \dot{\gamma} \rangle_g(y+t\dot{\gamma})\,\di t.
\end{align*}
{ Using \eqref{choice_of_gamma_dot_for_cs}, \eqref{gradients_close_continuity}, and \eqref{metric_g_close_continuity}, we get that for $t \in [-s, s]$ it holds}
\begin{align*}
    |\langle\nabla f, \dot{\gamma} \rangle_g(y+t\dot{\gamma})-|\nabla f|_g|\dot{\gamma}|_g(x)| \leq& |\langle\nabla f(y+t\dot{\gamma}) ,\dot{\gamma}\rangle_{g(y+t\dot{\gamma})-g(x)}| +|\langle \nabla f(y+t\dot{\gamma})-\nabla f(x), \dot{\gamma} \rangle_{g(x)}| \\
    &+ |\langle\nabla f, \dot{\gamma}\rangle_g(x)-|\nabla f|_g(x)|\dot{\gamma}|_g(x)| \leq \frac{3\e}{100}.
\end{align*}
Hence, for all {$\delta \in (0,q),$} $y \in B_\delta(x),$
\begin{align}\label{differenceversusgradient}
    |(|f(y-s\dot{\gamma})-f(y+s\dot{\gamma})|)-(2s|\nabla f|_g(x)|\dot{\gamma}|_g(x))| \leq \frac{6s\e}{100}.
\end{align}
Moreover, {again by \eqref{choice_of_gamma_dot_for_cs}, \eqref{gradients_close_continuity}, and \eqref{metric_g_close_continuity},} we get that 
\begin{align}
    \Big|\frac{1}{|B_q(x)|}\int_{B_q(x)} |\nabla f|_g (y)\di\mathcal{L}^n(y) -|\nabla f|_g (x) \Big| &\leq \frac{1}{100} \e \label{lebesguepointgradienta}, \\
    \Big|\frac{1}{|B_q(x)|}\int_{B_q(x)} |\nabla f|_g|\dot{\gamma}|_g(y)\di\mathcal{L}^n(y) -|\nabla f|_g|\dot{\gamma}|_g (x) \Big| &\leq \frac{1}{100} \e, \label{lebesguepointgradientb}
\end{align}
and { \eqref{x_lebesgue_point} yields that}
\begin{align}\label{lebesguepointweakgradient}
     \Big|\frac{1}{|B_q(x)|}\int_{B_q(x)} |Df|_w(y)\di\mathcal{L}^n(y)  -|Df|_w(x) \Big| \leq \frac{1}{100} \e,
\end{align}
thus,
\begin{align}\label{positivegapgradients}
    \frac{1}{|B_q(x)|}\int_{B_q(x)} (|\nabla f|_g -|Df|_w)(y)\di\mathcal{L}^n(y) > \frac{9\e}{10}. 
\end{align}
Using that $t=0$ is a Lebesgue point of $F_{\dot{\gamma},q, x}$, and the fact that $g$ and $|\nabla f|_g$ are continuous, we can choose $s \in (0, q)$ small enough such that 
\begin{align}\label{complicatedlebesguepointgradient}
    \Bigg|\frac{1}{|B_q(x)|}&\int_{-s}^s\int_{B_q(x)}|\nabla f|_g(y +t\dot{\gamma})|\dot{\gamma}|_{g(y +t\dot{\gamma})}\,\di\mathcal{L}^n(y)\di t - 2s  \frac{1}{|B_q(x)|}\int_{B_q(x)} |\nabla f|_g (y)|\dot{\gamma}|_g(y)\di\mathcal{L}^n(y)\Bigg| \leq \frac{2s\e}{100}, 
\end{align}
and
\begin{align}\label{complicatedlebesguepointweakgradient}
    \Bigg|\frac{1}{|B_q(x)|}&\int_{-s}^s\int_{B_q(x)}|Df|_w(y +t\dot{\gamma})|\dot{\gamma}|_{g(y +t\dot{\gamma})}\,\di\mathcal{L}^n(y)\di t- 2s  \frac{1}{|B_q(x)|}\int_{B_q(x)} |Df|_w(y)|\dot{\gamma}|_g(y)\di\mathcal{L}^n(y)\Bigg| \leq \frac{2s\e}{100}.
\end{align}
Now, we have all the ingredients to construct a test plan that yields a contradiction. For $y \in B_q(x)$ define $\gamma_y: [0, 1]\to \psi_m(V_m)$, $t\mapsto y+2(t-\frac{1}{2}) s\dot{\gamma}$ and define a test plan $\boldsymbol{\pi}$ as
\begin{align*}
    \di\boldsymbol{\pi}(\gamma):= \left\{ \begin{array}{ll}
           \frac{1}{|B_{q}(x)|}\di\mathcal{L}^n(y) & \mathrm{if}\ \gamma = \gamma_{y}, \ \mathrm{for \ some} \ y \in  B_{q}(x), \\
           0\  & \mathrm{otherwise}.
        \end{array}\right.
\end{align*}
Now, we can directly compute that 
\begin{align}\label{testingdifferenceeasy}
    \int |f(\gamma_1)-f(\gamma_0)| \di\boldsymbol{\pi}(\gamma) = \frac{1}{|B_{q}(x)|}\int_{B_q(x)}|f(y-s\dot{\gamma})-f(y+s\dot{\gamma})|\, \di\mathcal{L}^n(y).
\end{align}
Moreover, 
\begin{align}
    &\iint_0^1 |\nabla f|_g(\gamma_t)|\dot{\gamma}_t|_{g(\gamma_t)}\, \di t\, \di\boldsymbol{\pi}(\gamma) \nonumber \\
    &\quad = \frac{1}{|B_q(x)|}\int_{B_q(x)}\int_0^1 |\nabla f|_g\Big(y +2\Big(t-\frac{1}{2}\Big)s\dot{\gamma}\Big)2s|\dot{\gamma}|_{g((\gamma_y)_t)}\,\di t \, \di\mathcal{L}^n(y) \nonumber  \\
    &\quad = \frac{1}{|B_q(x)|}\int_{B_q(x)}\int_{-s}^s |\nabla f|_g(y+t\dot{\gamma})|\dot{\gamma}|_{g((\gamma_y)_t)}\,\di t\, \di\mathcal{L}^n(y), \label{transformationtestplangradient}
\end{align}
and similarly
\begin{align*}
    &\iint_0^1 |Df|_w(\gamma_t)|\dot{\gamma}_t|_{g(\gamma_t)}\, \di t\, \di\boldsymbol{\pi}(\gamma)  \\
    &\quad = \frac{1}{|B_q(x)|}\int_{B_q(x)}\int_{-s}^s |Df|_w(y+t\dot{\gamma})|\dot{\gamma}|_{g((\gamma_y)_t)}\,\di t \, \di\mathcal{L}^n(y).
\end{align*}
We then have that
\begin{align}\label{differencetesting}
    &\iint_0^1 |\nabla f|_g(\gamma_t)|\dot{\gamma}_t|_{g(\gamma_t)}\, \di t\,\di\boldsymbol{\pi}(\gamma)-\iint_0^1 |Df|_w(\gamma_t)|\dot{\gamma}_t|_{g(\gamma_t)}\, \di t\,\di\boldsymbol{\pi}(\gamma) \nonumber \\
    &\quad = \frac{1}{|B_q(x)|}\int_{B_q(x)}\int_{-s}^s (|\nabla f|_g-|Df|_w)(y+t\dot{\gamma})|\dot{\gamma}|_{g(y+t\dot{\gamma})}\,\di t\di\mathcal{L}^n(y) \nonumber\\
    &\overset{\eqref{complicatedlebesguepointweakgradient}, \eqref{complicatedlebesguepointgradient}}{\geq} 2s\frac{1}{|B_q(x)|}\int_{B_q(x)} (|\nabla f|_g-|Df|_w)|\dot{\gamma}|_g(y) \, \di\mathcal{L}^n(y) - \frac{4s\e}{100} \nonumber\\
    &\quad \overset{\eqref{positivegapgradients}}{>} \frac{99}{100}\frac{9s\e}{10} - \frac{4s\e}{100} \geq \frac{3s\e}{2} > 0.
\end{align}
The combination of the above estimates gives: 
\begin{align*}
    \int |f(\gamma_1)-f(\gamma_0)| \di\boldsymbol{\pi}(\gamma)  &\overset{\eqref{testingdifferenceeasy},\eqref{differenceversusgradient}}{\geq} 2s|\nabla f|_g(x)|\dot{\gamma}|_g(x) - \frac{6s\e}{100} \\
    &\overset{\eqref{lebesguepointgradientb}}{\geq} \frac{2s}{|B_q(x)|}\int_{B_q(x)} |\nabla f|_g|\dot{\gamma}|_g(y) \di\mathcal{L}^n(y) - \frac{s\e}{10}\\
    &\overset{\eqref{complicatedlebesguepointgradient}}{\geq} \frac{1}{|B_q(x)|}\int_{-s}^s\int_{B_q(x)}|\nabla f|_g(y +t\dot{\gamma})|\dot{\gamma}|_{g(y +t\dot{\gamma})}\,\di\mathcal{L}^n(y)\di t - \frac{3s\e}{20}\\
    &\overset{\eqref{transformationtestplangradient}}{=} \iint_0^1 |\nabla f|_g(\gamma_t)|\dot{\gamma}_t|_{g(\gamma_t)}\, \di t\di\boldsymbol{\pi}(\gamma) - \frac{3s\e}{20}\\
&\overset{\eqref{differencetesting}}{>}\iint_0^1|Df|_w(\gamma_t)|\dot{\gamma}_t|_{g(\gamma_t)} \,\di t\di\boldsymbol{\pi}(\gamma) -\frac{3s\e}{20}+\frac{3s\e}{2}.
\end{align*}
This yields a contradiction, as $|Df|_w$ fails to satisfy the condition of a weak upper gradient for the chosen test plan $\boldsymbol{\pi}$. 
\end{proof}
In the following, we will investigate $W^{1,2}_w(X)$  for $(X, \sfd, \mathfrak{m}) = (M, \sfd_g, \mu)$, with $\di\mu = h^2\di\vol_g$. \\
A mollification argument together with {Rademacher's theorem and \thref{weakgradientforc1}} shows that: 
\begin{proposition}\label{Lipschitz_weak_upper_gradient} 
    Let $(M, \sfd_g, \mu)$ be the metric measure space arising from a smooth manifold $M$ a continuous Riemannian metric $g$ and a measure given by $\di \mu = h^2\di \vol_g$ for a function $h \in C^0(M, (0, \infty))$.
    Let $f$ be a locally $\sfd_g$-Lipschitz function. Then $f$ is differentiable $\mu$-almost everywhere. If in addition $f, |\nabla f|_g \in L^2(M, \mu)$ then $|Df|_w = |\nabla f|_g$ almost everywhere, moreover $f \in W^{1,2}_w(M) \cap H^2_1(M, \mu)$, and $\norm{f}_{W^{1,2}_w(M)} = \norm{f}_{H^2_1(M, \mu)}$.
\end{proposition}
{
\begin{proof}
    Throughout this proof, we will fix a locally finite cover $(B_i, \psi_i)_{i=1}^\infty$ of $M$ such that each $B_i$ is a regular coordinate ball.
    As $f$ is locally $\sfd_g$-Lipschitz, we can apply \thref{lipschitz} (or Theorem 4.5 in \cite{burtscher2012length}), to see that for each $i\geq 1$, $(\psi_i)_*f$ is locally Lipschitz continuous with respect to the Euclidean metric on $\psi_i(B_i)$. 
    By Rademacher's theorem, we have that for each $i$,  $(\psi_i)_*f$ is differentiable $\mathcal{L}^n$-almost everywhere in $\psi_i(B_i)$. As $(\psi_i)_\#\mu$ is absolutely continuous with respect to $\mathcal{L}^n$ on $\psi_i(B_i)$, we get that $(\psi_i)_*f$ is differentiable $\mu$-almost everywhere in $\psi_i(B_i)$. As $M$ is the countable union of the $B_i$, we get that $f$ is differentiable $\mu$-almost everywhere in $M$, hence the function $|\nabla f|_g$ is well-defined in $L^\infty_{loc}(M, \mu)$.
    
    Assume $f, |\nabla f|_g \in L^2(M, \mu)$.
    We will prove that $|\nabla f|_g$ is a weak upper gradient for $f$. Let $\e > 0$.
    Fix a partition of unity $\zeta_i$ subordinate to the cover $B_i$ and proceed as follows. For each $i=1, 2, \ldots,$ choose $0 < \delta_i \leq \dist^{euc}(\supp\, \zeta_i, \partial \psi_i(B_i))$ such that
    \begin{align*}
        &\norm{\rho_{\delta_i}*(\psi_i)_*\zeta_if-(\psi_i)_*\zeta_if}_{C^0(\psi_i(B_i))} \leq \frac{\e}{2^{i}} \quad \mathrm{and}\\
        &\norm{\rho_{\delta_i}*(\psi_i)_*\zeta_if-(\psi_i)_*\zeta_if}_{W^{1,2}(\psi_i(B_i), (\psi_i)_\#\mu)} \leq \frac{\e}{2^{i}(1+ n^2(\norm{(\psi_i)_*g}_{L^\infty(\psi_i(B_i))}+\norm{(\psi_i)_*g^{-1}}_{L^\infty(\psi_i(B_i))}))}.
    \end{align*}
   This is possible by classical results about $L^p$-spaces. 
    Define $f_\e = \sum_{i=1}^\infty \rho_{\delta_i}*(\psi_i)_*\zeta_if$. As the cover $B_i$ is locally finite, $f_\e$ is well-defined.
    That produces a sequence $(f_{\e})_{\e > 0}$. 
    Note that by construction, we have that for every point $p \in M$, it holds 
    \begin{align*}
        |f_\e(p)-f(p)| \leq \e.
    \end{align*}
    Moreover,
    \begin{align*}
        \norm{f_\e-f}_{H^2_1(M, \mu)} &\leq n^2\sum_{i=1}^\infty (\norm{g}_{L^\infty(\psi_i(B_i))}+\norm{g^{-1}}_{L^\infty(\psi_i(B_i))})\norm{\rho_{\delta_i}*(\psi_i)_*\zeta_if-(\psi_i)_*\zeta_if}_{W^{1,2}(\psi_i(B_i), (\psi_i)_\#\mu)} \leq \e.
    \end{align*}
    Fix a test plan $\boldsymbol{\pi}$. For all $\e \in (0, 1]$, it holds that 
\begin{align}\label{mollified_weak_upper_gradients}
   \int  |f_\e(\gamma_0)-f_\e(\gamma_1)| \, \di\boldsymbol{\pi}(\gamma)= \int \Bigg| \int_0^1 \langle \nabla f_\e(\gamma_t), \dot{\gamma}_t\rangle_{g(\gamma_t)}\,\di t \Bigg|\di\boldsymbol{\pi}(\gamma) \leq \int \int_0^1 |\nabla f_\e(\gamma_t)|_{g(\gamma_t)}|\dot{\gamma}_t|_{g(\gamma_t)}\,\di t\, \di\boldsymbol{\pi}(\gamma).
\end{align}
As $\e \leq 1$, we have that $|f_\e(x)-f_\e(x')| \leq 2+ |f(x)-f(x')|$, for any $x, x' \in M$.
Now we can use that $\boldsymbol{\pi}$ is a probability measure together with the Hölder inequality, to get that
\begin{align*}
    \left(\int  |f(\gamma_0)-f(\gamma_1)|+2 \, \di\boldsymbol{\pi}(\gamma)\right)^2 &\leq 8 + 2\int  |f(\gamma_0)-f(\gamma_1)|^2 \, \di\boldsymbol{\pi}(\gamma) \leq 8+ 4\int |f(\gamma_0)|^2+|f(\gamma_1)|^2 \, \di\boldsymbol{\pi}(\gamma) \\
    &= 8+4 \int |f|^2 \, \di (e_0)_\#\boldsymbol{\pi}(\gamma) +4 \int |f|^2 \, \di (e_1)_\#\boldsymbol{\pi}(\gamma) \leq 8+8C(\boldsymbol{\pi})\norm{f}_{L^2}^2.
\end{align*}
Hence, we can use dominated convergence theorem to infer that  
\begin{align}\label{lhs_convergence_of_ints}
    \int  |f_\e(\gamma_0)-f_\e(\gamma_1)| \, \di\boldsymbol{\pi}(\gamma) \to \int  |f(\gamma_0)-f(\gamma_1)| \, \di\boldsymbol{\pi}(\gamma),
\end{align}
as $\e \to 0$. Moreover, we have that for all $k \in L^2(M, \mu)$, it holds
\begin{align*}
    \int \int_0^1  |k|(\gamma_t)|\dot{\gamma}_t|_{g(\gamma_t)}\,\di t\,\di\boldsymbol{\pi}(\gamma) &\leq \left(\int \int_0^1  |k|^2(\gamma_t)\,\di t\,\di\boldsymbol{\pi}(\gamma)\right)^{\frac{1}{2}}\left(\int \int_0^1 |\dot{\gamma}_t|^2_{g(\gamma_t)}\,\di t\,\di\boldsymbol{\pi}(\gamma)\right)^{\frac{1}{2}}.
\end{align*}
 By the definition of test plans, we have that 
 \begin{align*}
     \int \int_0^1 |\dot{\gamma}_t|^2_{g(\gamma_t)}\,\di t\,\di\boldsymbol{\pi}(\gamma) < \infty,
 \end{align*}
 and, using Fubini's theorem,
 \begin{align*}
     \int \int_0^1  |k|^2(\gamma_t)|\,\di t\di\boldsymbol{\pi}(\gamma) = \int_0^1 \int |k|^2 \, \di(e_t)_{\#}\boldsymbol{\pi} \di t \leq C(\boldsymbol{\pi})\norm{k}^2_{L^2}. 
 \end{align*}
 Now we can use the $L^2$-convergence of $\nabla f_\e \to \nabla f$ and choose $k = |\nabla f_\e-\nabla f|_g$ to see that 
 \begin{align}\label{rhs_convergence_of_ints}
     \int \int_0^1 |\nabla f_\e(\gamma_t)|_{g(\gamma_t)}|\dot{\gamma}_t|_{g(\gamma_t)}\,\di t\di\boldsymbol{\pi}(\gamma) \to \int \int_0^1 |\nabla f(\gamma_t)|_{g(\gamma_t)}|\dot{\gamma}_t|_{g(\gamma_t)}\,\di t\, \di\boldsymbol{\pi}(\gamma),
 \end{align}
 as $\e \to 0$. The combination of \eqref{lhs_convergence_of_ints} and \eqref{rhs_convergence_of_ints} allows passing to the limit as $\e \to 0$ in \eqref{mollified_weak_upper_gradients}, and obtain
 \begin{align*}
   \int  |f(\gamma_0)-f(\gamma_1)| \, \di\boldsymbol{\pi}(\gamma) \leq \int \int_0^1 |\nabla f(\gamma_t)|_{g(\gamma_t)}|\dot{\gamma}_t|_{g(\gamma_t)}\,\di t\di\boldsymbol{\pi}(\gamma),
 \end{align*}
 hence, $|\nabla f|_g$ is a weak upper gradient. In particular, we get that for all Lipschitz functions $f$, it holds $\norm{f}_{W^{1, 2}_w(M)} \leq \norm{f}_{H^2_1(M, \mu)}$.
 As $f \in H^2_1(M, \mu)$, we can find a sequence $f_p \in H^2_1(M, \mu)\cap C^1(M)$ such that $f_p \to f$ in $H^2_1(M, \mu)$ as $p \to \infty$.
 This implies that $f_p \to f$ in $W^{1, 2}_w(M)$ and by \thref{weakgradientforc1}, $|\nabla f_p|_g=|Df_p|_w \to |\nabla f|_g$ in $L^2(M, \mu)$, hence $|\nabla f|_g= |Df|_w$ almost everywhere, and $\norm{f}_{W^{1, 2}_w(M)} = \norm{f}_{H^2_1(M, \mu)}$.
\end{proof}
}
By \cite[Rem.\;5.5]{ambrosio2014inventio}, the slope $|Df|$ of a $\sfd_g$-Lipschitz function $f$ is a weak upper gradient. Thus, if $f \in L^2(M, \mu)$ is a $\sfd_g$-Lipschitz function such that $|Df| \in L^2(M, \mu)$, then Proposition \ref{Lipschitz_weak_upper_gradient} implies that:
\begin{align}\label{slopegradientestimate}
   |Df|_w = |\nabla f|_g \leq |Df|.
\end{align}
This observation is crucial for the following lemma. 

\begin{lemma}\thlabel{cheegerdomain}
Let $M$ be a smooth manifold, $g$ a $C^0$-Riemannian metric on $M$, $\sfd_g$ the distance induced by $g$, $\vol_g$ the volume form, and $h$ be a positive continuous function. Define the measure $\mu$ by $\di\mu = h^2\di\vol_g$. Then $W^{1,2}_w(M) \subset H^2_1(M, \mu)$ and for each $f \in W^{1,2}_w(M)$, it holds $|Df|_w = |\nabla f|_g$ { a.e.}. 
\end{lemma}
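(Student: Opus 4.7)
The plan is to approximate $f \in W^{1,2}_w(M,\mu)$ by Lipschitz functions whose gradient \emph{vector fields} converge strongly in $L^2(TM,\mu)$, and then leverage Proposition~\ref{Lipschitz_weak_upper_gradient}, which identifies the two Sobolev norms on Lipschitz functions, to pass to the limit in $H^2_1(M,\mu)$.

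First, I would invoke the relaxation representation of the Cheeger energy as the $L^2$-lower semicontinuous envelope of the slope energy on Lipschitz functions \cite{ambrosio2014inventio}: there exist Lipschitz $f_n \to f$ in $L^2(M,\mu)$ such that $\int|Df_n|^2\,\di\mu \to \int|Df|_w^2\,\di\mu$. By \eqref{slopegradientestimate}, $|\nabla f_n|_g = |Df_n|_w \leq |Df_n|$; combining this with $L^2$-lower semicontinuity of $\Ch$, one obtains $\int|\nabla f_n|_g^2\,\di\mu \to \int|Df|_w^2\,\di\mu$, so that $\{\nabla f_n\}$ is bounded in the Hilbert space $L^2(TM,\mu)$. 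Extracting a weak limit $\nabla f_n \rightharpoonup V$ and applying Mazur's lemma yields convex combinations $\tilde f_n = \sum_k a_k^n f_{n+k}$ (each still Lipschitz, still $L^2$-converging to $f$) whose gradient vector fields $\nabla \tilde f_n = \sum_k a_k^n \nabla f_{n+k}$ converge to $V$ strongly in $L^2(TM,\mu)$.

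Second, since Proposition~\ref{Lipschitz_weak_upper_gradient} gives $\tilde f_n \in H^2_1(M,\mu)$ with the $W^{1,2}_w$- and $H^2_1$-norms coinciding, the strong convergences $\tilde f_n \to f$ in $L^2(M,\mu)$ and $\nabla \tilde f_n \to V$ in $L^2(TM,\mu)$ make $\{\tilde f_n\}$ Cauchy in $H^2_1(M,\mu)$. By completeness, it admits a limit in $H^2_1(M,\mu)$ which agrees with $f$ in $L^2$, so $f \in H^2_1(M,\mu)$ with $\nabla f = V$. For the identification $|Df|_w = |\nabla f|_g$: strong $L^2(TM,\mu)$-convergence of the vector fields entails $|\nabla \tilde f_n|_g = |D\tilde f_n|_w \to |\nabla f|_g$ strongly in $L^2(M,\mu)$, and the $L^2$-stability of minimal weak upper gradients shows that $|\nabla f|_g$ is a weak upper gradient of $f$, so $|Df|_w \leq |\nabla f|_g$ $\mu$-a.e. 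Jensen's inequality applied along the convex combinations produces the reverse $\|\nabla f\|_{L^2(TM,\mu)} \leq \||Df|_w\|_{L^2(M,\mu)}$, and combined with the pointwise inequality this forces $|Df|_w = |\nabla f|_g$ $\mu$-a.e.

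The only real technical hurdle is that the minimal weak upper gradient controls the \emph{length} $|\nabla f_n|_g$ but not the full gradient vector field, whereas $H^2_1$ is defined via the vector field itself. Mazur's lemma in the Hilbert space $L^2(TM,\mu)$ is precisely the tool that bridges this gap, upgrading weak vector-valued convergence to strong convergence on suitable convex combinations and thereby allowing one to transfer information between the intrinsic Cheeger framework and the chart-based $H^2_1$ framework.
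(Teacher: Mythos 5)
Your argument is correct, and it reaches the conclusion by a route that differs from the paper's in its second half. Both proofs start the same way: a Lipschitz approximating sequence from the relaxation of the Cheeger energy, together with \eqref{slopegradientestimate} and the norm identity on Lipschitz/smooth functions (Proposition \ref{Lipschitz_weak_upper_gradient}, \thref{weakgradientforc1}), gives a sequence bounded in $H^2_1(M,\mu)$. The paper then concludes $f\in H^2_1(M,\mu)$ by weak compactness in the reflexive space $H^2_1$, and obtains the identity $|Df|_w=|\nabla f|_g$ by a \emph{second} approximation: smooth $C^2_1$-functions converging to $f$ in the $H^2_1$-norm are Cauchy in $W^{1,2}_w$ by the norm equality of \thref{weakgradientforc1}, and completeness of $W^{1,2}_w$ together with $\big||Df_k|_w-|Df|_w\big|\le |D(f_k-f)|_w$ forces $|\nabla f_k|_g=|Df_k|_w\to|Df|_w$ in $L^2$. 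You instead stay with the single Lipschitz sequence, use Mazur's lemma in $L^2(TM,\mu)$ to upgrade weak convergence of the gradient fields to strong convergence along convex combinations (which remain Lipschitz, with $\nabla\tilde f_n=\sum_k a^n_k\nabla f_{n+k}$ a.e.), and then close the identity by combining the stability of weak upper gradients under $L^2$-convergence from \cite{ambrosio2014inventio} (giving $|Df|_w\le|\nabla f|_g$ a.e.) with the opposite $L^2$-norm bound $\norm{\nabla f}_{L^2(TM,\mu)}\le \norm{|Df|_w}_{L^2(M,\mu)}$ (which is really lower semicontinuity of the norm under weak convergence, or convexity along the Mazur combinations, rather than ``Jensen''), pinching to a.e.\ equality. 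What each approach buys: the paper's route needs only the density of smooth functions in $H^2_1$ (true by definition of the completion) and the Banach structure of $W^{1,2}_w$, avoiding any appeal to the stability theorem for weak upper gradients; your route avoids the second approximation entirely and works with one sequence, at the price of importing that stability result (standard in \cite{ambrosio2014inventio}, though not recalled in the paper) and of checking that the convex recombination is compatible with the Lipschitz class and with the energy convergence supplied by the relaxation. Your energy-convergence form of the relaxation is slightly weaker input than the paper's use of \cite{ambrosio2014duke}, (2.22) ($L^2$-convergence of the slopes), but it suffices for your argument.
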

\begin{proof}
    Let $f \in W^{1,2}_w(M)$. By \cite[(2.22)]{ambrosio2014duke}, we can find a sequence of Lipschitz functions $(f_l)_{l=1}^\infty$ such that $f_l \to f$ in $L^2$ and $|Df_l| \to |Df|_w$ in $L^2$.
    Using \eqref{slopegradientestimate}, we get that $(f_l)_{l=1}^\infty$ is a bounded sequence in $H^2_1(M, \mu)$. As this space is reflexive, we can find a subsequence that converges weakly in $H^2_1(M, \mu)$. As $f_l \to f$ strongly in $L^2$, we get that $f_l \rightharpoonup f$ in $H^2_1(M, \mu)$.
As we now know that $f \in H^2_1(M, \mu)$, we can approximate it with a different sequence $f_k \in H^2_1(M, \mu)\cap C^2_1(M, \mu)$, i.e. $\lim_{l\to \infty}\norm{f_k-f}_{H^2_1(M, \mu)} = 0$. Then $f_k$ is a Cauchy sequence in $H^2_1(M, \mu)$. As for $C^2_1(M, \mu)$-functions $\norm{\cdot}_{W^{1,2}_w(M)} = \norm{\cdot}_{H^2_1(M, \mu)}$ (by \thref{weakgradientforc1}), it is a Cauchy sequence in $W^{1,2}_w$. Hence $|\nabla f_k|_g = |Df_k|_w\to |Df|_w$ in $L^2$, which in particular shows that $|Df|_w = |\nabla f|_g$ almost everywhere. 
\end{proof}
We can now finally identify the space $W^{1,2}_w$ with the classical Sobolev space $H^2_1$:
\begin{corollary}\thlabel{summary_chapter_4}
    Let $M$ be a smooth manifold with a $C^0$-Riemannian metric $g$. Consider the metric measure space $(M, \sfd_g, \mu)$, where $\di\mu = h^2\di\vol_g$ for a continuous positive function $h$. Then $W^{1,2}_w(M) = H^2_1(M, \mu)$ and the Dirichlet form associated to the Cheeger energy on $(M, \sfd_g, \mu)$ is a quadratic form given by the $L^2(M, \mu)$-inner product of the gradients, { more precisely for all functions $f, h \in W^{1,2}_w(M) = H^2_1(M, \mu)$ it holds
    \begin{itemize}
        \item[(i)] $|Df|_w = |\nabla f|_g$. 
        \item[(ii)] $\mathcal{E}(f,h) = \int_M \langle \nabla f, \nabla h \rangle_g \di \mu$
    \end{itemize}
    }
     Moreover,  every $f\in D(\Ch)$ with $|Df|_{w}\leq 1$ $\mu$-a.e. admits a $1$-Lipschitz representative $\mu$-a.e. with respect to $\sfd_{g}$.
\end{corollary}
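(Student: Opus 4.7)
The statement splits naturally into three parts, which I would address in order.

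\emph{Step 1: Identification $W^{1,2}_w(M,\mu) = H^2_1(M,\mu)$.} The inclusion $W^{1,2}_w(M,\mu) \subset H^2_1(M,\mu)$ together with the identity $|Df|_w = |\nabla f|_g$ $\mu$-a.e.\ is the content of \thref{cheegerdomain}. For the reverse inclusion I would take $f \in H^2_1(M,\mu)$ and approximate it by $f_k \in C^2_1(M,\mu)$ in the $H^2_1$-norm; by \thref{weakgradientforc1} each $f_k$ lies in $W^{1,2}_w(M,\mu)$ with $|Df_k|_w = |\nabla f_k|_g$, and the convergence $|\nabla f_k|_g \to |\nabla f|_g$ in $L^2(M,\mu)$ combined with the $L^2$-lower semicontinuity of the Cheeger energy (a standard property of the construction of \cite{ambrosio2014inventio}) forces $f \in D(\Ch) = W^{1,2}_w(M,\mu)$. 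The equality $|Df|_w = |\nabla f|_g$ on the whole space is then provided again by \thref{cheegerdomain}.

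\emph{Step 2: Quadratic nature of the Dirichlet form.} Since $2\Ch(f) = \int_M |\nabla f|_g^2\, \di\mu$ and locally $|\nabla f|_g^2 = g^{ij}\partial_i f\, \partial_j f$ is bilinear in $f$, polarisation yields the symmetric bilinear form
\[
\mathcal{E}(f_1, f_2) = \int_M \langle \nabla f_1, \nabla f_2\rangle_g\, \di\mu,
\]
which, via \thref{carreduchamp}, coincides with the Dirichlet form associated to $2\Ch$ in the infinitesimally Hilbertian setting.

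\emph{Step 3: Sobolev-to-Lipschitz property.} Fix $f \in D(\Ch)$ with $|\nabla f|_g \leq 1$ $\mu$-a.e. I plan to mollify chart-wise. In a coordinate patch, let $f_\e$ denote the Euclidean mollification of $f$. For any base-point $x$, Jensen's inequality applied to the convex quadratic form $v \mapsto g^{ij}(x)\, v_i v_j$ against the probability kernel $\rho_\e$ gives
\[
|\nabla f_\e|_g^2(x) = g^{ij}(x)\,(\partial_i f * \rho_\e)(x)\,(\partial_j f * \rho_\e)(x) \leq \int_{\R^n} g^{ij}(x)\, \partial_i f(x-u)\,\partial_j f(x-u)\, \rho_\e(u)\, \di u.
\]
Uniform continuity of $g$ on compact sets allows me to replace $g^{ij}(x)$ by $g^{ij}(x-u)$ in the integrand with an error $\omega(\e) \to 0$ uniformly on compacts, producing the bound $|\nabla f_\e|_g \leq 1 + \omega(\e)$ locally. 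Assembling these local mollifications via a partition of unity delivers smooth global $f_\e$ with the same bound on compact sets. Since $f_\e \in C^1$, \thref{slopeforc1} identifies $|Df_\e| = |\nabla f_\e|_g$, and the length-space structure of $(M,\sfd_g)$ from Proposition \ref{length_space} upgrades this to
\[
|f_\e(x) - f_\e(y)| \leq (1+\omega(\e))\, \sfd_g(x,y)
\]
on compacts. Anchoring the $f_\e$ at a Lebesgue point of $f$ to control the base value and invoking Arzelà-Ascoli, I would extract a subsequence converging locally uniformly to a $1$-Lipschitz function $\tilde f$ on $(M,\sfd_g)$; the $L^2$-convergence of the mollifications forces $\tilde f = f$ $\mu$-a.e.

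\emph{Main obstacle.} The most delicate step is the quantitative Jensen argument in Step 3: the varying metric $g^{ij}(x)$ must be pushed through the convolution at the cost of an error vanishing uniformly on compacts, a feature that hinges essentially on the continuity (rather than mere measurability) of $g$, and is the reason the uniform bound $|\nabla f|_g \leq 1$ propagates only as $|\nabla f_\e|_g \leq 1 + \omega(\e)$ rather than as a sharp estimate.
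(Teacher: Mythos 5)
Your proposal is correct and follows essentially the same route as the paper: the identification $W^{1,2}_w(M,\mu)=H^2_1(M,\mu)$ via \thref{cheegerdomain} together with approximation by $C^2_1(M,\mu)$-functions and \thref{weakgradientforc1}, the quadratic Dirichlet form by polarisation through \thref{carreduchamp}, and the Sobolev-to-Lipschitz property by mollification (with the continuity of $g$ yielding $|\nabla f_\e|_g \leq 1+\omega(\e)$) combined with the length-space property of Proposition \ref{length_space} to integrate along almost-minimising curves. The differences are only cosmetic: the paper localises $f$ to a coordinate patch at the outset and uses the uniform convergence $f_\delta \to f$ directly, whereas you patch chart-wise mollifications with a partition of unity and extract the limit by Arzel\`a--Ascoli, and in Step 1 you close the reverse inclusion by lower semicontinuity of the Cheeger energy rather than by completeness of $W^{1,2}_w$ as in \thref{cheegerdomain}.
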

\begin{proof}
  We  prove that $H^2_1(M, \mu) \subset W^{1,2}_w(M)$. At first we notice that by \thref{weakgradientforc1}, $C^2_1(M, \mu) \subset W^{1,2}_w(M)$ and for each $f \in C^2_1(M, \mu)$, we have that $\norm{f}_{H^2_1(M, \mu)}=\norm{f}_{W^{1,2}_w(M)}$. Now, fix $f \in H^2_1(M, \mu)$ and approximate it with a sequence $f_k \in C^2_1(M, \mu)$. 
    This is then a Cauchy sequence in $H^2_1(M, \mu)$ and hence in $W^{1,2}_w(M)$, by the norm equality we proved in \thref{cheegerdomain}. Hence, $f_k$ converges in $W^{1,2}_w(M)$ to some $\Tilde{f}$. It follows that $f_k \to \Tilde{f}$ in $L^2(M, \mu)$, hence, $\Tilde{f} = f \in W^{1,2}_w(M)$. We get $W^{1,2}_w(M) = H^2_1(M, \mu)$ from \thref{cheegerdomain}. 
    
    For the second part, we use that by \thref{carreduchamp}, the associated Dirichlet form $\mathcal{E}$ is defined on $(H^2_1(M, \mu))^2$ and is of the form
\begin{align*}
    \mathcal{E}(f, k) = \int_M G(f, k)\,\di\mu,
\end{align*}
where 
\begin{align}\label{explicitcarreduchamp}
        G(f, k)= \lim_{\e \searrow 0} \frac{|D(f+\e k)|_w^2-|Df|_w^2}{2\e} = \langle \nabla f, \nabla k\rangle_g \in L^1(X, \mathfrak{m}).
\end{align}

{ For the last part, without loss of generality by localising to a suitable coordinate patch and by using a partition of unity, we can assume that $f$ has compact support in an open set $U\subset \R^{n}$ endowed with a continuous Riemannian metric $g$ such that for every $x\in U$ it holds that $C^{-1} \,{\rm Id}_{n}\leq g \leq C\, {\rm Id}_{n}$, for some constant $C\geq 1$ uniform on $U$.  It follows that $f$ lies in $W^{1,\infty}(\R^{n})$ and thus, by the classical result in $\R^{n}$, $f$ has a Lipschitz representative $\mathcal{L}^{n}$-a.e. that we identify with $f$. Hence, $f$ is Lipschitz also in $(M,g)$ and, by Proposition \ref{Lipschitz_weak_upper_gradient} and the assumption on $f$, it holds that  $|\nabla f|_{g}=|Df|_{w}\leq 1$ $\mu$-a.e. Let $\rho_\delta(x):= \frac{1}{\delta^n}\rho(\frac{x}{\delta})$, $\delta > 0$, be standard mollifiers in $\R^{n}$ and consider $f_{\delta}:=f\star \rho_{\delta}$. It is easily seen that 
\begin{equation}\label{eq:fdeltatof}
 |\nabla f_{\delta}|_{g}\leq 1 +\theta(\delta)\quad  \text{and} \quad f_{\delta}\to f \quad \text{uniformly as }\delta\to 0.
 \end{equation}
 Here $\lim_{\delta \to 0} \theta(\delta) = 0$. 
Let $x,y\in M, \; x\neq y$. By Proposition \ref{length_space}, for every $\e\in (0, \sfd_{g}(x,y)/2)$ there exists a rectifiable curve $\gamma:[0,1]\to M$ parametrised by arc-length such that 
\begin{equation}\label{eq:arclengthxy}
{\rm{length}}_{g}(\gamma) \leq \sfd_{g}(x,y)+\e, \quad |\dot\gamma|= {\rm{length}}_{g}(\gamma), \; {\mathcal{L}}^{1}\text{-a.e. on }[0,1]. 
\end{equation}
Then 
\begin{align*}
|f_{\delta}(x)- f_{\delta}(y)| &= \left|\int_{0}^{1} \frac{\di}{\di t} f_{\delta}(\gamma_{t}) \, \di t \right| \leq   \int_{0}^{1} |\dot{\gamma}_{t}|_g \, |\nabla f_{\delta}|_{g}  \, \di t  \\
&\overset{\eqref{eq:fdeltatof}, \eqref{eq:arclengthxy}}{\leq}  (1+\theta(\delta))(\sfd_{g}(x,y)+\e).
\end{align*}
Passing to the limit as $\e\to 0$, we get that $f_{\delta}$ is $1+\theta(\delta)$-Lipschitz with respect to $\sfd_{g}$. Passing further to the limit as $\delta\to 0$ and recalling the second in \eqref{eq:fdeltatof} we conclude that $f$ is 1-Lipschitz with respect to $\sfd_{g}$.
}
\end{proof} 

\section{Second order calculus}
In this section, we will specialise the second order calculus developed in \cite[Ch.\;3]{gigli2018nonsmooth}  to the setting of a smooth manifold with a $C^{0}$-Riemannian metric with $L^2_{\rm{loc}}$-Christoffel symbols and $C^{0}\cap W^{1,2}_{\rm{loc}}$-weight on the measure. 

\subsection{{Some elements of the theory for general \texorpdfstring{$\rcd(K, \infty)$}{TEXT}-spaces}}
Let $(M, \sfd, \mathfrak{m})$ be a metric measure space that satisfies the $\rcd(K, \infty)$-condition.
Recall that 
$$D(\Delta): = \{f \in L^2(\mathfrak{m}): \Delta f \in L^2( \mathfrak{m})\} \subset W^{1,2}_w(M).$$
{ Denote by $\mathrm{Const}(M)$ the space of constant functions on $M$.}
Following the notation of \cite[Ch.\;3]{gigli2018nonsmooth},  we define the space of test functions
\begin{align*}
  \mathrm{TestF}(M):= \big\{ f \in D(\Delta)\cap L^\infty(M, \mathfrak{m}): \ |\nabla f| \in L^\infty(M, \mathfrak{m})\ \mathrm{and}\ \Delta f \in W^{1,2}_w(M)\big\}, 
\end{align*}
 test vector fields
\begin{align*}
  \mathrm{TestV}(M):= \Big\{ \sum_{i=1}^n h_i\nabla f_i \ : \ n \in \N \ f_i \in  \mathrm{TestF}(M), h_i \in  \mathrm{TestF}(M)\cup \mathrm{Const}(M)\} \ i=1, \ldots, n \Big\},
\end{align*}
and test forms
\begin{align*}
     \mathrm{TestForm}_k(M):= \big\{\mathrm{linear \ combinations\ of\ forms\ of \ the \ kind} \\
    f_0df_1 \wedge \ldots \wedge df_k:
     &\ f_1,\ldots f_k \in  \mathrm{TestF}(M), f_0 \in \mathrm{TestF} \cup \mathrm{Const}(M) \big\},
\end{align*}
for $1 \leq k \leq n$. { Set $\mathrm{TestForm}_0(M) = \mathrm{TestF}(M)$}. 
For the notion of gradients and differentials in metric measure spaces, we refer to  \cite[Ch.\;2]{gigli2018nonsmooth}. We will apply this theory to weighted manifolds with continuous metrics and weights. In that case, the non-smooth notions of \cite{gigli2018nonsmooth} coincide with  the classical gradients and differentials on manifolds. 

All expressions are well-defined, because $f \in \mathrm{TestF}(M)$ implies that $f\in W^{1,2}_w$, so the differential and the gradient are well-defined. 
{ A regularization result due to Savaré \cite{savare2013self} (see also \cite{gigli2018nonsmooth}, (3.1.5))}, yields that if $f \in L^2\cap L^{\infty}(M,\mathfrak{m})$ then for all $t>0$, $f_t:= H_tf \in  \mathrm{TestF}(M)$.   In the next proposition we recall two important density results proved in  \cite[Prop.\;2.2.5]{gigli2018nonsmooth} for general metric measure spaces.  In the setting of a smooth manifold with a continuous metric and a continuous weight, we will prove an even stronger result in \thref{testvectorfieldsdense}.
\begin{proposition}
    $\mathrm{TestForm}_1(M)$ is dense in $L^2(T^*M)$ and $\mathrm{TestV}(M)$ is dense in $L^2(TM)$. 
\end{proposition}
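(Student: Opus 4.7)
The plan is to reduce the two density statements to a single key fact: the set $\mathrm{TestF}(M)$ is dense in $W^{1,2}_w(M,\mathfrak{m})$. Once this is in hand, the density of $\mathrm{TestForm}_1(M)$ in $L^2(T^*M)$ and of $\mathrm{TestV}(M)$ in $L^2(TM)$ follows from the structural description of Gigli's cotangent/tangent modules, since $L^2(T^*M)$ is by construction the $L^2$-completion of finite $L^\infty$-linear combinations $\sum_i h_i \, df_i$ with $f_i\in W^{1,2}_w$, and the musical isomorphism $\sharp$ transports density between cotangent and tangent module.

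First I would prove that $\mathrm{TestF}(M)$ is $W^{1,2}_w$-dense. Given $f\in W^{1,2}_w$, I would first truncate, $f_n:=(-n)\vee f\wedge n$, so that $f_n\in L^\infty\cap W^{1,2}_w$ with $f_n\to f$ in $W^{1,2}_w$ (truncation is a contraction for $\Ch$ in the infinitesimally Hilbertian setting). Then I would apply the heat flow and show $H_t f_n\in\mathrm{TestF}(M)$ for every $t>0$. The membership $H_t f_n\in D(\Delta)$ with $\|H_t f_n\|_\infty\leq n$ is the standard smoothing and maximum principle for the semigroup. The bound $|\nabla H_t f_n|\in L^\infty$ follows from the $\rcd(K,\infty)$ gradient estimate of Theorem \ref{linftyboundonderivative} together with $|\nabla f_n|\leq|\nabla f|\in L^2$ and the bounded-from-$L^\infty$ smoothing $H_t|\nabla f_n|_w^2\in L^\infty$ for $t>0$. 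Finally, $\Delta H_t f_n\in W^{1,2}_w$ follows by writing $H_t=H_{t/2}\circ H_{t/2}$: the first step puts $H_{t/2}f_n$ in $D(\Delta)$, and then $\Delta H_t f_n = H_{t/2}(\Delta H_{t/2} f_n)\in D(\Delta)\subset W^{1,2}_w$, using that $H_{t/2}$ commutes with $\Delta$ and sends $L^2$ into $D(\Delta)$. Sending $t\downarrow 0$ and then $n\to\infty$, a diagonal extraction gives the claimed $W^{1,2}_w$-approximation.

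Second, to pass from density of $\mathrm{TestF}$ to density of $\mathrm{TestForm}_1$, I would start from a generic $\omega\in L^2(T^*M)$ and approximate it first by a finite sum $\sum_i h_i \, df_i$ with $h_i\in L^\infty$ and $f_i\in W^{1,2}_w$ (this is the definition of the cotangent module), then approximate each $h_i$ by an $L^\infty\cap W^{1,2}_w$ function (using truncation of a $W^{1,2}_w$ approximation), and finally apply the first step to replace both $h_i$ and $f_i$ by elements of $\mathrm{TestF}(M)$. Continuity of $(h,f)\mapsto h\,df$ from $(L^\infty\cap W^{1,2}_w)^2$ into $L^2(T^*M)$, with $L^\infty$-uniform control of the $h$-factors, guarantees that the approximants converge in $L^2(T^*M)$. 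Applying $\sharp$ yields the analogous statement for $\mathrm{TestV}(M)$ in $L^2(TM)$.

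The main obstacle is the triple requirement that $H_t f_n$ lie in $D(\Delta)\cap L^\infty$ with $|\nabla H_t f_n|\in L^\infty$ \emph{and} $\Delta H_t f_n\in W^{1,2}_w$. The first two pieces are direct consequences of the standard heat-flow machinery and the $\rcd$ Bakry–Émery gradient estimate, but the third requires the iterated smoothing $H_t=H_{t/2}\circ H_{t/2}$ together with the commutation of $\Delta$ with the semigroup, and is the only point where one genuinely uses the self-adjoint functional calculus for $\Delta$.
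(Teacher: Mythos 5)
Your overall strategy is the standard one, and indeed essentially the argument behind the result the paper is merely recalling (the proposition is quoted from \cite[Prop.\;2.2.5]{gigli2018nonsmooth}, not proved): density of $\mathrm{TestF}(M)$ in $W^{1,2}_w$, plus the fact that $L^\infty$-combinations of differentials of a $W^{1,2}_w$-dense class generate the cotangent module, with $\sharp$ transporting everything to $L^2(TM)$. The truncation scheme, the module step, and your argument that $\Delta H_t f_n\in W^{1,2}_w$ via $H_t=H_{t/2}\circ H_{t/2}$ and commutation are all fine.

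The genuine gap is your justification of $|\nabla H_t f_n|\in L^\infty$. You invoke ``bounded-from-$L^\infty$ smoothing $H_t|\nabla f_n|_w^2\in L^\infty$ for $t>0$'', i.e.\ that the heat semigroup maps $L^1$ (or $L^2$) into $L^\infty$ in positive time. This is ultracontractivity, and it fails in general $\rcd(K,\infty)$ spaces: already for the Gaussian space $(\R^n,|\cdot|,e^{-|x|^2/2}\mathcal{L}^n)$, which is $\rcd(1,\infty)$, the Ornstein--Uhlenbeck semigroup is hypercontractive but not ultracontractive, so $H_t$ does not map $L^2$ into $L^\infty$. Since the proposition is stated for general $\rcd(K,\infty)$ spaces, this step would fail as written. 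The fix is to use the boundedness of $f_n$ rather than the integrability of its gradient: the Bakry--\'Emery estimate $|D H_tu|_w^2\leq e^{-2Kt}H_t(|Du|_w^2)$ upgrades, via the standard interpolation $s\mapsto H_s\big((H_{t-s}u)^2\big)$, to the $L^\infty$-to-Lipschitz regularization $2I_{2K}(t)\,|D H_t u|_w^2\leq H_t(u^2)\leq \norm{u}_{L^\infty}^2$ with $I_{2K}(t)=\int_0^t e^{2Ks}\,\di s$, which gives $\norm{\,|DH_tf_n|_w\,}_{L^\infty}\leq C(K,t)\,n$. This is exactly the content of the remark the paper states right before the proposition, namely that $f\in L^2\cap L^\infty(\mathfrak{m})$ implies $H_tf\in\mathrm{TestF}(M)$; your truncation $f_n$ is precisely what makes that applicable. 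A smaller point in your second step: a coefficient $h_i\in L^\infty$ need not belong to $L^2$ when $\mathfrak{m}(M)=\infty$, so ``truncating a $W^{1,2}_w$ approximation of $h_i$'' is not available as stated; what is needed is approximation of $h_i$ boundedly and in measure with respect to the finite measure $|df_i|^2\mathfrak{m}$ (e.g.\ reduce to simple coefficients $\chi_A$ with $A$ bounded and use $H_t\chi_A$), after which your dominated-convergence argument for $h\,df$ goes through.
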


In \cite{gigli2018nonsmooth} it is shown that for $f, g \in \mathrm{TestF}(M)$, then $\langle \nabla f, \nabla g \rangle \in W^{1,2}_w(M)$, which allows  to define a notion of Hessian $H[f]: [\mathrm{TestF}(M)]^2 \to L^2(M)$ of a function $f \in \mathrm{TestF}(M)$ as
\begin{align*}
    H[f](g,h) = \frac{1}{2}\big( \langle \nabla \langle \nabla f, \nabla g\rangle, \nabla h\rangle + \langle \nabla \langle \nabla f, \nabla h\rangle, \nabla g\rangle - \langle \nabla \langle \nabla g, \nabla h\rangle, \nabla f\rangle\big).
\end{align*}
\begin{definition}
    The space $D(\boldsymbol{\Delta}) \subset W^{1,2}_w(M)$ is the space of functions $f \in W^{1,2}_w(M)$ such that there exists a measure $\nu \in \mathrm{Meas}(M)$ satisfying
    \begin{align*}
        \int h\, \di\nu = - \int \langle \nabla h, \nabla f \rangle \, \di\mathfrak{m},
    \end{align*}
    for all $h: M \to \R$ Lipschitz with bounded support. In this case the measure $\nu$ is unique and denoted by $\boldsymbol{\Delta}f$. 
\end{definition}
By \cite[Lemma 3.2.6]{gigli2018nonsmooth}, we have that if $X, Y \in \mathrm{TestV}(M)$, then $\langle X, Y \rangle \in D(\boldsymbol{\Delta})$.
Define 
$$D_{W^{1,2}_w}(\Delta) = \{f \in W^{1,2}_w(M),\ \Delta f \in W^{1,2}_w(M)\}.$$ 
We are now able to define the measure valued operator $\boldsymbol{\Gamma}_2: {[\mathrm{TestF}(M)]^2} \to \mathrm{Meas}(M)$ given by
\begin{align*}
    \boldsymbol{\Gamma}_2(f,g):= \frac{1}{2}\boldsymbol{\Delta}\langle \nabla f , \nabla g \rangle - \frac{1}{2}\big( \langle \nabla \Delta f , \nabla g \rangle + \langle \nabla f , \nabla \Delta g \rangle \big).
\end{align*}
In the next definition we recall the Bakry-\'Emery condition BE$(K, N)$, the reader is referred to \cite{ambrosio2015bakry, ambrosio2016loctoglob, erbar2015equivalence} for more details.
\begin{definition}
    Let $K \in \R$ and $N \in [1, \infty]$. We say that $(M, \sfd, \mathfrak{m})$ satisfies the BE$(K, N)$ condition if for every $f \in \mathrm{TestF}(M)$, it holds 
    \begin{align*}
       \boldsymbol{\Gamma}_2(f,f) \geq \Big(K|\nabla f|^2 + \frac{1}{N}(\Delta f)^2\Big)\mathfrak{m}.
    \end{align*}
\end{definition}
The following important fact was proved for $N=\infty$ in  \cite{ambrosio2014duke, ambrosio2015bakry} (see also \cite{GigliKuwadaOhtaCPAM, AGMR}) and for $N\in [1,\infty)$   in \cite[Sec.\;4]{erbar2015equivalence} and \cite[Sec.\;12]{ambrosio2019nonlinear}: 
\begin{theorem}\label{cd_implies_be}
    Let $(X, \sfd, \mathfrak{m})$ be a metric measure space, let $K \in \R$ and $N \in [1, \infty]$. The following are equivalent:
    \begin{itemize}
        \item[$\mathrm{(i)}$] $(X, \sfd, \mathfrak{m})$  is a  $\rcd^*(K,N)$-space  or, in case $N=\infty$, $(X, \sfd, \mathfrak{m})$  is a  $\rcd(K,\infty)$-space.
        \item[$\mathrm{(ii)}$] $(X, \sfd, \mathfrak{m})$ satisfies  \eqref{condition_on_measure}, the $\mathsf{BE}(K, N)$-condition, { and it  is infinitesimally Hilbertian}.  Moreover,  every $f\in D(\Ch)$ with $|Df|_{w}\leq 1$ $\mathfrak{m}$-a.e. admits a $1$-Lipschitz representative $\mathfrak{m}$-a.e. with respect to $\sfd$.  
    \end{itemize}
\end{theorem}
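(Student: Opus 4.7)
The plan is to establish the two implications separately, following the Lagrangian-to-Eulerian and Eulerian-to-Lagrangian strategies pioneered in \cite{ambrosio2014duke, ambrosio2015bakry} for $N=\infty$ and in \cite{erbar2015equivalence, ambrosio2019nonlinear} for $N<\infty$. The pivotal object linking both formulations is the heat semigroup $(H_t)$: under $\rcd$ it is identified with the gradient flow of the Boltzmann--Shannon entropy $\mathcal{E}_\infty$ on $(\mathcal{P}_2(X), W_2)$, while in the BE framework it generates the carré-du-champ and the $\boldsymbol{\Gamma}_2$ operator.

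For (i)~$\Rightarrow$~(ii), starting from $\rcd^{*}(K,N)$, infinitesimal Hilbertianity upgrades $K$-geodesic convexity (resp.\;$(K,N)$-convexity) of the entropy to the Evolution Variational Inequality EVI${}_K$ (resp.\;EVI${}_{K,N}$) for $(H_t)$ on $(\mathcal{P}_2, W_2)$. Kuwada's duality between EVI and gradient estimates then produces the pointwise bound
\begin{align*}
|\nabla H_t f|_w^2 \leq e^{-2Kt}\, H_t|\nabla f|_w^2
\end{align*}
of Theorem \ref{linftyboundonderivative} (with an additional dimensional $(\Delta H_t f)^2$-term in the case $N<\infty$). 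Differentiating at $t=0$ and comparing $\tfrac12\,\Delta|\nabla f|^2 - \langle \nabla f,\nabla\Delta f\rangle$ with $\boldsymbol{\Gamma}_2(f,f)$ against arbitrary non-negative $\phi\in D(\Delta)\cap L^\infty$ yields BE$(K,N)$. The exponential volume growth \eqref{condition_on_measure} is automatic in $\cd$-spaces by \cite[Thm.\;4.24]{sturm2006geometryI} (cf.\;Remark \ref{rem:CDVol}), and the Sobolev-to-Lipschitz property follows from the gradient estimate combined with infinitesimal Hilbertianity.

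For (ii)~$\Rightarrow$~(i), one reverses the above chain. From BE$(K,N)$ one first re-derives the gradient estimate via a Gronwall argument applied to the monotonicity in $s$ of $G(s):= e^{-2Ks} H_s(|\nabla H_{t-s}f|_w^2)$, whose derivative in $s$ is controlled precisely by the $\boldsymbol{\Gamma}_2$ quantity. The Sobolev-to-Lipschitz hypothesis then upgrades the resulting $\mathfrak{m}$-a.e.\;bound to a genuine Lipschitz constant for $H_tf$, and Kuwada's duality converts this into the Wasserstein contraction
\begin{align*}
W_2(H_t^{*}\mu, H_t^{*}\nu)\leq e^{-Kt}W_2(\mu,\nu)
\end{align*}
for the dual semigroup on probability measures. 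Identifying $(H_t^{*})$ with the Wasserstein gradient flow of $\mathcal{E}_\infty$ and invoking the action estimate of \cite{ambrosio2014duke} promotes the contraction into EVI${}_K$ (resp.\;EVI${}_{K,N}$), which is equivalent to $\cd(K,\infty)$ (resp.\;$\cd^{*}(K,N)$). Combined with infinitesimal Hilbertianity (built in through the quadraticity of $\Ch$), this gives $\rcd(K,\infty)$ (resp.\;$\rcd^{*}(K,N)$).

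The main obstacle is the dimensional refinement $N<\infty$. At the BE level, one needs the self-improvement of the Bochner inequality into a gradient estimate incorporating the Hessian, in the spirit of Bakry's $\Gamma_2$-calculus as carried out in \cite{erbar2015equivalence, ambrosio2019nonlinear}. At the CD level, one must work with the distortion coefficients $\sigma^{(t)}_{K/N}$ and show that the Wasserstein contraction upgrades to a dimensional EVI${}_{K,N}$; this requires a delicate action estimate along Wasserstein geodesics incorporating the $\tfrac{1}{N}(\Delta f)^2$ contribution of BE$(K,N)$, and the Sobolev-to-Lipschitz assumption is what allows the a.e.\;gradient bounds to be transferred to the metric structure.
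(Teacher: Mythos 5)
You should first be aware that the paper does not prove Theorem \ref{cd_implies_be} at all: it is invoked as a known deep result, with the $N=\infty$ case attributed to \cite{ambrosio2014duke, ambrosio2015bakry} (see also \cite{GigliKuwadaOhtaCPAM, AGMR}) and the case $N<\infty$ to \cite{erbar2015equivalence} and \cite{ambrosio2019nonlinear}. Your proposal is therefore not an alternative to an argument in the paper but an attempt to reconstruct the cited proofs, and as a reconstruction it is a reasonable road map (EVI, Kuwada duality, the Gronwall argument on $s\mapsto e^{-2Ks}H_s(|\nabla H_{t-s}f|_w^2)$, volume growth from \cite[Thm.\;4.24]{sturm2006geometryI} as in Remark \ref{rem:CDVol}), but the genuinely hard steps are asserted rather than proved, and at two points the asserted chain of implications is not actually valid as stated.

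First, in (ii)\,$\Rightarrow$\,(i) you pass from the Wasserstein contraction $W_2(H_t^*\mu,H_t^*\nu)\leq e^{-Kt}W_2(\mu,\nu)$ to $\mathrm{EVI}_K$ by ``identifying $H_t^*$ with the gradient flow of $\mathcal{E}_\infty$ and invoking the action estimate''. Contractivity of the dual semigroup is strictly weaker than $\mathrm{EVI}_K$ (hence than $\cd(K,\infty)$), and it is not the intermediate step in \cite{ambrosio2015bakry}: there the EVI is derived directly, by differentiating the entropy along geodesics regularised through the heat semigroup and using the full strength of the $\mathsf{BE}$ gradient estimates together with the Sobolev-to-Lipschitz hypothesis and \eqref{condition_on_measure}; the contraction is a consequence, not a stepping stone, so your chain has a logical gap precisely at the point where the Lagrangian condition is recovered. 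Second, for $N<\infty$ the relevant Lagrangian object is not $\mathrm{EVI}_K$ with a dimensional correction bolted on, but the dimensional evolution variational inequality for the exponentiated entropy introduced in \cite{erbar2015equivalence}, whose equivalence with $\rcd^*(K,N)$ is itself a substantial theorem, and the Eulerian side requires the self-improvement of $\mathsf{BE}(K,N)$ via the measure-valued $\boldsymbol{\Gamma}_2$-calculus of \cite{ambrosio2019nonlinear}; your sketch names these ingredients but does not supply them. Finally, in (i)\,$\Rightarrow$\,(ii) the Sobolev-to-Lipschitz property does not ``follow from the gradient estimate combined with infinitesimal Hilbertianity'' in one line: it is established in \cite{ambrosio2014duke} through the dual (Kuwada-type) formulation of the gradient bound and the length property of the space. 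In short, the proposal correctly identifies the architecture of the literature proofs, which is exactly what the paper delegates by citation, but it does not itself close the equivalence.
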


\subsection{Application to smooth manifolds with lower regularity Riemannian metrics and weights}
Let $M$ be a smooth manifold. Take a Riemannian metric $g \in C^{0}$ that admits $L^2_{\rm{loc}}$-Christoffel symbols and induces the distance $\sfd_g$. Define the measure $\mu$ via $\di\mu := h^2 \di\vol_g$ for a $h \in C^{0}(M; (0, \infty))\cap W^{1,2}_{\rm{loc}}(M, \mu)$. 
Moreover, we assume that  $(M, \sfd_g, \mu)$ is a $\cd(K, \infty)$-space. Thus \eqref{condition_on_measure} is  satisfied (see Remark \ref{rem:CDVol})  and  by \thref{summary_chapter_4},  $(M, \sfd_g, \mu)$ is an $\rcd(K, \infty)$-space.  

We will now specialise some concepts from \cite{gigli2018nonsmooth} to this particular case. Using integration by parts, \cite[Prop.\;2.14 (iv)]{ambrosio2014duke} and \thref{weakconv} yields:
\begin{proposition}\label{heat_flow_weak_conv_first}
    Let $f \in W^{1,2}_w(M)$ and $H_t$ denote the heat flow of the Cheeger energy. Then $f_t :=H_tf \to f$ in $W^{1,2}_w(M)$ as $t \to 0$. 
\end{proposition}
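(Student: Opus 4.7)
The plan is to prove strong convergence in the Hilbert space $W^{1,2}_w(M,\mu)$ by combining weak convergence with convergence of norms, which together give strong convergence in any Hilbert space.

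First, I would recall that $H_t f \to f$ in $L^2(M,\mu)$ as $t\to 0$ by the $C_0$-semigroup property of the heat flow, and that the variational inequality \cite[(4.26)]{ambrosio2014inventio} tested with the competitor $g=f$ yields $\mathrm{Ch}(H_t f)\leq \mathrm{Ch}(f)$ for every $t>0$. Thus $\{H_t f\}_{t>0}$ is bounded in $W^{1,2}_w(M,\mu)$, which is a Hilbert space by \thref{summary_chapter_4}.

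The key step is to establish weak convergence $H_t f\rightharpoonup f$ in $W^{1,2}_w(M,\mu)=H^2_1(M,\mu)$. I would apply \thref{weakconv}: the criterion reduces the problem to verifying that, in every coordinate chart $(V_\alpha,\psi_\alpha)$, both $(\psi_\alpha)_*H_t f$ and its first-order partial derivatives converge weakly in $L^2(\psi_\alpha(V_\alpha),(\psi_\alpha)_\#\mu)$ to the corresponding objects for $f$. The zeroth-order statement is the $L^2$-convergence already noted. For the first-order part, fix $\phi\in C_c^\infty(\psi_\alpha(V_\alpha))$ and integrate by parts against the density $h^2\sqrt{|g|}$ of $(\psi_\alpha)_\#\mu$, which lies in $W^{1,2}_{\mathrm{loc}}$ by Lemma \ref{christoffel_l2_implies_dg_l2} and the hypothesis $h\in C^0\cap W^{1,2}_{\mathrm{loc}}$. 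Strong $L^2$-convergence of $H_t f$ to $f$, together with the uniform $L^2$-bound on the derivatives from the first step, allows one to pass to the limit and identify the weak limit of each $\partial_i H_t f$ as $\partial_i f$. Uniqueness of the weak limit shows that the whole net (not just sequences) converges weakly.

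Finally, lower semicontinuity of the Cheeger energy with respect to $L^2$-convergence combined with the upper bound $\mathrm{Ch}(H_t f)\leq \mathrm{Ch}(f)$ gives $\mathrm{Ch}(H_t f)\to \mathrm{Ch}(f)$, whence $\|H_t f\|_{W^{1,2}_w}\to \|f\|_{W^{1,2}_w}$. Weak convergence plus norm convergence in the Hilbert space $W^{1,2}_w$ then upgrades to strong convergence. The main technical step is the local integration-by-parts argument: one must verify that the low regularity assumed on $g$ and $h$ is precisely what is needed to differentiate the weight $h^2\sqrt{|g|}$ weakly and push the $L^2$-convergence of $H_t f$ through the pairing with a smooth test function—this is exactly where \cite[Prop.\;2.14(iv)]{ambrosio2014duke} and \thref{weakconv} are used.
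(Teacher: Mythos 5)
Your proof is correct and follows essentially the route the paper indicates (it cites exactly integration by parts, the heat-flow properties from \cite{ambrosio2014duke, ambrosio2014inventio}, and \thref{weakconv}): uniform energy bound $\Ch(H_tf)\le\Ch(f)$ from \cite[(4.26)]{ambrosio2014inventio}, identification of the weak $H^2_1$-limit chartwise via \thref{weakconv}, and the upgrade to strong convergence by $L^2$-lower semicontinuity of $\Ch$ plus the Radon--Riesz property of the Hilbert space $W^{1,2}_w(M,\mu)=H^2_1(M,\mu)$. The only simplification worth noting is that the chartwise integration by parts is not strictly needed for the weak convergence: boundedness in the Hilbert space together with strong $L^2$-convergence already forces every weak subsequential limit to be $f$.
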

\begin{proposition}
    The space $L^2(T^*M)$ (as defined in \cite[Def.\;2.2.1]{gigli2018nonsmooth}) is isometric to $H^2_0(T^*M, \mu)$ and the space  $L^2(TM)$ (as defined in \cite[Def.\;2.3.1]{gigli2018nonsmooth}) is isometric to $H^2_0(TM, \mu)$.
\end{proposition}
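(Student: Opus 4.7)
The plan is to construct a natural isometric $L^\infty(\mu)$-module isomorphism
$$\Phi:L^{2}(T^{*}M)\longrightarrow H^{2}_{0}(T^{*}M,\mu)$$
sending Gigli's abstract differential $df$ (of a function $f\in W^{1,2}_w(M,\mu)$) to the classical differential $d_{\mathrm{cl}}f$, and then to deduce the tangent statement via the musical isomorphism $\sharp$ induced by $g$ (which makes sense for a $C^{0}$-metric). The workhorse is \thref{summary_chapter_4}, which identifies $W^{1,2}_{w}(M,\mu)=H^{2}_{1}(M,\mu)$ and yields $|Df|_{w}=|\nabla f|_{g}$ $\mu$-a.e., so that $d_{\mathrm{cl}}f$ exists as an element of $H^{2}_{0}(T^{*}M,\mu)$ with pointwise $g$-norm equal to $|Df|_{w}$.

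I would first define $\Phi$ on the dense class of simple forms $\omega=\sum_{i=1}^{N}\mathbbm{1}_{A_{i}}df_{i}$ (with $A_{i}$ disjoint Borel and $f_{i}\in W^{1,2}_{w}(M,\mu)$) by the natural formula $\Phi(\omega):=\sum_{i=1}^{N}\mathbbm{1}_{A_{i}}\,d_{\mathrm{cl}}f_{i}$; by construction of $L^{2}(T^{*}M)$ as the $L^2$-completion of the quotient of these simple forms by the pointwise-norm relation, well-definedness reduces to checking that two simple forms having equal Gigli pointwise norm give rise to classical sections that coincide $\mu$-a.e. This, together with norm preservation, follows from the locality of the classical differential and from the identity $|d_{\mathrm{cl}}f|_{g}=|Df|_{w}$ $\mu$-a.e. combined with the polarisation/parallelogram identities satisfied by both pointwise norms (both being quadratic in the infinitesimally Hilbertian setting). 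Extension by density then produces a linear isometry $\Phi$ on the whole of $L^{2}(T^{*}M)$, automatically compatible with multiplication by $L^{\infty}(\mu)$-functions.

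To obtain surjectivity, I would check that $\Phi$ has dense image in $H^{2}_{0}(T^{*}M,\mu)$. By Lemma \ref{ccinfty_dense_in_lp} and a straightforward approximation, compactly supported smooth $1$-forms are dense in $H^{2}_{0}(T^{*}M,\mu)$. Picking an atlas $(V_{\alpha},\psi_{\alpha})$ with a subordinate smooth partition of unity $(\eta_{\alpha})$, any such form writes as $\omega=\sum_{\alpha,i}\eta_{\alpha}\,\omega_{\alpha,i}\,dx^{i}_{\alpha}$, where the coordinate functions $x^{i}_{\alpha}$ can be truncated and multiplied by smooth cutoffs equal to $1$ on $\mathrm{supp}\,\eta_{\alpha}$ so as to yield globally defined functions in $W^{1,2}_{w}(M,\mu)\cap L^{\infty}$ whose classical differentials agree with $dx^{i}_{\alpha}$ on $\mathrm{supp}\,\eta_{\alpha}$. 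Each summand is thus in the image of $\Phi$, and so is $\omega$.

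Finally, for the tangent statement, recall that in Gigli's framework $L^{2}(TM)$ is defined as the $L^{2}$-module dual of $L^{2}(T^{*}M)$, whereas $H^{2}_{0}(TM,\mu)$ coincides (via $\flat,\sharp$) with $H^{2}_{0}(T^{*}M,\mu)$ since $g\in C^{0}$ is non-degenerate. Transporting the isometry via $\sharp$ gives the desired identification. The step I expect to require the most care is the surjectivity argument: one has to ensure that the local coordinate $1$-forms $dx^{i}_{\alpha}$ genuinely arise as classical differentials of elements in $W^{1,2}_{w}(M,\mu)$ globally, and that the cutoff procedure is compatible with the low ($C^{0}$) regularity of $g$; this is handled by using that on compacta the matrices $(g_{ij})$ and $(g^{ij})$ are uniformly bounded, so the local-to-global estimates on $L^2$ norms and on $|\nabla\cdot|_{g}$ are quantitative and stable under the cutoff.
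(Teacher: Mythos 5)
Your proposal is correct and is essentially the argument the paper leaves implicit: it unpacks Gigli's definition of $L^2(T^*M)$ as the completion of simple forms $\sum_i \mathbbm{1}_{A_i}\,df_i$, uses the first-order identification $|Df|_w=|\nabla f|_g$ and $W^{1,2}_w(M,\mu)=H^2_1(M,\mu)$ from Corollary \thref{summary_chapter_4} to get well-definedness and isometry, and obtains surjectivity and the tangent statement by the same density-in-coordinates and musical-isomorphism/duality considerations the authors have in mind when they say the claim follows from the definitions together with the density result of \cite[Prop.\;2.2.5]{gigli2018nonsmooth}. No gaps beyond routine detail.
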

 We will not give a proof as it directly follows from the (quite technical) definitions and from \cite[Prop.\;2.2.5]{gigli2018nonsmooth}; in the following we will identify the isomorphic spaces with each other. \\ 
We will now turn to a generalisation of the \textbf{divergence}. 
For a compactly supported $X \in H^2_1(TM, \mu)$ we have that $ \mathrm{div}(X)=\big(\partial_iX^i +X^{i}\frac{2}{h}\partial_ih +  X^i\frac{1}{2}\tr(g^{-1}\partial_ig)\big) \in L^2(M, \mu)$ satisfies \eqref{divergence_int_by_parts}.  
Hence, any compactly supported and smooth vector field $X$ is in $D(\mathrm{div})$, according to \cite[Def.\;2.3.11]{gigli2018nonsmooth}.

Next, we look at a generalisation of the \textbf{Hessian}. 
The identity \eqref{hessian_for_gradients} motivates the following definition of the space $W^{2, 2}(M)$. 
\begin{definition}[\cite{gigli2018nonsmooth} Def. 3.3.1]\label{synthetic_w22_def}
    The space $W^{2, 2}(M) \subset { W^{1,2}_w(M)}$ is the space of all functions $f \in  W^{1,2}_w(M)$ with the following property: There exists $A \in L^2((T^*)^{{\otimes}2}M)$ such that for any $h_1, h_2, \phi \in \mathrm{TestF}(M)$ it holds
    \begin{align}\label{equation_for_synthetic_w22}
        2\int \phi A &(\nabla h_1, \nabla h_2)\, \di\mu \nonumber \\
        &= - \int \langle \nabla f, \nabla h_1 \rangle\mathrm{div}(\phi \nabla h_2) +\langle \nabla f, \nabla h_2 \rangle\mathrm{div}(\phi \nabla h_1) + \phi\langle \nabla f, \nabla \langle \nabla h_1, \nabla h_2 \rangle \rangle \, \di\mu.
    \end{align}
We will call $A$ the Hessian of $f$ and denote it as $\Hess(f)$. The space $W^{2,2}(M)$ is endowed with the norm $\norm{\cdot}_{W^{2, 2}(M)}$ defined via 
\begin{align*}
    \norm{f}_{W^{2, 2}(M)}^2 = \norm{f}_{L^2(M)}^2+ \norm{df}^2_{L^2(T^*M)} + \norm{\Hess f}_{L^2((T^*)^{{\otimes}2}M)}^2.
\end{align*}
\end{definition}
We next investigate such a space $W^{2,2}(M)$ in case of a smooth manifold $M$ with a $C^{0}$-Riemannian metric $g$ with $L^2_{\rm{loc}}$-Christoffel symbols and a $C^0$ weighted measure $\mu$.

{ Recall that, for every $\varphi \in C^\infty_c(M) \subset L^2 \cap L^\infty(M, \mu) \cap D(\Delta)$, it holds $H_t \varphi \in \mathrm{TestF}(M)$.  Moreover, for each function $\varphi \in D(\Delta)$, we know that $\Delta H_t \varphi = H_t \Delta \varphi \to \Delta \varphi$ in $L^2$. Then, Proposition \ref{heat_flow_weak_conv_first} yields that  $H_t \varphi \to \varphi$ in $H^2_1(M, \mu)$ and \thref{linftyboundonderivative} gives that $\norm{|\nabla H_t \varphi|}_{L^\infty} \leq e^{-2Kt}\norm{|\nabla\varphi|}_{L^\infty}$. 
Hence, it follows that for each $f \in W^{2,2}(M)$, \eqref{equation_for_synthetic_w22} also holds with $\phi, h_1, h_2 \in C_c^\infty(M)$, which together with $A \in L^2$ implies that $f \in H^2_2(M, \mu)$. This fact is summarised in the following proposition.}
\begin{proposition}\label{norm_equality_h22}
    { $H^2_2(M, \mu) \supset W^{2, 2}(M)$ and for all $f \in W^{2,2}(M)$, it holds $\norm{f}_{H^2_2(M, \mu)} =\norm{f}_{W^{2, 2}(M)}$. }
\end{proposition}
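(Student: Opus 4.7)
The plan is to first verify the proposition on the dense subspace $C^2_2(M,\mu)\subset H^2_2(M,\mu)$ and then extend by completion, using that the $W^{2,2}$-Hessian operator is closed (as shown in \cite{gigli2018nonsmooth}).

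First, I would fix $f\in C^2_2(M,\mu)$ and show that the classical Hessian $\Hess f\in L^2((T^*)^{\otimes 2}M,\mu)$ satisfies the defining identity of $W^{2,2}(M,\mu)$, so that $f\in W^{2,2}(M,\mu)$ with the two Hessians agreeing $\mu$-a.e.  The starting point is the pointwise identity \eqref{hessian_for_gradients}: for smooth $f$ and for any $h_1,h_2\in \mathrm{TestF}(M)$ one has, in the $\mu$-a.e.\;sense after exploiting the identification $W^{1,2}_w = H^2_1$ of Cor.\;\ref{summary_chapter_4},
\begin{align*}
2\Hess f(\nabla h_1,\nabla h_2)=\langle\nabla h_1,\nabla\langle\nabla f,\nabla h_2\rangle\rangle+\langle\nabla h_2,\nabla\langle\nabla f,\nabla h_1\rangle\rangle-\langle\nabla f,\nabla\langle\nabla h_1,\nabla h_2\rangle\rangle,
\end{align*}
where each inner product $\langle\nabla f,\nabla h_i\rangle$ and $\langle\nabla h_1,\nabla h_2\rangle$ lies in $W^{1,2}_w(M,\mu)$ because $h_i,\nabla h_i$ are bounded and $f\in C^2_2$.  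Multiplying by an arbitrary $\phi\in\mathrm{TestF}(M)$ and integrating against $\mu$, I would then integrate by parts the first two terms on the right using the weighted divergence formula \eqref{divergence_int_by_parts} applied to the compactly supported-in-energy vector fields $\phi\nabla h_1,\phi\nabla h_2$, which lie in $D(\mathrm{div})$ by \cite[Prop.\;2.3.14]{gigli2018nonsmooth}.  This produces exactly the defining identity of $W^{2,2}(M,\mu)$, thereby identifying $A=\Hess f$ on $C^2_2(M,\mu)$.

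Next, I would check the norm equality on $C^2_2(M,\mu)$: since on smooth functions $|\nabla f|_g=|df|_{L^2(T^*M)}$ by Proposition \ref{weakgradientforc1}, and $|\Hess f|_{HS}=|\nabla^2_c f|$ pointwise, we directly get $\norm{f}_{W^{2,2}(M)}=\norm{f}_{H^2_2(M,\mu)}$.  Now let $f\in H^2_2(M,\mu)$ be arbitrary and pick by definition a sequence $f_n\in C^2_2(M,\mu)$ with $f_n\to f$ in $H^2_2(M,\mu)$.  The norm equality on smooth functions makes $(f_n)_n$ a Cauchy sequence in $W^{2,2}(M,\mu)$ as well.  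Closedness of the Hessian operator (cf.\;\cite[Thm.\;3.3.2]{gigli2018nonsmooth}, which follows from passing to the limit in the integration-by-parts identity) yields that the $W^{2,2}$-limit $\tilde f$ belongs to $W^{2,2}(M,\mu)$ with $\Hess \tilde f$ equal to the $L^2((T^*)^{\otimes 2}M)$-limit of $\Hess f_n$; since the $L^2(M,\mu)$-convergences agree, $\tilde f=f$, so $f\in W^{2,2}(M,\mu)$ and $\norm{f}_{W^{2,2}(M)}=\lim_n\norm{f_n}_{W^{2,2}(M)}=\lim_n\norm{f_n}_{H^2_2(M,\mu)}=\norm{f}_{H^2_2(M,\mu)}$.

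The main obstacle is the first step: making the integration-by-parts rigorous in the low-regularity setting where $g\in C^0$ with $\Gamma^k_{ij}\in L^2_{\mathrm{loc}}$ and $h\in C^0\cap W^{1,2}_{\mathrm{loc}}$.  The test functions $h_i,\phi\in\mathrm{TestF}(M)$ are only guaranteed to be in $W^{1,2}_w$ (with $\Delta h_i,\Delta\phi\in W^{1,2}_w$) and \emph{need not} be classically differentiable, so one cannot expand $\mathrm{div}_\mu(\phi\nabla h_i)$ via the explicit formula \eqref{weighted_divergence_smooth}.  The clean way around this is to use the abstract definition of $D(\mathrm{div})$ from \cite[Def.\;2.3.11]{gigli2018nonsmooth}, combined with the identifications $L^2(TM)\cong H^2_0(TM,\mu)$ and $W^{1,2}_w=H^2_1(M,\mu)$ established in the previous subsection; this allows one to treat $\mathrm{div}_\mu(\phi\nabla h_i)$ purely as the metric-measure divergence and to use the weighted integration-by-parts rule \eqref{divergence_int_by_parts} without ever differentiating the test functions classically.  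Once this weak form of integration by parts is in place, the rest of the argument is a routine density/closedness argument.
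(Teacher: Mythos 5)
Your overall strategy — verify the defining identity of $W^{2,2}(M,\mu)$ for the classical Hessian on the dense class $C^2_2(M,\mu)$, check the norm equality there, and conclude by density plus closedness of the Hessian — is exactly the (two-line) argument the paper has in mind, so the architecture is fine. The issue is the justification of the first step. You claim that $\langle\nabla f,\nabla h_i\rangle$ and $\langle\nabla h_1,\nabla h_2\rangle$ lie in $W^{1,2}_w(M,\mu)$ ``because $h_i,\nabla h_i$ are bounded and $f\in C^2_2$'', and that \eqref{hessian_for_gradients} holds $\mu$-a.e.\ with $h_1,h_2\in\mathrm{TestF}(M)$ ``after exploiting $W^{1,2}_w=H^2_1$''. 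Boundedness alone gives no Sobolev regularity of these products, and the chart identification $W^{1,2}_w=H^2_1$ cannot be applied to $\langle\nabla f,\nabla h_i\rangle=g^{jk}\partial_j f\,\partial_k h_i$ without second-order weak derivatives of $h_i$ in coordinates — i.e.\ without the inclusion $W^{2,2}\subset H^2_2$, which is precisely not available (the proposition only asserts the opposite inclusion), so that reading is circular. Test functions are in general not classically twice differentiable in this $C^0$/$L^2_{\rm loc}$ setting, so the pointwise identity \eqref{hessian_for_gradients} with test-function entries cannot simply be quoted either.

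The repair stays inside the abstract calculus: use that $\mathrm{TestF}(M)\subset W^{2,2}(M,\mu)$ and that $\nabla f\in H^2_1(TM,\mu)\subset W^{1,2}_C(TM)$ with covariant derivative the (sharp of the) classical Hessian, and then invoke Gigli's Leibniz rule for $\langle X,Y\rangle$ with $X,Y\in W^{1,2}_C(TM)$, which gives $d\langle\nabla f,\nabla h_i\rangle=\Hess f(\cdot,\nabla h_i)+\Hess h_i(\cdot,\nabla f)$ and likewise for $\langle\nabla h_1,\nabla h_2\rangle$; after integrating by parts via the abstract divergence (as you propose), the Hessians of $h_1,h_2$ cancel by symmetry and the $W^{2,2}$ identity for $A=\Hess f$ drops out, with no pointwise use of \eqref{hessian_for_gradients} for non-smooth $h_i$. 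One further integrability point deserves a sentence: for $f\in C^2_2(M,\mu)$ the gradient $|\nabla f|_g$ need not be globally bounded, so $\Hess h_i(\cdot,\nabla f)$ is a priori only in $L^1$ and the integration by parts against $\mathrm{div}(\phi\nabla h_i)\in L^2$ requires a truncation/cut-off argument rather than a direct appeal to the $W^{1,2}$ duality. The norm equality on $C^2_2$ and the final density/closedness step are fine as written.
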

Moreover, it holds that $\mathrm{TestF}(M) \subset W^{2, 2}(M)$, see  \cite[Thm.\;3.3.8]{gigli2018nonsmooth}.

A consequence of Proposition \ref{heat_flow_weak_conv_first}, \thref{weakconv}, and Mazur's lemma is:
\begin{proposition}\thlabel{normdensityw22}
Let $M$ be a smooth manifold, $g$ a $C^{0}$-Riemannian metric that admits $L^2_{\rm{loc}}$-Christoffel symbols and $\mu$ a measure on $M$ defined by $\di\mu = h^2\di\vol_g$ for a $h \in C^{0}(M, (0, \infty))\cap W^{1,2}_{\rm{loc}}(M)$. Then {$\mathrm{TestF}(M)$} is $H^2_2(M, \mu)$-dense in {$L^\infty \cap H^2_2(M, \mu)$} .  
\end{proposition}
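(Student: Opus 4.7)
The plan is to approximate $f \in L^\infty \cap H^2_2(M,\mu)$ by its heat-flow iterates $f_t := H_t f$ ($t>0$), show weak $H^2_2$-convergence $f_t \rightharpoonup f$, and then invoke Mazur's lemma in the Hilbert space $H^2_2(M,\mu)$.

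First I would verify that $f_t \in \mathrm{TestF}(M)$ for every $t>0$: the $L^\infty$-bound is preserved by the maximum principle for the heat flow; iterating the semigroup as $f_t = H_{t/2}(H_{t/2}f)$ and using that $H_s$ sends $L^2(M,\mu)$ into $D(\Delta)$ for $s>0$ yields $\Delta f_t \in D(\Delta) \subset W^{1,2}_w(M,\mu)$; and Theorem \thref{linftyboundonderivative} combined with the Sobolev-to-Lipschitz property established in Cor.~\thref{summary_chapter_4} provides the $L^\infty$-to-Lipschitz smoothing $|\nabla f_t| \in L^\infty(M,\mu)$. Proposition \ref{heat_flow_weak_conv_first} then gives the first-order convergence $f_t \to f$ strongly in $W^{1,2}_w(M,\mu) = H^2_1(M,\mu)$ (using the identification from Cor.~\thref{summary_chapter_4}).

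To promote this to \emph{weak} $H^2_2$-convergence, I invoke \thref{weakconv}: it suffices to check, in each chart $(V_\alpha,\psi_\alpha)$, weak $L^2$-convergence of $\partial^\beta f_t$ to $\partial^\beta f$ for every multi-index with $|\beta|\leq 2$. The case $|\beta|\leq 1$ follows immediately from the $W^{1,2}_w$-convergence above. For $|\beta|=2$, distributional convergence $\partial_{ij}f_t \to \partial_{ij}f$ follows from two integrations by parts against smooth compactly supported test functions combined with $L^2$-convergence of $f_t$; paired with a uniform $L^2_{\rm loc}$-bound on $\partial_{ij}f_t$ in each chart, this upgrades to weak $L^2$-convergence. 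For that uniform bound I would use the decomposition $\partial_{ij}f_t = (\nabla^2_c f_t)_{ij} + \Gamma^k_{ij}\,\partial_k f_t$: the Christoffel-symbol term is controlled using $\Gamma^k_{ij}\in L^2_{\rm loc}$ and the $L^\infty$-bound on $\nabla f_t$, while the covariant-Hessian term is controlled via the Bochner inequality associated with $\be(K,\infty)$ — available because $(M,\sfd_g,\mu)$ is $\rcd(K,\infty)$ by Cor.~\thref{summary_chapter_4} and thus satisfies $\be(K,\infty)$ by Theorem \ref{cd_implies_be} — namely
\[
\int_M |\nabla^2_c f_t|_{HS}^2\,d\mu \leq \int_M (\Delta f_t)^2\,d\mu - K\int_M |\nabla f_t|^2\,d\mu.
\]
Once weak convergence $f_t \rightharpoonup f$ in $H^2_2(M,\mu)$ is established, Mazur's lemma furnishes a sequence of convex combinations of the $f_t$'s — still inside the linear space $\mathrm{TestF}(M)$ — converging strongly to $f$ in the $H^2_2$-norm.

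The main obstacle is the uniform $L^2$-bound on $\Delta f_t$ needed to make the Bochner estimate effective. Indeed, under the low-regularity hypotheses $g \in C^0$ with $\Gamma \in L^2_{\rm loc}$ and $h \in C^0 \cap W^{1,2}_{\rm loc}$, the weight contribution $\tfrac{2}{h}\langle \nabla h, \nabla f\rangle$ in $\Delta_\mu f$ is only $L^1_{\rm loc}$ for a general $f \in H^2_2$, so one does not automatically have $f \in D(\Delta)$. This is where the $L^\infty$-Lipschitz regularisation of the heat flow established in the first paragraph becomes essential: the bound $|\nabla f_t|\in L^\infty$ turns $\tfrac{2}{h}\langle \nabla h, \nabla f_t\rangle$ into an $L^2_{\rm loc}$-object controlled by $\|\nabla f_t\|_{L^\infty}\|\nabla h\|_{L^2_{\rm loc}}$, which — after a cut-off/partition-of-unity localisation of the Bochner estimate — produces the required uniform bound on $\Delta f_t$, hence on $\nabla^2_c f_t$, and closes the argument.
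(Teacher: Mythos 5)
Your overall skeleton is exactly the paper's: run the heat flow, pass to weak $H^2_2(M,\mu)$-convergence via the chart-wise criterion of \thref{weakconv}, and finish with Mazur's lemma (the paper derives the proposition precisely from Proposition \ref{heat_flow_weak_conv_first}, \thref{weakconv} and Mazur). The first two steps of your write-up are fine (modulo a small attribution slip: Corollary \thref{summary_chapter_4} only gives infinitesimal Hilbertianity and the Sobolev-to-Lipschitz property; the $\rcd(K,\infty)$ property you use comes from the standing $\mathsf{CD}(K,\infty)$ assumption of the subsection combined with that corollary). The problem is the step you yourself identify as the crux: the uniform second-order bound on $f_t=H_tf$ as $t\to 0$, which is what weak $L^2$-convergence of the second partials (hence \thref{weakconv}) actually requires.

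Your proposed bound is not uniform in $t$. The quantity $\norm{\,|\nabla f_t|\,}_{L^\infty}$ is finite for each fixed $t>0$ by the $L^\infty$-to-Lipschitz regularisation, but that regularisation degenerates like $t^{-1/2}\norm{f}_{L^\infty}$ as $t\to 0$; Theorem \thref{linftyboundonderivative} would only propagate a uniform bound if $|\nabla f|\in L^\infty$ to begin with, which is not part of the hypothesis $f\in L^\infty\cap H^2_2(M,\mu)$. Consequently both the control of the Christoffel term $\Gamma^k_{ij}\partial_k f_t$ in $L^2_{\rm loc}$ and the control of the weight term $\tfrac{2}{h}\langle\nabla h,\nabla f_t\rangle$ blow up as $t\to 0$, so no $t$-independent bound on $\partial_{ij}f_t$ results. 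Moreover, even granting a uniform gradient bound, your use of the Bochner inequality is circular: it estimates $\int|\nabla^2_c f_t|^2\,\di\mu$ by $\int(\Delta_\mu f_t)^2\,\di\mu$, and bounding only the weight part of $\Delta_\mu f_t$ does not bound $\Delta_\mu f_t$ itself, since the remaining piece $\Delta_g f_t$ is (up to the weight term) the trace of the very Hessian you are trying to control. The only non-circular way to make the Bochner estimate uniform is $\norm{\Delta H_tf}_{L^2}\le\norm{\Delta f}_{L^2}$, i.e.\ $f\in D(\Delta)$ — and you correctly observe that a general $f\in L^\infty\cap H^2_2(M,\mu)$ need not lie in $D(\Delta)$ because $\tfrac{2}{h}\langle\nabla h,\nabla f\rangle$ is only $L^1_{\rm loc}$. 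So the final paragraph does not close the argument: you still need an independent mechanism producing $t$-uniform $L^2_{\rm loc}$ bounds on the second derivatives of $f_t$ (or a reduction of the class $L^\infty\cap H^2_2$ to functions in $D(\Delta)$, e.g.\ via a preliminary approximation by compactly supported $C^2$ functions, which do belong to $D(\Delta_\mu)$ and for which \thref{strongheatflowconvergencew22} even gives strong $W^{2,2}$-convergence) before Mazur's lemma can be invoked.
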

\begin{definition}[\cite{gigli2018nonsmooth}, Def. 3.3.17]
    The space $H^{2,2}(M)$ is defined as the  $W^{2,2}$-closure of $\mathrm{TestF}(M)$.
\end{definition}
By Proposition 3.3.18 in \cite{gigli2018nonsmooth}, we have that $H^{2,2}(M)$ coincides with the $W^{2,2}$-closure of $D(\Delta_\mu)$, so we get: 
\begin{proposition}\label{w22closure}
    {$H^{2,2}(M) = \overline{H^{2}_2(M, \mu)\cap L^\infty(M, \mu)}^{H^2_2(M, \mu)}$} and it coincides with the $W^{2,2}$-closure of $D(\Delta_\mu)$.
\end{proposition}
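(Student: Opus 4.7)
The plan is to reduce both equalities to the density statement in Proposition \ref{normdensityw22} and the already-cited Gigli Proposition 3.3.18, using the fact that on $H^2_2(M,\mu)$ the $H^2_2$- and $W^{2,2}$-norms coincide.

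First I would establish that $H^{2,2}(M) \subset H^2_2(M,\mu)$. By definition, $H^{2,2}(M)$ is the $W^{2,2}$-closure of $\mathrm{TestF}(M)$, and $\mathrm{TestF}(M) \subset L^\infty \cap H^2_2(M,\mu)$. Since the $W^{2,2}$- and $H^2_2$-norms agree on $H^2_2(M,\mu)$, any $W^{2,2}$-Cauchy sequence in $\mathrm{TestF}(M)$ is an $H^2_2$-Cauchy sequence; by completeness of $H^2_2(M,\mu)$ its limit lies in $H^2_2(M,\mu)$. In particular, on the subspace $H^{2,2}(M)$ the $W^{2,2}$- and $H^2_2$-closures of $\mathrm{TestF}(M)$ coincide, so
\begin{equation*}
H^{2,2}(M) = \overline{\mathrm{TestF}(M)}^{H^2_2(M,\mu)}.
\end{equation*}

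Next I would identify this closure with $\overline{L^\infty(M,\mu) \cap H^2_2(M,\mu)}^{H^2_2(M,\mu)}$. The inclusion $\subset$ is immediate from $\mathrm{TestF}(M) \subset L^\infty \cap H^2_2(M,\mu)$. For the reverse inclusion, Proposition \ref{normdensityw22} says exactly that every element of $L^\infty \cap H^2_2(M,\mu)$ is the $H^2_2$-limit of a sequence in $\mathrm{TestF}(M)$, so $L^\infty \cap H^2_2(M,\mu) \subset \overline{\mathrm{TestF}(M)}^{H^2_2}$, and taking $H^2_2$-closure on both sides gives the opposite inclusion. This proves the first equality.

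The second equality, namely that $H^{2,2}(M)$ coincides with the $W^{2,2}$-closure of $D(\Delta_\mu)$, is a direct invocation of \cite[Prop.\;3.3.18]{gigli2018nonsmooth}, which is already cited in the excerpt immediately before the statement; since the hypotheses of that result apply to our $\rcd(K,\infty)$ setting (established in Corollary \ref{summary_chapter_4}), no further work is needed. The only conceptual point to watch — and the one that could be viewed as the main obstacle — is the interplay between the two norms: one must verify that $H^{2,2}(M)$ is genuinely contained in $H^2_2(M,\mu)$ before one is allowed to replace the $W^{2,2}$-closure with the $H^2_2$-closure in the statement. Once that containment is observed via the norm identification on $H^2_2(M,\mu)$, the rest is a bookkeeping application of Proposition \ref{normdensityw22}.
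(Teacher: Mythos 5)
Your proof is correct and is essentially the paper's own argument: the paper obtains the proposition precisely by combining \thref{normdensityw22}, the equality of the $H^2_2(M,\mu)$- and $W^{2,2}(M,\mu)$-norms on $H^2_2(M,\mu)$, and \cite[Prop.~3.3.18]{gigli2018nonsmooth}, which is exactly your route. The only assertion you leave unproved, namely $\mathrm{TestF}(M)\subset L^\infty\cap H^2_2(M,\mu)$, is likewise taken for granted in the paper (it is already implicit in \thref{normdensityw22} and in the fact that the stated equality forces $H^{2,2}(M)\subset H^2_2(M,\mu)$), so this is not a departure from, or a gap relative to, the paper's reasoning.
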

Now, we can turn to the abstract definition of the \textbf{covariant derivative}:
\begin{definition}[\cite{gigli2018nonsmooth} Def.\;3.4.1]
    The Sobolev space $W^{1, 2}_C(TM) \subset L^2(TM)$ is defined as the space of all $X \in L^2(TM)$ such that there exists $T \in L^2(T^{\otimes2}M)$ such that for every $h_1, h_2, \p \in \mathrm{TestF}(M)$ it holds
    \begin{align*}
        \int \p T:(\nabla h_1 \otimes \nabla h_2) \, \di\mu= -\int \langle X, \nabla h_2 \rangle \mathrm{div}(\p \nabla h_1) - \p \Hess(h_2)(X, \nabla h_1)\, \di\mu.
    \end{align*}
    In this case we call the tensor $T$ the covariant derivative of $X$ and denote it by $\nabla X$. We endow $W^{1, 2}_C(TM)$ with the norm $\norm{\cdot}_{W^{1, 2}_C(TM)}$ defined by 
    \begin{align*}
        \norm{X}^2_{W^{1, 2}_C(TM)}:= \norm{X}^2_{L^2(TM)} +\norm{\nabla X}^2_{L^2(T^{\otimes2}M)}.
    \end{align*}
    We denote by $H^{1,2}_C(TM)$ the closure of $\mathrm{TestV}(M)$ in $W^{1,2}_C$. 
\end{definition}
This definition makes sense, as in \cite{gigli2018nonsmooth} it is proved that test vector fields are indeed in $W^{1,2}_C(TM)$.

\begin{proposition}
    {$\mathrm{TestV}(M)\subset H^2_1\cap L^\infty(TM, \mu)$}, and hence $H^{1,2}_C(TM)$ is a subspace of $H^2_1(TM, \mu)$.
\end{proposition}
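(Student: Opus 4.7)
The plan is first to reduce to the statement \(X := h\nabla f \in H^{2}_{1}(TM,\mu)\) for an arbitrary pair \(f,h \in \mathrm{TestF}(M)\) (by linearity of the covariant derivative), and then to derive the ``hence'' part as follows. By the observation recorded just before the proposition, the inclusion \(H^{2}_{1}(TM,\mu) \subset W^{1,2}_{C}(TM)\) holds with equal norms on the intersection, so on \(\mathrm{TestV}(M)\) the two norms coincide. Hence any sequence in \(\mathrm{TestV}(M)\) that is Cauchy in \(W^{1,2}_{C}\) is Cauchy in \(H^{2}_{1}\), and completeness of \(H^{2}_{1}(TM,\mu)\) yields \(H^{1,2}_{C}(TM) \subset H^{2}_{1}(TM,\mu)\).

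For the main claim I would use the regularity that the preceding results give for test functions. By \thref{normdensityw22} and the discussion around it, any \(f \in \mathrm{TestF}(M)\) lies in \(H^{2}_{2}(M,\mu)\cap L^{\infty}\) with \(|\nabla f|_{g}\in L^{\infty}\), and any \(h\in\mathrm{TestF}(M)\) lies in \(H^{2}_{1}(M,\mu)\cap L^{\infty}\). The estimate \(|X|_{g} \leq \|h\|_{\infty}|\nabla f|_{g}\in L^{2}\) then gives \(X\in L^{2}(TM,\mu)\). For the covariant derivative I would invoke the Leibniz rule
\begin{equation*}
\nabla_{c} X = dh\otimes\nabla f + h\,\nabla_{c}(\nabla f),
\end{equation*}
and the pointwise identity \(|\nabla_{c}(\nabla f)|_{g} = |\nabla_{c}^{2}f|_{g}\) (obtained by lowering the first index of the \((1,1)\)-tensor \(\nabla_{c}\nabla f\) via \(g\), which yields precisely the Hessian \(\nabla_{c}^{2}f\)). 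The two summands are then controlled by
\begin{equation*}
|dh\otimes\nabla f|_{g} \leq \big\||\nabla f|_{g}\big\|_{\infty}\,|dh|_{g} \in L^{2}, \qquad |h\,\nabla_{c}(\nabla f)|_{g} \leq \|h\|_{\infty}\,|\nabla_{c}^{2}f|_{g}\in L^{2},
\end{equation*}
using \(h\in H^{2}_{1}\) and \(f\in H^{2}_{2}\). Thus the distributional covariant derivative of \(X\) lies in \(L^{2}(T^{\otimes 2}M,\mu)\).

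The remaining (and main) step is to approximate \(X\) by smooth vector fields in \(C^{2}_{1}(TM,\mu)\) with respect to the \(H^{2}_{1}\)-norm, since this is how \(H^{2}_{1}(TM,\mu)\) is defined. I would use the manifold mollification \eqref{convolution_on_manifold_def}: set \(f_{\e}:= f\ast\rho_{\e}\), \(h_{\e}:= h\ast\rho_{\e}\). Because the operator is built via a partition of unity and pull-backs of Euclidean convolution, it preserves the \(L^{\infty}\)-bounds \(\|h_{\e}\|_{\infty} \leq \|h\|_{\infty}+o(1)\) and \(\||\nabla f_{\e}|_{g}\|_{\infty} \leq \||\nabla f|_{g}\|_{\infty}+o(1)\), and the standard mollification arguments plus the density of smooth sections in \(H^{2}_{k}\) (Section~4) give \(f_{\e}\to f\) in \(H^{2}_{2}\) and \(h_{\e}\to h\) in \(H^{2}_{1}\). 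Then \(X_{\e}:=h_{\e}\nabla f_{\e} \in C^{2}_{1}(TM,\mu)\), and splitting \(\nabla_{c}X_{\e}-\nabla_{c}X\) via the Leibniz rule exactly as above and applying dominated convergence (using the uniform \(L^{\infty}\)-bounds) shows \(X_{\e}\to X\) in \(H^{2}_{1}(TM,\mu)\), which is what is required.

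The main obstacle is precisely this last step: a Meyers--Serrin-type identification in a setting where the metric is only continuous with Christoffel symbols in \(L^{2}_{\rm loc}\) and the weight is only \(C^{0}\cap W^{1,2}_{\rm loc}\). The Leibniz identity for \(\nabla_{c}(h\nabla f)\) has to be interpreted distributionally first and then reconciled with the smooth approximations, which is why maintaining uniform \(L^{\infty}\) control on \(h_{\e}\) and \(|\nabla f_{\e}|_{g}\) during the mollification is crucial --- without it, the two cross terms coming from the product rule cannot be passed to the limit in \(L^{2}\).
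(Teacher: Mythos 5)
Your overall strategy is the natural one (and presumably the one the paper leaves implicit): reduce to $X=h\nabla f$, use that $\mathrm{TestF}(M)\subset L^\infty\cap H^2_2(M,\mu)$ with $|\nabla f|_g\in L^\infty$ (for the inclusion the right citation is Proposition \ref{w22closure}, since $\mathrm{TestF}(M)\subset H^{2,2}(M)\subset H^2_2(M,\mu)$; \thref{normdensityw22} presupposes rather than proves it), control the Leibniz terms by the $L^\infty$ bounds, and deduce the ``hence'' part from the norm equality on the intersection together with completeness of $H^2_1(TM,\mu)$ — that last step is correct. The genuine gap is the assertion that ``standard mollification arguments plus density of smooth sections'' give $f_\e\to f$ in $H^2_2(M,\mu)$. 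This is exactly the point where the hypotheses $g\in C^0$, $\Gamma^k_{ij}\in L^2_{\rm loc}$ bite: in a chart $(\nabla^2_c f_\e)_{ij}=\partial_{ij}f_\e-\Gamma^k_{ij}\partial_k f_\e$, while $\partial_{ij}f$ is only in $L^1_{\rm loc}$ (it equals the $L^2$ Hessian plus $\Gamma^k_{ij}\partial_k f\in L^1_{\rm loc}$), so mollification does not obviously commute with the Hessian; and the triangle-inequality route through an abstract smooth $H^2_2$-approximant fails because mollification is not uniformly bounded on $H^2_2$ (the commutator $(\Gamma^k_{ij}\partial_k u)\ast\rho_\e-\Gamma^k_{ij}(\partial_k u\ast\rho_\e)$ is in general only controlled in $L^1_{\rm loc}$). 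The step is repairable, but by an explicit Friedrichs-type commutator argument that uses precisely the $L^\infty$ gradient bound you already have: write $\nabla^2_c f_\e=(\nabla^2_c f)\ast\rho_\e+\big[(\Gamma^k_{ij}\partial_k f)\ast\rho_\e-\Gamma^k_{ij}(\partial_k f\ast\rho_\e)\big]$ and send the bracket to $0$ in $L^2_{\rm loc}$ by dominated convergence, the dominating function being $\||\nabla f|_g\|_{L^\infty}|\Gamma|\in L^2_{\rm loc}$. You invoke the $L^\infty$ bounds only for the cross terms $dh_\e\otimes\nabla f_\e$, but the place where they are indispensable is this commutator; as written, the crucial convergence is asserted rather than proved.

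A second, smaller gap is globalization. The chartwise convolution \eqref{convolution_on_manifold_def} gives the needed convergences (and the claimed bounds $\|h_\e\|_\infty\le\|h\|_\infty+o(1)$, $\||\nabla f_\e|_g\|_\infty\le\||\nabla f|_g\|_\infty+o(1)$) only locally: on a non-compact $M$ the constants comparing $g$, $\mu$ with the Euclidean data of the locally finite atlas are not uniform in the chart index, so neither the global $o(1)$ nor the summation of the chart errors in the global $H^2_1(TM,\mu)$-norm is justified as stated. This is easily fixed by reducing to compact support first: in the standing $\cd(K,\infty)$ setting $(M,\sfd_g)$ is complete, locally compact and a length space, hence proper, so one can take smooth cutoffs $\chi_R$ with $|\nabla\chi_R|_g\le C$ supported in large balls; since $\nabla_c(\chi_R X)=d\chi_R\otimes X+\chi_R\nabla_c X$ and $X,\nabla_c X\in L^2(\mu)$, one has $\chi_R X\to X$ in $H^2_1(TM,\mu)$, and the local mollification argument (with the commutator fix above) then applies on the finitely many charts meeting $\supp\chi_R$. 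With these two repairs your proof goes through and is, in substance, the argument the paper intends.
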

We note that by our previous computations { we get that for $X \in H^{2,1}_C(TM)$, it holds $\nabla X = (\nabla_cX)^\sharp$ in the classical $H^2_1(TM, \mu)$-sense.} 
For $X \in W^{1,2}_C(TM)$ and $Z \in L^2(TM)$, we define $\nabla_Z X$ via
\begin{align*}
    \langle \nabla_Z X, Y \rangle_g = \nabla X : (Z \otimes Y).
\end{align*}
A computation shows that in the case of smooth vector fields, this coincides with the smooth covariant derivative. 
As for $C^1$-vector fields, we have that $\nabla_ZX=Z^{i}\partial_i X^s + \Gamma^s_{ij}X_jZ_i$, this holds by density for all vector fields {$X \in H^{2}_1\cap L^\infty(TM, \mu)$} and motivates the following definition of \textbf{Lie bracket}:
\begin{definition}[\cite{gigli2018nonsmooth}, Def.\;3.4.8]
    For $X, Y \in H^{1,2}_C(TM)$, we define $[X,Y] \in L^1(TM)$ via
    \begin{align*}
       [X,Y]:= \nabla_XY-\nabla_YX.
    \end{align*}
\end{definition}
Note that this again coincides with the Lie bracket in the smooth case and the local expressions carry over by density. The generalised \textbf{differential} is defined as follows:
\begin{definition}[\cite{gigli2018nonsmooth}, Def.\;3.5.1, 3.5.5]
    The space $W^{1,2}_d(\Lambda^kT^*M) \subset L^2(\Lambda^kT^*M)$ is the space of $k$-forms $\omega \in L^2(\Lambda^kT^*M)$ such that there exists a $(k+1)$-form $\eta \in L^2(\Lambda^{k+1}T^*M)$ for which the identity 
    \begin{align*}
        \int \eta (X_0, \ldots, X_k)\, \di\mu = & \int \sum_i (-1)^{i+1}\omega(X_0, \ldots, \hat{X}_i. \ldots, X_k)\, \di\mu \\
        &+ \int \sum_{i<j} (-1)^{i+j}\omega([X_i, X_j], X_0, \ldots, \hat{X}_i, \ldots, \hat{X}_j, \ldots, X_k)\, \di\mu,
    \end{align*}
    holds for any $X_0, \ldots, X_k \in \mathrm{TestV}(M)$. In this case we call $\eta $ the exterior differential of $\omega$ and we will denote it as $\di\omega$. We endow $W^{1,2}_d(\Lambda^kT^*M)$ with the norm $\norm{\cdot}_{W^{1,2}_d(\Lambda^kT^*M)}$ given by
    \begin{align*}
        \norm{\omega}^2_{W^{1,2}_d(\Lambda^kT^*M)} := \norm{\omega}_{L^2(\Lambda^kT^*M)}^2 +  \norm{\di\omega}_{L^2(\Lambda^{k+1}T^*M)}^2.
    \end{align*}
    Moreover, we define $H^{1,2}_d(\Lambda^kT^*M)$ as the $W^{1,2}_d$-closure of $\mathrm{TestForm}_k(M)$. 
\end{definition}
Again, this definition makes sense as in \cite{gigli2018nonsmooth} it is proved that test forms are contained in $W^{1,2}_d(\Lambda^kT^*M)$.
As $\mathrm{TestV}(M) \subset H^2_1(TM, \mu)$, we get that $H^2_1(M, \mu) \subset W^{1,2}_d(\Lambda^0T^*M)$, on the intersection the coefficients need to coincide with the ones from the classical differential,  and the norms are equivalent on the intersection. 
The cases of $k=0, 1$ will be of particular interest.
\\ Next, we recall the definition of the \textbf{codifferential}:
\begin{definition}[\cite{gigli2018nonsmooth}, Def.\;3.5.11]
    The space $D(\delta_k) \subset L^2(\Lambda^kT^*M)$ is the space of $k$-forms $\omega$ for which there exists a form $\delta\omega \in  L^2(\Lambda^{k-1}T^*M)$ called the codifferential of $\omega$, such that 
    \begin{align*}
        \int \langle \delta\omega, \eta  \rangle \, \di\mu = \int \langle \omega, \di\eta \rangle \, \di\mu
    \end{align*}
    for all $\eta \in \mathrm{TestForm}_{k-1}(M)$. In the case $k=0$, we set $D(\delta_0)= L^2(\mu)$ and define $\delta$ to be identically zero. 
\end{definition}
For $1$-forms, note that $ \omega \in D(\delta_1)$ if and only if $\omega^{\sharp} \in D(\mathrm{div})$ and, in this case,  $\delta \omega = -\mathrm{div}(\omega^{\sharp})$.
In \cite{gigli2018nonsmooth}, it is shown that for each $k$, $\mathrm{TestForm}_k(M) \subset D(\delta_k)$. Hence, the following definition makes sense:
\begin{definition}[\cite{gigli2018nonsmooth}, Def.\;3.5.13]
    The space $W^{1,2}_H(\Lambda^kT^*M)$ is defined as $W^{1,2}_d(\Lambda^kT^*M) \cap D(\delta_k)$ endowed with the norm $\norm{\cdot}_{W^{1,2}_H(\Lambda^kT^*M)}$ defined by
    \begin{align*}
        \norm{\omega}_{W^{1,2}_H(\Lambda^kT^*M)}^2 := \norm{\omega}_{W^{1,2}_d(\Lambda^kT^*M)}^2 + \norm{\delta\omega}^2_{L^2(\Lambda^{k-1}T^*M)}. 
    \end{align*}
    The space $H^{1,2}_H(\Lambda^kT^*M)$ is defined as the $W^{1,2}_H$-closure of $\mathrm{TestForm}_k(M)$. 
\end{definition}
It is not hard to check that $\mathrm{TestForm}_k(M)\subset W^{1,2}_H(\Lambda^kT^*M)$.
\begin{definition}[\cite{gigli2018nonsmooth}, Def.\;3.6.3]
    The space $H^{1,2}_H(TM)\subset L^2(TM, \mu)$ is the space of vector fields $X \in L^2(TM)$ such that $X^\flat \in H^{1,2}_H(T^*M)$ equipped with the norm $\norm{X}_{H^{1,2}_H(TM)} := \norm{X^\flat}_{H^{1,2}_H(T^*M)}$. 
\end{definition}
\begin{definition}[\cite{gigli2018nonsmooth}, Def.\;3.5.14]
    Given $k \in \N$ the domain $D(\Delta_{H, k}) \subset H^{1,2}_H(\Lambda^kT^*M)$ of the Hodge Laplacian is defined as the set of $\omega \in H^{1,2}_H(\Lambda^kT^*M)$ for which there exists an $\alpha \in L^2((\Lambda^kT^*M)$ such that 
    \begin{align*}
        \int \langle \alpha, \eta \rangle \, \di\mu = \int \langle d\omega, \di\eta \rangle \, \di\mu+\int \langle \delta\omega, \delta\eta \rangle \, \di\mu \quad \forall \eta \in H^{1,2}_H(\Lambda^kT^*M).
    \end{align*}
    In this case $\alpha$ is unique and we denote it by $\Delta_{H, k} \omega$. 
\end{definition}
Note that
\begin{align*}
    \int \langle \Delta_{H, k} \omega, \omega \rangle \, \di\mu = \int (\langle d\omega, d\omega \rangle+ \langle \delta\omega, \delta\omega \rangle) \, \di\mu.
\end{align*}
In \cite[Prop.\;3.6.1]{gigli2018nonsmooth}, it is shown that $\mathrm{TestForm}_1(M) \subset D(\Delta_{H,1})$. Moreover, $\Delta f = - \Delta_{H, 0}f$ for all $f \in \mathrm{TestF}(M)$. Finally, we recall the generalised \textbf{Ricci curvature tensor}: 
\begin{theorem}[\cite{gigli2018nonsmooth}, Thm.\;3.6.7]\label{defsyntheticricci}
    There is a unique continuous map $\mathrm{\textbf{Ric}}: [H^{1,2}_H(TM)]^2 \to \mathrm{Meas}(M)$ such that, for every $X, Y \in \mathrm{TestV}(M)$, it holds:
    \begin{align*}
        \mathrm{\textbf{Ric}}(X,Y) = \boldsymbol{\Delta}\frac{\langle X, Y \rangle}{2} + \Big( \frac{1}{2}\langle X, (\Delta_HY^\flat)^\sharp\rangle + \frac{1}{2}\langle Y, (\Delta_HX^\flat)^\sharp\rangle - \nabla X : \nabla Y\Big)\mu.
    \end{align*}
    This map is bilinear, symmetric and satisfies: 
    \begin{align*}
        \mathrm{\textbf{Ric}}(X,X) \geq Kg(X, X)\mu.
    \end{align*}
    Moreover, setting $X = Y$, we get that
    \begin{align*}
       \mathrm{\textbf{Ric}}(X,X) + (|\nabla X|^2_{HS} -  \langle X, (\Delta_HX^\flat)^\sharp\rangle)\mu=  \boldsymbol{\Delta} \frac{|X|^2}{2}.
    \end{align*}
\end{theorem}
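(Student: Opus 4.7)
My plan is to first define $\mathrm{\textbf{Ric}}(X,Y)\in\mathrm{Meas}(M)$ on test vector fields $X,Y\in\mathrm{TestV}(M)$ by the displayed formula and then extend it to $H^{1,2}_H(TM)$ by continuity. Each summand of the defining formula is well-posed: by the lemma of \cite{gigli2018nonsmooth} cited above, $\langle X,Y\rangle\in D(\boldsymbol{\Delta})$, so the first term is a Radon measure; since $Y^\flat\in\mathrm{TestForm}_1(M)\subset D(\Delta_{H,1})$ (and symmetrically for $X$), and since test vector fields lie in $L^\infty\cap L^2(TM)$, the term $\langle X,(\Delta_H Y^\flat)^\sharp\rangle$ is in $L^1(\mu)$; finally $\nabla X:\nabla Y\in L^1(\mu)$ because $\mathrm{TestV}(M)\subset H^{1,2}_C(TM)$. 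The formula is manifestly symmetric and $\R$-bilinear, and setting $X=Y$ directly produces the Bochner-type identity displayed in the statement.

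\textbf{Lower bound.} The next step would be to prove the curvature lower bound on $\mathrm{TestV}$. First I would consider the basic case $X=\nabla f$ with $f\in\mathrm{TestF}(M)$: using $\delta(df)=-\Delta f$ and $d\circ d=0$, one has $\Delta_H(df)=d\delta(df)=-d\Delta f$, so $(\Delta_H(\nabla f)^\flat)^\sharp=-\nabla\Delta f$. Substituting into the definition gives
\[
\mathrm{\textbf{Ric}}(\nabla f,\nabla f)=\boldsymbol{\Delta}\tfrac{|\nabla f|^2}{2}-\langle\nabla f,\nabla\Delta f\rangle\mu-|\Hess f|_{HS}^2\mu=\boldsymbol{\Gamma}_2(f,f)-|\Hess f|_{HS}^2\mu.
\]
The $\rcd(K,\infty)$ hypothesis, through Theorem \ref{cd_implies_be} together with the self-improved Bochner inequality underlying the definition of $W^{2,2}$ in the previous subsection (which refines $\be(K,\infty)$ to the measure-valued estimate $\boldsymbol{\Gamma}_2(f,f)\geq K|\nabla f|^2\mu+|\Hess f|_{HS}^2\mu$ on $\rcd(K,\infty)$ spaces), then yields $\mathrm{\textbf{Ric}}(\nabla f,\nabla f)\geq K|\nabla f|^2\mu$. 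For a general $X=\sum_i h_i\nabla f_i\in\mathrm{TestV}(M)$, I would expand $|X|^2$, $\nabla X=\sum_i(\nabla h_i\otimes\nabla f_i+h_i\Hess f_i)$, and $\Delta_H(X^\flat)=\Delta_H\bigl(\sum_i h_i\,df_i\bigr)$ using the Leibniz rule for $\boldsymbol{\Delta}$ on products of $\mathrm{TestF}$-functions and the corresponding Leibniz rule for the Hodge Laplacian on functions times exact forms. After rearrangement, the cross terms containing second derivatives of the weights $h_i$ should cancel, leaving a convex combination of the gradient-case quantities $h_ih_j\mathrm{\textbf{Ric}}(\nabla f_i,\nabla f_j)$ plus a manifestly nonnegative Hilbert--Schmidt square involving $\nabla h_i\otimes\nabla f_i$, so the gradient-case bound propagates to $\mathrm{\textbf{Ric}}(X,X)\geq Kg(X,X)\mu$.

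\textbf{Continuity extension and main obstacle.} With the pointwise lower bound secured on $\mathrm{TestV}(M)$, I would establish a continuity estimate of the form $\|\mathrm{\textbf{Ric}}(X,Y)\|_{TV}\leq C\,\|X\|_{H^{1,2}_H(TM)}\|Y\|_{H^{1,2}_H(TM)}$ by bounding each term of the defining formula (the first summand via $\langle X,Y\rangle\in D(\boldsymbol{\Delta})$ with a controlled mass, the middle via Cauchy--Schwarz in $L^2$, and the last via the $L^2$-norms of $\nabla X,\nabla Y$), combined with Cauchy--Schwarz for the polarised symmetric form applied to the nonnegative measure $\mathrm{\textbf{Ric}}(X,X)-K|X|^2\mu$. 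Density of $\mathrm{TestV}(M)$ in $H^{1,2}_H(TM)$ then produces a unique continuous extension, and bilinearity, symmetry, and the curvature lower bound all pass to the weak limit in $\mathrm{Meas}(M)$. The principal obstacle will be the algebraic expansion in the general-$\mathrm{TestV}$ step of paragraph 2: the Leibniz rules for $\boldsymbol{\Delta}$ on products and for $\Delta_H$ on $h\,df$ generate many cross terms in derivatives of the weights $h_i$, and identifying them as a complete Hilbert--Schmidt square paired cleanly against the gradient-case contributions — while keeping consistent sign conventions between $\Delta_H$, $d$, $\delta$, and the raising/lowering musical isomorphisms — is the delicate bookkeeping at the heart of the proof.
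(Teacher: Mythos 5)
This theorem is not proved in the paper: it is imported verbatim from Gigli's memoir (\cite{gigli2018nonsmooth}, Thm.\ 3.6.7), so the relevant comparison is with Gigli's argument. Your outline reproduces its architecture correctly at the level of structure: well-posedness of each term of the defining formula on $\mathrm{TestV}(M)$, the gradient case via $(\Delta_H df)^\sharp=-\nabla\Delta f$ and the identity $\mathrm{\textbf{Ric}}(\nabla f,\nabla f)=\boldsymbol{\Gamma}_2(f,f)-|\Hess f|^2_{HS}\mu$ together with the self-improved measure-valued Bochner inequality, and the extension to $H^{1,2}_H(TM)$ by combining the nonnegativity of $\mathrm{\textbf{Ric}}(X,X)-K|X|^2\mu$, polarisation, explicit total-mass computations and density of $\mathrm{TestV}(M)$.

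The genuine gap is exactly the step you flag as ``delicate bookkeeping'': the passage from $X=\nabla f$ to general $X=\sum_i h_i\nabla f_i$. Your claim that after expansion one is left with ``a convex combination of the gradient-case quantities $h_ih_j\mathrm{\textbf{Ric}}(\nabla f_i,\nabla f_j)$ plus a manifestly nonnegative Hilbert--Schmidt square'' does not work as stated: the off-diagonal measures $\mathrm{\textbf{Ric}}(\nabla f_i,\nabla f_j)$ carry no sign information from the gradient case; the coefficients $h_ih_j$ are functions, not constants, so they cannot be pulled through a measure inequality; and the measures involved ($\boldsymbol{\Delta}$-terms, $\boldsymbol{\Gamma}_2$-terms) may have singular parts with respect to $\mu$, so no pointwise completion of squares is available. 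In Gigli's proof (Lemma 3.6.2 there) this is overcome by a Savar\'e-type self-improvement device: one applies the improved inequality $\boldsymbol{\Gamma}_2(u)\geq\bigl(K|\nabla u|^2+|\Hess u|^2_{HS}\bigr)\mu$ not only to the $f_i$ but to an auxiliary family of test functions built from linear combinations of the $f_i$ and of the products $f_if_j$ with constant (rational) coefficients, and then optimises the coefficients $\mu$-a.e.\ at Lebesgue points of the relevant densities, handling the singular parts separately (they are nonnegative). Without this idea the propagation of the lower bound to general test fields fails, and with it fails the continuity estimate as well, since the total-variation bound cannot be obtained by estimating $\boldsymbol{\Delta}\tfrac{\langle X,Y\rangle}{2}$ term by term (there is no a priori TV control on that measure); it must be routed, as you partly indicate, through the nonnegativity of $\mathrm{\textbf{Ric}}(X,X)-K|X|^2\mu$, which is precisely the statement left unproved.
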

We will now connect the measure valued Ricci tensor in the sense of Theorem \ref{defsyntheticricci} and the distributional Ricci tensor in the sense of Subsection \ref{SubSec:BochnerWRM}. 
\smallskip

\begin{proposition}\thlabel{equalityriccis}
Let $M$ be a smooth manifold with a $C^0$-Riemannian metric $g$ with $L^2_{\rm{loc}}$ Christoffel symbols. Consider $\di\mu = h^2\di\vol_g$ for a  positive function $h\in C^{0}\cap W^{1,2}_{\rm{loc}}$. Assume that $(M, \sfd_g, \mu)$ is a $\mathsf{CD}(K, \infty)$-space.
    For {$X \in \mathrm{TestV}(M)$} we have that $\mathrm{\textbf{Ric}}(X,X) = \Ric_{\mu, \infty}(X,X)$ in the sense of \eqref{districcilocalweighted}.
\end{proposition}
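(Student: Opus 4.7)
The plan is to evaluate both $\mathrm{\textbf{Ric}}(X,X)$ and $\Ric_{\mu,\infty}(X,X)$ on an arbitrary $\phi \in \mathrm{TestF}(M)$ and check they produce the same distributional pairing, namely (the LHS of) \eqref{districcilocalweighted}. Using Theorem \ref{defsyntheticricci}, I write
\[
\int_M \phi\, \di\mathrm{\textbf{Ric}}(X,X) = \int_M \phi\, \di\boldsymbol{\Delta}\tfrac{|X|^2}{2} + \int_M \phi \langle X, (\Delta_{H,1}X^\flat)^\sharp\rangle\di\mu - \int_M \phi |\nabla X|^2_{HS}\di\mu,
\]
and unpack each of the three terms using the abstract calculus developed in the previous subsection, arriving at an expression in first derivatives of $g, h, X, \phi$ that matches \eqref{districcilocalweighted}.

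The first term is the easiest: by the definition of the measure-valued Laplacian $\boldsymbol{\Delta}$ applied to $|X|^2/2 \in D(\boldsymbol{\Delta})$ and tested against $\phi$, which is Lipschitz with bounded support,
\[
\int_M \phi\, \di\boldsymbol{\Delta}\tfrac{|X|^2}{2} = -\tfrac{1}{2}\int_M \langle \nabla\phi,\nabla|X|^2\rangle_g \di\mu,
\]
which is exactly the first term on the RHS of \eqref{districcilocalweighted}. The Hilbert--Schmidt term is equally direct: since $X \in \mathrm{TestV}(M) \subset H^{1,2}_C(TM) \cap H^2_1(TM,\mu)$, the abstract covariant derivative of $X$ coincides $\mu$-a.e.\ with the classical one (as discussed after the definition of $H^{1,2}_C(TM)$), so $|\nabla X|^2_{HS}$ has the local coordinate expression $g^{ij}g_{sr}(\partial_iX^s + \Gamma^s_{ip}X^p)(\partial_jX^r + \Gamma^r_{jq}X^q)$ appearing as the last term on the RHS of \eqref{districcilocalweighted}.

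The main step is the Hodge-Laplacian contribution. Since $X^\flat \in \mathrm{TestForm}_1(M)$ and $\phi X^\flat \in H^{1,2}_H(\Lambda^1 T^*M)$, the abstract definition of $\Delta_{H,1}$ applied to the test form $\eta = \phi X^\flat$ yields
\[
\int_M \phi \langle X^\flat, \Delta_{H,1}X^\flat\rangle\di\mu = \int_M \langle dX^\flat, d(\phi X^\flat)\rangle \di\mu + \int_M \delta_\mu X^\flat\cdot \delta_\mu(\phi X^\flat)\di\mu.
\]
Expanding via the Leibniz rules $d(\phi X^\flat) = d\phi \wedge X^\flat + \phi\, dX^\flat$ and $\delta_\mu(\phi X^\flat) = \phi\,\delta_\mu X^\flat - X(\phi)$ — which hold in the smooth case and extend here by \thref{districcisobolev} together with the density of smooth vector fields — this reduces to $\int_M \phi(|dX^\flat|^2 + |\delta_\mu X^\flat|^2)\di\mu$ plus cross terms involving $d\phi$. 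The squared-norm integrals supply the middle two terms of the RHS of \eqref{districcilocalweighted}. In the fully smooth case, collecting all pieces and recombining the cross terms with the gradient contribution reproduces $\int_M \phi \Ric_{\mu,\infty}(X,X)\di\mu$ via the weighted Bochner formula (Proposition \ref{weightedbochnersformula}); the identity then extends to the present $C^0 \cap W^{1,2}_{\mathrm{loc}}$ setting by approximating $g, h$ by smooth objects in the $C^0\cap W^{1,2}_{\mathrm{loc}}$-topology and invoking joint continuity of all the expressions above in first derivatives of $g, h, X, \phi$. The main technical obstacle is precisely the careful bookkeeping of those cross terms from the two Leibniz expansions: they must recombine with $-\tfrac{1}{2}\int_M\langle\nabla\phi,\nabla|X|^2\rangle\di\mu$ so as to reconstruct the distributional pairing \eqref{districcilocalweighted} exactly, without residue.
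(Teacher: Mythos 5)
Your overall strategy is the same as the paper's: expand $\int\phi\,\di\mathrm{\textbf{Ric}}(X,X)$ through Theorem \ref{defsyntheticricci}, convert the $\boldsymbol{\Delta}$-term using the definition of the measure-valued Laplacian, identify $|\nabla X|^2_{HS}$ with its coordinate expression, and match the result with \eqref{districcilocalweighted} via \thref{districcisobolev} and density. Where you deviate is the Hodge term, and that is exactly where your argument is incomplete. The paper identifies $\int_M\phi\,\langle X,(\Delta_{\mu,H}X^\flat)^\sharp\rangle\,\di\mu$ with $\int_M\phi\,(|dX^\flat|^2+|\delta_\mu X^\flat|^2)\,\di\mu$ directly, i.e.\ by the same identification used in Subsection \ref{SubSec:BochnerWRM} to produce \eqref{districcilocalweighted}, so no terms in $d\phi$ ever enter. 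You instead test the abstract $\Delta_{H,1}$ against $\phi X^\flat$ and expand by the Leibniz rules, which generates the cross terms
\begin{align*}
\int_M \big(\langle d\phi\wedge X^\flat,\, dX^\flat\rangle - X(\phi)\,\delta_\mu X^\flat\big)\,\di\mu ,
\end{align*}
which do not appear among the terms of \eqref{districcilocalweighted} and do not vanish in general: already for the flat metric on $\R^2$ with $h\equiv 1$ and $X=y\,\partial_x$ the first cross term equals $\int y\,\partial_y\phi\,\di\mathcal{L}^2\neq 0$. Hence your concluding assertion that everything recombines with $-\tfrac12\int\langle\nabla\phi,\nabla|X|^2\rangle\,\di\mu$ ``without residue'' is precisely the statement that has to be proved, and you never prove it; invoking Proposition \ref{weightedbochnersformula} plus smooth approximation is circular here, since \eqref{districcilocalweighted} was itself derived from that Bochner identity, and what is missing is the additional integration by parts (carried out in the $C^0\cap W^{1,2}_{\rm loc}$ setting) showing how the $d\phi$-terms are reabsorbed. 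You correctly flag this as the ``main technical obstacle'', but flagging it is not closing it, whereas the paper closes the step in one line by identifying the tested Hodge term directly.

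Two smaller points. The claim is about the distributional pairing, so the test function should be taken compactly supported in a chart, $\phi\in C_c\cap W^{1,2}_{\rm loc}$ (as in the paper); testing against arbitrary $\phi\in\mathrm{TestF}(M)$ is both unnecessary and problematic, since the definition of $\boldsymbol{\Delta}$ requires Lipschitz test functions with bounded support, and \eqref{districcilocalweighted} is a local-coordinate identity. Also, your use of the abstract definition of $\Delta_{H,1}$ with $\eta=\phi X^\flat$ requires $\phi X^\flat\in H^{1,2}_H(\Lambda^1T^*M)$; this is true (products of test functions are test functions, so $\phi X^\flat\in\mathrm{TestForm}_1(M)$), but it is used and should be said.
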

\begin{proof}
     By the previous observations we have that for $\p \in C_c\cap W^{1,2}_{\rm{loc}}(M)$ (which we assume to be supported in one coordinate patch), 
\begin{align*}
    \int_M \p \,\di\mathrm{\textbf{Ric}}(X,X) &= \int_M \p \, d \boldsymbol{\Delta} \frac{|X|^2}{2} + \int_M (\langle X, (\Delta_HX^\flat)^\sharp\rangle -|\nabla X|^2_{HS})\p \, \di\mu \\ 
    &= -\frac{1}{2}\int_M \langle \nabla |X|^2, \nabla \p \rangle  \, \di\mu + \int_M (\langle X, (\Delta_{\mu, H}X^\flat)^\sharp\rangle -|\nabla X|^2_{HS})\p \, \di\mu \\
    &=  -\frac{1}{2}\int_M \langle \nabla |X|^2, \nabla \p \rangle  \, \di\mu + \int_M (|dX^\flat|^2+|\delta_\mu X^\flat|^2 -|\nabla X|^2_{HS})\p \, \di\mu. 
\end{align*}
Now the last line equals the right hand side of \eqref{districcilocalweighted} as it appears in \thref{districcisobolev} so we get that locally
\begin{align*}
    &\int_M \p\, d \mathrm{\textbf{Ric}}(X,X) \\
   =&\int  X^jX^k (\Gamma^s_{kj}\Gamma^p_{ps}-\Gamma^s_{kp}\Gamma^p_{js}) \p h^2\sqrt{|g|}\, dx^1\ldots dx^n - \int \Gamma^p_{jk}\partial_p(X^jX^k \p h^2 \sqrt{|g|})\, dx^1\ldots dx^n \nonumber  \\
    &+\int \Gamma^p_{pk}\partial_j(X^jX^k \p h^2 \sqrt{|g|})\, dx^1\ldots dx^n \nonumber \\
    &+ 2\int X^jX^k(\partial_jh\partial_kh+h\partial_sh\Gamma^s_{kj})\varphi\sqrt{|g|}dx^1\ldots dx^n + 2\int \partial_jh\partial_k(X^jX^kh\varphi\sqrt{|g|})dx^1\ldots dx^n.
\end{align*}
\end{proof}
As in \cite{gigli2018nonsmooth}, we have that for all $f \in \mathrm{TestF}(M)$ it holds
\begin{align}\label{interplay_ric_gamma2}
    \mathrm{\textbf{Ric}}(\nabla f,\nabla f) + |\nabla^2 f|^2_{HS}\mu =  \boldsymbol{\Delta} \frac{|\nabla f|^2}{2} - \langle \nabla f, \nabla \Delta f\rangle\mu = \boldsymbol{\Gamma}_2(f,f).
\end{align}
\begin{proposition}\thlabel{strongheatflowconvergencew22}
     Let $M$ be a smooth manifold and $g$ a $C^{0}$-Riemannian metric with $L^2_{\rm{loc}}$-Christoffel symbols and $h \in C^{0}(M, (0, \infty))\cap W^{1,2}_{\rm{loc}}$ such that $(M, \sfd_g, \mu)$ is a $\mathsf{CD}(K, \infty)$-space, where $\di\mu= h^2\di\vol_g$. 
     \\Then,  for all $f \in H^{2,2}(M)$, it holds that 
     \begin{align*}
         \int_M |\nabla^2 f|^2h^2\,  \di\vol_g\leq \int_M |\Delta f|^2 h^2\,\di\vol_g - K\int_M |\nabla f|^2 h^2\, \di\vol_g.
     \end{align*}
    If $f \in D(\Delta)$, we have that $H_t f \to f$ strongly in $H^{2,2}$. 
\end{proposition}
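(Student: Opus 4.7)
The plan is to prove the integrated Bochner inequality first for test functions via the pointwise measure-valued identity, then lift it to $f\in D(\Delta)$ using the heat flow, establishing the strong $H^{2,2}$-convergence along the way, and finally extend to all of $H^{2,2}(M)$ by density.

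\textbf{Step 1 (Test functions).} For $f\in \mathrm{TestF}(M)$, the identity \eqref{interplay_ric_gamma2} combined with the synthetic Ricci bound $\mathrm{\textbf{Ric}}(\nabla f,\nabla f)\geq K|\nabla f|^{2}\mu$ from Theorem \ref{defsyntheticricci} yields the pointwise measure-valued inequality
\begin{align*}
|\nabla^{2}f|_{HS}^{2}\,\mu \;\leq\; \boldsymbol{\Delta}\tfrac{|\nabla f|^{2}}{2} - \langle \nabla f,\nabla \Delta f\rangle\,\mu - K|\nabla f|^{2}\mu.
\end{align*}
Integrating over $M$ and using that $\int_{M}\,\di\boldsymbol{\Delta}\tfrac{|\nabla f|^{2}}{2}=0$ (which I would justify by testing the definition of $\boldsymbol{\Delta}$ against a sequence of Lipschitz cutoffs built from the Lipschitz function $V$ provided by \eqref{condition_on_measure}, whose gradients go to $0$ in $L^{2}(\mu)$ against $|\nabla|\nabla f|^{2}|\in L^{2}$), together with the integration by parts $-\int\langle \nabla f,\nabla \Delta f\rangle\,\di\mu=\int(\Delta f)^{2}\,\di\mu$ (valid since $\Delta f\in W^{1,2}_{w}$), produces the desired inequality for test functions.

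\textbf{Step 2 (Heat flow convergence and $f\in D(\Delta)$).} Given $f\in D(\Delta)$, set $u_{t}:=H_{t}f$; since $f\in L^{2}$, one has $u_{t}\in \mathrm{TestF}(M)$ for every $t>0$. Linearity of the Hessian and Step 1 applied to $u_{t}-u_{s}\in\mathrm{TestF}(M)$ give
\begin{align*}
\int |\Hess(u_{t}-u_{s})|^{2}\,\di\mu \;\leq\; \int (\Delta(u_{t}-u_{s}))^{2}\,\di\mu - K\int|\nabla(u_{t}-u_{s})|^{2}\,\di\mu.
\end{align*}
Since $\Delta u_{t}=H_{t}\Delta f\to \Delta f$ in $L^{2}$ (strong $L^{2}$-continuity of the semigroup applied to $\Delta f\in L^{2}$) and $\nabla u_{t}\to \nabla f$ in $L^{2}$ by Proposition \ref{heat_flow_weak_conv_first}, the right-hand side vanishes as $s,t\to 0$, so $(\Hess u_{t})_{t>0}$ is Cauchy in $L^{2}$. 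Its strong limit must equal $\Hess f$: passing to the limit in the integral identity defining $\Hess u_{t}$ (using $u_{t}\to f$ in $W^{1,2}_{w}$ and strong convergence of $\Hess u_{t}$) identifies the limit as the Hessian of $f$. This proves the claimed strong $H^{2,2}$-convergence $H_{t}f\to f$, and passing to the limit in the Bochner inequality for $u_{t}$ extends it to every $f\in D(\Delta)$.

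\textbf{Step 3 (General $f\in H^{2,2}(M)$).} By Definition 3.3.17 and Proposition \ref{w22closure}, $H^{2,2}(M)$ is the $W^{2,2}$-closure of $\mathrm{TestF}(M)\subset D(\Delta)$, so there exist $f_{n}\in\mathrm{TestF}(M)$ with $f_{n}\to f$ in $W^{2,2}$. If $\sup_{n}\|\Delta f_{n}\|_{L^{2}}<\infty$, a weakly convergent subsequence combined with passage to the limit in the defining identity of $\Delta$ shows $f\in D(\Delta)$ with $\Delta f_{n}\rightharpoonup\Delta f$ in $L^{2}$; the inequality for $f_{n}$ then passes to $f$ by strong $L^{2}$-convergence of $\Hess f_{n}$ and $\nabla f_{n}$ together with weak lower semicontinuity of the $L^{2}$-norm on the $\Delta$-term. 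In the complementary case $\|\Delta f_{n}\|_{L^{2}}\to\infty$, the right-hand side of the inequality equals $+\infty$ and the statement is trivially true.

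The main obstacle is the vanishing $\int\,\di\boldsymbol{\Delta}\tfrac{|\nabla f|^{2}}{2}=0$ on the non-compact $M$ in Step 1: this is exactly the point where \eqref{condition_on_measure} is essential, since a naive choice of cutoffs could produce boundary terms. The rest of the argument is essentially a careful combination of Bochner inequality, heat semigroup regularity, and closure under $W^{2,2}$-limits.
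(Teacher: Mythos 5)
Your overall architecture (measure-valued Bochner inequality for test functions, heat-flow Cauchy argument, density) is reasonable, and your Step 2 is essentially the paper's own heat-flow argument. But the paper does not re-derive the integrated inequality: it simply quotes \cite[Cor.\;3.3.9]{gigli2018nonsmooth}, which states exactly that $\int_M|\nabla^2 u|^2_{HS}\,\di\mu\le\int_M\big((\Delta u)^2-K|\nabla u|^2\big)\,\di\mu$ for every $u\in D(\Delta)$. Your Step 1 tries to reprove this, and the point you yourself flag as ``the main obstacle'' is a genuine gap as you have sketched it. To discard $\int_M\di\boldsymbol{\Delta}\tfrac{|\nabla f|^2}{2}$ you propose Lipschitz cutoffs built from the function $V$ of \eqref{condition_on_measure} ``whose gradients go to $0$ in $L^2(\mu)$''. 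Under \eqref{condition_on_measure} the measure of balls may grow like $e^{CR^2}$; a cutoff $\chi_R$ interpolating between $1$ and $0$ across an annulus of width $\sim R$ has $\norm{\nabla\chi_R}_{L^\infty}\sim R^{-1}$ but $\norm{\nabla\chi_R}^2_{L^2(\mu)}\lesssim R^{-2}\mu(B_{2R})$, which is unbounded, so the gradients do \emph{not} tend to $0$ in $L^2(\mu)$. Nor does the pairing $\int\langle\nabla\chi_R,\nabla|\nabla f|^2\rangle\,\di\mu$ close by Cauchy--Schwarz: the good factor $\norm{\nabla|\nabla f|^2}_{L^2(\mathrm{annulus})}\to0$ is multiplied by the unbounded factor $\norm{\nabla\chi_R}_{L^2(\mathrm{annulus})}$, and using the $L^\infty$-bound on $\nabla\chi_R$ instead would require $\nabla|\nabla f|^2\in L^1(\mu)$, which you do not have. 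Controlling this ``mass at infinity'' (via good cutoffs with controlled Laplacian, or conservativity of the heat semigroup under \eqref{condition_on_measure}, or simply the quoted corollary) is precisely the nontrivial global content of the result you are re-deriving, so as written your proof re-opens the hard point without closing it.

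Two secondary slips. In Step 2 you claim $H_tf\in\mathrm{TestF}(M)$ for $f\in L^2$; membership in $\mathrm{TestF}$ requires $f\in L^2\cap L^\infty$ (the $L^\infty$-bounds on $H_tf$ and $|\nabla H_tf|$ come from the maximum principle and the Bakry--Émery gradient estimate applied to bounded data), so applying your Step 1 inequality to $u_t-u_s$ is not licensed as stated; the paper avoids this by applying the $D(\Delta)$-inequality from the citation directly to $u=f-H_tf$. In Step 3, weak lower semicontinuity gives $\norm{\Delta f}_{L^2}\le\liminf_n\norm{\Delta f_n}_{L^2}$, which bounds the limiting right-hand side from \emph{below}, not above, so the inequality does not pass to the limit in the way you describe; the correct move is that once $f\in D(\Delta)$ is established you invoke the $D(\Delta)$-case (where $\Delta H_tf=H_t\Delta f\to\Delta f$ strongly), and for $f\in H^{2,2}\setminus D(\Delta)$ the statement is vacuous.
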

\begin{proof} 
    By \cite[Cor.\;3.3.9]{gigli2018nonsmooth}, we have that
    \begin{align*}
        \int_M|\nabla^2u|^2_{HS}\,\di\mu \leq \int_M ((\Delta u)^2- K |\nabla u|^2) \, \di\mu
    \end{align*}
    for all $u \in D(\Delta)$. 
    If $f \in H^{2,2}\cap D(\Delta)$, we have that $\Delta f \in L^2$ and hence $H_t \Delta f \to \Delta f$ strongly in $L^2$ as $t \to 0$. As the semi-group $H_t$ is generated by $\Delta$, we have that $ \Delta H_t f = H_t \Delta f$ for all $t \geq 0$. 
    It follows that $\Delta H_t f \to \Delta f$ strongly in $L^2$ as $t \to 0$. Setting $u = f-H_tf$, we infer
    \begin{align*}
        \norm{\nabla^2(f-H_tf)}^2_{L^2((T^*)^{\otimes 2}M)} &\leq  - K\int_M |\nabla (f-H_tf)|^2 \, h^2\di\vol_g +  \int_M |\Delta (f-H_tf)|^2 \, h^2\di\vol_g \\
        & \leq (|K|+1)(\norm{f-H_tf}_{H^2_1(M, \mu)} + \norm{\Delta (f-H_tf)}_{L^2(M, \mu)}) \to 0
    \end{align*}
    as $t \to 0$. 
\end{proof}

\section{$\rcd$ implies distributional  Ricci curvature lower bounds}
\subsection{Some useful approximation results}
We start the section with some local considerations, so we work in $\R^n$ for simplicity of presentation. Recall the following consequence of the Poincar\'e inequality:
\begin{lemma}\thlabel{scalingpoincare}
    Let $R > 0$, $1 \leq p < \infty$ and $u \in W^{1,p}(B_R(0))$. Let $m := \frac{1}{|B_R(0)|}\int_{B_R(0)} u\, \di\mathcal{L}^n$. Then there exists a constant $C=C(p)>0$ independent of $R$ such that 
    \begin{align*}
        \norm{u-m}_{L^p(B_R(0))} \leq CR\norm{\nabla u}_{L^p(B_R(0))}.
    \end{align*}
\end{lemma}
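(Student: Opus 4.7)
The plan is to reduce the statement to the classical Poincaré inequality on the unit ball $B_{1}(0)$ via a scaling argument, thereby making the $R$-dependence of the constant explicit.

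First I would invoke the standard Poincaré-Wirtinger inequality on $B_{1}(0)$: there exists a constant $C=C(p,n)$ such that for every $v\in W^{1,p}(B_{1}(0))$,
\begin{equation*}
\bigl\lVert v-\bar v\bigr\rVert_{L^{p}(B_{1}(0))} \leq C\,\lVert \nabla v\rVert_{L^{p}(B_{1}(0))},
\qquad \bar v := \tfrac{1}{|B_{1}(0)|}\int_{B_{1}(0)} v \, \di\mathcal{L}^{n}.
\end{equation*}
This is the only nontrivial ingredient; it is classical (see e.g.\ Evans, \emph{Partial Differential Equations}, or Gilbarg-Trudinger).

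Given $u\in W^{1,p}(B_{R}(0))$, I would set $v(y):=u(Ry)$ for $y\in B_{1}(0)$, which lies in $W^{1,p}(B_{1}(0))$ with $\nabla v(y)=R\,\nabla u(Ry)$. The change of variables $x=Ry$ gives
\begin{align*}
\bar v &= \frac{1}{|B_{1}(0)|}\int_{B_{1}(0)} u(Ry)\,\di\mathcal{L}^{n}(y) = \frac{1}{|B_{R}(0)|}\int_{B_{R}(0)} u(x)\,\di\mathcal{L}^{n}(x) = m, \\
\lVert v-m\rVert_{L^{p}(B_{1}(0))}^{p} &= R^{-n}\lVert u-m\rVert_{L^{p}(B_{R}(0))}^{p}, \\
\lVert \nabla v\rVert_{L^{p}(B_{1}(0))}^{p} &= R^{p-n}\lVert \nabla u\rVert_{L^{p}(B_{R}(0))}^{p}.
\end{align*}

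Plugging these three identities into the unit-ball inequality and multiplying by $R^{n/p}$ yields the desired estimate with the same constant $C$. Since the proof is a one-line scaling, there is no real obstacle; the only thing to notice is that the constant depends on $p$ (and the ambient dimension $n$, which is fixed and not displayed in the statement) but not on $R$, which is exactly what the lemma asserts.
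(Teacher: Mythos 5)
Your proof is correct: the scaling identities are computed accurately, and deducing the $R$-dependence by rescaling the Poincaré--Wirtinger inequality from the unit ball is exactly the route the paper intends, since it states the lemma as a "consequence of the Poincaré inequality" without further proof. Your remark that the constant also depends on the (fixed) dimension $n$, suppressed in the statement, is the right reading of $C=C(p)$.
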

\begin{lemma}\label{weak_star_convergence_criterion}
    Let $\Omega \subset \R^n$ be open and $\nu$ be a $\sigma$-finite measure on $\Omega$ such that $C_c^\infty(\Omega)$ is dense in $L^p(\nu)$ for each $p \in [1, \infty)$. Let $f, (f_k)_{k \geq 1}$ be measurable functions on $\Omega$ such that $\norm{f_k}_{L^\infty}$ and $\norm{f}_{L^\infty}$ are bounded and such that $f_k  \rightharpoonup f$ weakly in $L^p(\nu)$ for some $p \in [1, \infty)$. Then $f_k  \overset{*}{\rightharpoonup} f$ in the weak* topology of $(L^1)^* = L^\infty$.
\end{lemma}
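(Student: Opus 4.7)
The plan is to prove this by a standard density/triangle inequality argument. The key observation is that weak convergence in $L^p(\nu)$ tests against $L^{p'}(\nu)$, and our density hypothesis ensures $C_c^\infty(\Omega) \subset L^{p'}(\nu)$ lies inside both $L^{p'}(\nu)$ and $L^1(\nu)$ and is dense in both. Thus test functions can be used as a common bridge between weak convergence in $L^p(\nu)$ and weak-$*$ convergence in $L^\infty(\nu) = (L^1(\nu))^*$.

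Concretely, let $M := \sup_k \norm{f_k}_{L^\infty} + \norm{f}_{L^\infty} < \infty$ by hypothesis. Fix an arbitrary $g \in L^1(\nu)$ and $\e > 0$. By the density of $C_c^\infty(\Omega)$ in $L^1(\nu)$, choose $\phi \in C_c^\infty(\Omega)$ with $\norm{g - \phi}_{L^1(\nu)} < \e$. Since the density assumption also forces $C_c^\infty(\Omega) \subset L^{p'}(\nu)$ (where $p'$ is the conjugate exponent of $p$, with the usual convention $p'=\infty$ if $p=1$; in the latter case we use boundedness of $\phi$ and compact support to conclude $\phi\in L^\infty$), the function $\phi$ is an admissible test function for the weak convergence in $L^p(\nu)$. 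We then decompose
\begin{align*}
\left|\int f_k\, g\, \di\nu - \int f\, g\, \di\nu\right|
&\leq \left|\int f_k (g-\phi)\, \di\nu\right| + \left|\int (f_k-f)\, \phi\, \di\nu\right| + \left|\int f(\phi-g)\, \di\nu\right| \\
&\leq \norm{f_k}_{L^\infty}\norm{g-\phi}_{L^1(\nu)} + \left|\int (f_k-f)\, \phi\, \di\nu\right| + \norm{f}_{L^\infty}\norm{\phi-g}_{L^1(\nu)}.
\end{align*}
The first and third terms are bounded by $M\e$ uniformly in $k$, while the middle term tends to $0$ as $k \to \infty$ by the weak convergence $f_k \rightharpoonup f$ in $L^p(\nu)$ applied to the fixed test function $\phi$.

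Passing to the $\limsup$ as $k \to \infty$ in the above chain of inequalities gives
\begin{align*}
\limsup_{k \to \infty}\left|\int f_k\, g\, \di\nu - \int f\, g\, \di\nu\right| \leq 2M\e.
\end{align*}
Since $\e > 0$ was arbitrary, the left-hand side equals zero, proving $\int f_k g\, \di\nu \to \int f g\, \di\nu$ for every $g \in L^1(\nu)$, which is precisely the desired weak-$*$ convergence in $L^\infty(\nu)$. There is no serious obstacle here: the argument is entirely a routine three-term telescoping with uniform control furnished by the $L^\infty$ bound; the only point requiring care is verifying that the density hypothesis delivers a common dense subset usable for both weak topologies under consideration.
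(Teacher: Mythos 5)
Your proof is correct, but it follows a different route from the paper's. The paper argues by compactness: since the $f_k$ are uniformly bounded in $L^\infty=(L^1)^*$, Banach--Alaoglu (together with the implicit separability of $L^1(\nu)$, which follows from the density hypothesis) yields a weak*-convergent subsequence with some limit $\Tilde f$, and the limit is then identified with $f$ by a contradiction argument: if $\Tilde f\neq f$ one could find $\phi\in L^1$, and by density even $\phi\in C_c^\infty\subset L^{p'}$, separating them, contradicting the weak $L^p$ convergence; strictly speaking this gives convergence only along a subsequence, and one must invoke the standard ``every subsequence has a further subsequence converging to the same limit'' principle to upgrade to the full sequence. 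Your argument instead is a direct three-term telescoping: approximate an arbitrary $g\in L^1(\nu)$ by $\phi\in C_c^\infty$ in $L^1$, control the two error terms uniformly in $k$ by the $L^\infty$ bound, and handle the middle term by weak $L^p$ convergence against the fixed test function $\phi\in L^{p'}(\nu)$ (your check that $\phi\in L^{p'}$, including the $p=1$ case, is the right point to verify). This buys you a more elementary and self-contained proof: no Banach--Alaoglu, no separability of the predual, and no subsequence extraction, since you obtain convergence of the whole sequence at once; the paper's compactness argument is shorter on the page but leaves those points implicit. Both proofs are valid.
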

\begin{proof}
    As $\nu$ is $\sigma$-finite, we have that indeed $(L^1)^* = L^\infty$. By the Banach-Alaoglu theorem, it follows that there is a  weakly* convergent subsequence of $f_k$, which we will still denote by $f_k$. Hence, $f_k$ converges weakly* to some $\Tilde{f} \in L^\infty$. Suppose $\Tilde{f} \neq f$. Then there exists an $L^1$-function $\phi$ such that 
    \begin{align*}
        \int_\Omega f\phi \, \di\nu \neq  \int_\Omega \Tilde{f}\phi \, \di\nu.  
    \end{align*}
    As $C_c^\infty(\Omega)$ is dense in $L^1$, we can assume that $\phi \in C_c^\infty \subset L^q$, where $q = \frac{p}{p-1}$. This contradicts the weak convergence in $L^p$. 
\end{proof}
\begin{lemma}\thlabel{locallycontroledcover}
    Let $K \subset B_1(0) \subset \R^n$ be compact and denote $d = \dist (K, \partial B_1(0)) > 0$. Then there exists a $\delta_0 > 0$ and a constant $C = C(n)$ such that for all $\delta \in (0, \delta_0)$, there exist an integer $m \leq C\delta^{-n}$ and points $y_1, \ldots, y_m \in B_1(0)$ such that $B_{3\delta(y_i)} \subset B_{2\delta+\frac{d}{12}}(y_i) \subset B_1(0)$ for all $1 \leq i \leq m$, $K \subset \bigcup_{i=1}^m B_\delta(y_i)$ and for all $x \in B_1(0)$, $|\{i: x \in B_{2\delta}(y_i)\}| \leq C$. 
\end{lemma}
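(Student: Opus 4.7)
The plan is to produce the points $y_i$ as the centers of a uniform grid of small cubes, keeping only those cubes that meet $K$. Concretely, I would set $\delta_0 := d/12$, and for $\delta \in (0,\delta_0)$ tile $\R^n$ by the closed axis-aligned cubes of side $\delta/\sqrt{n}$ obtained from the lattice $(\delta/\sqrt{n})\Z^n$. Let $y_1,\dots,y_m$ be the centers of those cubes whose closure meets $K$. Since every cube in this tiling has diameter $\delta$, each point of $K$ lies at distance $\leq \delta/2$ from some $y_i$, giving the covering $K \subset \bigcup_i B_\delta(y_i)$.

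For the cardinality bound $m \leq C(n)\delta^{-n}$, I would argue by volumes: the selected cubes are essentially disjoint, each has volume $n^{-n/2}\delta^n$, and each is contained in $\{\,x : \dist(x,K)\leq \delta\,\} \subset B_2(0)$ (for $\delta<1$); comparing volumes yields the desired bound with $C(n) = n^{n/2}|B_2(0)|$. The inclusion $B_{3\delta}(y_i) \subset B_{2\delta+d/12}(y_i)$ is immediate since $\delta \leq d/12$. For the inclusion $B_{2\delta+d/12}(y_i) \subset B_1(0)$, I would use that $K \subset \overline{B_{1-d}(0)}$ together with $\dist(y_i,K) \leq \delta/2$ to get $|y_i| \leq 1-d+\delta/2$, and then estimate
\begin{equation*}
|y_i| + 2\delta + \tfrac{d}{12} \;\leq\; 1 - d + \tfrac{5\delta}{2} + \tfrac{d}{12} \;\leq\; 1 - \tfrac{11d}{12} + \tfrac{5d}{24} \;<\; 1,
\end{equation*}
using $\delta \leq d/12$ in the middle inequality.

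For the bounded overlap, I would observe that $x \in B_{2\delta}(y_i)$ forces $y_i \in B_{2\delta}(x)$. Since the centers $\{y_i\}$ form a subset of the rectangular lattice $(\delta/\sqrt{n})(\Z^n + \tfrac{1}{2})$ of spacing $\delta/\sqrt{n}$, the number of them inside $B_{2\delta}(x)$ is at most $(4\sqrt{n}+1)^n$, a dimensional constant independent of $x$ and $\delta$. Taking $C$ to be the maximum of this overlap constant and the constant from the volume count yields a single $C=C(n)$ with both required properties.

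There is no genuine obstacle in the argument: it is a standard grid-covering construction, and the only care needed is numerical, namely choosing $\delta_0$ small enough (here $\delta_0 = d/12$) so that simultaneously $3\delta \leq 2\delta + d/12$ and $B_{2\delta + d/12}(y_i) \subset B_1(0)$ for every center $y_i$ of a cube meeting $K$.
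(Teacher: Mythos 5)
Your proof is correct and follows essentially the same grid-covering strategy as the paper, which takes the lattice points $\frac{\delta}{2\sqrt{n}}\Z^n$ inside a slightly shrunken ball and bounds their number and overlap by dimensional lattice-counting. Your only deviations are cosmetic: you keep only the cube centers of grid cells meeting $K$ (rather than all grid points in $B_{1-3d/4}(0)$) and bound $m$ by a volume comparison instead of a direct lattice-point count, and all the numerical checks ($\delta_0 = d/12$, the inclusion in $B_1(0)$, the $(4\sqrt{n}+1)^n$ overlap bound) go through.
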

\begin{proof}
    Let $\delta_0= \frac{d}{12}$ and fix $\delta \in (0,\delta_0)$. Now let $\{y_1, \ldots, y_m\}:= \frac{\delta}{2\sqrt{n}}\Z^n \cap B_{1-\frac{3d}{4}}(0)$. 
    It follows that $K \subset B_{1-\frac{3d}{4}-\frac{\delta}{2}}(0) \subset \bigcup_{i=1}^m B_\delta (y_i) \subset \bigcup_{i=1}^m B_{2\delta+\frac{d}{12}} (y_i) \subset B_{1-\frac{d}{4}}(0)$. 
    Moreover, there exists a constant $C_1(n)$ such that for each $x \in \R^n$, $|\frac{1}{2\sqrt{n}}\Z^n \cap B_2(x)| \leq C_1(n)$ so for each $x \in B_1(0)$, there are at most $C_1(n)$ points in $\frac{\delta}{2\sqrt{n}}\Z^n \cap B_{2\delta}(x)$.
    Finally, there exists a constant $C_2(n)$ such that for each $0 <\lambda \leq 1$, $|\frac{\lambda}{2\sqrt{n}}\Z^n \cap B_1(0)| \leq C_2(n) \lambda^{-n}$. 
    Taking $C(n) = \max(C_1(n), C_2(n))$ finishes the proof. 
\end{proof}
\begin{lemma}\thlabel{nicepartition}
    Let $R > 0$, $\varphi \in C_c^\infty(B_R(0))$. Then there exists a constant $C=C(n,R)>0$ and $\delta_0 > 0$ such that for each $\delta \in (0, \delta_0)$, there exists a set $\{\chi_1, \ldots, \chi_m\} \subset C_c^\infty(B_R(0), [0,1])$ and a family $\{y_1, \ldots, y_m\} \subset B_{R-3\delta}(0)$ such that the following holds: 
    \begin{itemize}
        \item [$\mathrm{(i)}$]  $m \leq C\delta^{-n}$,  
        \item[$\mathrm{(ii)}$] $\sum_{i=1}^m \chi_i(x) \leq 1$ for all $x \in B_R(0)$ and $\sum_{i=1}^m \chi_i(x) = 1$ for all $x \in \supp\ \p$,
        \item[$\mathrm{(iii)}$] For each $1 \leq i \leq m$, $\norm{\nabla \chi_i}_{L^{\infty}} \leq C \delta^{-1}$.
         \item[$\mathrm{(iv)}$] For all $i$, it holds $\chi_i \in C_c^\infty(B_{2\delta}(y_i))$.
         \item[$\mathrm{(v)}$] For all $x \in B_R(0)$, it holds $|\{i: x \in B_{2\delta}(y_i)\}| \leq C$. 
    \end{itemize}
\end{lemma}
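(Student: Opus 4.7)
The plan is to realise the partition of unity by the standard quotient construction applied to a covering of a fixed open neighbourhood of $\supp\varphi$ supplied by \thref{locallycontroledcover}. Set $K:=\supp\varphi$, $d:=\dist(K,\partial B_R(0))>0$, and
\begin{equation*}
K':=\{x\in B_R(0):\dist(x,K)\le d/4\},
\end{equation*}
a compact set with $\dist(K',\partial B_R(0))\ge 3d/4$. Fix once and for all a cut-off $\eta\in C_c^\infty(\R^n,[0,1])$ with $\eta\equiv 1$ on $K$ and $\supp\eta\subset\{\dist(\cdot,K)<d/8\}\subset\mathrm{int}(K')$; the function $\eta$ depends only on $\varphi$ (through $d$), and crucially not on $\delta$.

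Next, I apply the scaled version of \thref{locallycontroledcover} to $K'\subset B_R(0)$: this produces $\delta_0>0$ depending on $d$ and $n$, and for every $\delta\in(0,\delta_0)$ points $y_1,\ldots,y_m\in B_R(0)$ with $m\le C(n,R)\delta^{-n}$, $K'\subset\bigcup_i B_\delta(y_i)$, and the bounded-overlap estimate $|\{i:x\in B_{2\delta}(y_i)\}|\le C$ for every $x\in B_R(0)$. Shrinking $\delta_0$ if necessary one also ensures $y_i\in B_{R-3\delta}(0)$ for every $i$, which already yields (i), (iv) and (v).

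Then I fix a radial bump $\rho\in C_c^\infty(B_2(0),[0,1])$ with $\rho\equiv 1$ on $\overline{B_1(0)}$ and set $\psi_i(x):=\rho((x-y_i)/\delta)$, $\Psi:=\sum_i\psi_i$. By construction $\psi_i\in C_c^\infty(B_{2\delta}(y_i),[0,1])$, $\psi_i\equiv 1$ on $\overline{B_\delta(y_i)}$, and $\|\nabla\psi_i\|_\infty\le C/\delta$; moreover $\Psi\ge 1$ on $K'$, and in particular on $\supp\eta$. I then define
\begin{equation*}
  \chi_i(x):=\eta(x)\,\frac{\psi_i(x)}{\Psi(x)}\ \text{on }\{\Psi>0\},\qquad \chi_i(x):=0\ \text{otherwise}.
\end{equation*}
By continuity of $\Psi$ there is an open neighbourhood $U$ of the compact set $\supp\eta$ on which $\Psi\ge 1/2$, so $\psi_i/\Psi$ is smooth on $U$; since $\eta\equiv 0$ outside $\supp\eta\subset U$, the two formulas glue together to give $\chi_i\in C_c^\infty(B_R(0))$. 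The bounds $0\le\chi_i\le 1$ (from $\psi_i\le\Psi$ and $0\le\eta\le 1$) and $\supp\chi_i\subset B_{2\delta}(y_i)$ are then automatic, and property (ii) follows from the identity $\sum_i\chi_i=\eta$, which equals $1$ on $K=\supp\varphi$ and is pointwise bounded by $1$ on $B_R(0)$.

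Finally, (iii) follows from the product and quotient rules:
\begin{equation*}
\nabla\chi_i=\frac{\psi_i}{\Psi}\nabla\eta+\frac{\eta}{\Psi}\nabla\psi_i-\frac{\eta\,\psi_i}{\Psi^2}\nabla\Psi,
\end{equation*}
since on $\supp\eta$ one has $\Psi\ge 1$, bounded overlap gives $|\nabla\Psi|\le\sum_j|\nabla\psi_j|\le C/\delta$, and the factors $\eta$ and $\psi_i/\Psi$ are bounded by $1$; thus $\|\nabla\chi_i\|_\infty\le C(n,R,\varphi)/\delta$ with a constant independent of $\delta$. The only delicate point in the argument is to pin down the cut-off $\eta$, and hence the enlarged target $K'$, \emph{before} choosing $\delta$, so that $\Psi$ is uniformly bounded below by $1$ on $\supp\eta$ regardless of how small $\delta$ gets; without this uniformity the denominator $\Psi$ could degenerate near $\partial\supp\eta$ and produce a $\delta$-dependent constant in the gradient estimate, breaking (iii).
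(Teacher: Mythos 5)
Your proposal is correct in substance but takes a different route from the paper's. You run the classical quotient construction $\chi_i=\eta\,\psi_i/\Psi$, which forces you to fix in advance a cut-off $\eta$ and an enlarged compact $K'$ so that the denominator $\Psi=\sum_j\psi_j$ is $\geq 1$ on $\supp\eta$ uniformly in $\delta$. The paper instead applies \thref{locallycontroledcover} directly to $K=\supp\varphi$ and replaces the denominator by $h\big(\sum_j\eta_j\big)$ for a fixed smooth $h$ with $h\geq \tfrac14$, $|h'|\leq 1$ and $h(x)=x$ for $x\geq 1$: this makes the quotient smooth and uniformly controlled on all of $B_R(0)$ with no auxiliary cut-off and no enlargement of $K$, while still giving $\sum_i\chi_i=1$ on $\supp\varphi$ because there $\sum_j\eta_j\geq 1$. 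Your verification of (i), (ii), (iv), (v) is sound; the gluing argument for smoothness and the identity $\sum_i\chi_i=\eta$ are fine.

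One point needs tightening: as written, your bound in (iii) is $\norm{\nabla\chi_i}_{L^\infty}\leq C(n,R,\varphi)\,\delta^{-1}$, because of the term $(\psi_i/\Psi)\nabla\eta$, whereas the lemma asserts $C=C(n,R)$ (which the paper's $h$-trick delivers automatically, since no $\nabla\eta$ term appears there). This is repairable inside your own argument: choose $\eta$ with $\norm{\nabla\eta}_{L^\infty}\leq C(n)/d$, and note that (after possibly shrinking $\delta_0$ so that $\delta_0\leq d/12$, as in the proof of \thref{locallycontroledcover}) every admissible $\delta$ satisfies $\delta< \delta_0\leq d/12$, hence $\norm{\nabla\eta}_{L^\infty}\leq C(n)\,\delta^{-1}$. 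The offending term is then itself of order $\delta^{-1}$ with a dimensional constant, so the $\varphi$-dependence survives only in $\delta_0$, which the statement permits.
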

\begin{proof}
    By rescaling, we can assume $R = 1$. Then take $K = \supp \, \p$ and apply \thref{locallycontroledcover} to get $\delta_0$. Choose $0 <\delta \leq \delta_0$ and take $m$, $\{y_1, \ldots, y_m\}$ as in \thref{locallycontroledcover}. It then follows that for all $i=1, \ldots, m$, $B_{3\delta}(y_i)\subset B_R(0)$, hence $y_i \in B_{R-3\delta}(0)$.
    Now, for each $i \in \{1, \ldots, m\}$ take a function $\eta_i \in C_c^\infty(B_{2\delta}(y_i), [0,1])$ such that $\eta_i(x) = 1$ for all $x \in B_{\delta}(y_i)$ and $|\nabla \eta_i| \leq C \frac{1}{\delta}$. 
    It follows that $\sum_{i=1}^m \eta_i \in C_c^\infty(B_1(0))$ and $\sum_{i=1}^m \eta_i \geq 1$ on $\supp\, \p$. 
    Now take a function $h \in C^\infty(\R)$ such that $|h'| \leq 1$, $ h(x) \geq \max(|x|, \frac{1}{4})$ and $h(x) = x$ for $x \geq 1$. Define
    \begin{align*}
        \chi_j(x) = \frac{\eta_j(x)}{h(\sum_{i=1}^m \eta_i(x))}.
    \end{align*}
    It follows that $\sum_{i=1}^m \chi_i(x) \leq 1$ for all $x \in B_1(0)$ and $\sum_{i=1}^m \chi_i(x) = 1$ for all $x \in \supp\, \p$. To see (iii), note that for all $x \in B_1(0)$
    \begin{align*}
        |\nabla \chi_j(x)| \leq \frac{|\nabla \eta_j|(x) +|h'(\sum_{i=1}^m \eta_i(x))|\sum_{i=1}^m |\nabla \eta_i|(x)|}{h(\sum_{i=1}^m \eta_i(x))^2} \leq 16C\Big(\delta^{-1} +\sum_{i: x \in \supp\,{\eta _i}}^m \delta^{-1}\Big) \leq 16(1+C(n))\delta^{-1}. 
    \end{align*}
\end{proof}
\begin{definition}
Let $\Omega \subset \R^n$ be an open set. 
    We denote by $\mathrm{CompV}(\Omega)$ the linear span of functions of the form $\rho \nabla h$, where $\rho, h \in C_c^\infty(\Omega)$. For a smooth manifold $M$ with a $C^0$-Riemannian metric $g$, we define  $\mathrm{CompV}(M)$ as the linear span of functions of the form $\rho \nabla h$, where $\rho, h \in C_c^\infty(M)$.
\end{definition}
\smallskip

\begin{lemma}\label{main_construction}
    Let $R > 0$, $B_R(0)\subset \R^n$. Let $X \in C_c^\infty(B_R(0); \R^n)$. Then there exists a constant $C= C(n,R)>0$ and a constant $C_2=C_2(n)>0$ such that for all $\e > 0$ there is a  $\Tilde{X}= \sum_{p=1}^q h_p\nabla f_p \in  \mathrm{CompV}(B_R(0))$ satisfying the following
   \begin{itemize}
       \item[$\mathrm{(i)}$] $\norm{X-\Tilde{X}}_{W^{1,1}(B_R(0))} \leq C \e$ and $\norm{X-\Tilde{X}}_{L^\infty(B_R(0))} \leq C \e$,
       \item[$\mathrm{(ii)}$] $\norm{\Tilde{X}}_{W^{1,\infty}(B_R(0))} \leq C (1+\norm{X}_{W^{2,\infty}(B_R(0))})$,
       \item[$\mathrm{(iii)}$] $q \leq C_2$,
       \item[$\mathrm{(iv)}$] for all $p$, we have that $\norm{h_p}_{L^\infty(B_R(0))} \leq C$ and $\norm{f_p}_{L^\infty(B_R(0))} \leq C\norm{X}_{C^1(B_R(0))}\e$,  
        \item[$\mathrm{(v)}$] for all $p$, we have that $\norm{\nabla h_p}_{L^\infty(B_R(0))} \leq C(1+\norm{X}_{C^2(B_R(0))})\e^{-1}$ and $\norm{\nabla f_p}_{L^\infty(B_R(0))} \leq C(1+\norm{X}_{C^1(B_R(0))})$. 
   \end{itemize}  
\end{lemma}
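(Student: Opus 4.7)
The plan is to build $\tilde X$ as a linear combination of first-order Taylor polynomials of $X$ glued together by a partition of unity on balls of radius $\sim\e$, and then to rewrite this combination in the desired $\sum_p h_p\nabla f_p$ form, using a graph-colouring trick to keep the number of terms bounded in terms of the dimension $n$ alone.

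After disposing of the trivial case $X\equiv 0$ (which is forced whenever $\|X\|_{C^1}=0$ since $X$ is compactly supported), set $\delta := \min(1,\e/(1+\|X\|_{C^2(B_R(0))}))$ and apply Lemma \ref{nicepartition} to obtain a partition of unity $\{\chi_j\}_{j=1}^m$ with $\supp\chi_j\subset B_{2\delta}(y_j)$, $\sum_j\chi_j\equiv 1$ on $\supp X$, centres $y_j$ on a grid of spacing comparable to $\delta$, and $|\nabla\chi_j|\leq C/\delta$; pick also additional cutoffs $\eta_j\in C_c^\infty(B_{3\delta}(y_j))$ with $\eta_j\equiv 1$ on $B_{2\delta}(y_j)$ and $|\nabla\eta_j|\leq C/\delta$. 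Let $X_j(x):=X(y_j)+\nabla X(y_j)(x-y_j)$ be the first-order Taylor polynomial of $X$ at $y_j$, and set
\begin{equation*}
    \tilde X(x) \;:=\; \sum_{j=1}^m \chi_j(x)\,X_j(x).
\end{equation*}
Second-order Taylor estimates give $\|X-\tilde X\|_{L^\infty(B_R)}\leq C\delta^2\|X\|_{C^2}$; the identity $\sum_j\nabla\chi_j\equiv 0$ on $\supp X$ (and, off $\supp X$, the vanishing of $X$ and $\nabla X$ forces $X(y_j)=O(\delta^2\|X\|_{C^2})$ and $\nabla X(y_j)=O(\delta\|X\|_{C^2})$ for $y_j$ with $x\in B_{2\delta}(y_j)$) yields $\|\nabla(X-\tilde X)\|_{L^\infty(B_R)}\leq C\delta\|X\|_{C^2}\leq C\e$. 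Integrating over $B_R$ gives (i), and the same bounds on $\tilde X$ itself supply (ii).

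To realise $\tilde X$ in gradient form, expand $X_j$ componentwise as $X_j^k(x)=X^k(y_j)+\sum_i\partial_iX^k(y_j)(x^i-y_j^i)$ and note that on $\supp\chi_j$, where $\eta_j\equiv 1$, one has $\nabla\!\bigl((x^k-y_j^k)\eta_j\bigr)=e_k$. Consequently
\begin{equation*}
    \tilde X \;=\; \sum_{j=1}^m\sum_{k=1}^n \bigl(\chi_j X_j^k\bigr)\,\nabla\!\bigl((x^k-y_j^k)\eta_j\bigr),
\end{equation*}
a decomposition with $mn$ terms, too many for (iii). However, the balls $\{B_{3\delta}(y_j)\}$ have $C(n)$-bounded overlap, so their intersection graph admits a proper colouring with $C_0(n)$ colours such that within each colour the supports $\{\supp\eta_j\}_{j\in c}$ (and in particular $\{\supp\chi_j\}_{j\in c}$) are pairwise disjoint. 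For each colour $c$ and direction $k=1,\dots,n$ set
\begin{equation*}
    h_{c,k} \;:=\; \frac{1}{\|X\|_{C^1}}\sum_{j\in c}\chi_j X_j^k,\qquad
    f_{c,k} \;:=\; \|X\|_{C^1}\sum_{j\in c}(x^k-y_j^k)\eta_j.
\end{equation*}
By disjointness, at each $x$ at most one $j\in c$ contributes to each of $h_{c,k},f_{c,k},\nabla f_{c,k}$, and a direct case analysis yields $h_{c,k}\nabla f_{c,k}=\sum_{j\in c}\chi_j X_j^k e_k$; summing over $c$ and $k$ reconstructs $\tilde X$ with $q=C_0(n)\cdot n\leq C_2(n)$ terms, proving (iii).

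Conditions (iv) and (v) now follow from the explicit formulas: the prefactor $1/\|X\|_{C^1}$ in $h_{c,k}$ together with $\|X_j^k\|_{L^\infty(\supp\chi_j)}\leq\|X\|_\infty+2\delta\|X\|_{C^1}\leq 3\|X\|_{C^1}$ yields $\|h_{c,k}\|_\infty\leq C$, while the dual factor $\|X\|_{C^1}$ in $f_{c,k}$ combined with $|x^k-y_j^k|\leq 3\delta$ and $\delta\leq\e$ gives $\|f_{c,k}\|_\infty\leq C\|X\|_{C^1}\e$; the gradient bounds $|\nabla\chi_j|,|\nabla\eta_j|\leq C/\delta$ together with $1/\delta\leq(1+\|X\|_{C^2})/\e$ supply $\|\nabla h_{c,k}\|_\infty\leq C(1+\|X\|_{C^2})/\e$ and $\|\nabla f_{c,k}\|_\infty\leq C\|X\|_{C^1}$. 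The chief obstacle is the simultaneous fulfilment of (iii) and (iv)--(v): the naive global ansatz $X=\sum_k X^k\nabla(\chi x^k)$ uses only $n$ terms but leaves $\|h_p\|_\infty$ and $\|f_p\|_\infty$ uncontrolled in $\e$, whereas a direct per-ball decomposition respects the pointwise bounds but produces $\sim\delta^{-n}$ terms; the colouring step combined with the $\|X\|_{C^1}$ renormalisation is precisely what reconciles these competing requirements.
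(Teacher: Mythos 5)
Your proof is correct and follows essentially the same route as the paper's: the partition of unity at scale $\delta\sim\e/(1+\norm{X}_{C^2})$ from \thref{nicepartition}, local affine approximations of $X$ rewritten as $h\,\nabla f$ with coordinate-bump functions $f$, and a regrouping of disjointly supported pieces to force $q\leq C_2(n)$, where your proper colouring of the intersection graph plays exactly the role of the paper's residue classes mod $12n$ on the grid of centres. The remaining differences are organisational rather than substantive: you use the Taylor polynomial at $y_j$ instead of the ball average (so pointwise Taylor estimates replace the paper's Poincar\'e step in (i)), and you balance (iv)--(v) by the $\norm{X}_{C^1}$ renormalisation of $h_{c,k}$ and $f_{c,k}$ rather than by the paper's splitting of the affine part into the blocks $\chi\nabla\alpha$ and $\chi\rho^k\nabla\eta^{lk}$.
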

\begin{proof}[Proof of (i)]
Assume (by potentially readjusting) that $\e \leq \min(\frac{1}{2},\delta_0)$, where $\delta_0>0$ is as in \thref{nicepartition}.
 We will denote by $DX$ the Jacobi matrix of $X$. 
We choose
\begin{align}\label{defdeltaeps}
    \delta = \frac{\e}{8n^2(1+\norm{DX}_{L^\infty}+ \norm{D^2X}_{L^\infty})},
\end{align}
so by the mean value theorem we get that for all $y, z \in B_R(0)$ with $|y-z|\leq 4\delta$, it holds $|X(y)-X(z)|< \e$ and $|DX(y)-DX(z)|< \e$. Note that we also have $\delta < \delta_0$. 
    For this chosen $\delta$, take $m$, $\{\chi_1, \ldots \chi_m\}$ and $\{y_1, \ldots, y_m\}$ as in \thref{nicepartition}. 
    Let $\psi \in C_c^\infty(B_{3\delta}(0))$ such that $\psi(B_{2\delta}(0))=\{1\}$ and $|\nabla \psi| \leq 2 \delta^{-1}$.
    Fix an $i \in \{1, \ldots, m\}$. Let 
    \begin{align*}
        A_i= \frac{1}{|B_{2\delta}(y_i)|} \int_{B_{2\delta}(y_i)} X \di\mathcal{L}^n \in \R^n,
    \end{align*}
    and 
    \begin{align*}
        \alpha_i: \R^n \to \R, x \mapsto \psi(x-y_i)\langle A_i, (x-y_i) \rangle.
    \end{align*}
    Then $\nabla \alpha_i = A_i$ and $DA_i = 0$ on $B_{2\delta}(y_i)$. 
    For $1 \leq l,k \leq n$, let $B_i^{lk} = \partial_kX^l(y_i) \in \R$. 
    Moreover, define 
    \begin{align*}
        \rho_i^k(x) &:= \psi(x-y_i)(x_k-(y_i)_k) \in C_c^\infty(B_R(0)), \\
        \eta_i^{lk}(x)&:= \psi(x-y_i)B^{lk}_i (x_l-(y_i)_l) \in C_c^\infty(B_R(0))   \ \mathrm{and} \\
        \beta_i^{lk}(x) &:=(\rho_i^k \nabla  \eta_i^{lk})(x) =  \psi(x-y_i)(x_k-(y_i)_k) \cdot \nabla (\psi(x-y_i)B^{lk}_i (x_l-(y_i)_l) \in \mathrm{CompV}(B_R(0)).
    \end{align*}
    Then, for all $x \in B_{2\delta}(y_i)$, we have that
    \begin{align*}
        \beta_i^{lk} (x) &= (x_k-(y_i)_k)B^{lk}_ie_l \in \R^n, \quad \mathrm{and} \\
        D\beta_i^{lk} (x) &= (\delta_m^l\delta_p^k B^{lk}_i)_{m,p=1, \ldots,n } \in \R^{n \times n}.
    \end{align*}  
    Notice that 
    \begin{align*}
        \int_{B_{2\delta}(y_i)} \beta^{lk}_i d \mathcal{L}^n= 0 \in \R^n,
    \end{align*}
    as $\beta^{lk}_i$ is linear on $B_{2\delta}(y_i)$. 
    Now define $\Tilde{X} := \sum_{i=1}^m\chi_i \cdot (\nabla \alpha_i + \sum_{k,l=1}^n \beta_i^{lk}) \in \mathrm{CompV}(B_R(0))$. As $X, \Tilde{X} \in W^{1,1}_0(B_R(0))$, the Poincaré inequality implies that
    \begin{align*}
        \norm{X-\Tilde{X}}_{W^{1,1}(B_R(0))} \leq C\cdot(1+R)\norm{DX-D\Tilde{X}}_{L^{1}(B_R(0))},
    \end{align*}
    for some $C=C(n)>0$. 
    Now, as $X = \sum_{i=1}^m \chi_i X$, we get that 
\begin{align*}
    &\norm{DX-D\Tilde{X}}_{L^{1}(B_R(0))} = \int_{B_R(0)} |\sum_{i=1}^m D(\chi_iX)-D(\chi_i (\nabla \alpha_i + \sum_{k,l=1}^n \beta_i^{lk})) | \, \di\mathcal{L}^n \\
    &\qquad  \leq \int_{B_R(0)} \sum_{i=1}^m |\nabla \chi_i||X-(\nabla \alpha_i+\sum_{k,l=1}^n \beta_i^{lk})| +|\sum_{i=1}^m \chi_i(DX-D\sum_{k,l=1}^n \beta_i^{kl})| \, \di\mathcal{L}^n.
\end{align*}
For each $i$, we get that 
\begin{align}\label{linftygradient}
    \norm{DX-D\Big(\sum_{k,l=1}^n \beta_i^{kl}\Big)}_{L^\infty(B_{2\delta}(y_i))}= \norm{DX-DX(y_i)}_{L^\infty(B_{2\delta}(y_i))} \leq n^2\e.
\end{align}
Thus, using that $\chi_i \geq 0$, we get that
\begin{align*}
    \int_{B_R(0)}\Big|\sum_{i=1}^m \chi_i(DX-D\sum_{k,l=1}^n \beta_i^{kl})\Big| \, \di\mathcal{L}^n \leq  \int_{B_R(0)}\sum_{i=1}^m \chi_i \Big|(DX-D\sum_{k,l=1}^n \beta_i^{kl})\Big| \, \di\mathcal{L}^n \leq n^2|B_R(0)|\e. 
\end{align*}
For each $i$, we have $|\nabla \chi_i| \leq C(n,R) \delta^{-1}$ and 
\begin{align*}
    \int_{B_R(0)} |\nabla \chi_i| \Big|X-\nabla \alpha_i+\sum_{k,l=1}^n \beta_i^{kl}\Big| \, \di\mathcal{L}^n &= \int_{B_{2\delta}(y_i)}|\nabla \chi_i| \Big|X-\nabla \alpha_i+\sum_{k,l=1}^n \beta_i^{kl}\Big| \, \di\mathcal{L}^n \\
    &\leq C(n,R) \delta^{-1} \norm{X-\nabla \alpha_i+\sum_{k,l=1}^n \beta_i^{kl}}_{L^1(B_{2\delta}(y_i))}.
\end{align*}
By \thref{scalingpoincare} and \eqref{linftygradient}, we have that 
\begin{align*}
  \norm{X-\nabla \alpha_i+\sum_{k,l=1}^n \beta_i^{kl}}_{L^1(B_{2\delta}(y_i))} &\leq  2C \delta \norm{DX-D\Big(\nabla \alpha_i+\sum_{k,l=1}^n \beta_i^{kl}\Big)}_{L^1(B_{2\delta}(y_i))} \\
  &=2C \delta \norm{DX-D\Big(\sum_{k,l=1}^n \beta_i^{kl}\Big)}_{L^1(B_{2\delta}(y_i))} \\
  &\leq C\delta^{n+1} \e.
\end{align*}
Thus,
\begin{align*}
    \int_{B_R(0)} |\nabla \chi_i||X-\nabla \alpha_i+\sum_{k,l=1}^n \beta_i^{kl}| \, \di\mathcal{L}^n  \leq  C(n, R) \e \delta^n.
\end{align*}
Now, as $m \leq C(n,R)\delta^{-n}$, we get that 
\begin{align*}
    \int_{B_R(0)} \sum_{i=1}^m |\nabla \chi_i||X-\Big(\nabla \alpha_i+\sum_{k,l=1}^n \beta_i^{lk}\Big)| \leq C(n,R)^2\e. 
\end{align*}
All together this gives
\begin{align*}
    \norm{X-\Tilde{X}}_{W^{1,1}(B_R(0))} \leq C(1+R)\norm{DX-D\Tilde{X}}_{L^{1}(B_R(0))} \leq C(n,R)\e.
\end{align*}
Moreover, note that for each $x \in B_R(0)$,
\begin{align*}
    |X-\Tilde{X}|(x) \leq  \Big|\sum_{i=1}^m \chi_i(X- \nabla \alpha_i)\Big|(x) + \sum_{i=1}^m\sum_{l,k} |\chi_i\beta^{kl}_i|(x) \leq C(n)\e. 
\end{align*}
This concludes the proof of (i). 

\smallskip 
\textit{Proof of (ii).} Next, we want to investigate the $L^\infty$-norm of $D\Tilde{X}$. We get that for a point $y \in B_R(0)$
\begin{align*}
      |DX-D\Tilde{X}|(y) &= \Big|\sum_{i=1}^m D(\chi_iX)-D\Big(\chi_i (\nabla \alpha_i + \sum_{k,l=1}^n \beta_i^{lk})\Big) \Big|(y) \\
     &\leq \Big(\sum_{i=1}^m |\nabla \chi_i|| \Big|X-(\nabla \alpha_i+\sum_{k,l=1}^n \beta_i^{lk})\Big|\Big)(y)+ \Big|\sum_{i=1}^m \chi_i\Big(DX-D\sum_{k,l=1}^n \beta_i^{kl}\Big)\Big|(y).
\end{align*}
We will investigate the two sums separately. For a fixed $i$, we know that $|\nabla \chi_i|(y) \leq C(n,R)\delta^{-1}$ and $|\chi_i|(y) \leq 1$. It suffices to only consider $y \in B_{2\delta}(y_i)$, in which case we have that $B_{2\delta}(y_i) \subset B_{4\delta}(y)$. By our choice of $\delta$, we have that for all $z \in B_{2\delta}(y_i)$ it holds $|X(z)-X(y)| \leq \e$.
Hence, 
\begin{align*}
    & |X(y)-\nabla \alpha_i| \\
    &= \left| X(y)- \frac{1}{|B_{2\delta}(y_i)|}\int_{B_{2\delta}(y_i)} X(z)\, \di\mathcal{L}^n(z) \right| \\
    & \leq \frac{1}{|B_{2\delta}(y_i)|}\int_{B_{2\delta}(y_i)} |X(z)-X(y)|\, \di\mathcal{L}^n(z) \leq \e.
\end{align*}
Moreover,
\begin{align*}
   \Big|\sum_{k,l=1}^n \beta_i^{lk}\Big|(y) \leq n^2 \delta (1+|DX|_{L^\infty}). 
\end{align*}
This gives that 
\begin{align*}
    |\nabla \chi_i| \Big|X-\Big(\nabla \alpha_i+\sum_{k,l=1}^n \beta_i^{lk}\Big)\Big|(y) &\leq C(n)\delta^{-1} \Big(|X(y)-\nabla \alpha_i| + \Big|\sum_{k,l=1}^n \beta_i^{lk}\Big|(y)\Big)\\
    &\leq C(n)\delta^{-1}(\e +\delta |DX|_{L^\infty}) \\
    & \leq C(n)(1+|DX|_{L^\infty}+ |D^2X|_{L^\infty}), 
\end{align*}
where in the last estimate we used the precise dependence of $\e$ in terms of $\delta$, \eqref{defdeltaeps}.
The choice of $\delta$ also gives 
\begin{align*}
    \Big|DX-D\sum_{k,l=1}^n \beta_i^{kl}\Big|(y) \leq n^2 \e.
\end{align*}
Noting that $|i: y \in \supp \chi_i| \leq C(n)$, we get that
\begin{align*}
    |DX-D\Tilde{X}|(y) &\leq \sum_{i=1}^m |\nabla \chi_i| \Big|X-\Big(\nabla \alpha_i+\sum_{k,l=1}^n \beta_i^{lk}\Big)\Big|(y)+\Big|\sum_{i=1}^m \chi_i\Big(DX-D\sum_{k,l=1}^n \beta_i^{kl}\Big)\Big|(y) \\
    & \leq C(n)\Big( \big(1+\norm{DX}_{L^\infty}+ \norm{D^2X}_{L^\infty}\big) + n^2 \e\Big).
\end{align*}
It follows that there exists a constant $C(n, R)$ such that
\begin{align*}
    \norm{D\Tilde{X}}_{L^\infty} \leq \norm{DX-D\Tilde{X}}_{L^\infty} + \norm{DX}_{L^\infty} \leq C(n, R)(1+\norm{X}_{C^2(B_R(0))}).
\end{align*} 
As $\Tilde{X} \in C_c^\infty(B_R(0))$, we get that also $\norm{\Tilde{X}}_{L^\infty} \leq CR (1+\norm{X}_{C^2(B_R(0))})$, which proves (ii). 

\smallskip 
\textit{Proof of (iii)}
For the last three statements, let us explicitly spell out how we can build $\Tilde{X}$ out of finitely many (bounded by $C_2= C_2(n)$ independent of $\delta$) $C^\infty_c$-functions and their gradients. 
Recall that by the proof of \thref{locallycontroledcover}, we have chosen the $y_i$ to be in $\frac{\delta}{2\sqrt{n}}\Z^n \cap B_{R}(0)$.
Note that $\Big|\Big(\faktor{\Z}{12n\Z}\Big)^n\Big| = (12n)^n$. 
For $\xi \in \Big(\faktor{\Z}{12n\Z}\Big)^n$, let 
$$Z_\xi := \{\zeta \in \Z^n: \zeta_s \equiv \xi_s \mod 12n, \forall 1 \leq s \leq n\}.$$
 Define $Y_\xi :=\{i: y_i \in \{y_1, \ldots, y_m\} \cap \frac{\delta}{2\sqrt{n}}Z_\xi\}$. 
If $Y_\xi = \emptyset$, define  $\chi_\xi=\alpha_\xi=\rho_\xi^k=\eta^{lk}_\xi=0 \in C_c^\infty(B_R(0))$, where $k,l \in \{1, \ldots, n\}$. Otherwise, define 
\begin{align*}
    \chi_\xi &:=  \sum_{i \in Y_\xi} \chi_i \in C_c^\infty(B_R(0)), \\
    \alpha_\xi &:= \sum_{i \in Y_\xi} \alpha_i \in C_c^\infty(B_R(0)), \\
    \rho_\xi^k &:= \sum_{i \in Y_\xi} \rho^k_i \in C_c^\infty(B_R(0)), \ \mathrm{for\ } 1 \leq k \leq n \ \mathrm{and} \\
    \eta^{lk}_\xi &:= \sum_{i \in Y_\xi} \eta^{lk}_i \in C_c^\infty(B_R(0)), \ \mathrm{for\ } 1 \leq k,l \leq n.
\end{align*}
We note that 
\begin{align}\label{condition_of_xi}
    B_{3\delta}(y_i)\cap B_{3\delta}(y_{i'}) = \emptyset, \ \mathrm{if\ } i, {i'} \in Y_\xi \  \mathrm{and} \ i\neq i', 
\end{align}
as then $|y_i-y_{i'}| \geq 6 \sqrt{n} \delta$. 
 Now, for all $i$, we have that 
 \begin{align*}
     \supp\, \alpha_i \cup \supp\, \chi_i \cup \bigcup_{k=1}^n\supp\, \rho_i^k \cup \bigcup_{k,l=1}^n \supp\ \eta^{lk}_i \subset B_{3\delta}(y_i),
 \end{align*}
so 
 \begin{align*}
     \chi_\xi \nabla \alpha_\xi + \sum_{k,l=1}^n \chi_\xi \rho_\xi^k \nabla \eta^{lk}_\xi &= \left(\sum_{i \in Y_\xi} \chi_i\right)\left(\sum_{i \in Y_\xi} \nabla \alpha_i\right) + \sum_{k,l=1}^n\left(\sum_{i \in Y_\xi} \chi_i\right)\left(\sum_{i \in Y_\xi} \rho^k_i \right)\left(\sum_{i \in Y_\xi}\nabla  \eta^{lk}_i \right) \\
     &= \sum_{i \in Y_\xi} \chi_i\nabla \alpha_i + \sum_{i \in Y_\xi} \chi_i \sum_{k,l=1}^n \rho_i^k \nabla  \eta^{lk}_i.
 \end{align*}
 Hence,
\begin{align*}
    \Tilde{X} = \sum_{\xi \in \Big(\faktor{\Z}{12n\Z}\Big)^n} \left(  \chi_\xi \nabla \alpha_\xi + \sum_{k,l=1}^n \chi_\xi \rho_\xi^k \nabla \eta^{lk}_\xi \right).
\end{align*}
These are at most $2\cdot (12n)^n(n^2+1) =: C_2(n)$ functions, proving (iii). 

\smallskip \textit{Proof of (iv) and (v).}
To prove the last two statements, we may again fix a $\xi \in \Big(\faktor{\Z}{12n\Z}\Big)^n$ as above. Take an $x \in B_R(0)$. 
If $x \notin B_{3\delta}(y_i)$ for some $i \in Y_\xi$, then all of the above functions and their gradients are zero when evaluated at $x$.
Otherwise, by \eqref{condition_of_xi}, there exists exactly one $i \in Y_\xi$ such that $x \in B_{3\delta}(y_i)$. Then we have,
\begin{align*}
    &|\chi_\xi(x)| = |\chi_i(x)| \leq 1, \\
   &| \nabla \chi_\xi(x)| = | \nabla \chi_i(x)| \leq C\delta^{-1} \leq C(1+\norm{X}_{C^2})\e^{-1}, \\ 
   &|\alpha_\xi(x)| = |\alpha_i(x)| \leq C\norm{X}_{C^0}\delta \leq C (1+\norm{DX}_{C^1})\delta \leq C\e,
\end{align*}
and
\begin{align*}
     |\nabla \alpha_\xi(x)| &= |\nabla \alpha_i(x)| \leq |\nabla \psi(x-y_i)\langle A_i, (x-y_i)\rangle|(x)+|\psi(x-y_i)A_i|(x) \leq \delta^{-1} \cdot C\norm{X}_{C^0}\delta + C\norm{X}_{C^0} \\
     &\leq C(1+ \norm{X}_{C^0}).
\end{align*}
Moreover,
\begin{align*}
     &|\chi_\xi\rho_\xi^k(x)| = |\chi_i\rho_i^k(x)| \leq C\delta \leq C\e  
\end{align*}
and 
\begin{align*}
     |\nabla (\chi_\xi\rho_\xi^k)|(x) &= |\nabla( \chi_i\rho_i^k)|(x) \leq |(\nabla \chi_i)|(x)|\rho_i^k|(x) + |\chi_i|(x)(|\nabla \psi(x-y_i)||(x_k-(y_i)_k)| +|\psi(x-y_i)|) \\
     &\leq C(\delta^{-1}\cdot \delta) + C(\delta^{-1} \delta +1) \leq C.
\end{align*}
Finally,
\begin{align*}
    |\eta_\xi^{lk}|(x) = |\eta_i^{lk}|(x)=|\psi(x-y_i)||B^{lk}_i||(x_l-(y_i)_l)| \leq C \norm{X}_{C^1} \delta 
\end{align*}
and 
\begin{align*}
    |\nabla \eta_\xi^{lk}|(x) &= |\nabla \eta_i^{lk}|(x) \leq |\nabla \psi(x-y_i)||B^{lk}_i||x_l-(y_i)_l| + |\psi(x-y_i)||B^{lk}_i| \\
    &\leq C\delta^{-1}\norm{X}_{C^1}\delta + \norm{X}_{C^1} \leq C\norm{X}_{C^1}. 
\end{align*}
As $x$ was arbitrary in $B_R(0)$, this holds for all $x$, and as $\xi$ was arbitrary, this holds for all $\xi$, which proves the lemma. 
\end{proof}
We now deduce a corollary about Sobolev spaces in $\R^n$. This is not related to our further study but it is worth mentioning.
\smallskip

\begin{corollary}
Let $\Omega \subset \R^n$ be an open subset. Then $\mathrm{CompV}(\Omega)$ is dense in $W^{1,p}_0(\Omega)$ for all $p \in [1,\infty)$.
\end{corollary}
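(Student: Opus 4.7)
The plan is to reduce the claim to approximating smooth compactly supported vector fields in $W^{1,p}$ by elements of $\mathrm{CompV}(\Omega)$, and then to invoke Lemma \ref{main_construction} together with a simple interpolation argument.

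First, I would observe that by the standard density of $C_c^\infty(\Omega;\R^n)$ in $W^{1,p}_0(\Omega;\R^n)$, it suffices to fix $X \in C_c^\infty(\Omega;\R^n)$ and to construct, for every $\e>0$, an element $\tilde X_\e \in \mathrm{CompV}(\Omega)$ with $\norm{X - \tilde X_\e}_{W^{1,p}} \to 0$ as $\e \to 0$. I would choose a ball $B_R(x_0)$ with $\supp X \subset B_R(x_0)$ and $\overline{B_R(x_0)} \subset \Omega$, and after translation assume $x_0 = 0$; any element of $\mathrm{CompV}(B_R(0))$ then lies in $\mathrm{CompV}(\Omega)$ by zero extension.

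Second, I would apply Lemma \ref{main_construction} to the fixed $X$ with parameter $\e$, obtaining $\tilde X_\e = \sum_{p} h_p \nabla f_p \in \mathrm{CompV}(B_R(0))$ satisfying
\begin{align*}
\norm{X - \tilde X_\e}_{W^{1,1}(B_R(0))} + \norm{X - \tilde X_\e}_{L^\infty(B_R(0))} &\leq C \e, \\
\norm{\tilde X_\e}_{W^{1,\infty}(B_R(0))} &\leq C\bigl(1+\norm{X}_{W^{2,\infty}(B_R(0))}\bigr),
\end{align*}
where the constants are independent of $\e$. In particular $\norm{D(X-\tilde X_\e)}_{L^\infty(B_R(0))}$ is uniformly bounded by a constant $M = M(n,R,X)$ independent of $\e$.

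Third, for $p=1$ the conclusion is immediate from the first estimate. For $p \in (1,\infty)$, I would interpolate between $L^\infty$ and $L^1$: using $|f|^p \leq \norm{f}_{L^\infty}^{p-1}|f|$ we get
\begin{align*}
\norm{X - \tilde X_\e}_{L^p}^p &\leq \norm{X - \tilde X_\e}_{L^\infty}^{p-1}\norm{X - \tilde X_\e}_{L^1} \leq C^{p}\e^{p}, \\
\norm{D(X - \tilde X_\e)}_{L^p}^p &\leq \norm{D(X - \tilde X_\e)}_{L^\infty}^{p-1}\norm{D(X - \tilde X_\e)}_{L^1} \leq M^{p-1} C\e,
\end{align*}
both of which vanish as $\e \to 0$, yielding $W^{1,p}$ convergence.

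The main obstacle, and the reason Lemma \ref{main_construction} is tailored this way, is that only $L^1$ (not $L^\infty$) smallness is available for the derivative $D(X - \tilde X_\e)$. Hence the role of part (ii) of that lemma is crucial: it provides a uniform $W^{1,\infty}$ bound on the approximants $\tilde X_\e$, which in turn gives a uniform $L^\infty$ bound on $D(X - \tilde X_\e)$; this is exactly what enables the interpolation step to upgrade $W^{1,1}$ convergence to $W^{1,p}$ convergence for every finite $p$.
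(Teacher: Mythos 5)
Your overall strategy --- reduce to $X \in C_c^\infty(\Omega;\R^n)$, apply Lemma \ref{main_construction}, and upgrade the $W^{1,1}$-smallness of $X-\tilde X_\e$ to $W^{1,p}$-smallness by interpolating against the uniform $W^{1,\infty}$ bound coming from part (ii) of that lemma --- is exactly the intended use of the lemma (the paper states the corollary without a written proof, immediately after Lemma \ref{main_construction}), and your interpolation step is carried out correctly: the derivative error is only small in $L^1$, but it is uniformly bounded in $L^\infty$, which is precisely what $|f|^p \le \norm{f}_{L^\infty}^{p-1}|f|$ needs.

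There is, however, one step that fails as written. For a general open set $\Omega$ there need not exist any ball $B_R(x_0)$ with $\supp X \subset B_R(x_0)$ and $\overline{B_R(x_0)} \subset \Omega$: take for instance the annulus $\Omega = \{1<|x|<2\}$ and $X$ supported in a neighbourhood of the whole sphere $\{|x|=3/2\}$; every ball containing $\supp X$ then meets the complement of $\Omega$. Since Lemma \ref{main_construction} is formulated on balls, it cannot be applied to $X$ directly in this situation. The repair is routine: cover the compact set $\supp X$ by finitely many balls $B_{R_j}(x_j)$ with $\overline{B_{R_j}(x_j)} \subset \Omega$, choose a smooth partition of unity $(\theta_j)_j$ subordinate to this cover, write $X = \sum_j \theta_j X$ with $\theta_j X \in C_c^\infty(B_{R_j}(x_j);\R^n)$, apply the lemma and your interpolation argument to each summand, and use that $\mathrm{CompV}(\Omega)$ is a linear span, so the (finite) sum of the approximants is again admissible and the errors simply add. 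With this modification your argument is complete.
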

We are now going to apply the previous results to manifolds. The next lemma follows from partitioning the manifold in balls and patching the approximations from Lemma \ref{main_construction} together.
\smallskip

\begin{lemma}\label{patch_together}
    Let $M$ be a smooth manifold with a $C^{0}$-Riemannian metric $g$ that admits $L^2_{\rm{loc}}$-Christoffel symbols and a measure $\mu$ defined via $\di\mu = h^2 \di\vol_g$, where $h \in C^{0}(M, (0,\infty))$. Let $X$ be a compactly supported smooth vector field on $M$ and let $\e >0$. 
    Then there exist a constant $C= C(M, g, h, X)$ and a vector field $\Tilde{X} = \sum_{p=1}^q h_p \nabla f_p \in \mathrm{CompV}(M)$ such that 
 \begin{itemize}
       \item[$\mathrm{(i)}$] $\norm{X-\Tilde{X}}_{H^1_1(TM, \mu)} \leq C \e$,
       \item[$\mathrm{(ii)}$] $\sup_M(|\Tilde{X}|_g+ |\nabla \Tilde{X}|_g)  \leq C$,
       \item[$\mathrm{(iii)}$] $q \leq C$,
       \item[$\mathrm{(vi)}$] for all $p$, we have that $\norm{h_p}_{L^\infty} \leq C$ and $\norm{f_p}_{L^\infty} \leq C\e$,  
        \item[$\mathrm{(v)}$] for all $p$, we have that $\norm{|\nabla h_p|_g}_{L^\infty} \leq C\e^{-1}$ and $\norm{ |\nabla f_p|_g}_{L^\infty} \leq C$. 
   \end{itemize}  
\end{lemma}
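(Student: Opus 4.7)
The plan is to derive Lemma \ref{patch_together} from Lemma \ref{main_construction} by a partition-of-unity argument combined with a modification of the local construction that is compatible with the Riemannian gradient $\nabla_g$. Since $\supp X$ is compact, I cover it by finitely many coordinate charts $(U_\alpha,\psi_\alpha)_{\alpha=1}^{N}$ with $\overline{U_\alpha}$ compact and $\psi_\alpha(U_\alpha)=B_{R_\alpha}(0)$, choose a smooth partition of unity $\{\eta_\alpha\}$ with $\sum_\alpha\eta_\alpha\equiv 1$ on $\supp X$, and write $X=\sum_\alpha X_\alpha$ with $X_\alpha:=\eta_\alpha X\in C^\infty_c(U_\alpha)$. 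The problem reduces to producing, for each $\alpha$, an element of $\mathrm{CompV}(M)$ supported in $U_\alpha$ that approximates $X_\alpha$.

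In each chart I rerun the construction from the proof of Lemma \ref{main_construction}, but with the coefficients defining $\alpha_i$ and $\eta_i^{lk}$ adjusted so that their \emph{Riemannian} gradients (rather than the Euclidean ones) match the prescribed targets at the centre $y_i$ of each ball. Writing $G_i:=(g_{jk}(y_i))$ for the frozen metric matrix at $y_i$, I replace $\alpha_i(x)=\psi(x-y_i)\langle A_i,x-y_i\rangle$ by $\tilde\alpha_i(x)=\psi(x-y_i)\langle G_iA_i,x-y_i\rangle$, and $\eta_i^{lk}(x)=\psi(x-y_i)B_i^{lk}(x_l-(y_i)_l)$ by $\tilde\eta_i^{lk}(x)=\psi(x-y_i)B_i^{lk}\sum_m(G_i)_{ml}(x_m-(y_i)_m)$. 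On $B_{2\delta}(y_i)$, where $\psi\equiv 1$, one checks that $\nabla_g\tilde\alpha_i(x)=g(x)^{-1}G_iA_i$ equals $A_i$ at $x=y_i$, and similarly $\nabla_g\tilde\eta_i^{lk}(x)=B_i^{lk}g(x)^{-1}G_ie_l$ equals $B_i^{lk}e_l$ at $x=y_i$; the deviation on the ball is controlled by the $C^0$-modulus of continuity of $g^{-1}$ on $\overline{\psi_\alpha(U_\alpha)}$. Multiplying by a cutoff $\chi_\alpha\in C^\infty_c(U_\alpha)$ equal to $1$ on $\supp X_\alpha$, pulling back to $M$, and summing over $\alpha$ and $p$ produces $\tilde X\in\mathrm{CompV}(M)$. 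Taking $\delta$ small enough (depending on the modulus of continuity of $g$ on $\bigcup_\alpha\overline{U_\alpha}$ and on $\e$) absorbs the frozen-metric error into the prescribed $O(\e)$ bound.

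Estimates (i)--(v) are then inherited from Lemma \ref{main_construction}: the Euclidean $L^1$-bound of Lemma \ref{main_construction}(i) translates to $L^1(\mu)$ because $\mu$ is locally comparable to Lebesgue measure by continuity of $g$ and $h$, and the Christoffel contribution $\Gamma^k_{ij}(X-\tilde X)^j$ to $|\nabla_c(X-\tilde X)|_g$ is controlled in $L^1(\mu)$ by $\|X-\tilde X\|_{L^\infty}\,\|\Gamma\|_{L^2(\supp)}\le C\e$ via Lemma \ref{main_construction}(i), giving (i); the pointwise bounds in (ii) follow from Lemma \ref{main_construction}(ii) together with uniform boundedness of $g$ and $g^{-1}$ on the compact set $\bigcup_\alpha\overline{U_\alpha}$; items (iii)--(v) are direct transcriptions of Lemma \ref{main_construction}(iii)--(v) with constants inflated by the finite number $N$ of charts and by $\|\eta_\alpha\|_{C^1}$, $\|\chi_\alpha\|_{C^1}$. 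The main obstacle is the reconciliation of the Euclidean piecewise-linear structure underlying Lemma \ref{main_construction} with the Riemannian-gradient form demanded by $\mathrm{CompV}(M)$ when $g$ is only $C^0$; the frozen-metric replacement $A_i\mapsto G_iA_i$, $B_i^{lk}e_l\mapsto G_iB_i^{lk}e_l$ resolves this at the cost of an $o(1)$ error as $\delta\to 0$, which is absorbed into the $\e$-bound by taking $\delta$ sufficiently small.
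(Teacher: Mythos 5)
Your overall strategy (cover $\supp X$ by charts, run the construction of Lemma \ref{main_construction} locally, patch with a partition of unity) is the route the paper intends, but the device you introduce to reconcile Euclidean and Riemannian gradients — freezing the metric at the ball centres, i.e.\ replacing $A_i$ by $G_iA_i$ so that $\nabla_g\tilde\alpha_i=g(x)^{-1}G_iA_i$ — breaks the first-order estimate, and estimate (i) is exactly where the lemma has content. The modulus-of-continuity argument only controls the \emph{zeroth order} deviation $|\nabla_g\tilde\alpha_i-A_i|$ on $B_{2\delta}(y_i)$; when you differentiate, $\partial_j\big(g^{sm}(x)(G_iA_i)_m\big)=\partial_jg^{sm}(x)(G_iA_i)_m$ is a term of fixed size of order $|Dg|\,|X|$, which lies only in $L^2_{\rm loc}$ and does not shrink as $\delta,\e\to0$. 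It also does not cancel against the Christoffel term in $\nabla_c(X-\tilde X)$: writing $\nabla_c W=\partial W+\Gamma W$ and using \eqref{partial_derivative_in_terms_of_christoffel} (equivalently $\Gamma_{jk,s}+\Gamma_{sj,k}=\partial_j g_{ks}$), the two contributions combine into a residual of the form $g^{sl}X^k\,\partial_j g_{kl}$, whose $L^1(\supp X)$ norm is a fixed positive quantity independent of $\e$. A one-dimensional check makes this transparent: for $g=g(x)\,dx^2$ and $X=\partial_x$ one gets $\nabla_cX=g'/(2g)$ while your approximant satisfies $\nabla_c\tilde X\approx-g'/(2g)$, so $\norm{\nabla_c(X-\tilde X)}_{L^1}\approx\norm{g'/g}_{L^1}$, not $O(\e)$. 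For the same reason you cannot simply "inherit" Lemma \ref{main_construction}(i): that estimate is proved for the unmodified Euclidean field, and your modified building blocks have different derivatives. (A secondary issue: the cutoff interaction $\sum_i|\nabla\chi_i|\,|X-\nabla_g\tilde\alpha_i-\dots|$ is no longer handled by the Poincar\'e inequality but only by the modulus of continuity, giving a bound of order $\omega_g(\delta)/\delta$, which need not vanish.)

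A patching that does work keeps the construction Euclidean at the level of the \emph{lowered} field. In a chart, the components of $X^\flat$, i.e.\ $g_{jk}X^k$, are $C^0\cap W^{1,2}_{\rm loc}$ (Lemma \ref{christoffel_l2_implies_dg_l2}); mollify them to get a smooth compactly supported field $W$ with $\norm{W-gX}_{C^0}+\norm{D(W-gX)}_{L^2}$ small, apply Lemma \ref{main_construction} to $W$ (Euclidean gradients), and set $\tilde X:=\sum_p h_p\nabla_g f_p=g^{-1}\sum_p h_p\nabla^{\rm euc}f_p$, which lies in $\mathrm{CompV}$. Then $|X-\tilde X|\le C\,|gX-\sum_p h_p\nabla^{\rm euc}f_p|$, and in the covariant derivative the identity $\Gamma_{jk,s}+\Gamma_{sj,k}=\partial_jg_{ks}$ shows that the first-kind Christoffel terms coming from $\nabla_cX$ and from the raised Hessians $(\Hess f_p)^\sharp$ recombine so that the $H^1_1$ error is controlled by the Euclidean $W^{1,1}$-error of $\sum_p h_p\nabla^{\rm euc}f_p$ against $W$ (Lemma \ref{main_construction}(i)), the $W^{1,1}\cap C^0$-error of $W$ against $gX$, and $\norm{\Gamma}_{L^2(\supp X)}$ times the $L^\infty$-error — all of which are $O(\e)$. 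This is the reconciliation your frozen-metric substitution was meant to provide; as written, your argument leaves a non-vanishing error in (i) and therefore has a genuine gap.
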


 In the next proposition we invoke the parabolic De Giorgi-Nash-Moser theory \cite{ladyzhenskaia1968linear} to obtain $C^{0,\alpha}$ convergence of the heat flow to the initial datum. The non-triviality of the statement relies on the fact that the Riemannian metric is merely $C^0$ with $L^2_{\rm{loc}}$ Christoffel symbols, and the weight on the measure is merely $C^0{\cap W^{1,2}_{\rm{loc}}(M)}$.

\smallskip

\begin{proposition}\label{c0_convergence_heat_flow}
    Let $M$ be a smooth manifold and $g$ a $C^0$-Riemannian metric with $L^2_{\rm{loc}}$ Christoffel symbols. Let moreover $h \in C^0(M, (0, \infty)){\cap W^{1,2}_{\rm{loc}}(M)}$. Define the measure $\mu$ on $M$ via $\di\mu = h^2\di\vol_g$. Let $K \in \R$ and assume that $(M, \sfd_g, \mu)$ is an $\rcd(K, \infty)$-space. Denote by $H_t$ the heat flow on $M$. Let $\Omega \subset M$ be open such that $\Omega$ is contained in one coordinate patch $(U, \psi)$ and $\overline{\Omega}$ is compact. Then there exists an $\alpha \in (0,1)$ such that for each $\rho \in C_c^\infty(M)$,  $T > 0$, and for each open $\Tilde{\Omega} \subset \subset \Omega$, it holds 
    $({\rm id}_{[0,1]}\times \psi)_*H_t\rho|_{\Tilde{\Omega}} \in C^{0,\alpha}([0,T), \Tilde{\Omega})$.
\end{proposition}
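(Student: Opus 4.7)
The plan is to reduce the claim to a standard application of parabolic De Giorgi--Nash--Moser theory for uniformly parabolic divergence-form equations with bounded measurable coefficients. Set $u(t,x):=(H_t\rho)(x)$; since $\rho\in C_c^\infty\subset L^\infty\cap L^2(\mu)$, the maximum principle for the heat flow recalled before \thref{linftyboundonderivative} gives $\|u(t,\cdot)\|_{L^\infty(\mu)}\le \|\rho\|_{L^\infty}$ for every $t\ge 0$. By \thref{summary_chapter_4}, the Dirichlet form associated with the Cheeger energy on $(M,\sfd_g,\mu)$ is $(f,k)\mapsto \int \langle \nabla f,\nabla k\rangle_g\,\di\mu$, so the identity $\frac{\di}{\di t}H_t\rho=\Delta H_t\rho$ in $L^{2}(\mu)$, tested against $\phi\in C_c^\infty(\Omega)$, reads
\[
\frac{\di}{\di t}\int u(t,\cdot)\,\phi\,\di\mu \;=\; -\int \langle \nabla u(t,\cdot),\nabla \phi\rangle_g\,\di\mu, \qquad t>0.
\]
Pushing this forward through $\psi$ and writing $v:=u\circ\psi^{-1}$, $\tilde h:=h\circ\psi^{-1}$, $\tilde g^{ij}:=g^{ij}\circ\psi^{-1}$, $b:=\tilde h^{2}\sqrt{|\tilde g|}$, $a^{ij}:=\tilde h^{2}\sqrt{|\tilde g|}\,\tilde g^{ij}$, the identity becomes the statement that $v$ is a bounded distributional solution on $(0,T)\times \psi(\Omega)$ of the divergence-form parabolic equation
\[
b(x)\,\partial_t v \;=\; \partial_i\bigl(a^{ij}(x)\,\partial_j v\bigr).
\]

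Since $g$ is continuous and positive definite and $h$ is continuous and strictly positive, on the compact set $\psi(\overline\Omega)$ the coefficients $b$ and $a^{ij}$ are bounded and bounded away from zero, and the matrix $(a^{ij})$ is uniformly elliptic; crucially, no differentiability of the coefficients is required. By the parabolic De Giorgi--Nash--Moser theorem (\cite{ladyzhenskaia1968linear}, Ch.\;III), every bounded weak solution of such an equation is locally H\"older continuous with some exponent $\alpha\in(0,1)$ depending only on the ellipticity and boundedness constants (hence only on $g,h$ and $\Omega$). Applied to $v$, this yields $v\in C^{0,\alpha}_{\rm loc}((0,T)\times \psi(\Omega))$, which gives the claim away from $t=0$ on any $\tilde\Omega\subset\subset \Omega$.

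To extend the estimate down to $t=0$, one uses that the initial datum $\rho\circ\psi^{-1}$ is smooth, hence H\"older continuous, on $\psi(\Omega)$. For divergence-form parabolic equations with bounded measurable coefficients the parabolic initial-boundary H\"older estimates (again in \cite{ladyzhenskaia1968linear}, Ch.\;III) ensure that bounded weak solutions with H\"older continuous initial data are H\"older continuous in $[0,T)\times \psi(\tilde\Omega)$, which is the statement. The main (and essentially only) obstacle is conceptual rather than computational: one has to verify that the abstract heat flow on $(M,\sfd_g,\mu)$, whose generator is defined only through the Dirichlet form, coincides \emph{locally} with the distributional solution of the classical divergence-form parabolic equation above, so that De Giorgi--Nash--Moser can be applied as a black box; once this identification is made, the H\"older regularity follows with no further work.
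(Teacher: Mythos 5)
Your proposal is correct and follows essentially the same route as the paper: using Corollary \ref{summary_chapter_4} to identify the Dirichlet form, you write $H_t\rho$ in the coordinate chart as a bounded weak solution of a uniformly parabolic divergence-form equation with bounded measurable coefficients (the weight $h^2\sqrt{|g|}$ and $A^{ij}=h^2\sqrt{|g|}g^{ij}$ being bounded above and below on $\overline\Omega$ by continuity of $g,h$), and then invoke parabolic De Giorgi--Nash--Moser theory from \cite{ladyzhenskaia1968linear}, with the smoothness of the initial datum $\rho$ giving H\"older continuity up to $t=0$. This matches the paper's argument, which you only make slightly more explicit regarding the estimate at the initial time.
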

\begin{proof}
    Throughout this proof we write $\rho_t$ for $\psi_*H_t\rho|_{{\Omega}}$. As $\overline{\Omega}$ is compact, we get that there exists a constant $\lambda>0$ such that  $ \frac{1}{\lambda} \leq h^2\sqrt{|g|} \leq \lambda$ and $\frac{1}{\lambda}{\rm Id}_{n} \leq g^{-1} \leq \lambda {\rm Id}_{n}$ on $\Omega$. 
    We note that $\rho_t$ weakly solves
    \begin{align*}
        \frac{\di}{\di t} \rho_t = \Delta_\mu \rho_t
    \end{align*}
    on $\Omega$. 
    Observing  that for any two functions $f, \phi \in C_c^\infty(\Omega)$ it holds 
    \begin{align*}
        \int_M \Delta_\mu f \cdot \phi \, \di\mu = - \int_M \langle \nabla_g f, \nabla_g \phi \rangle_g \, \di\mu = - \int_M h^2\sqrt{|g|}g^{ij}\partial_i f\partial_j \phi \, \di\mathcal{L}^n,
    \end{align*}
    we get that $\rho_t$  weakly solves
    \begin{align}
        \partial_t \rho_t -\mathrm{div} (A D\rho_t) = 0,
    \end{align}
    where 
    \begin{align}
        A_{ij} = h^2\sqrt{|g|}g^{ij}.
    \end{align}
    By the previous observations, $A$ is uniformly elliptic on $\Omega$. Recalling that $\rho\in C_c^\infty(M)$, the result follows from the parabolic DeGiorgi-Nash-Moser theory, see for instance Theorem 1.1 in Chapter V of \cite{ladyzhenskaia1968linear}.
\end{proof}
We are now ready to prove the main approximation result of the section that will be key in the proof of the main theorem of the paper.
\begin{lemma}\thlabel{testvectorfieldsdense}
    Let $M$ be a smooth manifold endowed with a $C^{0}$-Riemannian metric $g$ that admits $L^2_{\rm{loc}}$-Christoffel symbols and a continuous positive weight $h$. Let $\mu$ be the measure defined via $\di\mu=h^2 dvol_g$. 
    Assume that the metric measure space $(M,\sfd_g,\mu)$ satisfies the $\mathsf{CD}(K,\infty)$-condition.
    Then for each relatively compact, open $U \subset M$ and vector field $X \in C^\infty_c(TM)$ there exists a sequence $(W_j)_j \subset \mathrm{TestV}(M)$ such that $W_j \to X$ in $H^2_1(TU, \mu)$ and $(W_j)_j$ is bounded in $L^\infty(U, \mu)$.
\end{lemma}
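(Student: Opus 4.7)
The plan is to carry out a two-step approximation: first approximate $X$ in $H^2_1(TU,\mu)$ by a field in $\mathrm{CompV}(M)$ via Lemma~\ref{patch_together}, and then regularise each factor by the heat flow $H_s$ to land in $\mathrm{TestV}(M)$. For Step~1, I would fix $\varepsilon>0$ and apply Lemma~\ref{patch_together} to obtain $\widetilde X_\varepsilon=\sum_{p=1}^{q} h_p\,\nabla f_p$ with $h_p,f_p\in C_c^\infty(M)$, $\|X-\widetilde X_\varepsilon\|_{H^1_1(TM,\mu)}\le C\varepsilon$, and uniform bounds on $|\widetilde X_\varepsilon|_g$, $|\nabla_c \widetilde X_\varepsilon|_g$, $\|h_p\|_\infty$ and $\|\,|\nabla f_p|_g\|_\infty$. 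Since $\overline U$ is compact and $X-\widetilde X_\varepsilon$, $\nabla_c(X-\widetilde X_\varepsilon)$ are uniformly bounded on $\overline U$, the interpolation $\|v\|_{L^2(U)}^2\le \|v\|_{L^\infty}\|v\|_{L^1(U)}$ upgrades the $H^1_1$-bound to $H^2_1(TU,\mu)$-convergence.

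For Step~2, for $s>0$ I would set
\[
W_{\varepsilon,s}:=\sum_{p=1}^{q} H_s h_p\,\nabla H_s f_p.
\]
Since $h_p,f_p\in L^2\cap L^\infty$, the excerpt recalls that $H_s h_p,H_s f_p\in\mathrm{TestF}(M)$, so $W_{\varepsilon,s}\in\mathrm{TestV}(M)$. The uniform $L^\infty(U,\mu)$-bound follows at once from the maximum principle $\|H_s h_p\|_\infty\le\|h_p\|_\infty$ and from Theorem~\ref{linftyboundonderivative} combined with Proposition~\ref{weakgradientforc1}, which give $\|\,|\nabla H_s f_p|_g\|_\infty\le e^{-Ks}\|\,|\nabla f_p|_g\|_\infty$.

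The core task is to prove $W_{\varepsilon,s}\to \widetilde X_\varepsilon$ in $H^2_1(TU,\mu)$ as $s\downarrow 0$. Expanding $\nabla_c(h\nabla f)=\nabla h\otimes \nabla f + h\,\Hess f$, the zeroth-order term and the $\nabla H_s h_p\otimes \nabla H_s f_p$ contribution converge strongly in $L^2$ by pairing the strong $W^{1,2}_w$-convergence of Proposition~\ref{heat_flow_weak_conv_first} with the uniform $L^\infty$-bounds just established. For the Hessian contribution I would first check that $f_p\in D(\Delta)$: a direct computation in coordinates combined with Lemma~\ref{christoffel_l2_implies_dg_l2} and $h\in W^{1,2}_{\mathrm{loc}}$ shows that $\Delta_\mu f_p\in L^2(M,\mu)$. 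Proposition~\ref{strongheatflowconvergencew22} then yields $H_s f_p\to f_p$ strongly in $H^{2,2}$, whence $\Hess H_s f_p\to \Hess f_p$ in $L^2$.

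The hard part is passing to the $L^2$-limit inside the product $H_s h_p\cdot \Hess H_s f_p$: the first-order theory only delivers $L^2$-convergence of $H_s h_p$, which is too weak to pair with an $L^2$-convergent factor. This is exactly where the parabolic De~Giorgi--Nash--Moser regularity of Proposition~\ref{c0_convergence_heat_flow} enters: covering $\overline U$ by finitely many coordinate patches, it upgrades $H_s h_p\to h_p$ to uniform convergence on $U$, which combined with the $L^2$-boundedness of $\Hess H_s f_p$ closes the estimate. A standard diagonal selection $s=s(\varepsilon)\downarrow 0$ then produces the desired sequence $W_j\in\mathrm{TestV}(M)$ with $W_j\to X$ in $H^2_1(TU,\mu)$ and $\sup_j\|\,|W_j|_g\|_{L^\infty(U,\mu)}<\infty$.
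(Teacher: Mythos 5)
Your proposal is correct and follows essentially the same route as the paper's proof: Lemma \ref{patch_together} plus interpolation, heat-flow regularisation of the factors $h_p,f_p$, the maximum principle together with Theorem \ref{linftyboundonderivative} for the uniform $L^\infty$ bound, Proposition \ref{strongheatflowconvergencew22} for the Hessian factor, and the De Giorgi--Nash--Moser $C^0$ convergence of Proposition \ref{c0_convergence_heat_flow} to handle the product $H_sh_p\cdot\Hess H_sf_p$. The only cosmetic difference is that the paper fixes a single heat-flow time $t$ with tolerance of order $\e^2$ to beat the $\e^{-1}$ bound on $\nabla h_p$, whereas you send $s\to0$ for fixed $\e$ and diagonalise, which amounts to the same argument.
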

\begin{proof}
    { As a first observation we note that by Corollary \ref{summary_chapter_4}, the metric measure space $(M, \sfd_g, \mu)$ is infinitesimally Hilbertian, hence an $\rcd(K, \infty)$-space, which in particular enables us to apply all the theory from the previous section.}
    {Let $\e > 0$. We can find a constant $C = C(M, g, h, X)$ and functions $f_p, h_p \in C_c^\infty(M)$ where $p \in \{1, \ldots q\}$ such that, denoting $\Tilde{X} := \sum_{p=1}^q h_p \nabla f_p$ satisfy conditions (i)-(v) from Lemma \ref{patch_together}.}
   We first note that by interpolation, $\norm{X-\Tilde{X}}_{H^2_1(TM, \mu)} \leq C \sqrt{\e}$.
   We have that  $C_c^\infty \subset D(\Delta_\mu) \cap W^{2,2}(M) \cap L^\infty(M)$, { so for all $p$, it holds $f_p,h_p \subset H^{2}_2\cap L^\infty(M, \mu) \subset D(\Delta)$}. Thanks to \thref{strongheatflowconvergencew22} { together with the equality of norms established in Proposition \ref{norm_equality_h22}}, we get that for each $p$, there exists a $t \in (0,\frac{1}{|K|}]$, such that
\begin{align*}
    &\max(\norm{h_p- H_{t}h_p}_{H^2_2(M, \mu)}, \norm{f_p- H_{t}f_p}_{H^2_2(M, \mu)}) < \frac{\e^2}{\max_p \big(\norm{f_p}_{H_2^2(M, \mu)}+\norm{h_p}_{H_2^2(M, \mu)}\big)}.
\end{align*} 
Fix a relatively compact, open set $U \subset M$. Then by  using Proposition \ref{c0_convergence_heat_flow} on a finite, relatively compact cover, we can potentially decrease $t$, to get  
\begin{align*}
     &\max(\norm{h_p- H_{t}h_p}_{L^\infty(U, \mu)}, \norm{f_p- H_{t}f_p}_{L^\infty(U, \mu)}) < \frac{\e^2}{\max_p \big(\norm{f_p}_{H_2^2(M, \mu)}+\norm{h_p}_{H_2^2(M, \mu)}\big)},
\end{align*}
for all $p = 1, \ldots, q$. 
Define $\Tilde{f}_p :=  H_{t}f_p$ and $\Tilde{h}_p :=  H_{t}h_p$ for all $p$.
Moreover, recall that for each $t\in \big(0, \frac{1}{|K|}\big]$ and each $f\in W^{2,2}\cap L^\infty $, we have that 
\begin{align*}
    &\norm{H_tf}_{L^\infty(M, \mu)} \leq \norm{f}_{L^\infty(M, \mu)} \ \mathrm{and }\\
     &\norm{|\nabla H_tf|_g}_{L^\infty(M, \mu)} \leq e^{-Kt}\norm{|\nabla f|_g}_{L^\infty(M, \mu)},
\end{align*}
by \thref{linftyboundonderivative} and the maximum principle of the heat flow. 
Hence,
\begin{align*}
    &\norm{\Tilde{f}_p}_{L^\infty} \leq \norm{f_p}_{L^\infty} \leq C\e, \\
    &\norm{\Tilde{h}_p}_{L^\infty} \leq \norm{h_p}_{L^\infty} \leq C, \\
    & \norm{|\nabla\Tilde{f}_p|_g}_{L^\infty} \leq C, \ \mathrm{and} \\
    & \norm{|\nabla\Tilde{h}_p|_g}_{L^\infty} \leq C \e^{-1}.
\end{align*}
Define $W := \sum_{p=1}^q \Tilde{h}_p\nabla \Tilde{f}_p $. We directly get that 
\begin{align*}
    \norm{|W|_g}_{L^\infty(M, \mu)} = \norm{ \Big|\sum_{p=1}^q \Tilde{h}_p\nabla \Tilde{f}_p\Big|_g}_{L^\infty(M, \mu)} \leq \sum_{p=1}^q \norm{| \Tilde{h}_p\nabla \Tilde{f}_p|_g}_{L^\infty(M, \mu)} \leq qC^2.
\end{align*}
 To conclude, we estimate that 
\begin{align*}
    \norm{W-\Tilde{X}}_{L^2(TM, \mu)} &\leq \sum_{p=1}^q \norm{\Tilde{h}_p\nabla \Tilde{f}_p-h_p \nabla f_p}_{L^2(TM, \mu)} \\
    &\leq  \sum_{p=1}^q \norm{\Tilde{h}_p\nabla \Tilde{f}_p-\Tilde{h}_p \nabla f_p}_{L^2(TM, \mu)} + \norm{\Tilde{h}_p\nabla f_p-h_p \nabla f_p}_{L^2(TM, \mu)}\\
    &\leq \sum_{p=1}^q \norm{\Tilde{h}_p}_{L^\infty(M, \mu)}\norm{\nabla \Tilde{f}_p-\nabla f_p}_{L^2(TM, \mu)} + \norm{\Tilde{h}_p-h_p}_{L^2(M, \mu)}\norm{\nabla \Tilde{f}_p}_{L^\infty(TM, \mu)}\\
    &\leq C(M, g, h, X)\e. 
\end{align*}
For the covariant derivative, we note that 
\begin{align*}
    \nabla W = \left(\nabla_c \sum_{p=1}^q \Tilde{h}_p\nabla \Tilde{f}_p \right)^\sharp = \left(\sum_{p=1}^q \Tilde{h}_p \Hess \Tilde{f}_p + \di\Tilde{h}_p \otimes \nabla \Tilde{f}_p \right)^\sharp = \sum_{p=1}^q \Tilde{h}_p (\Hess \Tilde{f}_p)^\sharp  + \nabla \Tilde{h}_p \otimes \nabla \Tilde{f}_p.
\end{align*}
Then, by similar arguments as the previous ones, we get that 
\begin{align*}
    \norm{\nabla W-\nabla \Tilde{X}}_{L^2(T^{\otimes 2}U, \mu)} \leq& \sum_{p=1}^q \norm{ \nabla(\Tilde{h}_p\nabla \Tilde{f}_p)-\nabla(h_p \nabla f_p)}_{L^2(T^{\otimes 2}U, \mu)} \\
    \leq& \sum_{p=1}^q \norm{\Tilde{h}_p}_{L^\infty(U, \mu)}\norm{\Hess (\Tilde{f}_p- f_p)^\sharp}_{L^2(T^{\otimes2}U, \mu)} + \norm{\Tilde{h}_p-h_p}_{L^\infty(U, \mu)}\norm{(\Hess \Tilde{f}_p)^\sharp}_{L^2(T^{\otimes2}U, \mu)} \\
    &+ \sum_{p=1}^q \norm{\nabla \Tilde{h}_p}_{L^\infty(TU)}\norm{\nabla \Tilde{f}_p-\nabla f_p}_{L^2(TU, \mu)} + \norm{\nabla(\Tilde{h}_p-h_p)}_{L^2(TU, \mu)}\norm{\nabla \Tilde{f}_p}_{L^\infty(TU, \mu)} \\
    \leq & C(M,U,g,h,X)\e. 
\end{align*}
Finally, we estimate that
\begin{align*}
    \norm{X-W}_{H^2_1(TU, \mu)} \leq \norm{X-\Tilde{X}}_{H^2_1(TU, \mu)} + \norm{\Tilde{X}-W}_{H^2_1(TU, \mu)} \leq C(M,U,g,h,X)\sqrt{\e}. 
\end{align*}
\end{proof}

\subsection{The case $N=\infty$}

We are now able to prove the first main result:
\smallskip

\begin{theorem}\label{first_main_theorem}
     Let $M$ be a smooth manifold and $g \in C^{0}(M)$ be a metric with $L^2_{\rm{loc}}$ Christoffel symbols  and let $h \in C^{0}(M, (0, \infty))\cap W^{1,2}_{\rm{loc}}(M)$ be such that $(M, \sfd_g, \mu)$, with $\di\mu = h^2\di\vol_g$ is a $\mathsf{CD}(K, \infty)$-space. Then 
    \begin{align*}
        \Ric_{\mu, \infty} \geq Kg
    \end{align*}
    in the distributional sense. 
\end{theorem}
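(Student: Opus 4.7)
The plan is to deduce the distributional lower bound from the synthetic one by a density argument, using Gigli's non-smooth second order calculus as the bridge. By Corollary \ref{summary_chapter_4}, the metric measure space $(M,\sfd_g,\mu)$ is infinitesimally Hilbertian and has the Sobolev-to-Lipschitz property, so the $\mathsf{CD}(K,\infty)$ hypothesis upgrades to $\mathsf{RCD}(K,\infty)$. By Theorem \ref{cd_implies_be} this is equivalent to $\mathsf{BE}(K,\infty)$, and hence Theorem \ref{defsyntheticricci} produces a measure-valued Ricci tensor $\mathrm{\textbf{Ric}}$ satisfying $\mathrm{\textbf{Ric}}(W,W)\geq Kg(W,W)\mu$ as measures for every $W\in\mathrm{TestV}(M)$. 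Moreover, Proposition \ref{equalityriccis} identifies, for such $W$, the measure $\mathrm{\textbf{Ric}}(W,W)$ with the distributional expression $\Ric_{\mu,\infty}(W,W)$ spelled out via \eqref{districcilocalweighted}. Thus the desired inequality already holds for all test vector fields.

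To promote the bound to arbitrary smooth $X$, fix a non-negative test volume $\omega=\varphi\vol_g$ with $\varphi\in C_c^\infty(M)$ and pick a relatively compact open $U$ containing $\supp\varphi$. Multiplying $X$ by a cutoff equal to $1$ on $\supp\varphi$ does not affect either side, so I may assume $X\in C_c^\infty(TM)$ with support in $U$. Lemma \ref{testvectorfieldsdense} then yields a sequence $(W_j)\subset\mathrm{TestV}(M)$ with $W_j\to X$ in $H^2_1(TU,\mu)$ and $\sup_j\|W_j\|_{L^\infty(U,\mu)}<\infty$. For each $j$ the synthetic inequality, combined with Proposition \ref{equalityriccis}, gives
\begin{equation*}
\mathsf{D}(W_j,\varphi)\;\geq\; K\int_U \varphi\, g(W_j,W_j)\,\di\mu,
\end{equation*}
where $\mathsf{D}(W_j,\varphi)$ denotes the right-hand side of \eqref{districcilocalweighted} applied to $W_j$ with test function $\varphi$.

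The final step is to pass to the limit $j\to\infty$. The right-hand side converges to $K\int_U\varphi\,g(X,X)\di\mu$ by dominated convergence, exploiting $L^2$-convergence of $W_j$ together with the uniform $L^\infty$-bound. For $\mathsf{D}(W_j,\varphi)$, I invoke the validity of \eqref{districcilocalweighted} for $H^2_1(TM,\mu)$-vector fields (Lemma \ref{districcisobolev}) and inspect the expression term by term: after integration by parts every summand is a pairing of $\varphi$, $h$, $\partial h$, $g$, $\Gamma$ (regarded as fixed functions in $L^\infty$ or $L^2_{\rm loc}$ on $U$) with quadratic expressions in $W_j$ and $\nabla W_j$. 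Since $W_j\to X$ and $\nabla W_j\to\nabla X$ in $L^2(U,\mu)$ and $\sup_j\|W_j\|_{L^\infty(U,\mu)}<\infty$, each such pairing converges to the analogous quantity built from $X$, yielding $\mathsf{D}(W_j,\varphi)\to \mathsf{D}(X,\varphi)$ and therefore the distributional bound.

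The principal technical point is exactly this last continuity argument: the coefficients $\Gamma^k_{ij}$ and $\partial_i h$ sit only in $L^2_{\rm loc}$, so the mixed terms $\Gamma\,\partial W_j\,W_j\,\varphi h^2\sqrt{|g|}$ and $\partial h\,\partial W_j\,W_j\,h\varphi\sqrt{|g|}$, as well as the quadratic $\Gamma\Gamma\,W_j^2$ contributions, must be controlled by pairing an $L^2$-factor against an $L^2$-factor while keeping the remaining $W_j$ uniformly bounded. This is precisely why Lemma \ref{testvectorfieldsdense} is tailored to provide simultaneous $H^2_1$-convergence \emph{and} a uniform $L^\infty$-bound: without the latter, the quadratic structure of \eqref{districcilocalweighted} and the low regularity of $g$ and $h$ would leave the limit underdetermined.
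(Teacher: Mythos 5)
Your proposal is correct and follows essentially the same route as the paper's proof: upgrade $\mathsf{CD}(K,\infty)$ to $\rcd(K,\infty)$ via Corollary \ref{summary_chapter_4}, use Theorem \ref{defsyntheticricci} together with Proposition \ref{equalityriccis} to get the bound for test vector fields, approximate a cut-off smooth $X$ by test fields via Lemma \ref{testvectorfieldsdense}, and pass to the limit using $H^2_1$-convergence combined with the uniform $L^\infty$-bound. The paper implements your final ``term-by-term'' continuity step by grouping the expression \eqref{districcilocalweighted} into an $L^1$-coefficient paired with $W_j\otimes W_j$ and an $L^2$-coefficient paired with $\partial W_j\otimes W_j$, handled respectively by weak$^*$-convergence (Lemma \ref{weak_star_convergence_criterion}) and the $L^2$--$L^2$ pairing you describe, which is the same mechanism.
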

\begin{proof}
    From \thref{summary_chapter_4}, we get that $(M, \sfd_g, \mu)$ is indeed an $\rcd(K, \infty)$-space, so all the observations from the previous chapter apply.
    Our aim is to show that $\Ric_g \geq Kg$ distributionally, so we fix a smooth vector field $X \in \mathcal{T}^1_0$ and a test volume $\omega = \phi h^2\vol_g$, where $\phi \in C_c\cap W^{1,2}_{\rm{loc}}(M)$. 
    Using a smooth cut-off function, we can assume that $X$ is compactly supported. We can (using a partition of unity) again assume that $\phi$ is supported in one coordinate patch. We will therefore directly work in an open, relatively compact set $U \subset \R^n$.  From \eqref{defdistributionalweightedricci}, we know that 
    \begin{align*}
       \int_U \Ric_{\mu, \infty}(X,X)\omega = & \int_U  X^jX^k (\Gamma^s_{kj}\Gamma^p_{ps}-\Gamma^s_{kp}\Gamma^p_{js}) \phi h^2\sqrt{|g|}\, dx^1\ldots dx^n \\
       &- \int_U \Gamma^p_{jk}\partial_p(X^jX^k \phi h^2\sqrt{|g|})\, dx^1\ldots dx^n +\int_U \Gamma^p_{pk}\partial_j(X^jX^k \phi h^2\sqrt{|g|})\, dx^1\ldots dx^n  \\
    &+ 2\int_U X^jX^k(\partial_jh\partial_kh+h\partial_sh\Gamma^s_{kj})\phi\sqrt{|g|}dx^1\ldots dx^n + 2\int_U \partial_jh\partial_k(X^jX^kh\phi\sqrt{|g|})dx^1\ldots dx^n \\
    &= \int_U  X^jX^k (\Gamma^s_{kj}\Gamma^p_{ps}-\Gamma^s_{kp}\Gamma^p_{js}) \phi h^2 \sqrt{|g|}\, dx^1\ldots dx^n - \int \Gamma^p_{jk}\partial_p(X^j)X^k \phi h^2\sqrt{|g|}\, dx^1\ldots dx^n   \\
    &- \int_U \Gamma^p_{jk}X^j\partial_p(X^k \phi h^2\sqrt{|g|})\, dx^1\ldots dx^n +\int_U \Gamma^p_{pk}\partial_j(X^j)X^k \phi h^2\sqrt{|g|}\, dx^1\ldots dx^n \\
    &+\int_U \Gamma^p_{pk}X^j\partial_j(X^k \phi h^2\sqrt{|g|})\, dx^1\ldots dx^n \\
    &+ 2\int_U X^jX^k(\partial_jh\partial_kh+h\partial_sh\Gamma^s_{kj})\phi\sqrt{|g|}dx^1\ldots dx^n
    + 2\int_U \partial_jh\partial_kX^jX^kh\phi\sqrt{|g|})dx^1\ldots dx^n \\
    &+ 2\int_U \partial_jhX^j\partial_k(X^kh\phi\sqrt{|g|})dx^1\ldots dx^n.
    \end{align*}
    By the assumptions on $g$ and $h$ and Lemma \ref{christoffel_l2_implies_dg_l2}, there exist functions $\Psi_{i,j,k} \in L^2(U, \mathcal{L}^n)$ and $\Phi_{j,k} \in L^1(U, \mathcal{L}^n)$ all compactly supported in $U$ such that \begin{align*}
        \int_U \Ric_{\mu, \infty}(X,X)\omega = \int_U X^jX^k \Phi_{j,k} \, \di\mathcal{L}^n + \int_U \partial_i X^jX^k\Psi_{i,j,k} \, \di\mathcal{L}^n. 
    \end{align*} 
     By \thref{testvectorfieldsdense}, there exists a sequence $V_\delta \in \mathrm{TestV}(M)$ such that $\norm{|V_\delta|_g}_{L^\infty}(U) \leq C$ for some constant $C > 0$ that does not depend on $\delta$ and $\norm{V_\delta - X}_{H^2_1(TU, \mu)} \to 0$ as $\delta \to 0$. Fix an $\e > 0$. 
     Note that $V_\delta \otimes V_\delta$ is bounded in $L^\infty$ and converges to $X \otimes X$ in $L^1$. By Lemma \ref{weak_star_convergence_criterion}, $V_\delta \otimes V_\delta \overset{*}{\rightharpoonup} X \otimes X$ in $L^\infty = (L^1)^*$ and $V_\delta \overset{*}{\rightharpoonup} X$ in $L^\infty = (L^1)^*$.
     Hence, we can find a $\delta_0 > 0$ such that  $\norm{V_\delta - X}_{H^2_1(TU, \mu)} < \e$ and $\norm{V_\delta \otimes V_\delta - X\otimes X}_{L^1(T^{\otimes 2}U, \mu)} < \e$ for all $\delta \in (0, \delta_0)$. 
     As $g$ and $h^2\sqrt{|g|}$ are uniformly bounded from above and from below on $U$, we get that $\norm{V_\delta - X}_{W^{1,2}(\R^n, \mathcal{L}^n)} < C\e$ and $\norm{V_\delta \otimes V_\delta - X\otimes X}_{L^1(\R^{n \times n}, \mathcal{L}^{n \times n})} < C\e$ for all $\delta \in (0, \delta_0)$ and a $C= C(M, g, h, U) > 0$.
     Now we can use the weak* convergence, to choose a $\delta_1 \in (0, \delta_0)$ such that  
     \begin{align*}
         &\left|\int_U X^jX^k \Phi_{j,k} \, \di\mathcal{L}^n - \int_U V_{\delta_1}^jV_{\delta_1}^k \Phi_{j,k} \, \di\mathcal{L}^n\right| \leq \e \ \mathrm{and} \\
         &\left|\int_U \partial_i X^jX^k\Psi_{i,j,k} \, \di\mathcal{L}^n- \int_U \partial_i X^jV_{\delta_1}^k\Psi_{i,j,k} \, \di\mathcal{L}^n\right|\leq \e.
     \end{align*}
    From now on we will denote $V_{\delta_1} = V$ for simplicity. 
    We get that
    \begin{align*}
        \left|\int_U \partial_i X^jV^k\Psi_{i,j,k} \, \di\mathcal{L}^n- \int_U \partial_i V^jV^k\Psi_{i,j,k} \, \di\mathcal{L}^n\right|\leq C(M, g, h, U, \phi, X)\e.
    \end{align*} 
   Now note that 
   \begin{align*}
      \left| \int_M \langle V, V \rangle_g\phi \, h^2\di\vol_g - \int_M \langle X, X \rangle_g\phi \, h^2\di\vol_g \right|  \leq C  \norm{V \otimes V - X\otimes X}_{L^1(\R^{n \times n})} \leq C\e,
   \end{align*}
   where again $C=C(M, g, h, U, \phi, X)$. 
   Finally, we use that 
   \begin{align}
        \int_M \phi h^2\, d \mathrm{\textbf{Ric}}(V, V)= \int_U \Ric_{\mu, \infty}(V,V)\omega = \int_U V^jV^k \Phi_{j,k} \, \di\mathcal{L}^n + \int_U \partial_i V^jV^k\Psi_{i,j,k} \, \di\mathcal{L}^n. 
   \end{align}
   All together, we get that 
   \begin{align*}
       \int_M \Ric(X,X)\omega &\geq \int_M \phi h^2\, d \mathrm{\textbf{Ric}}(V, V) - C\e \\
       &\geq K\int_M \langle V, V \rangle_g\phi h^2 \, \di\vol_g - C\e \\
       &\geq K\int_M \langle X, X \rangle_g\phi h^2 \, \di\vol_g -(2+|K|)C\e \\
       &=  K\int_M g(X,X)\omega - (2+|K|)C\e.
   \end{align*}
   Sending $\e \to 0$ yields the result. 
\end{proof}
\begin{lemma}\label{symmetric_matrix_apprx}
    Let $\Omega \subset \R^n$ be open and $M \in C_c^\infty(\Omega; \R^{n\times n}_{sym})$ be pointwise positive semidefinite. Let $\e > 0$. Then there exist a function $\p \in C_c^\infty(\Omega, [0,1])$ and vector fields $b_k \in C^\infty(\Omega, \R^n)$ for $k=1, \ldots, n$, such that for $M_\e := \sum_k \p b_k \otimes b_k$, it holds
    \begin{align*}
        |M-M_\e|_{C^1(\R^n)} \leq C\e,
    \end{align*}
    where $C= C(\Omega, \supp\, M, n)$ does not depend on $\e$. 
\end{lemma}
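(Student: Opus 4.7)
The plan is to regularise $M$ by adding a small multiple of the identity, extract a smooth symmetric square root, and then multiply by a fixed cutoff. More precisely, I would first fix once and for all a bump function $\varphi \in C_c^\infty(\Omega;[0,1])$ that equals $1$ on a neighbourhood of $\supp M$; such a $\varphi$ depends only on $\Omega$ and $\supp M$. Extending $M$ by zero to all of $\R^n$, the matrix-valued function $M(x)+\e I_n$ takes values in the open cone $\mathrm{Sym}^+_n$ of symmetric positive definite matrices, with smallest eigenvalue at least $\e$.

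Next I would invoke the fact that the square root map $\mathrm{Sym}^+_n\to\mathrm{Sym}^+_n$, $B\mapsto\sqrt{B}$, is $C^\infty$ on the open cone of positive definite symmetric matrices. This is really the only nontrivial point; the cleanest justification is via the inverse function theorem applied to the map $B\mapsto B^2$, whose Fr\'echet derivative at $B$ is the Sylvester operator $H\mapsto BH+HB$, which is a linear isomorphism of $\mathrm{Sym}_n$ precisely when $B$ is positive definite. Composing with the smooth map $x\mapsto M(x)+\e I_n$, one obtains $A_\e(x):=\sqrt{M(x)+\e I_n}\in C^\infty(\R^n;\mathrm{Sym}^+_n)$.

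Setting $b_k(x):=A_\e(x)\,e_k$ (i.e.\;the $k$-th column of $A_\e$), the symmetry of $A_\e$ yields the rank-one decomposition
\begin{align*}
\sum_{k=1}^n b_k(x)\otimes b_k(x) \;=\; A_\e(x)\,A_\e(x)^T \;=\; A_\e(x)^2 \;=\; M(x)+\e\, I_n.
\end{align*}
I would then define $M_\e:=\sum_{k=1}^n \varphi\, b_k\otimes b_k=\varphi\,(M+\e I_n)$. Since $\varphi\equiv 1$ on $\supp M$, it holds $\varphi M=M$, so $M-M_\e=-\e\,\varphi\, I_n$ and therefore
\begin{align*}
|M-M_\e|_{C^1(\R^n)} \;=\; \e\,|\varphi\, I_n|_{C^1(\R^n)} \;\leq\; C\,\e,
\end{align*}
with a constant $C=C(\Omega,\supp M,n)$ independent of $\e$, as required. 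The main (and essentially only) obstacle is the smoothness of the matrix square root on $\mathrm{Sym}^+_n$; once that standard fact is established, the remainder of the argument is a direct bookkeeping calculation.
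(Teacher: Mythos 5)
Your proof is correct and takes essentially the same route as the paper: regularise to $M+\e\,{\rm Id}_{n}$, take the smooth positive square root, decompose it into rank-one terms $b_k\otimes b_k$, and multiply by a fixed cutoff $\p$ equal to $1$ on $\supp M$, so that $M-M_\e=-\e\,\p\,{\rm Id}_{n}$ and the $C^1$ bound follows with a constant independent of $\e$. The only difference is the justification of smoothness of the matrix square root: you invoke the inverse function theorem via the Sylvester operator $H\mapsto BH+HB$, whereas the paper uses holomorphic functional calculus (a contour-integral representation of the square root); both are standard and equally valid.
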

\begin{proof}
    First we recall a basic fact about matrices. Let $P \in \R^{n \times n}$ be positive definite. Then there exists a unique positive definite matrix $A \in \R^{n \times n}$ such that $A^TA= P$. Writing $A = (A_{ij})_{ij}$, we get that $P_{ij} = (A^TA)_{ij} = \sum_{k=1}^n A_{ki}A_{kj} = \sum_k (A_k \otimes A_k)_{ij}$, where $A_k$ denotes the $k$-th row vector of $A$ and $\otimes$ the dyadic product. \\
    Now define $\Tilde{M}:= M + \e {\rm Id}_{n} \in C^\infty(\Omega; \R^{n\times n}_{sym})$ and note that $\Tilde{M}$ is constant outside $\supp\, M$. By construction, we have that $\Tilde{M}$ is positive definite everywhere in $\Omega$ and its smallest eigenvalue is bounded below by $\e$. Similarly, its largest eigenvalue is bounded above by $||{M}|_{HS}|_{C^0(\Omega)}+ \e=:m$. 
    Set 
    $$U:= \left\{a+ib: a \in \Bigg(\frac{1}{2}\e, m + 2\Bigg), b \in (-1, 1)\right\}$$ and $K := [\e, m+1]\subset \R_+ \subset \C$.
    For a matrix $B \in \C^{n \times n}$, we denote by $\sigma_{\C^{n \times  n}}(B)$ the set of its complex eigenvalues. Note that for each $p \in \Omega$, $\sigma_{\C^{n \times n}}(\Tilde{M}(p)) \subset K$.
    Let $\gamma$ be a smooth cycle in $U\setminus K$ such that $n(\gamma, w) = 1$ for all $w \in K$, where $n(\gamma, w)$ denotes the winding number of $\gamma$ around $w$. We denote by $s: U \to \C$ the unique holomorphic function which is defined by $s^2(z)=z$ and $s(w) \geq 0$ for $w \in \R \cap U$. Define
    \begin{align*}
        B(p) := \frac{1}{2\pi i}\int_\gamma s(z)(z{\rm Id}_{n}-\Tilde{M}(p))^{-1}\, dz.
    \end{align*}
    Using holomorphic functional calculus we infer that $B(p)$ is the unique positive square root of $\Tilde{M}(p)$ for all $p \in \Omega$. As $\gamma$ is a fixed curve with positive distance to $K$, all functions in the integral are smooth and bounded on the domain of integration. As $\Tilde{M}(p)$ is a smooth function of $p$, so is $B(p)$ and each of its rows, which we will denote by $b_k$ for $k=1, \ldots, n$.
    Let $\p \in C_c^\infty(\Omega, [0,1])$ be a non-negative cut-off function such that $\p \equiv 1$ on $\supp \, M$. We can choose $\p$ such that $|\nabla \p|$ is bounded by $C(n)\cdot \dist(\partial \Omega, \supp\, M)$. Now define 
    \begin{align*}
        M_\e := \p \sum_k b_k \otimes b_k.
    \end{align*}
    By definition, we have that 
    \begin{align*}
    |M_\e - \Tilde{M}|(p) = \left\{ \begin{array}{ll}
           0 & \mathrm{if}\ p \in \supp\, M,  \\
          \p |\e {\rm Id}_{n}| \leq C(n)\e &\mathrm{if}\ p \notin \supp\, M.
        \end{array}\right.
    \end{align*}
    Similarly, 
    \begin{align*}
    |\nabla(M_\e - \Tilde{M})|(p) = \left\{ \begin{array}{ll}
           0 &\mathrm{if}\ p \in \supp\, M,  \\
          \nabla \p |\e {\rm Id}_{n}| \leq C(\Omega, \supp\, M, n)\e &\mathrm{if}\ p \notin \supp\, M.
        \end{array}\right.
    \end{align*}
    By definition, we know that $|M-\Tilde{M}|_{C^1} \leq C(n)\e$, so combining the estimates above, we get that
    \begin{align*}
       |M-M_\e|_{C^1}  \leq |M-\Tilde{M}|_{C^1} + |M_\e - \Tilde{M}|_{C^1} \leq C \e.
    \end{align*}
\end{proof}
\begin{theorem}\label{ricciasmeasure}
    Let $M$ be a smooth manifold endowed with a $C^{0}$-Riemannian metric $g$ that admits $L^2_{\rm{loc}}$-Christoffel symbols and a positive function $h \in C^{0}(M)\cap W^{1,2}_{\rm{loc}}(M)$. Define the measure $\mu$ via $\di\mu = h^2 \di\vol_g$.
    Suppose the distributional Bakry-Émery $\infty$-Ricci curvature as defined in \eqref{defdistributionalweightedricci} is bounded below by $K$ for some $K \in \R$. 
    For each coordinate patch $U$, there exist regular Radon measures $\nu_{ij}$ for $i,j=1, \ldots, n$ such that for a test volume $\omega$ with $\supp\, \omega \subset U$ (locally written as $\omega = \p h^2\sqrt{|g|} dx^1\wedge \ldots \wedge dx^n$) and smooth vector fields $X, Y$, we have that in local coordinates it holds
    \begin{align}\label{ricci_as_measure}
        \Ric_{\mu, \infty}(X, Y)\omega = \sum_{i,j}\int X^{i}Y^j\p h^2 \sqrt{|g|}\di\nu_{ij}(x_1, \ldots, x_n).
    \end{align}
\end{theorem}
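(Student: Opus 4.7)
The plan is to fix a coordinate patch $U\subset M$ and prove the stronger statement that each scalar component $(\Ric_{\mu,\infty})_{ij}$ is a signed Radon measure $\nu_{ij}$ on $U$; the identity \eqref{ricci_as_measure} will then follow from the $C^\infty$-multilinearity of the distributional Ricci tensor together with a routine density argument that extends the pairing from $C_c^\infty(U)$ to functions of the form $X^iY^j\phi h^2\sqrt{|g|}$.

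The heart of the argument is the classical Schwartz fact that a non-negative distribution is a non-negative Radon measure. Fix $v\in\R^n$ and a cut-off $\psi\in C_c^\infty(U)$, so that $X:=\psi v^i\partial_{x^i}$ is a smooth, compactly supported vector field on $M$. Testing the distributional lower bound against a non-negative test volume $\omega = \phi h^2\sqrt{|g|}\,dx$ with $\phi\in C_c\cap W^{1,2}_{\rm loc}(U,[0,\infty))$ and choosing $\psi\equiv 1$ on $\supp\phi$ yields, by the $C^\infty$-multilinearity of $\Ric_{\mu,\infty}$,
\begin{equation*}
  \bigl\langle v^iv^j(\Ric_{\mu,\infty})_{ij},\,\phi h^2\sqrt{|g|}\bigr\rangle \;\geq\; K\int_U g_{ij}v^iv^j\,\phi h^2\sqrt{|g|}\,dx.
\end{equation*}
Given any $\eta\in C_c^\infty(U,[0,\infty))$, the function $\phi:=\eta/(h^2\sqrt{|g|})$ lies in $C_c\cap W^{1,2}_{\rm loc}(U,[0,\infty))$; indeed $h^2\sqrt{|g|}\in C^0\cap W^{1,2}_{\rm loc}(U)$ is strictly positive on $\supp\eta$, and Lemma~\ref{christoffel_l2_implies_dg_l2} controls the weak derivative of $\sqrt{|g|}$. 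Substituting such a $\phi$ gives
\begin{equation*}
\bigl\langle v^iv^j(\Ric_{\mu,\infty})_{ij},\,\eta\bigr\rangle \;\geq\; K\int_U g_{ij}v^iv^j\,\eta\,dx, \qquad \forall\,\eta\in C_c^\infty(U,[0,\infty)).
\end{equation*}
Since the right-hand side is integration against the continuous (hence Radon) signed measure $K g_{ij}v^iv^j\mathcal{L}^n$, the distribution $v^iv^j(\Ric_{\mu,\infty})_{ij}-K g_{ij}v^iv^j\mathcal{L}^n$ is non-negative on $C_c^\infty(U,[0,\infty))$ and therefore, by Riesz representation, a non-negative Radon measure. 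Consequently $v^iv^j(\Ric_{\mu,\infty})_{ij}$ is itself a signed Radon measure on $U$ for each $v\in\R^n$.

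Polarising (take $v=e_i$, $v=e_j$, $v=e_i+e_j$ and use the symmetry of $\Ric_{\mu,\infty}$) one obtains signed Radon measures $\nu_{ij}$ which coincide with $(\Ric_{\mu,\infty})_{ij}$ as distributions on $U$. For smooth vector fields $X,Y$ and $\omega=\phi h^2\sqrt{|g|}\,dx$ with $\supp\omega\subset U$, the function $X^iY^j\phi h^2\sqrt{|g|}$ is continuous and compactly supported in $U$; approximating it in $C_c\cap W^{1,2}_{\rm loc}(U)$ by $C_c^\infty$-functions via mollification shows that the distributional pairing $\langle (\Ric_{\mu,\infty})_{ij},X^iY^j\phi h^2\sqrt{|g|}\rangle$ coincides with $\int X^iY^j\phi h^2\sqrt{|g|}\,d\nu_{ij}$; summing over $i,j$ and invoking $C^\infty$-multilinearity gives \eqref{ricci_as_measure}. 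The main technical point to verify is that $(\Ric_{\mu,\infty})_{ij}$, built from $L^2_{\rm loc}$ Christoffel symbols and weak derivatives of the $W^{1,2}_{\rm loc}$ weight, admits a continuous pairing with the space $C_c\cap W^{1,2}_{\rm loc}(U)$, so that both the reduction $\phi=\eta/(h^2\sqrt{|g|})$ and the final density step are legitimate; this is immediate from the integrated-by-parts formula \eqref{districcilocalweighted}. Alternatively, Lemma~\ref{symmetric_matrix_apprx} yields the stronger conclusion that $\Ric_{\mu,\infty}-Kg$ is a positive semi-definite matrix-valued Radon measure on $U$ (by testing against sums of rank-one tensors $b_k\otimes b_k$ and absorbing the $C^1$-error using that the distribution has finite order), from which the present statement is recovered by extracting components.
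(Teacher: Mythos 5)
Your argument is correct, but it follows a genuinely different route from the paper. The paper works with the matrix-valued distribution $D=(D_{ij})$ acting on $C_c^\infty(\Omega;\R^{n\times n}_{sym})$: it first upgrades the tested positivity (available only on fields of the form $\phi h^2\sqrt{|g|}\,X\otimes X$) to arbitrary smooth, compactly supported, pointwise positive semidefinite matrix fields by means of Lemma \ref{symmetric_matrix_apprx} (a smooth square-root decomposition built via holomorphic functional calculus) together with the fact that $D$ has order at most one, so that the $C^1$-error $|B-B_\e|_{C^1}\leq C\e$ can be absorbed; it then deduces the uniform bound $\langle D,A\rangle\leq\langle D,\rho\,{\rm Id}_n\rangle$ for $|A|_{HS}\leq 1$ and concludes by Riesz representation on $C_c(\Omega;\R^{n\times n}_{sym})$. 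You instead fix constant coordinate vectors $v$, localise with a cutoff, divide out the factor $h^2\sqrt{|g|}$ to reduce to smooth non-negative scalar test functions, invoke the Schwartz fact that a non-negative distribution is a non-negative Radon measure to see that $v^iv^jD_{ij}-Kg_{ij}v^iv^j\mathcal{L}^n\geq 0$ is a measure, and recover each $\nu_{ij}$ by polarisation using the symmetry of $\Ric_{\mu,\infty}$. Your route is more elementary: it avoids Lemma \ref{symmetric_matrix_apprx} and the quantitative order-one estimate entirely, at the (harmless) cost of losing the slightly stronger intermediate fact the paper obtains, namely positivity of $\Ric_{\mu,\infty}-Kg$ against \emph{all} smooth PSD matrix fields (which your constant-vector inequalities recover anyway for constant directions). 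Both proofs rest on the same implicit extension of the tested inequality to test volumes whose coefficient is only $C^0_c\cap W^{1,2}$ (via the integration-by-parts formula \eqref{districcilocalweighted}), which you correctly flag; just make explicit that the approximating mollified coefficients can be taken non-negative, so the inequality survives the limit, and that the same continuity is what justifies the final identification of the distributional pairing with $\int X^iY^j\phi h^2\sqrt{|g|}\,\di\nu_{ij}$ for merely continuous integrands.
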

\begin{proof}
    Using the local trivialisation $\psi: U \to \Omega \subset \R^n$, we may assume that we are working on the open set $\Omega$ and will express everything in local coordinates. Define 
    \begin{align*}
        D_{ij} := \partial_p \Gamma^p_{ij} - \partial_i \Gamma^p_{pj} + \Gamma^s_{ij}\Gamma^p_{ps} - \Gamma^s_{jp}\Gamma^p_{is} - \frac{2}{h^2}(h\partial_{ij}h -\partial_ih\partial_jh - \Gamma^s_{ij}h\partial_sh) - K g_{ij} \in \mathcal{D}'(\Omega). 
    \end{align*}
    By definition, $D_{ij}$ is a distribution of order at most $1$ and for any smooth vector field $X$ and any test volume $\omega = \p h^2\sqrt{|g|}dx^1\wedge\ldots\wedge dx^n$, for a $\p \in C_c\cap W^{1,2}_{\rm{loc}}(M)$ we get that
    \begin{align*}
        (\Ric(X,X)_{\mu, \infty}-Kg(X,X))\omega = \sum_{i,j} \langle D_{ij}, X^{i}X^j\p h^2\sqrt{|g|}\rangle_{\mathcal{D}', \mathcal{D}}.
    \end{align*}
    Now define $D \in \mathcal{D}'(\Omega, \R^{n \times n}_{sym})$ via $\langle D, M\rangle_{\mathcal{D}', \mathcal{D}} := \sum_{i,j} \langle D_{ij}, M_{ij} \rangle_{\mathcal{D}', \mathcal{D}}$. Denote $B(X, \p):= \p h^2\sqrt{|g|}(X \otimes X)$. Then
    \begin{align*}
        (\Ric(X,X)_{\mu, \infty}-Kg(X,X))\omega = \sum_{ij} \langle D_{ij}, B(X, \p)_{ij}\rangle_{\mathcal{D}', \mathcal{D}} = \langle D, B(X, \p)\rangle_{\mathcal{D}', \mathcal{D}}.
    \end{align*}
    Now let $B \in C_c^\infty(\Omega; \R^{n\times n}_{sym})$ be pointwise positive semidefinite and $\e > 0$.
    Let $B_\e$ be as in Lemma \ref{symmetric_matrix_apprx}. As $\Ric_{\mu, \infty} -Kg \geq 0$, we get that 
    \begin{align*}
        \langle D, B_\e \rangle_{\mathcal{D}', \mathcal{D}} \geq 0.
    \end{align*}
    Moreover, 
    \begin{align*}
        |\langle D, (B_\e-B) \rangle_{\mathcal{D}', \mathcal{D}}| \leq C(\Omega, \supp B,g, h) |B-B_\e|_{C^1(\Omega)} \leq C (\Omega, B, n, g, h) \e.
    \end{align*}
    Hence,
    \begin{align*}
        \langle D, B \rangle_{\mathcal{D}', \mathcal{D}} \geq -C (\Omega, B, n, g, h) \e.
    \end{align*}
    Letting $\e \to 0$, we get that for any such $B$, 
    \begin{align}\label{positive_dist_on_sym}
        \langle D, B \rangle_{\mathcal{D}', \mathcal{D}} \geq 0.
    \end{align}
    Now fix $S \subset \Omega$ compact and let $A \in C_c^\infty(\Omega; \R^{n\times n}_{sym})$ (not necessarily positive semidefinite) such that $\supp \, A \subset S$ and $\sup_S|A|_{HS} \leq 1$. Then $A$ only has eigenvalues in $[-1, 1]$ at each point in $\Omega$. Let $\rho \in C_c^\infty(\Omega, [0,1])$ such that $\rho \equiv 1$ on $S$. It then follows that $\rho {\rm Id}_{n}- A$ is positive semidefinite in $\Omega$. Then by \eqref{positive_dist_on_sym},
    \begin{align*}
        \langle D, A \rangle_{\mathcal{D}', \mathcal{D}} \leq \langle D, \rho {\rm Id}_{n} \rangle_{\mathcal{D}', \mathcal{D}}.
    \end{align*}
    In other words:
    \begin{align*}
        \sup_{A \in C_c^\infty(\Omega; \R^{n\times n}_{sym}), \supp A \subset S, ||A|_{HS}|_{C^0}\leq 1} \langle D, A \rangle_{\mathcal{D}', \mathcal{D}} \leq \langle D, \rho {\rm Id}_{n} \rangle_{\mathcal{D}', \mathcal{D}} < \infty. 
    \end{align*}
    As $S$ is arbitrary, we have proven that $D \in C_c(\Omega; \R^{n\times n}_{sym})' \cong \big(C_c(\Omega, \R)^{\frac{n(n+1)}{2}}\big)' \cong (C_c(\Omega, \R)')^{\frac{n(n+1)}{2}}$. 
    By Riesz' Theorem, $C_c(\Omega, \R)'$ equals the space of regular Radon measures on $\Omega$, so it follows that there exist regular Radon measures $\Tilde{\nu}_{ij}$ on $\Omega$ for $i,j=1, \ldots, n$ such that
    $$\langle D, M \rangle =\sum_{i,j} \int_\Omega M_{ij}\di\Tilde{\nu}_{ij}, \quad \text{for all } M \in C_c(\Omega; \R^{n\times n}_{sym}).$$ 
    Define the regular Radon measure 
    \begin{align*}
        \nu_{ij} := \Tilde{\nu}_{ij} + Kg_{ij}\mathcal{L}^n.
    \end{align*}
     Now for any vector field $X$ and any compactly supported test volume $\p h^2\sqrt{|g|}dx^1\wedge, \ldots, \wedge dx^n$, we have that
    \begin{align*}
         \int_{\Omega} X^{i}X^j\p h^2\sqrt{|g|}g_{ij}K\di\mathcal{L}^n =\langle Kg(X,X), \omega\rangle_{\mathcal{D}', \mathcal{D}}.
    \end{align*}
    By the definition of $B(X, \p)$ and $D$, we get that 
    \begin{align*}
        \langle\Ric(X, X)_{\mu, \infty},\omega\rangle_{\mathcal{D}', \mathcal{D}} = \sum_{i,j}\int X^{i}X^j\p h^2\sqrt{|g|}\di\nu_{ij}(x_1, \ldots, x_n).
    \end{align*}
   By  using that $\Ric_{\mu, \infty}(\cdot, \cdot)$ is a symmetric bilinear form and considering $\Ric_{\mu, \infty}(X+Y, X+Y)$, we conclude that \eqref{ricci_as_measure} holds.
    \end{proof}
\begin{remark}
From the proof of  Theorem \ref{ricciasmeasure} it also follows that $\sum_{i,j} |\nu_{ij}|(S) < \infty$ for each compact subset $S \subset \Omega$. Moreover, the components $(\nu_{ij})_{ij}$ change tensorially  under coordinate transformations.  
\end{remark}
    
    \subsection{The case $N\in [n,\infty)$}
    
Finally, we want to examine the case when $(M, \sfd_g, \mu)$ is a $\mathsf{CD}^*(K, N)$ space for some $N \geq 1$. By \thref{summary_chapter_4}, we know that $(M, \sfd_g, \mu)$ is infinitesimally Hilbertian, hence we know that it is $\rcd^*(K, N)$. Then, by Theorem \ref{cd_implies_be}, we get that 
 $(M, \sfd_g, \mu)$ satisfies the $\mathsf{BE}(K, N)$-condition
\begin{align*}
    \boldsymbol{\Gamma}_2(f,f) = \frac{1}{2}\boldsymbol{\Delta}\langle \nabla f, \nabla f \rangle - \langle \nabla \Delta f, \nabla f \rangle\mu \geq (K |\nabla f|^2 + \frac{1}{N}(\Delta f)^2)\mu, \quad \text{for all }f \in \mathrm{TestF}(M).
\end{align*}
Testing with a non-negative function $\p \in C_c\cap W^{1,2}_{\rm{loc}}(M)$, that we may assume to be supported in one coordinate patch, we obtain: 
\begin{align*}
    -\frac{1}{2}&\int_M \langle \nabla \langle \nabla f, \nabla f \rangle, \nabla \p \rangle h^2\sqrt{|g|} dx^1\ldots dx^n -\int_M \p \langle \nabla \Delta f, \nabla f \rangle  h^2\sqrt{|g|} dx^1\ldots dx^n \\ 
    & \geq \int_M (K |\nabla f|^2 + \frac{1}{N}(\Delta f)^2)\p h^2\sqrt{|g|} dx^1\ldots dx^n.
\end{align*}
Using \eqref{districcilocalweighted} and \thref{equalityriccis} and \eqref{interplay_ric_gamma2}, we get that 
\begin{align}\label{beconditiondistri}
    \int&  (\nabla f)^j(\nabla f)^k (\Gamma^s_{kj}\Gamma^p_{ps}-\Gamma^s_{kp}\Gamma^p_{js}) \p h^2\sqrt{|g|}\, dx^1\ldots dx^n - \int \Gamma^p_{jk}\partial_p((\nabla f)^j(\nabla f)^k \p h^2 \sqrt{|g|})\, dx^1\ldots dx^n \nonumber \\
    &+\int \Gamma^p_{pk}\partial_j((\nabla f)^j(\nabla f)^k \p h^2 \sqrt{|g|})\, dx^1\ldots dx^n \nonumber  \\
    & + \int\frac{2}{h^2}(\nabla f)^j(\nabla f)^k(\partial_kh\partial_jh+ h\Gamma^s_{jk}\partial_sh) \p h^2 \sqrt{|g|} \, dx^1\ldots dx^n \nonumber \\
    & + 2\int\partial_{k}h\partial_j(h(\nabla f)^j(\nabla f)^k \p  \sqrt{|g|}) \, dx^1\ldots dx^n + \int_M \p|\nabla^2 f|_{HS}^2h^2 \sqrt{|g|} \, dx^1\ldots dx^n \nonumber \\
    \geq &\int_M (K |\nabla f|^2 + \frac{1}{N}(\Delta_\mu f)^2)\p h^2 \sqrt{|g|} \, dx^1\ldots dx^n.
\end{align}
Now take a function $\Tilde{f} \in C^2_c(M,\mu)$. It follows that $\Tilde{f} \in H^{2, 2}(M,\mu) \cap D(\Delta_\mu)$, so by Proposition \ref{strongheatflowconvergencew22} the heat flow $H_t \Tilde{f} =: \Tilde{f}_t$ converges strongly to $\Tilde{f}$ in $H^{2,2}(M) \subset H^2_2(TM, \mu)$. 
Then $\Tilde{f}_t \to \Tilde{f}$ in $H^{2,2}\subset H^2_2$ so we can conclude that \eqref{beconditiondistri} holds for any $\Tilde{f} \in C^2_c(M)$ and any $\p \in C^1_c(M)$. 
With Theorem \ref{ricciasmeasure}, this gives:
\smallskip

\begin{proposition}\label{be_for_smooth_functions}
Let $M$ be a smooth manifold endowed with a $C^{0}$-Riemannian metric $g$ that admits $L^2_{\rm{loc}}$-Christoffel symbols and a measure $\mu$ defined via $\di\mu = h^2\di\vol_g$, for a positive function $h \in C^{0}\cap W^{1,2}_{\rm{loc}}(M)$. If $(M, \sfd_g, \mu)$ is a $\mathsf{CD}^*(K, N)$-space, then for all $f \in C^2_c(M)$, it holds 
   \begin{align}\label{smoothbecond}
    \sum_{ij}(\nabla f)_i(\nabla f)_j \nu_{ij} + (|\nabla^2 f|^2_{HS} - K|\nabla f|^2 - \frac{1}{N}(\Delta_\mu f)^2)\mu \geq 0  
\end{align}
in fixed local coordinates.
\end{proposition}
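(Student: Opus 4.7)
The plan is to combine the inequality \eqref{beconditiondistri}, already derived for every $\tilde f \in C^2_c(M)$ via the heat-flow approximation in the preceding paragraph, with the Radon-measure representation of the distributional Ricci tensor from Theorem \ref{ricciasmeasure}.

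First I verify that the measures $\nu_{ij}$ are available in the present setting. By Lemma \ref{implications_of_cd_conditions}, $\mathsf{CD}^*(K,N)$ implies $\mathsf{CD}(K,\infty)$, and Theorem \ref{first_main_theorem} then yields $\Ric_{\mu,\infty}\ge K g$ in the distributional sense. The hypothesis of Theorem \ref{ricciasmeasure} is therefore satisfied, so in each coordinate chart there exist Radon measures $(\nu_{ij})_{ij}$ with
\[ \int \Ric_{\mu,\infty}(X,X)\, \omega \;=\; \sum_{ij} \int X^i X^j\, \phi h^2 \sqrt{|g|}\, d\nu_{ij} \]
for every smooth vector field $X$ and every test volume $\omega = \phi h^2 \sqrt{|g|}\, dx^1\wedge\cdots\wedge dx^n$.

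Next, comparing with the local formula \eqref{districcilocalweighted} from Lemma \ref{districcisobolev}, the first five integrals on the left-hand side of \eqref{beconditiondistri} (at $f=\tilde f$) together make up exactly the distributional pairing $\langle \Ric_{\mu,\infty}(\nabla \tilde f,\nabla \tilde f),\omega\rangle$. Since $\nabla \tilde f \in H^2_1(TM,\mu)$, Lemma \ref{districcisobolev} applies directly, so this identification is algebraic. To invoke the representation above I still need to replace $X$ by the merely $C^1_c$ field $\nabla \tilde f$: I approximate $\nabla \tilde f$ in $C^1$ by smooth compactly supported vector fields via the mollification \eqref{convolution_on_manifold_def}. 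Because the $\nu_{ij}$ are Radon and $\phi h^2\sqrt{|g|}$ is continuous with compact support, and because the distributional Ricci pairing is continuous with respect to $C^1$-approximation of $X$ on the compact support of $\phi$ (the heaviest terms in \eqref{districcilocalweighted} being quadratic in the $L^2_{\rm loc}$-Christoffel symbols paired with $C^1$-quantities), both sides pass to the limit, giving
\[ \langle \Ric_{\mu,\infty}(\nabla \tilde f,\nabla \tilde f),\omega\rangle \;=\; \sum_{ij} \int (\nabla \tilde f)^i (\nabla \tilde f)^j\, \phi h^2 \sqrt{|g|}\, d\nu_{ij}. \]

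Substituting into \eqref{beconditiondistri} and rearranging yields
\[ \sum_{ij} \int (\nabla \tilde f)^i (\nabla \tilde f)^j\, \phi h^2 \sqrt{|g|}\, d\nu_{ij} + \int \Big(|\nabla^2 \tilde f|_{HS}^2 - K|\nabla \tilde f|^2 - \tfrac{1}{N}(\Delta_\mu \tilde f)^2\Big)\phi\, d\mu \;\ge\; 0, \]
for every nonnegative $\phi\in C_c$ supported in a chart, which is precisely \eqref{smoothbecond} tested against $\phi$. The only mildly delicate point is the $C^1$-density step for $\nabla \tilde f$; this is routine since the $\nu_{ij}$ are finite on compact sets and the $L^2_{\rm loc}$-regularity of the Christoffel symbols is absorbed by the compact support of $\phi$.
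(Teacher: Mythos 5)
Your route is essentially the paper's: the paper derives \eqref{beconditiondistri} for $\mathrm{TestF}$ functions from the $\mathsf{BE}(K,N)$ condition, extends it to $C^2_c$ by the heat flow via Proposition \ref{strongheatflowconvergencew22}, and then simply says ``with Theorem \ref{ricciasmeasure} this gives'' the proposition; you take \eqref{beconditiondistri} for $C^2_c$ as established and supply exactly the missing step, namely producing the $\nu_{ij}$ (via Corollary \ref{summary_chapter_4}, Lemma \ref{implications_of_cd_conditions}, Theorem \ref{first_main_theorem} and then Theorem \ref{ricciasmeasure} --- note that Lemma \ref{implications_of_cd_conditions} alone does not give $\cd^*\Rightarrow\cd(K,\infty)$; you need the infinitesimal Hilbertianity of Corollary \ref{summary_chapter_4} first, as you in fact use) and transferring the representation \eqref{ricci_as_measure} from smooth vector fields to $X=\nabla \tilde f$ by approximation.

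The one genuine inaccuracy is the regularity bookkeeping in that last step. Since $g$ is only $C^0$, the field $\nabla\tilde f=g^{ij}\partial_j\tilde f\,\partial_i$ is \emph{not} $C^1_c$; its components are merely continuous, compactly supported, with weak derivatives in $L^2_{\rm loc}$ (Lemma \ref{christoffel_l2_implies_dg_l2}). Consequently ``approximate $\nabla\tilde f$ in $C^1$'' is impossible: no sequence of smooth fields converges to a non-$C^1$ limit in the $C^1$ norm, so continuity of the pairing ``with respect to $C^1$-approximation'' is vacuous as stated. The repair is immediate and is what your mollification actually delivers: $\nabla\tilde f\star\rho_\delta\to\nabla\tilde f$ uniformly on the compact support and in $H^2_1(TM,\mu)$, and this weaker convergence already suffices on both sides --- on the $\nu_{ij}$ side because the measures are finite on compacts and one only needs uniform convergence of $X^iX^j$, and on the \eqref{districcilocalweighted} side because every term is either an $L^1_{\rm loc}$ coefficient (quadratic in the Christoffel symbols or in $\partial h$) paired with $X^iX^j$, or an $L^2$ Christoffel symbol paired with $\partial_p\big(X^iX^j\phi h^2\sqrt{|g|}\big)$, both continuous under $C^0_{\rm loc}$ plus $H^2_1$ convergence of $X$ (this is the same continuity used in Lemma \ref{districcisobolev}). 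With that correction your argument closes the gap the paper leaves implicit and yields \eqref{smoothbecond} in the tested form, matching the paper's intended reading of the measure inequality.
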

An application of Theorem 4.7 in \cite{cheeger1982finite} shows:
\begin{lemma}\label{injectivity_radius_bound}
    Let $M$ be a smooth manifold, $g$ a $C^{0}$-Riemannian metric on $M$ and $U \subset V \subset M$ both open such that $\overline{U} \subset V$  and $\overline{V}$ is compact. Moreover denote by $g_\e = g * \rho_\e$. 
    Denote $G = \norm{g}_{C^{0}(\overline{V})}+ \norm{g^{-1}}_{C^0(\overline{V})}$. Then there exists an $\e_0>0$ such that for all $x \in U$ and $0 < \e<\e_0$, it holds
    \begin{align*}
        i_\e(x) \geq C(\e, G, U, V, \rho) >0,
    \end{align*}
    where $i_\e(x)$ denotes the injectivity radius with respect to the metric $g_\e$. 
\end{lemma}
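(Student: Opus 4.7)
The strategy is to verify, for the smooth mollified metric $g_\e$, the hypotheses of Cheeger's injectivity radius estimate \cite[Thm.\,4.7]{cheeger1982finite}: a two-sided bound on sectional curvature and a uniform lower bound on the volume of balls of definite radius around points of $U$. Both quantities will depend on $\e, G, U, V, \rho$, which matches the claimed constant.

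First, I would fix an intermediate open set $W$ with $\overline{U}\subset W\subset \overline{W}\subset V$, cover $\overline{W}$ by finitely many coordinate charts relatively compact in $V$, and take a subordinate partition of unity. Using \eqref{convolution_on_manifold_def}, standard mollification estimates together with uniform continuity of $g$ on $\overline{V}$ give an $\e_0 = \e_0(G, U, V, \rho)>0$ such that, for every $\e \in (0, \e_0)$, $g_\e$ is a smooth Riemannian metric on a neighbourhood of $\overline{U}$ satisfying
\begin{equation*}
\tfrac{1}{2G}\,\mathrm{Id} \leq g_\e \leq 2G\,\mathrm{Id}
\end{equation*}
in every fixed chart. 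Using $\|\partial^k \rho_\e\|_{L^1(\R^n)} \leq C_k(\rho)\e^{-k}$ and $\|g\|_{L^\infty(\overline{V})}\leq G$, I would then derive $\|\partial^k g_\e\|_{L^\infty(W)} \leq C_k(\rho, G, U, V)\,\e^{-k}$ for all $k\geq 0$. These bounds control the Christoffel symbols and the full Riemann tensor of $g_\e$, yielding an upper bound $|\sec_{g_\e}| \leq \Lambda(\e, G, U, V, \rho)$ on a neighbourhood of $\overline{U}$.

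Second, the uniform two-sided bound on $g_\e$ implies that $g_\e$-distances and Euclidean distances in each fixed chart are comparable up to a constant depending only on $G$. Consequently, $g_\e$-balls of sufficiently small radius centred at points of $U$ contain Euclidean balls of comparable radius, giving
\begin{equation*}
\mathrm{vol}_{g_\e}\bigl(B_r^{g_\e}(x)\bigr) \geq v(G, U, V)\, r^n,
\end{equation*}
for all $x \in U$ and $r$ below a threshold $r_0 = r_0(G, U, V)$ independent of $\e$.

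Finally, I would extend $g_\e$ to a smooth complete Riemannian metric on $M$ without altering it on a neighbourhood of $\overline{U}$, by interpolating with a fixed background complete metric via a cutoff supported outside $\overline{W}$. Applying \cite[Thm.\,4.7]{cheeger1982finite} to the extended metric then gives $i_\e(x) \geq i_0(\Lambda, v) > 0$ for all $x \in U$, which is the desired constant $C(\e, G, U, V, \rho)$. The main technical point is reconciling Cheeger's completeness hypothesis with the fact that the quantitative estimates only hold near $\overline{U}$; this is handled by the cutoff extension, since the injectivity radius at $x \in U$ is determined by geodesic behaviour inside a ball of radius $\min(\pi/\sqrt{\Lambda}, r_0)$ around $x$, which can be arranged to lie entirely in the unaltered region.
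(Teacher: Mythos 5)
Your proposal is correct and follows essentially the same route as the paper, whose proof consists precisely of invoking \cite[Thm.\;4.7]{cheeger1982finite} after noting that the mollification bounds $\norm{\partial^k g_\e}_{L^\infty}\leq C_k(\rho)G\e^{-k}$ yield a two-sided sectional curvature bound for $g_\e$ and the uniform equivalence of $g_\e$ with the Euclidean metric yields the volume lower bound for small balls. The only nitpick is that the threshold $\e_0$ guaranteeing uniform ellipticity of $g_\e$ depends on the modulus of continuity of $g$ on $\overline{V}$ and not merely on $G$, but since the lemma only asserts the existence of some $\e_0>0$ this is immaterial.
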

\smallskip

\begin{lemma}\label{domain_bounded_expo_diffs}
 Let $M$ be a smooth manifold, $g$ a $C^{0}$-Riemannian metric that admits $L^2_{\rm{loc}}$-Christoffel symbols on $M$ and $U \subset V \subset M$ both open such that $\overline{U} \subset V$, and $\overline{V}$ is compact and contained in one coordinate patch. Moreover let $g_\e := g * \rho_\e$. 
    Denote $G = \norm{g}_{C^{0}(\overline{V})}+ \norm{g^{-1}}_{C^0(\overline{V})}$. For all $p \in U$, there exists an $\e_0 > 0$ such that for all $\e \in (0, \e_0)$, there exists a constant $C = C(U, V, \e, G, p, \rho) >0$ with the following property: for all vectors $\Tilde{v} \in T_pM$ with $\norm{\Tilde{v}}_{g_\e} \leq \min(1, C)$,  it holds
    \begin{align}\label{differentials_expo_bounds}
    &\big|D\exp_p^{g_\e}|_{\Tilde{v}}\big|_{euc} \leq 3, \quad \mathrm{and} \quad \big|D^2\exp_p^{g_\e}|_{\Tilde{v}}\big|_{euc} \leq 3.
    \end{align}
\end{lemma}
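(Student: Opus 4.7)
Fix $p \in U$ and let $\e \in (0, \e_0)$ with $\e_0$ from Lemma~\ref{injectivity_radius_bound}, so that $i_\e(p) \geq r = r(\e, G, U, V, \rho) > 0$. Since $g_\e = g \ast \rho_\e$ is smooth in a neighborhood of $\overline V$, the exponential map $\exp_p^{g_\e} : B_r^{g_\e}(0) \to M$ is smooth. Using the fixed ambient chart $\psi$ containing $\overline V$, identify $T_pM \cong \R^n$ and view $\exp_p^{g_\e}$ as a smooth map between neighborhoods of the origin in $\R^n$.

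The geodesic equation for $g_\e$ gives the Taylor expansion $\exp_p^{g_\e}(\Tilde{v}) = p + \Tilde{v} - \frac{1}{2}\Gamma^k_{ij}(g_\e)(p)\,\Tilde{v}^i \Tilde{v}^j e_k + O(|\Tilde{v}|^3)$, so $D\exp_p^{g_\e}|_0 = \mathrm{id}_{\R^n}$ has Euclidean operator norm $1$, while $D^2\exp_p^{g_\e}|_0$ is a bilinear map whose components are controlled by the Christoffel symbols of $g_\e$ at $p$, finite for each fixed $\e$. Since the maps $\Tilde{v} \mapsto D\exp_p^{g_\e}|_{\Tilde{v}}$ and $\Tilde{v} \mapsto D^2\exp_p^{g_\e}|_{\Tilde{v}}$ are $C^\infty$ on $B_r^{g_\e}(0)$, continuity together with $|D\exp_p^{g_\e}|_0|_{euc} = 1 < 3$ yield a Euclidean radius $\delta = \delta(\e, G, U, V, p, \rho) > 0$ on which the first bound in~\eqref{differentials_expo_bounds} holds; a parallel argument handles the second bound, provided $|D^2\exp_p^{g_\e}|_0|_{euc}$ is kept strictly below $3$.

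Finally, convert a small Euclidean ball into a small $g_\e$-ball. Because $g_\e \to g$ uniformly on $\overline V$ as $\e \to 0$ and $g$ is continuous and positive definite on the compact set $\overline V$, there exists $c_0 = c_0(G) > 0$ (uniform in $\e < \e_0$) with $g_\e(\Tilde{v}, \Tilde{v}) \geq c_0\,|\Tilde{v}|_{euc}^2$ at $p$; setting $C := c_0^{1/2}\delta$ forces $|\Tilde{v}|_{euc} \leq \delta$ whenever $\|\Tilde{v}\|_{g_\e} \leq \min(1,C)$, which gives~\eqref{differentials_expo_bounds}.

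The main obstacle is the bound on $|D^2\exp_p^{g_\e}|_{\Tilde{v}}|_{euc}$, since $|D^2\exp_p^{g_\e}|_0|_{euc}$ depends on the (possibly large) Christoffel symbols of the smoothed metric $g_\e$ at the fixed point $p$. A robust way around this is to run the argument in $g_\e$-normal coordinates centred at $p$, where $\Gamma(g_\e)(p) = 0$ so that $D^2\exp_p^{g_\e}|_0 = 0$, establish the analogous bounds (say with $3$ replaced by $3/2$) on a small enough $g_\e$-ball there, and then transfer back to the ambient chart using the bi-Lipschitz equivalence between the two coordinate systems with constants depending only on $G$ for $\e < \e_0$.
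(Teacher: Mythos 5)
Your handling of the first bound in \eqref{differentials_expo_bounds} is essentially fine: since the constant $C$ may depend on $\e$ and $p$, the smoothness of $g_\e$, the identity $D\exp_p^{g_\e}|_0=\mathrm{id}$ and plain continuity do give a small ball on which $|D\exp_p^{g_\e}|_{\tilde v}|_{euc}\le 3$, and your conversion of a Euclidean ball into a $g_\e$-ball via uniform ellipticity is correct. The genuine gap is the second bound, and the fix you propose does not close it. In $g_\e$-normal coordinates the statement is vacuous: there $\exp_p^{g_\e}$ is literally the identity on the ball of radius $i_\e(p)$, so its second derivative is $0$ and nothing has been proved; the entire content of the lemma lives in the fixed ambient chart, where $|\cdot|_{euc}$ is computed (and where the lemma is later applied to bound $|D^2\psi_\delta|_{euc}$). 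Transferring back is exactly the problem you set out to solve: the transition map between the normal chart and the ambient chart is, up to a fixed linear identification of $T_pM$, the ambient-chart representation of $\exp_p^{g_\e}$ itself, so the second derivative you need to bound \emph{is} the second derivative of that transition map. A bi-Lipschitz equivalence with constants depending only on $G$ controls first derivatives of the transition maps and says nothing about their second derivatives; indeed the second derivative of the transition map at the centre is again $-\Gamma^k_{ij}(g_\e)(p)$, the very quantity you correctly flagged as possibly large. The argument is therefore circular: the ``transfer'' step silently assumes the estimate it is supposed to establish.

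What is missing is a quantitative estimate along $g_\e$-geodesics, which is how the paper proceeds. There one writes the Jacobi equation for $g_\e$ along $\gamma^\e_v$ in the ambient chart as a first-order linear system $Y'=P(t)Y$ whose coefficients (built from $\Gamma_\e$, $D\Gamma_\e$ and the geodesic speed, hence of size $O(\e^{-2})$) are explicitly controlled, and applies Gronwall on a time interval of length comparable to $\e^{2}$; combined with $J_{v,w}(t)=D\exp_p^{g_\e}|_{tv}(tw)$, the injectivity-radius lower bound of Lemma \ref{injectivity_radius_bound} and the uniform equivalence of $g$, $g_\e$ and the Euclidean metric, this yields \eqref{differentials_expo_bounds} on a $g_\e$-ball whose admissible radius degenerates with $\e$ --- which is precisely how the possible largeness of $\Gamma_\e$ near $p$ is absorbed. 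Your proposal never produces such a quantitative ingredient: soft continuity works for the first derivative because its value at $\tilde v=0$ is universally $1$, but the second derivative has no universally small value at the centre in the ambient chart, so no continuity or chart-change argument of the kind you describe can reach the bound $3$.
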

\begin{proof}
    By covering and rescaling, we can assume that $U = B^{euc}_R(0) \subset \R^n$ and $V = B^{euc}_{R+2}(0) \subset \R^n$.
    Let $\e_0 > 0$ small enough for Lemma \ref{injectivity_radius_bound} to hold, and such that $\e_0 \leq \frac{1}{2}$ and for all $\e \in (0, \e_0)$, $g_\e$ is a Riemannian metric on $B_R(0)$ satisfying
    \begin{align*}
       \norm{g_\e}_{C^{0}(\overline{B^{euc}_{R}(0)})}+ \norm{g_\e^{-1}}_{C^0(\overline{B^{euc}_{R}(0)})} \leq 2G.
    \end{align*}
    Choose $\e \in (0, \e_0)$ and denote by $i_\e$ the injectivity radius with respect to $g_\e$. 
    Fix a point $p \in U$ and $v \in T_pU$ such that $\norm{v}_{euc} \leq 1$. { By the relative compactness of $V$ and arguments as in the proof of Proposition \ref{lipschitz} (or alternatively Theorem 4.5 in \cite{burtscher2012length}),} there exists  $\lambda= \lambda(G) \geq 1$ such that $g, g_\e$, and $|\cdot|_{euc}$ are pairwise $\lambda$-equivalent on $V$ and their induced metrics are pairwise $\lambda$-equivalent on $B_{1/2}^{euc}(p)$.
    For a curve $c:[0,1] \to M$ and $W$ a vector field along the $c$, we denote by $\dot{W}$ the covariant time derivative and by $W'$ the derivative in local coordinates. 
    Denote $\gamma:= \gamma^\e_v$ the geodesic with respect to $g_\e$, originating in $p$ and tangent to $v$. Note that $|\dot{\gamma}(t)|_{g_\e}$ is constant in the existence domain of $\gamma$, hence we have that for all such $t$, 
    \begin{align}\label{bound_on_geodesic_speed}
        |\dot{\gamma}(t)|_{euc} \leq \lambda^2 |\dot{\gamma}(0)|_{euc}= \lambda^2 |v|_{euc}. 
    \end{align}
    For any vector $w \in T_pU$ define $J^\e_{v,w}$ to be the Jacobi field in metric $g_\e$ along $\gamma$ with $\dot{J}^\e_{v,w}(0) = \nabla_{\dot{\gamma}(0)} J^\e_{v,w}(0) = w$. 
    Recall the Jacobi equation for the smooth metric $g_\e$:
    \begin{align*}
        \Ddot{J}^\e_{v,w}(t) = \nabla_{\dot{\gamma}(t)} (\nabla_{\dot{\gamma}(t)}J^\e_{v,w})(t) = R(J^\e_{v,w}(t), \dot{\gamma}(t))\dot{\gamma}(t).
    \end{align*}
    In order to keep notation short, we write $J_w$ for $J^\e_{v,w}$. In local coordinates, we get:
    \begin{align*}
        &(\dot{J}_{w}(t))^k = (\nabla_{\dot{\gamma}(t)} J_{w}(t))^k = \frac{\di}{\di t}J_w^k(t) + J^j(t)\dot{\gamma}^i(t){\Gamma_\e}^k_{ij}(\gamma(t)), \ \mathrm{and} \\
        & (\Ddot{J}_{w}(t))^l = \frac{\di}{\di t}\Big(  \frac{\di}{\di t}J_w^l(t) + J^j(t)\dot{\gamma}^i(t){\Gamma_\e}^l_{ij}(\gamma(t))\Big) + {\Gamma_\e}^l_{pk}(\gamma(t))\dot{\gamma}^p(t) \Big(\frac{\di}{\di t}J_w^k(t) + J^j(t)\dot{\gamma}^i(t){\Gamma_\e}^k_{ij}(\gamma(t))\Big).
    \end{align*}
    Hence, the Jacobi equation in local coordinates gives
    \begin{align*}
         (R_\e)^l_{ijk}J_w^{i}(t)\dot{\gamma}^j(t)\dot{\gamma}^k(t) =& \frac{\di^2}{\di t^2} J_w^l(t) + \frac{\di}{\di t}J^j(t)\dot{\gamma}^i(t){\Gamma_\e}^l_{ij}(\gamma(t)) + J^j(t){\gamma^{i}}''(t){\Gamma_\e}^l_{ij}(\gamma(t))  \\ 
         & + J^j(t)\dot{\gamma}^i(t)d{\Gamma_\e}^l_{ij}(\gamma(t))(\dot{\gamma}(t)) + {\Gamma_\e}^l_{pk}(\gamma(t))\dot{\gamma}^p(t) \Big(\frac{\di}{\di t}J_w^k(t) \\
         & + J^j(t)\dot{\gamma}^i(t){\Gamma_\e}^k_{ij}(\gamma(t))\Big).
    \end{align*}
    We have that 
    \begin{align}
        &|D^2 g_\e| \leq C(\rho,G)\e^{-2}, \ \mathrm{and} \quad |D g^{-1}_\e| \leq C(\rho, G)\e^{-1},
    \end{align}
    where we may assume that $C>1$.
    Moreover, recall that we can use the geodesic equation and \eqref{bound_on_geodesic_speed} to bound $|\gamma''(t)|_{euc}$ in terms of $|v|_{euc}$, $G\e^{-1}$ and $\lambda$.
 Hence, all coefficients of the above linear system of ODEs are bounded by $C\e^{-2}$. Denote $Y_w^T(t) := (J^T(t), (J')^T(t))$. We get that 
\begin{align}
    &Y'_w(t) = P(t)Y(t), \nonumber \\
    & Y_w(0)^T = (0^T, w^T),
\end{align}
where $P$ is a matrix with smooth entries whose $L^\infty$-norm is bounded above by $C\e^{-2}$.  Gronwall's inequality gives that for $u \in T_pM$
\begin{align*}
    |Y_w(t)- Y_{w+su}(t) |_{euc} \leq |s||u|_{euc} e^{Ct\e^{-2}}. 
\end{align*}
Hence,
\begin{align*}
    |J'_w(t)- J'_{w+su}(t)|_{euc} \leq |s||u|_{euc} e^{Ct\e^{-2}}. 
\end{align*}
Thus,
\begin{align*}
    |J_w(t)- J_{w+su}(t)|_{euc} \leq |s||u|_{euc} \frac{\e^2}{C}(e^{Ct\e^{-2}}-1). 
\end{align*}
It is known that $J_w(t) = D\exp_p^{g_\e}|_{tv}(tw)$.
Hence, denoting $a := C\e^{-2}$, we get that for $0 < t \leq \frac{1}{a}$,
\begin{align*}
      \big|D\exp_p^{g_\e}|_{tv}(w)- D\exp_p^{g_\e}|_{tv}(w+su)\big|_{euc} \leq |s||u|_{euc} \frac{1}{at}(e^{at}-1) \leq 3|s||u|_{euc},
\end{align*}
which shows local Lipschitz continuity of the first derivative at the point $\gamma(t)$. The second derivative of the exponential map exists as $g_\e$ is smooth and by the Lipschitz continuity, it is bounded. 
Setting $su = -w$, and recalling that $J_0(t)=0$ for all $t$, we get that
\begin{align*}
    \big|D\exp_p^{g_\e}|_{tv}(w)\big|_{euc} \leq 3 |w|_{euc}.
\end{align*}
Hence the first differential is bounded at the point $tv$. Recall that $v$ is arbitrary, given that the geodesic tangent to $v$ and its variation are well defined for $t \in [0, \frac{1}{a(\e)}]$. Then the above holds for each $\Tilde{v} \in (0, \frac{1}{a(\e)}]v$  and \eqref{differentials_expo_bounds} holds for each $\Tilde{v}$ such that 
\begin{align*}
    0 < \norm{\Tilde{v}}_{g_\e} \leq \frac{a}{2\lambda}\min(i_\e(p), 1).
\end{align*}
By continuity of the differential and the second differential this also holds for $\norm{\Tilde{v}}_{g_\e} = 0$.
Since $p$ was arbitrary, taking Lemma \ref{injectivity_radius_bound} into account, we get the result. 
\end{proof}

\begin{lemma}\label{apx_with_rot_sym_functions}
    Let $K, K' \subset \R^n$ be compact sets such that $K \subset K'$. Let furthermore $\mathcal{F}$ be a family of closed balls in $\R^n$ such  that for every point $x \in K$, there exists a radius $R_x > 0$ such that for all positive $R \leq R_x$, it holds $\overline{B_R(x)} \in \mathcal{F}$. Let $\p \in C_c(K', [0, \infty))$ and $\e > 0$. Then there exists a finite family of functions $\{\chi_p\}_{p=1}^q$ such that the following holds:
    \begin{itemize}
        \item[$\mathrm{(i)}$] For every $p=1, \ldots, q$, there exists an $x \in K$ and a positive $R \leq R_x$ such that $\chi_p \in C_c(B_R(x), [0, \infty))$ and $\chi_p$ is rotationally symmetric centered at $x$.
        \item[$\mathrm{(ii)}$] $\sum_{p=1}^q \chi_p \leq \p$ and $\norm{\sum_{p=1}^q \chi_p - \p}_{C^0(K)} \leq \e$.
    \end{itemize}
\end{lemma}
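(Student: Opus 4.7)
My plan is to construct the $\chi_p$'s by an iterative Besicovitch-type scheme. At step $k$ I would approximate the current non-negative remainder $\tilde\varphi_{k-1} := \varphi - \psi_{k-1}$ (where $\psi_{k-1}$ is the sum of the bumps produced so far) from below by a finite sum of rotationally symmetric bumps centered in $K$, in such a way that the remainder on $K$ decreases by a definite multiplicative factor up to an additive error controlled by the uniform continuity of $\tilde\varphi_{k-1}$ on the compact set $K'$. After finitely many steps the remainder will be below $\epsilon$ in $C^0(K)$, while the pointwise bound $\psi_k\le\varphi$ is preserved throughout.

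For a single step, I would fix once and for all a continuous radial profile $\eta\colon[0,\infty)\to[0,1]$ with $\eta\equiv 1$ on $[0,1/2]$ and $\eta\equiv 0$ on $[1,\infty)$, and let $N_n$ be the Besicovitch constant in $\R^n$. Given a non-negative remainder $\tilde\varphi\in C_c(K',[0,\infty))$ and a scale $\delta>0$, set $r_x := \min(R_x,\delta)$ for $x\in K$; applying the Besicovitch covering theorem to the half-balls $\{\overline{B_{r_x/2}(x)}\}_{x\in K}$ and using the compactness of $K$, I would extract a finite subcover $\{\overline{B_{r_i/2}(x_i)}\}_{i=1}^m$ of $K$ with multiplicity at most $N_n$. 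A standard volume-packing argument then bounds the multiplicity of the enlarged supports $\{B_{r_i}(x_i)\}$ by a dimensional constant $M_n\ge N_n$. With $c_i := \min_{\overline{B_{r_i}(x_i)}}\tilde\varphi\ge 0$, I would define
\[
\chi_i(y) := \frac{c_i}{M_n}\,\eta\!\left(\frac{|y-x_i|}{r_i}\right).
\]
Each $\chi_i$ is rotationally symmetric around $x_i\in K$ and compactly supported in $B_{r_i}(x_i)\subset B_{R_{x_i}}(x_i)$ with $\overline{B_{r_i}(x_i)}\in\mathcal{F}$, so property (i) holds automatically.

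For the pointwise control, if $\chi_i(y)>0$ then $y\in B_{r_i}(x_i)$, so $c_i\le\tilde\varphi(y)$, and the multiplicity bound yields $\sum_i\chi_i\le\tilde\varphi$ pointwise; in particular $\psi_{k-1}+\sum_i\chi_i\le\varphi$, so $\psi_k\le\varphi$. For $y\in K$, the half-ball cover gives some $i_0$ with $|y-x_{i_0}|\le r_{i_0}/2$, on which $\eta=1$, hence $\chi_{i_0}(y)=c_{i_0}/M_n\ge(\tilde\varphi(y)-\omega_{\tilde\varphi}(\delta))/M_n$, where $\omega_{\tilde\varphi}$ is the modulus of uniform continuity of $\tilde\varphi$ on $K'$. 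Combining,
\[
0 \le \tilde\varphi(y) - \sum_i\chi_i(y) \le \Big(1-\tfrac{1}{M_n}\Big)\tilde\varphi(y) + \tfrac{1}{M_n}\omega_{\tilde\varphi}(\delta),\qquad y\in K.
\]
Setting $\psi_0:=0$ and iterating the single step, at step $k$ I would choose $\delta_k$ so small that $\omega_{\tilde\varphi_{k-1}}(\delta_k)/M_n\le\epsilon/2^{k+1}$; stopping after $J:=\lceil\log(2\|\varphi\|_\infty/\epsilon)/\log(M_n/(M_n-1))\rceil$ steps, the iterated bound yields $\|\varphi-\psi_J\|_{C^0(K)}\le\epsilon$, and the finite collection $\{\chi_p\}:=\bigcup_{k\le J}\{\chi_i^{(k)}\}_i$ satisfies both (i) and (ii).

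The main technical obstacle is obtaining the dimensional multiplicity bound $M_n$ for the enlarged supports, since the radii $r_i$ may vary widely because of the pointwise dependence of $R_x$: when $R_x$ is uniformly bounded below on $K$ (which is the case in the intended application of this lemma), $M_n$ follows immediately from a volume-packing estimate coupled with the Besicovitch bound on the half-balls; in the general case one stratifies $K$ by dyadic ranges of $R_x$ and runs the iteration within each stratum separately, paying only a bounded factor in the effective $M_n$.
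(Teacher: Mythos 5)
Your overall scheme is the same as the paper's (an iterated Besicovitch construction with rotationally symmetric bumps scaled by local minima, giving a geometric decay of the remainder on $K$), but the step you yourself flag as ``the main technical obstacle'' is a genuine gap, and neither of your proposed resolutions closes it. The claim that a volume-packing argument bounds the multiplicity of the enlarged supports $\{B_{r_i}(x_i)\}$ by a dimensional constant is false when the radii are allowed to degenerate, which is exactly the situation here: $R_x$ is only assumed positive pointwise, with no lower bound on $K$. A one-dimensional example already shows the failure: take centers $x_k=0.9\cdot 4^{-k}$ and radii $r_k=4^{-k}$ for $k=0,\dots,m$; the half-balls $[x_k-r_k/2,\,x_k+r_k/2]=[0.4\cdot 4^{-k},\,1.4\cdot 4^{-k}]$ are pairwise disjoint, yet every enlarged ball $B_{r_k}(x_k)$ contains the origin, so the multiplicity of the enlarged family is $m+1$, unbounded. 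Your two fallbacks do not repair this: (a) in the intended application (Theorem \ref{cdknforlipschitzmetric}) the radii $\tau_{p,\e}$ come from Lebesgue-point arguments at each individual $p$, so there is no uniform lower bound on $R_x$, and in any case the lemma is stated without one; (b) the dyadic stratification of $K$ by the size of $R_x$ produces possibly infinitely many (non-compact) strata, and the nested-scale example above shows that a single point can meet one enlarged ball from each stratum, so the ``effective $M_n$'' is not bounded by a constant factor; nor can you run the whole iteration separately on infinitely many strata while keeping the total sum below $\varphi$ and the family finite. Since your per-step reduction factor is $1-1/M_n$, losing a universal $M_n$ destroys the finite termination of the iteration.

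The fix used in the paper avoids doubling altogether: Besicovitch is applied so as to produce $c_n$ subfamilies of half-balls, each subfamily consisting of finitely many pairwise \emph{disjoint} closed balls covering $K$; because each subfamily is finite and its balls are closed and disjoint, one can enlarge each radius only slightly, from $\rho_k/2$ to some $r_k\in(\rho_k/2,\rho_k]$, while preserving disjointness \emph{within each subfamily}. The supports of the bumps then have multiplicity at most $c_n$ automatically (at most one per subfamily), with no packing estimate and no hypothesis on the radii, and the rest of your argument (the minima $\theta_k$, the factor $1/c_n$, the lower bound from the half-ball cover, and the geometric iteration) goes through exactly as you wrote it. As written, however, your proof has a gap at this multiplicity step. (A minor cosmetic point: choose the radial profile to vanish on a neighbourhood of the boundary, so that each bump is genuinely compactly supported in the open ball, as required in item (i).)
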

\begin{proof}
    We first recall that, by Besicovitch's covering theorem, there exists a constant $c_n\geq 1$ such that for any family $\mathcal{G}$ of closed balls in $\R^n$, such that the set $A$ of their centers is bounded, there exist $c_n$ countable subfamilies $(\mathcal{G}_h)_{h=1}^{c_n}$ such that for each $h=1, \ldots, c_n$, any two different balls in $\mathcal{G}_h$ are disjoint and $A \subset \bigcup_{h=1}^{c_n}\bigcup_{B \in \mathcal{G}_h} B$. 
    \smallskip
    
    \textbf{Claim.} For every function $\p \in C_c(K', [0, \infty))$, there exists a finite family of functions $\{\psi_k\}_{k=1}^m$ such that for every $k=1, \ldots, m$ the following holds:
    \begin{enumerate}
        \item There exist $x \in K$ and a positive $R \leq R_x$ such that $\psi_k \in C_c(B_R(x), [0, \infty))$.
        \item $\psi_k$ is rotationally symmetric centered at $x$.
        \item The following estimate holds:\begin{align}\label{apx_step_rot_sym}
        \sum_{k=1}^m \psi_k \leq \p, \quad \mathrm{and} \quad \norm{\sum_{k=1}^m \psi_k - \p}_{C^0(K)} \leq \Big(1-\frac{1}{2c_n}\Big)\norm{\p}_{C^0(K)}.
    \end{align}
    \end{enumerate}

    \textit{Proof of the claim.}
    If $\norm{\p}_{C^0(K)} = 0$, we do not need any function to approximate $\p$, so we set $m=0$.
    Otherwise, using that $\p$ is uniformly continuous, we can find  $\delta > 0$ such that for all $x, y \in K'$ with $|x-y| \leq 4\delta$, we have that $|\p(x)-\p(y)| \leq \norm{\p}_{C^0(K)} \cdot \frac{1}{2c_n}$.
    For each $x \in K$ we define $\rho_x = \min(R_x, \delta)$. Now $\bigcup_{x \in K} B_{\frac{\rho_x}{2}}(x)$ is an open cover of $K$, so by compactness of $K$, there exists a finite subcover $\bigcup_{k=1}^mB_{\frac{\rho_k}{2}}(x_k)$ of $K$, where we write $\rho_k = \rho_{x_k}$ to keep notation short.
    Now we apply Besicovitch's covering theorem to find $c_n$ finite families $\mathcal{F}_h \subset \{\overline{B_{\frac{\rho_k}{2}}(x_k)}\}_k$ such that $K \subset \bigcup_{h=1}^{c_n}\bigcup_{B \in \mathcal{F}_h} B$ and the balls in each family are pairwise disjoint. 
    For each $h$ denote by $I_h$ the set of indices $k$ such that $\overline{B_{\frac{\rho_k}{2}}(x_k)} \in \mathcal{F}_h$.
    Fix such an $h$. Note that $I_h$ is finite, hence for each $k \in I_h$ we can find a radius $\frac{\rho_k}{2} < r_k \leq \rho_k$ such that for $k, k' \in I_h$, $k \neq k'$, we have that $B_{r_{k'}}(x_{k'}) \cap B_{r_k}(x_k) = \emptyset$. 
    For each $k = 1, \ldots, m$, define a function $\eta_k \in C_c(B_{r_k}(x_k), [0,1])$ such that $\eta_k$ is rotationally symmetric (with center $x_k$) and $\eta_k = 1$ on $\overline{B_{\frac{\rho_k}{2}}(x_k)}$. Define 
    \begin{align*}
        \theta_k := \min_{y \in \overline{B_{r_k}(x_k)}}\p(y) \in [0, \infty).
    \end{align*}  
    Now define $\psi_k \in C_c(B_{r_k}(x_k), [0, \infty))$ via 
    \begin{align*}
        \psi_k(x) := \frac{\theta_k}{c_n}\eta_k(x).
    \end{align*}
    We now claim that \eqref{apx_step_rot_sym} holds.
    Pick a point $x \in K'$. Let $L_x := \{l : x \in B_{r_l}(x_l)\}$. For each $h =1, \ldots, c_n$, we have that $|I_h \cap L_x| \leq 1$, hence $|L_x| \leq c_n$. By definition, we have that $\theta_l \leq \p(x)$ for each $l \in L_x$, hence
    \begin{align*}
        \sum_{k=1}^m\psi_k(x) = \sum_{l \in L_x}\psi_l(x) = \sum_{l \in L_x} \frac{\theta_l}{c_n}\eta_l(x) \leq \p(x).
    \end{align*}
    Now let $x \in K$. We have that for each $l \in L_x$ and $y \in \overline{B_{r_l}(x_l)}$ it holds $|x-y| \leq 2r_l \leq 2\rho_l \leq 2\delta$. Hence, for each $l \in L_x$, we have that $|\p(x)-\theta_l| \leq \norm{\p}_{C^0(K)}\cdot \frac{1}{2c_n}$. By the construction of our cover, we have that there exists at least one $\lambda \in L_x$ such that $x \in B_{\frac{\rho_\lambda}{2}}(x_\lambda)$. Hence
    \begin{align*}
       \sum_{k=1}^m\psi_k(x) \geq \frac{\theta_\lambda}{c_n}\eta_\lambda(x) = \frac{\theta_\lambda}{c_n}.
    \end{align*}
    Thus,
    \begin{align*}
        0 \leq \p(x) - \sum_{k=1}^m\psi_k(x) &\leq \p(x) - \frac{\theta_\lambda}{c_n} \leq \p(x) - \frac{\p(x)}{c_n} + \left|\frac{\p(x)}{c_n} - \frac{\theta_\lambda}{c_n}\right| = \Big(1-\frac{1}{c_n}\Big)\p(x) + \frac{1}{c_n}|\p(x)-\theta_\lambda| \\
        & \leq \Big(1-\frac{1}{c_n}\Big)\norm{\p}_{C^0(K)} + \frac{1}{2c^2_n}\norm{\p}_{C^0(K)} \leq \Big(1-\frac{1}{2c_n}\Big)\norm{\p}_{C^0(K)}.
    \end{align*}
  This proves the claim. \\
  Now given a function $\p \in C_c(K', [0, \infty))$, we define a sequence of functions $(\p_j)_{j=0}^\infty \subset C_c(K', [0, \infty))$ as follows: $\p_0 := \p$. Given $\varphi_j$, we apply the claim to find $m_j \geq 0$ and functions $\psi^{(j)}_k$, for $k =1, \ldots, m_j$ such that 
  \begin{itemize}
        \item $\psi^{(j)}_k \in C_c(B_{R_{x_{j,k}}(x_{j,k})}, [0, \infty))$ for some $ x_{j,k} \in K$.
        \item $\psi^{(j)}_k$ is rotationally symmetric centered at $x_{j, k}$.
        \item The following estimate holds:
        \begin{align*}
        \sum_{k=1}^{m_j} \psi^{(j)}_k \leq \p_j, \quad \mathrm{and} \quad \norm{\sum_{k=1}^{m_j} \psi^{(j)}_k - \p_j}_{C^0(K)} \leq \Big(1-\frac{1}{2c_n}\Big)\norm{\p_j}_{C^0(K)}.
    \end{align*}
   \end{itemize}
   Then set $\p_{j+1}:= \p_j - \sum_{k=1}^{m_j} \psi^{(j)}_k \geq 0$.
Inductively, we get that
   \begin{align}\label{multiple_iteration_process}
       \norm{\p_j}_{C^0(K)} \leq \Big(1-\frac{1}{2c_n}\Big)^j\norm{\p}_{C^0(K)}.
   \end{align}
   Now we can choose $J \geq \frac{\ln \e - \ln \norm{\p}_{C^0(K)}}{\ln{(1-\frac{1}{2c_n})}}$. We choose the family of functions $\{\psi_{k}^{(j)}: k=1, \ldots, m_j, j=0, \ldots J\}$ Then 
   \begin{align*}
       \p - \sum_{j=0}^J\sum_{k=1}^{m_j} \psi^{(j)}_k = \p_0 - \sum_{j=0}^J (\p_j-\p_{j+1}) = \p_{J+1} \geq 0.
   \end{align*}
  By our choice of $J$ and \eqref{multiple_iteration_process}, we get that
  \begin{align*}
      \norm{\p - \sum_{j=0}^J\sum_{k=1}^{m_j}\psi^{(j)}_k}_{C^0(K)} = \norm{\p_{J+1}}_{C^0(K)} \leq \e. 
  \end{align*}
  This finishes the proof.
\end{proof}
\begin{lemma}\label{lebeague_point_with_cts}
    Let $f \in L^1_{\rm{loc}}(\R^n)$ and $h \in C^0(\R^n)$. Then for each $p \in \R^n$, we have that if $p$ is a Lebesgue point of $f$, then $p$ is also a Lebesgue point of $f\cdot h$. 
\end{lemma}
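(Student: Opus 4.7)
The plan is to prove the lemma by the standard product decomposition
\begin{align*}
    f(x)h(x) - f(p)h(p) = h(p)\bigl(f(x) - f(p)\bigr) + f(x)\bigl(h(x) - h(p)\bigr),
\end{align*}
so that by the triangle inequality the average of $|fh - f(p)h(p)|$ over $B_r(p)$ is bounded by the sum of the two corresponding averages. My task is then to control each piece separately as $r \to 0$.

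The first term, $|h(p)|\cdot\frac{1}{|B_r(p)|}\int_{B_r(p)}|f(x)-f(p)|\,\di\mathcal{L}^{n}(x)$, tends to $0$ as $r \to 0$ directly by the hypothesis that $p$ is a Lebesgue point of $f$. For the second term, I would introduce the pointwise modulus of continuity $\omega_h(r) := \sup_{x \in \overline{B_r(p)}}|h(x)-h(p)|$; since $h$ is continuous at $p$, one has $\omega_h(r) \to 0$ as $r \to 0$. Pulling $\omega_h(r)$ out of the integral, the remaining factor is the average of $|f|$ on $B_r(p)$, which I would bound by
\begin{align*}
    \frac{1}{|B_r(p)|}\int_{B_r(p)}|f(x)|\,\di\mathcal{L}^{n}(x) \leq |f(p)| + \frac{1}{|B_r(p)|}\int_{B_r(p)}|f(x) - f(p)|\,\di\mathcal{L}^{n}(x),
\end{align*}
and the right-hand side is bounded uniformly in $r$ (converging to $|f(p)|$) by the Lebesgue-point hypothesis on $f$. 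Hence the second term is dominated by $\omega_h(r)\cdot O(1)$, which vanishes as $r \to 0$.

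Combining the two estimates yields $\lim_{r\to 0}\frac{1}{|B_r(p)|}\int_{B_r(p)}|f(x)h(x) - f(p)h(p)|\,\di\mathcal{L}^{n}(x)=0$, which is precisely the statement that $p$ is a Lebesgue point of $fh$. The argument is entirely elementary and contains no genuine obstacle; the only mild subtlety is to resist the temptation to apply the Lebesgue differentiation theorem directly to $fh$ (for which $fh\in L^{1}_{\mathrm{loc}}$ would only give Lebesgue points $\mathcal{L}^n$-a.e., not at a prescribed point $p$), and instead to exploit the specific Lebesgue point of $f$ together with the pointwise continuity of $h$ at $p$.
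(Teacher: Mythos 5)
Your proof is correct. The paper itself states this lemma without proof (it is treated as an elementary fact), and your argument — splitting $fh - f(p)h(p) = h(p)(f-f(p)) + f(h-h(p))$, using the Lebesgue-point hypothesis for the first term and the local modulus of continuity of $h$ at $p$ together with the boundedness of the averages of $|f|$ for the second — is precisely the standard argument that fills this gap; no issues.
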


We are now able to state and prove the second main result of the paper, namely that the  $\mathsf{CD}^*(K, N)$ condition implies that the distributional $N$-Bakry-\'Emery Ricci tensor  is bounded below by $K$,  on a smooth manifold endowed with a continuous Riemannian metric with $L^2_{\rm{loc}}$-Christoffel symbols and a $C^{0}\cap W^{1,2}_{\rm{loc}}$-weight on the volume measure.

\smallskip

\begin{theorem}\label{cdknforlipschitzmetric}
    Let $M$ be a smooth manifold, $g \in C^{0}$ a Riemannian metric that admits $L^2_{\rm{loc}}$-Christoffel symbols and $h \in C^{0}\cap W^{1,2}_{\rm{loc}}(M)$ a positive function, $V := -2 \log h \in C^{0}\cap W^{1,2}_{\rm{loc}}(M)$. Define the measure $\mu$ via $\mu:= e^{-V}\di\vol_g$. Let $K\in \R$ and $N\in [n,\infty)$. If $(M, \sfd_g, \mu)$ is a $\mathsf{CD}^*(K, N)$-space, then 
    \begin{align*}
        \Ric_{\mu, N} = \Ric_{\mu, \infty}- \frac{1}{N-n}\nabla V \otimes \nabla V \geq K g \quad \mathrm{in}\ \mathcal{D}'\mathcal{T}^0_2.
    \end{align*}
\end{theorem}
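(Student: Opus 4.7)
The plan is to upgrade Proposition \ref{be_for_smooth_functions}, which provides a measure-valued Bakry--\'Emery inequality tested only by gradients of $C^2_c$-functions, to a distributional lower bound on the full $N$-Bakry--\'Emery tensor tested by arbitrary smooth vector fields. The algebraic key is this: if at a point $p$ one has $\nabla f(p)=X(p)$ and $\Hess_g f(p)=\lambda\,g(p)$ for some scalar $\lambda$, then $|\nabla^2 f|^2_{HS}(p)=n\lambda^2$ and $\Delta_\mu f(p)=n\lambda-dV(X)(p)$, so the quantity $\tfrac{1}{N}(\Delta_\mu f)^2(p)-|\nabla^2 f|^2_{HS}(p)$ is the concave quadratic
\begin{equation*}
F(\lambda) \,=\, -\tfrac{n(N-n)}{N}\lambda^2 \,-\, \tfrac{2n}{N}\lambda\,dV(X)(p) \,+\, \tfrac{1}{N}(dV(X))^2(p)
\end{equation*}
in $\lambda$, whose maximum value equals $\tfrac{1}{N-n}(dV(X))^2(p)$ and is attained at $\lambda^*(p):=-dV(X)(p)/(N-n)$. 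This is precisely the deficit needed to turn the $\be(K,N)$-inequality of Proposition \ref{be_for_smooth_functions} into the distributional bound $\Ric_{\mu,N}\geq K g$.

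\textbf{Local construction.} For $\mu$-a.e.\;$p$, specifically at common Lebesgue points of the Christoffel symbols $\Gamma^k_{ij}$, of $\partial h$, of $X$, and of the Radon--Nikodym densities of the $\nu_{ij}$ from Theorem \ref{ricciasmeasure}, the values $g_{ij}(p)$, $\Gamma^k_{ij}(p)$, and $\lambda^*(p)$ are well defined. In a fixed coordinate chart around such a $p$, take
\begin{equation*}
f_p(x) \,:=\, \eta(x-p)\Bigl[\,\langle a,x-p\rangle \,+\, \tfrac{1}{2}(x-p)^T B\,(x-p)\,\Bigr],
\end{equation*}
with $a_i := g_{ij}(p)X^j(p)$, $B_{ij}:=\lambda^*(p)\,g_{ij}(p)+\Gamma^k_{ij}(p)\,a_k$, and $\eta\in C^\infty_c$ a standard cutoff equal to $1$ near the origin. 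A direct computation gives $\nabla f_p(p)=X(p)$ and $\Hess_g f_p(p)=\lambda^*(p)\,g(p)$, so that evaluation at $p$ yields $\tfrac{1}{N}(\Delta_\mu f_p)^2(p)-|\nabla^2 f_p|^2_{HS}(p)=\tfrac{(dV(X))^2(p)}{N-n}$. Applying Proposition \ref{be_for_smooth_functions} to $f_p\in C^2_c(M)$ tested against a rotationally symmetric bump $\chi_{p,R}\in C_c(B_R(p),[0,\infty))$, dividing by $\int\chi_{p,R}\,d\mu$, and sending $R\to 0$, the Lebesgue differentiation theorem combined with Lemma \ref{lebeague_point_with_cts} produces the pointwise inequality
\begin{equation*}
\frac{d[\sum_{ij}X^iX^j\nu_{ij}]}{d\mu}(p) \,\geq\, K\,g(X,X)(p) \,+\, \tfrac{1}{N-n}\bigl(dV(X)\bigr)^2(p) \qquad\text{for $\mu$-a.e.\;}p.
\end{equation*}

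\textbf{Globalization.} To pass from this $\mu$-a.e.\;Radon--Nikodym statement to the distributional inequality of tensor-valued measures tested against an arbitrary non-negative $\varphi\in C^\infty_c(M)$ supported in a coordinate chart, use Lemma \ref{apx_with_rot_sym_functions} to approximate $\varphi$ by a finite combination $\sum_k\chi_k$ of rotationally symmetric bumps centered at such good points, with $\|\varphi-\sum_k\chi_k\|_{C^0}$ arbitrarily small. The Besicovitch-type bounded-overlap property implicit in Lemma \ref{apx_with_rot_sym_functions} keeps the sum of the localized BE inequalities controlled; summing and letting the approximation improve yields the integrated inequality for $\varphi$, first on the absolutely continuous part of the measures $\nu_{ij}$. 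Since $\Ric_{\mu,\infty}-Kg$ is already known to be a non-negative tensor-valued Radon measure by Theorem \ref{first_main_theorem}, and the subtracted term $\tfrac{1}{N-n}dV\otimes dV\,\mu$ is absolutely continuous with respect to $\mu$, the inequality on the singular part of $\Ric_{\mu,\infty}-Kg$ is automatic.

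\textbf{Main obstacle.} The principal technical difficulty is coordinating the low regularity of $g$ and $V$: the quantities $\Gamma^k_{ij}$ and $\partial_k V$ are only $L^2_{\rm loc}$, so their pointwise values (used in $B_{ij}$ and in $dV(X)$) are only defined at Lebesgue points, forcing one to select the centers of the covering bumps from the intersection of the full-$\mu$-measure Lebesgue sets of all the relevant $L^2_{\rm loc}$ quantities. In addition, the Lebesgue differentiation of the BE inequality at scale $R\to 0$ must be carried out uniformly over a countable dense family of smooth vector fields $X$ so that the resulting matrix inequality holds $\mu$-a.e.\;simultaneously, and the error terms coming from the quadratic quantities $(\Delta_\mu f_p)^2$ and $|\nabla^2 f_p|^2_{HS}$---which mix $L^2_{\rm loc}$ Christoffel factors with bounded ones---must be controlled so that they vanish in the differentiation limit. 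Balancing these Lebesgue-point conditions against the covering scheme of Lemma \ref{apx_with_rot_sym_functions} is the delicate technical core of the argument.
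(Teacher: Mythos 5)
Your proposal is correct in its overall architecture and in fact mirrors the paper's strategy: reduce to the measure-valued Bakry--\'Emery inequality \eqref{smoothbecond} of Proposition \ref{be_for_smooth_functions} (resting on Theorem \ref{ricciasmeasure} and, via Lemma \ref{implications_of_cd_conditions} and Theorem \ref{first_main_theorem}, on the nonnegativity of the singular part of $\Ric_{\mu,\infty}-Kg\mu$), then at $\mu$-a.e.\ point plug in a second-order test function whose gradient is $X(p)$ and whose covariant Hessian is the optimal multiple $\lambda^*(p)\,g(p)$, recovering exactly the deficit $\tfrac{1}{N-n}\langle X,\nabla V\rangle^2$ as in \eqref{minimized_equality}, and finally localize with rotationally symmetric bumps and a Besicovitch-type covering (Lemma \ref{apx_with_rot_sym_functions}), treating the singular part separately. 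Where you genuinely diverge is the construction of the adapted test function: the paper builds approximate normal coordinates at $p$ as limits of exponential maps of mollified metrics (Lemmas \ref{injectivity_radius_bound} and \ref{domain_bounded_expo_diffs}) and takes a quadratic polynomial in those coordinates, whereas you insert the Lebesgue value $\Gamma^k_{ij}(p)$ directly into the coordinate Hessian $B_{ij}$. Your route is simpler and, at a fixed good point, does the same job; your endgame (a $\mu$-a.e.\ Radon--Nikodym density inequality plus Lebesgue decomposition) is also cleaner than the paper's Hahn-decomposition contradiction argument, and indeed once you have the density inequality for each fixed smooth $X$ you can simply integrate, making the covering lemma largely dispensable.

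Two points need more care than your write-up gives them. First, the error terms are quadratic in the low-regularity data: $|\nabla^2 f_p|^2_{HS}$ and $(\Delta_\mu f_p)^2$ contain products $\Gamma\otimes\Gamma$, $\Gamma\otimes\partial V$ and $\partial V\otimes\partial V$ of two $L^2_{\rm loc}$ factors (not ``$L^2_{\rm loc}$ factors with bounded ones''), so your good points must be common Lebesgue points of these fixed $L^1_{\rm loc}$ product functions as well, with the Lebesgue value of the product agreeing with the product of the Lebesgue values; this is exactly why the paper's condition \eqref{convergence_properties_of_p} includes Lebesgue points of $Dg\cdot Dg$ and why Lemma \ref{lebeague_point_with_cts} is invoked. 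Second, since $\Gamma^k_{ij}(p)$ and $dV(X)(p)$ are Lebesgue values of unbounded $L^2_{\rm loc}$ functions, your $\|f_p\|_{C^2}$ is \emph{not} uniform in $p$; the paper's exponential-map detour exists precisely to secure the uniform bound \eqref{bound_on_f_c2}, which lets a single constant appear in \eqref{integral_property_rot_symm} and a single $\e$ be fixed in \eqref{choice_of_eps_last_proof} before the covering. If you keep the covering scheme as stated, ``bounded overlap keeps the sum controlled'' is not enough with point-dependent constants; you must either absorb the constant into the choice of the radius $\tau_{p,\e}$ (demanding the localized error be at most $\e$ times $(\sum_{i,j}|\nu_{ij}|+\mathcal{L}^n)$ of the ball, which is possible at each good point since the error is $o(1)$ times that measure as the radius shrinks), or bypass the covering entirely via the a.e.\ density inequality. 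With these adjustments your argument goes through and is a legitimate simplification of the paper's proof.
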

\begin{proof}
We first notice that by Lemma \ref{implications_of_cd_conditions} and Corollary \ref{summary_chapter_4}, $(M, \sfd_g, \mu)$ is an $\rcd(K, \infty)$-space, so by Theorem \ref{ricciasmeasure}, we have that $\Ric_{\mu, \infty} \in \mathcal{D}'\mathcal{T}^0_2$ can be expressed via Radon measures $(\nu_{ij})_{i,j}$ as in \eqref{ricci_as_measure}. 
Let $U \subset W \subset M$ be open such that $\overline{W}$ is compact, $\overline{U} \subset W$ and $W$ lies in one coordinate patch. It is enough to prove the statement for $M = U$, as we can cover the manifold with sets like $U$. 
Let $\rho \in C_c^\infty(B^{euc}_1(0), [0,\infty))$ be a rotationally symmetric standard mollifier and $\rho_\delta(x):= \frac{1}{\delta^n}\rho(\frac{x}{\delta})$, $\delta > 0$. For $0 < \delta \leq \delta_0 < \dist(\partial W, U)$, we define $g_\delta = \rho_\delta * g$.  We assume that for all these $\delta$, $g_\delta$ is a Riemannian metric on $U$ and that $\max(\norm{g_\delta}_{C^{0}(U)}, \norm{g_\delta^{-1}}_{C^{0}(U)}) < 2\max(\norm{g}_{C^{0}(U)}, \norm{g^{-1}}_{C^{0}(U)})$, as this is the case for $\delta_0>0$ small enough. Then $g_\delta \to g$ everywhere as $\delta \to 0$. 
Then, by the proof of Proposition \ref{length_space} there exists a $\zeta >0$ and a $\lambda \geq 1$ such that $g, g_\delta$ and $|\cdot|_{euc}$ are pairwise $\lambda$-equivalent on $U$ and $\sfd_g, \sfd_{g_\delta}$, and $|\cdot|_{euc}$ are pairwise $\lambda$-equivalent on $B^{euc}_\zeta(p)$ for all $p \in U$.  
 Fix a point $p \in U$ such that
 \begin{align}\label{convergence_properties_of_p}
     &p \ \mathrm{is \ a \ Lebesgue \ point \ of \ } Dg,\mathrm{\ and\ of \ } Dg \cdot Dg.
 \end{align}
 Then, as $\rho$ is rotationally symmetric, we get that 
 \begin{align}\label{derivative_converge}
     Dg_\delta(p) \to Dg(p).
 \end{align}
 Fix $\e > 0$. We can assume $p=0$. In order to compute local coordinates with a vanishing Levi-Cevita connection with respect to $g$ at the point $p$, we define the map $\psi: U \to \Tilde{U} \subset \R^n, x \mapsto y$ via 
\begin{align*}
     y^k = \psi^k(x) := 0 + \alpha^k_i x^i + \beta^k_{ij}x^{i}x^j,
\end{align*}
where $\alpha^k_i, \beta^k_{ij}= \beta^k_{ji}\in \R$ are constants.
Then
\begin{align*}
    &(D\psi)_{ki}(x) = \partial_{x^i}y^k = \alpha^k_i + 2\beta^k_{ij}x^j,  \\
    & \partial_{x^l} (D\psi)_{ki}(x) = 2\beta^k_{il}, \quad \mathrm{and}\\
    &(\psi_*g)_{ij} = ((D\psi)^{-T}g(D\psi)^{-1})_{ij}.
\end{align*}
Moreover, it holds
\begin{align*}
    (\partial_{x^l} (D\psi)^{-1})_{ij}(0) &= - ((D\psi)^{-1}\partial_{x^l} D\psi (D\psi)^{-1})_{ij}(0) =  - (D\psi)^{-1}_{it}\partial_{x^l} D\psi_{ts} (D\psi)^{-1}_{sj}(0) \\
    &= -2(D\psi)^{-1}_{it}\beta^t_{sl} (D\psi)^{-1}_{sj}(0).
\end{align*}
We assume $\alpha$ to be invertible and define $\alpha^{-1} =: \sigma = (\sigma_{ij})_{ij} \in \R^{n \times n}$. In order to be ``normal coordinates'' at $p=0$, we need that at the point $\psi(0) = 0 \in \Tilde{U}$, it holds
\begin{align}\label{first_cond_on_first_differential}
    (\psi_*g)_{ij}(0) = (\alpha^{-T}g\alpha^{-1})_{ij}(0) = {\rm Id}_{n}.
\end{align}
We want the first derivative of the metric with respect to the $y$-coordinates to vanish at $p=0$, hence we compute:
\begin{align}\label{system_of_equations_expo_a}
    0&= \partial_{y^k}(\psi_*g)_{ij}(0)= \partial_{x^l}((D\psi^{-1})_{ri}(D\psi^{-1})_{qj}g_{rq})\frac{\partial x^l}{\partial y^k}(0) \nonumber \\
    &= (\partial_{x^l}(D\psi^{-1})_{ri}(D\psi^{-1})_{qj}g_{rq}+(D\psi^{-1})_{ri}\partial_{x^l}(D\psi^{-1})_{qj}g_{rq}+(D\psi^{-1})_{ri}(D\psi^{-1})_{qj}\partial_{x^l}g_{rq})\frac{\partial x^l}{\partial y^k}(0) \nonumber \\
    &= (-2\sigma_{rt}\beta^t_{sl} \sigma_{si}\sigma_{qj}g_{rq}-2\sigma_{ri}\sigma_{qt}\beta^t_{sl}\sigma_{sj}g_{rq}+\sigma_{ri}\sigma_{qj}\partial_{x^l}g_{rq})\sigma_{lk}.
\end{align}
This gives $\frac{1}{2}n^2(n+1)$ equations and $\frac{1}{2}n^2(n+1)$ variables $\beta^k_{ij}$. To keep notation short, we rewrite this to 
\begin{align}\label{second_part_of_equation_sys}
    \zeta_{ijk}^m(g, \sigma)\beta^k_{ij} = \xi^m(Dg, \sigma), \quad m= 1, \ldots, \frac{1}{2}n^2(n+1),
\end{align}
for some polynomials $\zeta_{ijk}^m : (\R^{n\times n})^2 \to \R$ and $\xi^m: \R^{n \times n\times n} \times\R^{n\times n}$. 
In order to find a solution of this system of equations, consider $g_\delta$, $\delta \in (0, \delta_0)$.
By using the exponential map, we can find a map $\psi_\delta$ defined as
\begin{align*}
    \psi_\delta = (\exp_p^{g_\delta})^{-1} : \Omega_\delta \to B_{r_\delta}(0) \subset T_pU \cong \R^n, x \mapsto y_\delta,
\end{align*}
where $\Omega_\delta \subset U$ is an open neighbourhood of $0$ and $r_\delta > 0$.
Define the distance function $d^p_\delta: \Omega_\delta \to [0, \infty), y \mapsto \sfd_{g_\delta}(p,z) = \big(\sum_{i=1}^n (y_\delta^i)^2\big)^\frac{1}{2}$. It is known that $(\psi_\delta)_*(g_\delta)_{ij} = \delta_{ij}+ O_\delta(({d^p_\delta})^2)$. More precisely, this means that there exists a constant $C_{g_\delta} >0$ such that for all $x \in \Omega_\delta$,
\begin{align}
     \norm{(\psi_\delta)_*g_\delta(x)- {\rm Id}_{n}}_{\R^{n \times n}} \leq C_{g_\delta} ({d^p_\delta})^2(x).
\end{align}
Note that $|Dg_{\delta}|\leq C\lambda\delta^{-1}$ for some $C>0$.  
Take $\Tilde{\Omega}_\delta := \Omega_\delta \cap \{x: |x|_{euc} = |x-p|_{euc} \leq \frac{\delta}{2nC}, \ (d^p_\delta(x))^2 < \frac{1}{2nC_{g_\delta}}\frac{1}{4\lambda^2}\}$.
Then in $\Tilde{\Omega}_\delta$, we get that 
 \begin{align}
    \frac{1}{2\lambda}{\rm Id}_{n} \leq g_\delta \leq 2\lambda {\rm Id}_{n}, \quad \mathrm{and}\quad  \frac{1}{2}{\rm Id}_{n} \leq (\psi_\delta)_*g_\delta \leq 2{\rm Id}_{n}.
 \end{align}
Using that $\norm{v}_{g_\delta} = \norm{(\psi_\delta)_*v}_{(\psi_\delta)_*g_\delta}$, we get that the transition map $\Psi_\delta:= D\psi_\delta: T \Tilde{\Omega}_\delta \to T\psi_\delta(\Tilde{\Omega}_\delta)$ is bounded from above and from below in the Euclidean norm, meaning that for all $0 \neq v \in T_q\Tilde{\Omega}_\delta$, we have that 
\begin{align}\label{bound_on_transition_fn}
     \frac{1}{4\lambda} = \frac{\frac{1}{2}}{2\lambda} \leq \frac{\norm{\Psi_\delta(v)}_{euc}}{\norm{v}_{euc}} \leq \frac{2}{\frac{1}{2\lambda}} = 4\lambda.
\end{align}
By the inverse function theorem, we know that on $\Tilde{\Omega}_\delta$
\begin{align*}
    D\psi_\delta = (D (\exp^{g_\delta}_p))^{-1}, \quad \mathrm{hence} \quad \partial_iD\psi_\delta = -D \psi_\delta \partial_i(D(\exp^{g_\delta}_p))D \psi_\delta.
\end{align*}
Hence, using \eqref{bound_on_transition_fn} and \eqref{differentials_expo_bounds}, we get that  
\begin{align}\label{bounds_on_good_expo_map}
    |(D\psi_\delta)^{-1}|_{euc}(0) \leq 4\lambda \quad \mathrm{and} \quad    |D^2\psi_\delta|_{euc}(0) \leq 12\lambda^2.
\end{align}
Now for all $\delta>0$, we have that 
\begin{align*}
     \zeta_{ijk}^m(g_\delta, (D\psi_\delta)^{-1})\frac{1}{2}\partial_{j}(D\psi_\delta)_{ik}= \xi^m(Dg_\delta, (D\psi_\delta)^{-1}), \quad m= 1, \ldots, \frac{1}{2}n^2(n+1).
\end{align*}
With \eqref{bound_on_transition_fn} and \eqref{bounds_on_good_expo_map}, we get that there exists a convergent subsequence $\delta' \to 0$ such that $(D\psi_{\delta'}(0), D^2\psi_{\delta'}(0)) \to (D\psi(0), D^2\psi(0))$ and  
\begin{align}\label{bounds_on_g_expo_map}
    &|D\psi|_{euc}(0) \leq 4\lambda, \ |D\psi^{-1}|_{euc}(0) \leq 4\lambda, \ \mathrm{and} \ |D^2\psi|_{euc}(0) \leq 12\lambda^2.
\end{align}
By \eqref{convergence_properties_of_p}, \eqref{derivative_converge}, and the continuity of $g$, it follows that $(D\psi(0), D^2\psi(0)) $ satisfies \eqref{first_cond_on_first_differential} and \eqref{system_of_equations_expo_a}.
Now we have found coefficients for our map $\psi:U \to \Tilde{U}$ and we notice that for now the map is defined everywhere on $U$. 
By \eqref{first_cond_on_first_differential}, $D\psi(0)$ must be invertible, so by the inverse function theorem we can shrink $U$ to a neighbourhood $U'$ of $p$ such that $\psi$ is a diffeomorphism on $U'$.
Note that $D^2\psi$ is constant, so 
\begin{align}\label{uniform_nound_d_2_psi}
   |D^2\psi|_{euc}(x) \leq 12\lambda^2,  
\end{align}
for all $x\in U'$.
As $\psi_*g ={\rm Id}_{n}$, we get that the transition map $\Psi|_0:= D\psi|_0: T_0 U' \to T_0\psi(U')$ is bounded, meaning that for all $0 \neq v \in T_0\psi(U')$, we have that 
\begin{align}
     \frac{1}{\lambda} \leq \frac{\norm{\Psi(v)}_{euc}}{\norm{v}_{euc}} \leq \lambda.
\end{align}
Using \eqref{uniform_nound_d_2_psi}, we can shrink the domain $U'$ and find an open $\Omega \subset U'$ such that $p \in \Omega$ and the transition map $\Psi:= D\psi: T \Omega \to T \psi(\Omega)$ satisfies
\begin{align}\label{dpsi_bdd}
     \frac{1}{2\lambda} \leq \frac{\norm{\Psi(v)}_{euc}}{\norm{v}_{euc}} \leq 2\lambda,
\end{align}
for each $v \in T_q \Omega$, $q \in \Omega$.
As $p$ satisfies \eqref{convergence_properties_of_p}, we can use Lemma \ref{lebeague_point_with_cts}, to get that for each $f \in C^{2}_c(M)$, 
\begin{align}\label{p_nice_lebesgue_point}
    p\ \mathrm{is\ a\ Lebesgue\ point\ of\ }  x \mapsto |\nabla_{g}^2f(x)|^2_{HS}, \ \mathrm{and\ } x\mapsto (\Delta_gf(x))^2.
\end{align}
Here $\Delta_g$ denotes the Laplacian with respect to the volume measure induced by $g$. As, by the above construction, $y = \psi(x)$ defines normal coordinates at $p$ with respect to $g$, we have that
\begin{align*}
    &((\psi)_*(\nabla_{g}^2f)(0))_{ij} = \partial^2_{y^i, y^j}f(0), \quad \mathrm{and} \\
    & \Delta_{g} f = \sum_{i} \partial^2_{y^i, y^i}f(0) = \sum_i ((\psi)_*(\nabla_{g}^2f)(p))_{ii}.
\end{align*}
Now fix a smooth vector field $X$ on $U$ and denote $B := \psi_*(X(p)) \in \R^n$. Let $f: \Omega \to  \R$ be defined by $\psi_*f := \sum_i B_i y^{i} + \frac{a}{2\sqrt{n}}(y^i)^2$ for some $a \in \R$. Then
\begin{align*}
    \psi_*\nabla_{g}f(0) = B,  \quad \psi_*\nabla^2_{g}f(0) = \frac{a}{\sqrt{n}}{\rm Id}_{n}, \quad \mathrm{and} \quad  \psi_*\Delta_{g}f(0) = \sqrt{n}a.
\end{align*}
Note that $f$ is smooth because $\psi$ is smooth. 
Writing $h^2 = e^{-V}$ for a $C^{0}\cap W^{1,2}_{\rm{loc}}$-function $V$, we get: 
\begin{align*}
    &\Delta_{g,\mu} f = \Delta_g f + \langle \nabla_g V, \nabla_g f \rangle_g.
\end{align*}
Hence, 
\begin{align*}
    |\nabla^2_{g} f|^2_{HS, g}(p) - \frac{1}{N}(\Delta_{g, \mu} f)^2(p) = \frac{N-n}{N}a^2 - \frac{2\sqrt{n}}{N}a\langle X, \nabla_{g} V \rangle_{g}(p) - \frac{1}{N} \langle X, \nabla_{g} V \rangle^2_{g}(p).
\end{align*}
The choice $a= \frac{\sqrt{n}\langle X, \nabla_{g} V \rangle_{g}(p)}{N-n}$ minimises the right hand side and yields
\begin{align}\label{minimized_equality}
    |\nabla^2_{g} f|_{HS, g}^2(p) - \frac{1}{N}(\Delta_{g, \mu} f)^2(p) = -\frac{1}{N-n}\langle X, \nabla_{g} V \rangle^2_{g}(p).
\end{align}
Now there is a $\tau_0 > 0$, $\tau \leq \min(\e, 1)$ such that ${B_{\tau_0}(p)} := {B^{euc}_{\tau_0}(p)} \subset {\Omega}$ and for all $z \in {B_{\tau_0}(p)}$, we have 
\begin{align}\label{uniform_continuity_of_gradient_and_x}
    |\nabla_{g}f(z)-X(p)|_{euc} \leq \e \quad \mathrm{and} \quad |X(z)-X(p)|_{euc} \leq \e. 
\end{align}
Finally note $f = \psi^*\psi_* f$, so we can bound $\norm{f}_{W^{2, \infty}}$  in a neighbourhood of $p$ in terms of $|\psi|, |D\psi|, |D\psi^{-1}|, |D^2\psi|, a$, and $B$.
Hence, by \eqref{uniform_nound_d_2_psi} and \eqref{dpsi_bdd}, we get that
\begin{align}\label{bound_on_f_c2}
    \norm{f}_{W^{2, \infty}(B_{\tau_0}(p))} \leq C(X, V, \lambda, n, N).
\end{align}
Now, by \eqref{smoothbecond}, we have
\begin{align*}
     \sum_{i,j}(\nabla_g f)_i(\nabla_g f)_j \nu_{ij} + (|\nabla_g^2 f|^2_{HS, g} - K|\nabla_g f|_g^2 - \frac{1}{N}(\Delta_{g,\mu} f)^2)\mu \geq 0.  
\end{align*}
Recall that $\mu = e^{-V}\sqrt{|g|}\mathcal{L}^n$, where $V, g \in C^{0}(U)$. Then by the triangle inequality, \eqref{p_nice_lebesgue_point} together with Lemma \ref{lebeague_point_with_cts}, and by \eqref{uniform_continuity_of_gradient_and_x}, we get that there exists a $\tau\in(0,\tau_0)$ such that for all $\tau' \in (0, \tau)$, it holds
\begin{align*}
    \Bigg|&\Big( \sum_{i,j} (\nabla_{g} f)_i(p)(\nabla_{g} f)_j(p) \nu_{ij} + (|\nabla_{g}^2 f|^2_{HS, g}(p) - K|\nabla_{g} f|_{g}^2(p) - \frac{1}{N}(\Delta_{{g},\mu} f)^2(p))\mu \Big) \\
    &\quad  -\Big( \sum_{i,j} (\nabla_{g} f)_i(\nabla_{g} f)_j \nu_{ij} + (|\nabla_{g}^2 f|^2_{HS, g} - K|\nabla_{g} f|_{g}^2 - \frac{1}{N}(\Delta_{{g},\mu} f)^2)\mu \Big)\Bigg|_{TV}(B_{\tau'}(p)) \\
    & \quad \qquad \leq  C(n, U, g, V, X, N, K)\big( \sum_{i,j}|\nu_{ij}|_{TV}+\mathcal{L}^n\big)(B_{\tau'}(p))\cdot\e.
\end{align*}
 Similarly, using again \eqref{uniform_continuity_of_gradient_and_x}, we get that 
\begin{align*}
    \Bigg| \sum_{i,j}& (\nabla_{g} f)_i(p)(\nabla_{g} f)_j(p)\nu_{ij} - ( K|\nabla_{g} f|_{g}^2(p) + \frac{1}{N-n}(\langle \nabla_{g}f, \nabla_{g}V\rangle^2_{g})(p))\mu \\
    & - \sum_{i,j} X^iX^j \nu_{ij} +  (K|X|_g^2 + \frac{1}{N-n}(\langle X, \nabla_{g}V\rangle^2_{g}))\mu\Bigg|_{TV}(B_{\tau'}(p)) \\
    &\qquad \quad  \leq C(n, U, g, V, X, N, K)\e\big( \sum_{i,j}|\nu_{ij}|_{TV}+\mathcal{L}^n\big)(B_{\tau'}(p)).
\end{align*}
Together with \eqref{minimized_equality}, this yields
\begin{align*}
    \big(\sum_{i,j} X^iX^j &\nu_{ij} -  ( K|X|_g^2 + \frac{1}{N-n}(\langle X, \nabla_{g}V\rangle^2_{g}))\mu\big)(B_{\tau'}(p)) \nonumber \\
    &\geq -C(n, U, g, V, X, N, K)\e\big( \sum_{i,j}|\nu_{ij}|_{TV}+\mathcal{L}^n\big)(B_{\tau'}(p)).
\end{align*}
We infer that for $\chi \in C_c(B_\tau(p), [0, \infty)])$ with $\chi$ rotationally symmetric around $p$, it holds
\begin{align}\label{integral_property_rot_symm}
    \int_{B_\tau(p)} \chi \, &\di(\sum_{i,j} X^iX^j \nu_{ij} -  ( K|X|_g^2 + \frac{1}{N-n}(\langle X, \nabla_{g}V\rangle^2_{g}))\mu)  \nonumber \\
    &\geq -C(n, U, g, V, X, N, K)\e \int_{B_\tau(p)} \chi \, \di\big( \sum_{i,j}|\nu_{ij}|_{TV}+\mathcal{L}^n\big).
\end{align}
As $p$ was chosen arbitrarily among all points satisfying \eqref{convergence_properties_of_p}, we have that for all $p \in U$, satisfying \eqref{convergence_properties_of_p}, and each $\e>0$, there exists a $\tau_{p, \e} > 0$ such that \eqref{integral_property_rot_symm} holds for  $\chi \in C_c(B^{euc}_{\tau_{p, \e}}(p), [0, \infty))$ such that $\chi$ is rotationally symmetric around $p$.
Now suppose there exists a function $\phi \in C_c(U, [0, \infty))$ such that 
\begin{align*}
    \langle \Ric_{\mu, \infty}(X,X) - K|X|^2_g\mu- \frac{1}{N-n}  (\nabla V \otimes \nabla V)(X,X)\mu, \phi \; dx^1 \ldots dx^n \rangle_{\mathcal{D}', \mathcal{D}} = \kappa < 0. 
\end{align*}
We can assume that $\norm{\phi}_{sup} = 1$. Choose 
\begin{align}\label{choice_of_eps_last_proof}
    0 < \e \leq \frac{-\kappa}{8C\cdot (\mathcal{L}^n(U) + \sum_{i,j}|\nu_{ij}|_{TV}(U))},
\end{align}
where $C \geq 1$ is as in \eqref{integral_property_rot_symm}. 
Write 
\begin{align*}
    \Ric_{\mu, \infty}(X,X) := \mathfrak{r}_X^s + \mathfrak{r}_X^{a},
\end{align*}
where $\mathfrak{r}_X^s$ denotes the singular part and $\mathfrak{r}_X^{a}$ denotes the absolutely continuous part with respect to the Lebesgue measure. As $\Ric_{\mu, \infty}(X,X) - K|X|^2_g\mu \geq 0$, and $K|X|^2_g\mu$ is absolutely continuous with respect to the Lebesgue measure, we know that 
\begin{align}\label{singular_ricci_positive}
    \mathfrak{r}_X^s \geq 0.
\end{align}
Moreover, there exist two Borel sets $U_s, U_a \subset U$ such that $U_s \cap U_a = \emptyset$, $U_s \cup U_a = U$, $\mathfrak{r}^s_X(U_a) = 0$, and $\mathcal{L}^n(U_s) = 0$. 
Define 
\begin{align*}
    \mathcal{N}:= \{p \in U: \ p\ \mathrm{does \ not \ satisfy\  \eqref{convergence_properties_of_p}}\}.
\end{align*} 
Then $\mathcal{L}^n(\mathcal{N}) = 0$ and hence $\mathcal{L}^n(\mathcal{N} \cup U_s) = 0.$
By the outer regularity of Borel measures, there exists an open set $O \subset U$ such that 
\begin{align}\label{open_set_of_small_measure_containing_problems}
    &(\mathcal{N} \cup U_s) \subset O, \ \mathrm{and} \nonumber \\
    & |\mathfrak{r}^a_X|_{TV}(O) + (|K||X|^2_g + \frac{1}{N-n}  (\nabla V \otimes \nabla V)(X,X))\mu(O) \leq \frac{\e}{4}.
\end{align}
Denote $\supp\, \phi = K'$, $K:= K' \setminus O$ and note that both $K$ and $K'$ are compact. 
Moreover, denote $\mathcal{F}:= \{\overline{B_{\tau}(p)}: \ p \in K, 0 < \tau \leq \frac{1}{2}\tau_{p, \e}\}$. 
We can now apply Lemma \ref{apx_with_rot_sym_functions} to find an $m \in \N$ and functions $\chi_k \in C_c(B_{\tau_k}(p_k), [0, \infty))$ for $k=1, \ldots, m$, $p_k \in K$, $\chi_k$ is rotationally symmetric centered at $p_k$, $\tau_k \leq \frac{\tau_{p_k, \e}}{2}$, such that $\sum_{k=1}^m \chi_k \leq \phi$ and ${\norm{\sum_{k=1}^m \chi_k - \phi}_{C^0(K)} \leq \e}$. Denote $\psi:= \sum_{k=1}^m \chi_k$.
Define the signed Radon measure $\mathfrak{s}$ on $U$ as
\begin{align*}
    \mathfrak{s} = -K|X|^2_g\mu - \frac{1}{N-n}(\nabla V \otimes \nabla V)(X, X)\mu.  
\end{align*}
By \eqref{choice_of_eps_last_proof}, \eqref{open_set_of_small_measure_containing_problems}, and the bound ${\norm{\phi - \psi}_{C^0(K)} \leq \e}$, we have that 
\begin{align*}
    \Bigg|\int_U (\phi- \psi) \, \di(\mathfrak{s} + \mathfrak{r}^a_X) \Bigg| &=\Bigg|\int_{K'} (\phi- \psi) \, \di(\mathfrak{s} + \mathfrak{r}^a_X)\Bigg| \\
    & \leq  \Bigg|\int_{K} (\phi- \psi) \, \di(\mathfrak{s} + \mathfrak{r}^a_X)\Bigg| + \Bigg|\int_{O} (\phi- \psi) \, \di(\mathfrak{s} + \mathfrak{r}^a_X)\Bigg| \\
    & \leq \e \cdot (|\mathfrak{s}|_{TV} + |\mathfrak{r}^{a}_X|_{TV})(U) + \norm{\phi}_{sup}(|\mathfrak{r}^{a}_X|_{TV} + |\mathfrak{s}|_{TV})(O) \\
    &\leq -\frac{\kappa}{8} + \frac{\e}{4} \leq -\frac{\kappa}{2}.
\end{align*}
Then, using \eqref{singular_ricci_positive} and that $\phi \geq \psi$, we get that
\begin{align*}
    \int_U \phi \, \di(\mathfrak{s} + \mathfrak{r}^s_X + \mathfrak{r}^a_X) &= \int_U \psi \, \di(\mathfrak{s} + \mathfrak{r}^s_X + \mathfrak{r}^a_X) + \int_U (\phi- \psi) \, \di(\mathfrak{s} + \mathfrak{r}^a_X) + \int_U (\phi- \psi) \, \di(\mathfrak{r}^s_X) \\
    &\geq \int_U \psi \, \di(\mathfrak{s} +\mathfrak{r}^s_X + \mathfrak{r}^a_X) + \frac{\kappa}{2}.
\end{align*}
Finally, using \eqref{integral_property_rot_symm}, and $\norm{\psi}_{sup} \leq \norm{\phi}_{sup} \leq 1$, we get that
\begin{align*}
 \int_U & \psi \, \di (\mathfrak{s} +\mathfrak{r}^s_X + \mathfrak{r}^a_X) \\
    &= \sum_{k=1}^m \left( \sum_{i,j} \int_{B_{\tau_k}(p_k)}X^iX^j \chi_k\, \di\nu_{ij} -  \int_{B_{\tau_k}(p_k)} \Big( K|X|_g^2 + \frac{1}{N-n}(\langle X, \nabla_{g}V\rangle^2_{g}) \Big)\chi_k\,\di\mu \right) \\
    &  \geq -C(n, U, g, V, X, N, K)\e \sum_{k=1}^m \Big(\int_{B_{\tau_k}(p_k)} \sum_{i,j}\chi_k\,d|\nu_{ij}|_{TV}+ \int_{B_{\tau_k}(p_k)}\chi_k\,\di\mathcal{L}^n\Big) \\
    &\geq \frac{\kappa}{8(\mathcal{L}^n(U) + \sum_{i,j}|\nu_{ij}|_{TV}(U))}\Big(\int_{U} \sum_{i,j}\psi\,d|\nu_{ij}|_{TV}+ \int_U\psi\,\di\mathcal{L}^n\Big) \\
    &\geq \frac{\kappa}{8}.
\end{align*}
But then
\begin{align*}
    \kappa &= \left\langle \Ric_{\mu, \infty}(X,X) - K|X|^2_g\mu- \frac{1}{N-n}  (\nabla V \otimes \nabla V)(X,X)\mu, \phi dx^1 \ldots dx^n \right\rangle_{\mathcal{D}', \mathcal{D}}  \\
    &= \int_U \phi \, \di(\mathfrak{s} + \mathfrak{r}^s_X + \mathfrak{r}^a_X) \geq \frac{5\kappa}{8}.
\end{align*}
Since $\kappa < 0$, this is a contradiction. As $X$ was arbitrary, the proof is complete.
\end{proof}

\section{Distributional  Ricci curvature lower bounds imply $\rcd$}

In this section, we prove the reverse implication, namely from distributional to synthetic Ricci lower bounds, under the assumption of the volume growth condition \eqref{condition_on_measure}. Note that such a condition is necessary, as all $\mathsf{CD}(K,\infty)$ spaces satisfy it (see Remark \ref{rem:CDVol}). 
At the end of the section, we establish such volume growth condition for weighted manifolds with $C^1$-metrics and distributional Bakry-\'Emery $N$-Ricci curvature bounded below by $K$ for finite $N$.
\smallskip

\begin{theorem}\label{from_distri_torcd}
     Let $M$ be a smooth manifold, $g$ a continuous Riemannian metric with $L^2_{\rm{loc}}$-Christoffel symbols and let $h \in C^{0}(M)\cap W^{1,2}_{\rm{loc}}(M)$ be a positive function. Denote $V := -2 \log h \in C^{0}\cap W^{1,2}_{\rm{loc}}(M)$. Define the measure $\mu$ via $\di\mu:= e^{-V}\di\vol_g$. Let $N \in [n, \infty]$ and $K \in \R$. Assume that the metric measure space $(M, \sfd_g, \mu)$ satisfies \eqref{condition_on_measure}.
     
      If the distributional Bakry-\'Emery $N$-Ricci curvature tensor is bounded below by $K$, i.e.
     \begin{align*}
         \Ric_{\mu, N} \geq Kg \  \mathrm{in} \ {\mathcal{D}'}\mathcal{T}^0_2,
     \end{align*}
     then $(M, \sfd_g, \mu)$ satisfies the $\mathsf{RCD}^*(K, N)$-condition  if $N\in [n,\infty)$, or the $\rcd(K, \infty)$-condition if $N=\infty$.
\end{theorem}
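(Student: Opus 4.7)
The strategy is to reduce the claim to the Bakry--Émery condition via Theorem~\ref{cd_implies_be}. Infinitesimal Hilbertianity and the Sobolev-to-Lipschitz property of $(M,\sfd_g,\mu)$ are already delivered by Corollary~\ref{summary_chapter_4}, and the growth assumption \eqref{condition_on_measure} is part of the hypothesis. It therefore suffices to verify $\mathsf{BE}(K,N)$ for every $f\in\mathrm{TestF}(M)$, namely
$$\boldsymbol{\Gamma}_2(f,f)\geq\Big(K|\nabla f|^2+\tfrac{1}{N}(\Delta_\mu f)^2\Big)\mu,$$
and then invoke a standard density argument to extend it to all of $D_{W^{1,2}_w}(\Delta)$.

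The first ingredient is the decomposition \eqref{interplay_ric_gamma2}, $\boldsymbol{\Gamma}_2(f,f)=\mathrm{\textbf{Ric}}(\nabla f,\nabla f)+|\nabla^2 f|^2_{HS}\mu$, together with the identification \thref{equalityriccis} of $\mathrm{\textbf{Ric}}(\nabla f,\nabla f)$ with the distributional $\Ric_{\mu,\infty}(\nabla f,\nabla f)$. The second ingredient is a measure representation for $\Ric_{\mu,N}$: repeating the argument of Theorem~\ref{ricciasmeasure}, where now $\tfrac{1}{N-n}\nabla V\otimes\nabla V$ is a bona fide Radon measure (in view of $V\in W^{1,2}_{\rm loc}$), one produces in each coordinate patch Radon measures $\tilde\nu_{ij}$ such that $\sum X^iX^j\,d\tilde\nu_{ij}\geq K\,g_{ij}X^iX^j\,d\mu$ holds first for $X\in C^\infty_c$, and then, by local finiteness of $|\tilde\nu_{ij}|$ and continuity of $g$ and $h$, for every locally bounded measurable $X$. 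Since $\nabla f\in L^\infty(TM)$ by the very definition of $\mathrm{TestF}(M)$, and since \thref{equalityriccis} extends from smooth fields to test gradient fields via \thref{testvectorfieldsdense} and \thref{districcisobolev}, this gives
$$\mathrm{\textbf{Ric}}(\nabla f,\nabla f)\geq\Big(K|\nabla f|^2+\tfrac{1}{N-n}\langle\nabla V,\nabla f\rangle^2\Big)\mu,$$
the last summand being absent when $N=\infty$ (and trivially when $N=n$, since then $V$ is constant).

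To conclude in the finite-dimensional case, it remains to establish the pointwise a.e.\ algebraic inequality
$$|\nabla^2 f|^2_{HS}+\tfrac{1}{N-n}\langle\nabla V,\nabla f\rangle^2\geq\tfrac{1}{N}(\Delta_\mu f)^2,$$
which is classical: combine $|\nabla^2 f|^2_{HS}\geq\tfrac{1}{n}(\Delta_g f)^2$ with $\Delta_\mu f=\Delta_g f-\langle\nabla V,\nabla f\rangle$ to rewrite the difference of the two sides as a quadratic form in $(\Delta_g f,\langle\nabla V,\nabla f\rangle)$ with coefficients $(\tfrac{N-n}{nN},\tfrac{1}{N},\tfrac{n}{N(N-n)})$; its discriminant vanishes, making it a perfect square. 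Summing the two estimates yields $\mathsf{BE}(K,N)$ on $\mathrm{TestF}(M)$, and Theorem~\ref{cd_implies_be} concludes.

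I expect the main technical obstacle to be the passage from the distributional inequality, which by assumption is only paired with smooth compactly supported vector fields and smooth test volumes, to a Radon-measure inequality that can be tested at the merely $H^{1,2}_C$-regular gradient field $\nabla f$. This requires extending Theorem~\ref{ricciasmeasure} to the full Bakry--Émery tensor $\Ric_{\mu,N}$ and carefully handling the Lebesgue decomposition of the resulting measures: the non-negativity of the singular part (a direct consequence of $\Ric_{\mu,N}-Kg\geq 0$) is used to discard it, so that only the absolutely continuous piece must be compared pointwise a.e.\ with the Hessian and weight terms arising from the Bochner side. The continuity of $g$ and $h$, the Sobolev calculus of Section~\ref{Sec:Four}, and the approximation result \thref{testvectorfieldsdense} are what make each of these passages rigorous.
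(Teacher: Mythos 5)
Your overall skeleton (reduce to $\mathsf{BE}(K,N)$ via Theorem~\ref{cd_implies_be}, then use the trace/Cauchy--Schwarz perfect-square computation to absorb the $-\tfrac{1}{N-n}\langle\nabla V,\nabla f\rangle^2$ term) matches the paper, and your algebraic inequality is exactly the paper's Claim. But the middle of your argument has a genuine gap, in two respects. First, circularity: the objects and results you lean on --- the measure-valued $\mathrm{\textbf{Ric}}$ of Theorem~\ref{defsyntheticricci}, the identity \eqref{interplay_ric_gamma2}, the identification \thref{equalityriccis}, and the approximation Lemma~\thref{testvectorfieldsdense} --- are all established in the paper (and in \cite{gigli2018nonsmooth}) under the standing assumption that $(M,\sfd_g,\mu)$ is already a $\mathsf{CD}/\rcd(K,\infty)$ space, which is precisely what you are trying to prove in this implication. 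You cannot invoke them here without redoing them under the distributional hypothesis alone, which you do not address. Second, the key analytic step is not justified: a signed Radon measure $\tilde\nu_{ij}$ with a possibly nontrivial singular part cannot be paired with a merely bounded, $\mathcal L^n$-a.e.\ defined field such as $\nabla f$ for $f\in\mathrm{TestF}(M)$, so the claimed extension of $\sum X^iX^j\tilde\nu_{ij}\geq Kg(X,X)\mu$ from $X\in C^\infty_c$ to ``every locally bounded measurable $X$'' fails as stated, and the identification of the measure $\mathrm{\textbf{Ric}}(\nabla f,\nabla f)$ with $\sum(\nabla f)^i(\nabla f)^j\tilde\nu_{ij}$ is exactly the ill-defined pairing. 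Discarding the non-negative singular part, as you sketch, only bounds the absolutely continuous part of the right-hand object; it does not by itself produce the inequality $\mathrm{\textbf{Ric}}(\nabla f,\nabla f)\geq(K|\nabla f|^2+\tfrac{1}{N-n}\langle\nabla V,\nabla f\rangle^2)\mu$ between measures without a careful (Lebesgue-point/covering type) argument of the kind the paper only needs in the \emph{opposite} implication.

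The paper's proof shows the whole measure-representation detour is unnecessary in this direction. One tests the distributional inequality $\Ric_{\mu,N}\geq Kg$ with $X=\nabla f$ for \emph{smooth} $f$ (a continuous vector field with $W^{1,2}_{\rm loc}$ components, admissible by density in the first-order expression \eqref{districcilocalweighted}, cf.\ Lemma~\thref{districcisobolev}), applies the pointwise Claim \eqref{be_compared_to_tensor} to replace $-\tfrac{1}{N-n}\langle\nabla V,\nabla f\rangle^2$ by $|\nabla^2f|^2_{HS}-\tfrac1N(\Delta_\mu f)^2$, and thereby obtains the weak integrated Bochner inequality \eqref{beconditiondistri} for smooth $f$; since that inequality involves only $L^2$/$L^1$ products of $f$'s first and second derivatives, it passes to all $f\in H^{2,2}(M)$ by $H^2_2$-density, which is exactly $\mathsf{BE}(K,N)$ in the form needed for Theorem~\ref{cd_implies_be}. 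No singular measures ever appear, and no $\mathsf{CD}$-dependent second-order machinery is used. To repair your proposal you would either have to follow this route, or prove versions of \thref{equalityriccis}, \thref{testvectorfieldsdense} and the measure pairing that do not presuppose the curvature-dimension condition --- a substantially harder task than the problem requires.
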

\begin{proof}
    By Theorem \ref{cd_implies_be}, it suffices to prove that $(M, \sfd_g, \mu)$ satisfies the $\mathsf{BE}(K, N)$-condition. 
   For any smooth function $f \in C^\infty(M)$ and any non-negative, compactly supported test volume $\omega$, we have that 
    \begin{align}\label{given_inequality_distri_to_rcd}
         \langle&\Ric_{\mu, N}(\nabla f, \nabla f) - Kg(\nabla f, \nabla f), \omega \rangle_{\mathcal{D}', \mathcal{D}} \nonumber \\
         &= \left\langle\Ric_{\mu, \infty}(\nabla f, \nabla f) -\frac{1}{N-n}\langle\nabla V,\nabla f\rangle_g^2  - Kg(\nabla f, \nabla f), \omega \right\rangle_{\mathcal{D}', \mathcal{D}} \geq 0.
     \end{align}
    Let $n\geq 1$ be the dimension of $M$ and denote by $\Delta_g$ the Laplacian induced by the volume measure $\di\vol_g$. 
     \smallskip
     
     \textbf{Claim.} The following inequality holds at $\mu$-almost every point $p \in M$:
     \begin{align}\label{be_compared_to_tensor}
         |\nabla^2 f|^2_{HS} - \frac{1}{N}(\Delta_g f + \langle \nabla V, \nabla f \rangle)^2 \geq -\frac{1}{N-n} \langle \nabla V, \nabla f \rangle^2.
     \end{align}
     \textit{Proof of the claim.} Fix $p \in M$ such that $p$ is a Lebesgue point of $Dg$ and $(Dg)^2$. Denote $B:= |\langle \nabla V, \nabla f \rangle|(p)$. For $(e_i)_{i=1}^n$, a $g$-orthonormal basis at $T_pM$, we have that 
     \begin{align*}
         |\nabla^2 f|^2_{HS}(p)= \sum_{i, j=1}^n \nabla^2f(e_i, e_j)^2 \quad  \mathrm{and}\quad \Delta_gf(p) = \sum_{i=1}^n \nabla^2f(e_i, e_i).
     \end{align*}
     From the Cauchy-Schwartz inequality, we get that 
     \begin{align*}
         n|\nabla^2 f|^2_{HS}(p) \geq (\Delta_gf(p))^2.
     \end{align*}
Denote $a = |\nabla^2 f|_{HS}$. Then
\begin{align*}
   |\nabla^2 f|^2_{HS} &- \frac{1}{N}(\Delta_g f + \langle \nabla V, \nabla f \rangle)^2 + \frac{1}{N-n} \langle \nabla V, \nabla f \rangle^2 \\
   & \geq |\nabla^2 f|^2_{HS} - \frac{1}{N}(|\Delta_g f| + |\langle \nabla V, \nabla f \rangle|)^2 + \frac{1}{N-n} \langle \nabla V, \nabla f \rangle^2 \\
   &\geq a^2 - \frac{1}{N}(\sqrt{n}a+B)^2 +\frac{1}{N-n}B^2 \\
   &= \frac{1}{N}(\sqrt{N-n}a+\frac{\sqrt{n}}{\sqrt{N-n}}B)^2 \geq 0.
\end{align*}
This proves the claim. \\
We can now plug \eqref{be_compared_to_tensor} into \eqref{given_inequality_distri_to_rcd} and infer that
\begin{align*}
    \Big\langle\Ric_{\mu, \infty}(\nabla f, \nabla f) + |\nabla^2 f|^2_{HS} - \frac{1}{N}(\Delta_\mu f)^2  - Kg(\nabla f, \nabla f), \omega \Big \rangle_{\mathcal{D}', \mathcal{D}} \geq 0.
\end{align*}
Using a partition of unity, we may assume that $\omega$ is supported in one coordinate patch and we can locally assume that $\omega = \phi h^2 \sqrt{|g|}dx^1 \wedge \ldots \wedge dx^n$ for some $\phi \in C^{0}_c(M, [0, \infty))\cap W^{1,2}_{\rm{loc}}(M)$. In local coordinates, we get that \eqref{beconditiondistri} holds for any $f \in C^2(M)$.
Using Lemma \ref{ccinfty_dense_in_lp} and Proposition \ref{w22closure}, we get that \eqref{beconditiondistri} holds for each $f \in H^{2,2}(M) \subset H^2_2(M, \mu)$. This means exactly that the Bakry-Émery condition $\mathsf{BE}(K, N)$ is satisfied.
\end{proof}
The combination of Theorem \ref{first_main_theorem}, Theorem \ref{cdknforlipschitzmetric}, and Theorem \ref{from_distri_torcd}  yields:
\smallskip
\begin{theorem}\label{equivalence_result}
     Let $M$ be a smooth manifold, $g$ a continuous Riemannian metric with $L^2_{\rm{loc}}$-Christoffel symbols and let $h \in C^{0}\cap W^{1,2}_{\rm{loc}}(M)$ be  a positive function. Denote $V := -2 \log h \in C^{0}\cap W^{1,2}_{\rm{loc}}(M)$ and define the measure $\mu$ via $\di\mu:= e^{-V}\di\vol_g$. Let  $N \in [n, \infty]$ and $K \in \R$. The following are equivalent:
     \begin{itemize}
         \item[$\mathrm{(i)}$] $(M, \sfd_g, \mu)$ is a $\mathsf{CD}^*(K, N)$-space.
        \item[$\mathrm{(ii)}$] The distributional Bakry-Émery $N$-Ricci curvature tensor is bounded below by $K$ and $(M, \sfd_g, \mu)$ satisfies \eqref{condition_on_measure}. 
     \end{itemize}
\end{theorem}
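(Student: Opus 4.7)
The plan is to derive Theorem \ref{equivalence_result} purely by bookkeeping: all the substantive work has already been carried out in the three theorems cited in the statement, so the proof is a short assembly, with the main subtlety being the role played by the volume-growth condition \eqref{condition_on_measure}.

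For the forward direction $(\mathrm{i})\Rightarrow(\mathrm{ii})$ I would split on the value of $N$. If $N=\infty$, then $\cd^{*}(K,\infty)$ reduces to $\cd(K,\infty)$, so Theorem \ref{first_main_theorem} directly yields $\Ric_{\mu,\infty}\geq Kg$ in $\mathcal{D}'\mathcal{T}^{0}_{2}$. If instead $N\in[n,\infty)$, then Theorem \ref{cdknforlipschitzmetric} gives the stronger bound $\Ric_{\mu,N}\geq Kg$ distributionally. In either case, Lemma \ref{implications_of_cd_conditions} shows that $(M,\sfd_{g},\mu)$ is in particular $\cd(K,\infty)$, so by Remark \ref{rem:CDVol} (based on \cite[Thm.~4.24]{sturm2006geometryI}) the choice $V(x):=C\sfd_{g}(x,x_{0})$ for a suitable constant $C>0$ and a base point $x_{0}\in M$ satisfies \eqref{condition_on_measure}. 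Thus both parts of (ii) hold.

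For the reverse direction $(\mathrm{ii})\Rightarrow(\mathrm{i})$ I would simply invoke Theorem \ref{from_distri_torcd}, whose hypotheses are exactly the distributional lower bound $\Ric_{\mu,N}\geq Kg$ in $\mathcal{D}'\mathcal{T}^{0}_{2}$ and \eqref{condition_on_measure}, and whose conclusion is that $(M,\sfd_{g},\mu)$ is $\rcd^{*}(K,N)$ when $N<\infty$ and $\rcd(K,\infty)$ when $N=\infty$. In either case this is stronger than, and implies, the $\cd^{*}(K,N)$-condition asserted in (i).

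There is no genuine obstacle left at this stage. The only point requiring care, and worth emphasising, is that the volume-growth condition \eqref{condition_on_measure} must be included as part of (ii): without it, the Cheeger-energy framework of Section \ref{Sec:Four} and the BE$(K,N)\Leftrightarrow\rcd^{*}(K,N)$ equivalence of Theorem \ref{cd_implies_be} cannot be applied in Theorem \ref{from_distri_torcd}. The fact that \eqref{condition_on_measure} is automatic in the forward direction, but must be postulated in the reverse, is precisely what makes the statement of the theorem genuinely symmetric.
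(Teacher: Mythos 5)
Your proposal is correct and coincides with the paper's own argument, which obtains Theorem \ref{equivalence_result} precisely as the combination of Theorem \ref{first_main_theorem}, Theorem \ref{cdknforlipschitzmetric} and Theorem \ref{from_distri_torcd}, with \eqref{condition_on_measure} automatic in the forward direction by Remark \ref{rem:CDVol}. The only step you gloss over is that Lemma \ref{implications_of_cd_conditions} is stated for $\rcd^{*}(K,N)$, so to pass from $\cd^{*}(K,N)$ to $\cd(K,\infty)$ one should first invoke the infinitesimal Hilbertianity of Corollary \ref{summary_chapter_4} (exactly as the paper does inside the proof of Theorem \ref{cdknforlipschitzmetric}); this is a cosmetic, not substantive, omission.
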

{
\begin{remark}
    In Theorem \ref{equivalence_result} (i), the $\cd^*(K, N)$-condition corresponds to the $\rcd^*(K, N)$-condition by Corollary \ref{summary_chapter_4}.
\end{remark}
{\begin{remark}\label{problems_with_rgularity_below_c0}
    To define distributional lower Ricci curvature bounds, the minimal requirement is that $g, g^{-1} \in L^\infty_{loc}$ admits $L^2_{loc}$-Christoffel symbols; this is known in the literature as Geroch-Traschen class, after \cite{GT87}. Under the assumption that  $g, g^{-1} \in L^\infty_{loc}$,  Norris (\cite{norris1997heat}) and De Cecco-Palmieri (\cite{de1988distanza},  \cite{de1991integral}) defined a distance that turns $(M, \sfd_g)$ into a length space, when only considering curves $\gamma:\Lip([0,1], M)$ for which $\mathcal{L}^1$-almost every point is outside a null set $N$, which includes all non-Lebesgue points of $g$. It is thus natural to ask whether the equivalence of distributional and synthetic Ricci curvature lower bounds can be generalized to the Geroch-Traschen class.
    \\  It does not seem  immediate to extend the present paper, though. For instance, it is not obvious if Proposition \ref{metric_speed_explicit} and the identification of the slope of $C^1$-functions as the norm of the gradient remains true in such a higher generality. Related to this,  De Giorgi \cite {de1990conversazioni} introduced the notion of quasi-Riemannian metric spaces, which are Lipschitz manifolds with an elliptic metric $g$ such that the slope of Lipschitz functions  coincide almost everywhere with the norm of the gradient; moreover, he formulated several conjectures on the topic. 
    Since such  identification is crucial for our comparison of the classical and the synthetic Sobolev spaces (\thref{summary_chapter_4}), we assume $g$ to be at least continuous. 
\end{remark}}
}

\subsection*{Remarks on the volume growth condition}
{
We conclude the paper by pointing out that, in the case of a Riemannian manifold with $g \in C^0\cap W^{1,p}_{loc}(M)$ for some $p >n=\dim M$, one can drop the volume growth condition in Theorem \ref{equivalence_result} (ii). More precisely we prove that, in this case, the exponential bound on volume growth follows from the distributional lower Ricci curvature bound. The assumption $g \in C^0\cap W^{1,p}(M)$, with $p >n$ is used in order to build on top of the approximation results from Subsection \ref{subsec:regularisation} and the volume bounds established by Chen-Wei \cite{chen2022improved} (after Petersen-Wei \cite{PW97}) for Riemannian manifolds with Ricci curvature bounded below in an $L^p$-sense.

\begin{proposition}\label{prop:VolumeW1p}
    Let $M$ be a smooth manifold and let $g \in C^0\cap W^{1,p}_{loc}(M)$ be Riemannian metric on $M$,   for some $p >n$. Suppose that the distributional Ricci curvature tensor is bounded below by $K$, for some $K\in \R$. Then $(M, \sfd_g, \di\vol_g)$ satisfies the volume growth condition \eqref{condition_on_measure}. 
\end{proposition}
\begin{proof}
    Fix a point $p \in M$ and a sequence $R_j \in (0, \infty)$ such that $\lim_{j \to \infty}R_j = \infty$. For each $j$, the set $B_j := \overline{B^{\sfd_g}_{R_j}(p)}$ is compact. For $\e \in (0,1)$, define $g_\e = \rho_\e * g$.  By local uniform convergence $g_\e\to g$, as $\e\to 0$,  we get that, for each $j\in \N$, there exists an $\e_j \in (0,1)$, such that:
    \begin{itemize}
        \item for each Borel set $\Omega \subset B_j$, it holds $\frac{1}{2}\vol_g(\Omega) \leq \vol_{g_{\e_j}}(\Omega) \leq 2 \vol_g(\Omega)$;
        \item for each $x \in B_j$, it holds that $g$ and $g_{\e_j}$ are $2$-equivalent; i.e., $\frac{1}{2} g\leq g_{\e_j}\leq 2 g$. 
    \end{itemize}
    Proposition \ref{lp_ricci_conv} yields that, for each compact set $K\subset M$: 
    \begin{equation}\label{eq:RicgepsMollif}
        \norm{\Ric[g_\e]-\rho_\e* \Ric[g]}_{L^{p/2}(K)} \to 0,\quad   \text{as\ } \e \to 0.
    \end{equation}
    For any smooth Riemannian manifold $(M, \Tilde{g})$, in \cite{chen2022improved} the authors consider the quantity $\Bar{k}[\Tilde{g}](H, q, R, x)$, for $H\in \R, 2q>n, R>0, x\in M$,  defined as
    \begin{align*}
     \Bar{k}[\Tilde{g}](H, q, R,x) =  \Big(\frac{1}{\vol_{\Tilde{g}}(B_R^{\sfd_{\Tilde{g}}}(x))}\int_{(B_R^{\sfd_{\Tilde{g}}}(x))}\rho_H^q\di \vol_{\Tilde{g}}\Big)^{\frac{1}{q}},
    \end{align*}
    where $\rho_H(y)=\max(-\rho(y)+(n-1)H, 0)$ and $\rho(y)$ denotes the smallest eigenvalue of the Ricci curvature tensor at $y$. In other words, $\rho_H$ denotes the Ricci curvature lying below the threshold $(n-1)H$.
    The convergence  \eqref{eq:RicgepsMollif} implies that for all $j$, we can choose $\e_j$ small enough such that $\Bar{k}[g_{\e_j}](\frac{K}{n-1}, \frac{p}{2}, 1,x) \leq \delta_0$ for all $x \in B_{R_j-1}(p)$, where  $\delta_0 = \delta_0(\frac{K}{n-1}, \frac{p}{2}, 1)$ is given in \cite[Theorem 1.2]{chen2022improved}. Then, \cite[Theorem 1.2]{chen2022improved} yields that, for $R \leq \frac{R_j-1}{4}$: 
    \begin{align}
        \vol_g(B^{\sfd_{g}}_R(p)) \leq 2\vol_g(B^{\sfd_{g_{\e_j}}}_{2R}(p)) \leq 2(1+ C(n, p, K))e^{R-1}V(K, n, R),
    \end{align}
    where $V(K, n, R)$ denotes the volume of the ball of radius $R$ in the $n$-dimensional space form of constant curvature $K/(n-1)$. Now the result follows by the exponential volume growth of metric balls in space forms.
\end{proof}

In the presence of a weight on the volume measure, the results of \cite{{chen2022improved}} are not yet available in the literature for $L^p$-lower bounds on the Bakry-Émery-$N$-Ricci curvature tensor (though we expect analogous statements). For this reason,   below we include a variant of Proposition \ref{prop:VolumeW1p}  which allows to consider a $C^1$-weight, under the stronger assumption that $g\in C^1$. The proof is again by approximation, building on top of \cite{graf2020singularity}.
}
\begin{proposition}\label{prop:VolumeC1}
    Let $M$ be a smooth manifold and $g$ be a $C^1$-Riemannian metric on $M$. Moreover, let $h \in C^1(M, (0, \infty))$. Let $N \in [n, \infty)$ and $K \in \R$. Suppose that the distributional Bakry-Émery-$N$-Ricci curvature tensor is bounded below by $K$. Then $(M, \sfd_g, h^2\di\vol_g)$ satisfies the volume growth condition \eqref{condition_on_measure}. 
\end{proposition}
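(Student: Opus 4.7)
The plan is to approximate $(g,h)$ by smooth $(g_\e,h_\e)$ via mollification, transfer the distributional Bakry--\'Emery bound to approximate pointwise bounds on the smoothed weighted manifolds $(M,g_\e,\mu_\e)$ (with $\mu_\e:=h_\e^{2}\di\vol_{g_\e}$), apply the classical Bishop--Gromov volume comparison (available because $N<\infty$) to each smooth $(M,g_\e,\mu_\e)$, and finally pass to the limit $\e\to 0$ to obtain exponential volume growth of $\mu$, from which \eqref{condition_on_measure} follows. Concretely, via the manifold convolution \eqref{convolution_on_manifold_def}, set $g_\e:=g*\rho_\e$ and $h_\e:=h*\rho_\e$. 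Since $g,h\in C^{1}$, these converge to $g,h$ in $C^{1}_{\rm{loc}}(M)$ as $\e\to 0$; in particular, for small $\e$, $g_\e$ is a smooth Riemannian metric and $h_\e>0$ is smooth, so $\Ric_{\mu_\e,N}$ is a well-defined continuous $(0,2)$-tensor.

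The key technical step is to show that for every compact $K'\subset M$ and every $\eta>0$ there exists $\e_{0}>0$ such that
\begin{align*}
\Ric_{\mu_\e,N} \ \geq\ (K-\eta)\,g_\e\quad \text{pointwise on }K',\quad \forall\,\e<\e_{0}.
\end{align*}
The starting point is that, under our hypotheses, $\Ric_{\mu,N}-K g$ is a non-negative distribution of order at most $1$ (order $\leq 1$ because $\Gamma\in C^{0}$ and $V\in C^{1}$), hence a non-negative $(0,2)$-tensor-valued Radon measure; convolving with the non-negative mollifier $\rho_\e$ therefore yields a non-negative smooth tensor $(\Ric_{\mu,N}-Kg)*\rho_\e\geq 0$ on slightly shrunk domains. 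A Friedrichs-type commutator analysis then shows $\Ric_{\mu_\e,N}-(\Ric_{\mu,N})*\rho_\e\to 0$ in $L^{\infty}(K')$: the estimate $\|g_\e-g\|_{C^{0}}=O(\e)$ (from $g\in C^{1}$) combines with $\|(\partial^{2}g)*\rho_\e\|_{C^{0}}=O(\omega_{\partial g}(\e)/\e)$ (where $\omega_{\partial g}\to 0$ is the modulus of continuity of $\partial g$) so that all commutator errors arising from the nonlinear operations $g\mapsto g^{-1}$, $(\Gamma,\Gamma)\mapsto \Gamma\cdot\Gamma$ tend to zero, and similarly for the $h$-dependent terms $\Hess V$ and $\nabla V\otimes \nabla V$.

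With this transfer in hand, the classical Bishop--Gromov inequality applied to each smooth $(M,g_\e,\mu_\e)$ with $\Ric_{\mu_\e,N}\geq (K-\eta)g_\e$ on $K'$ gives
\begin{align*}
\mu_\e(B^{g_\e}_{r}(x_{0}))\ \leq\ C(K-\eta,N)\exp\!\big(C(K-\eta,N)\,r\big),
\end{align*}
for every $r>0$ such that $\overline{B^{g_\e}_{r}(x_{0})}\subset K'$. Using uniform convergence $\sfd_{g_\e}\to\sfd_{g}$ on compacts together with weak convergence $\mu_\e\to \mu$ (both consequences of $C^{0}$-convergence of $g_\e,h_\e$), and exhausting $M$ by a sequence of relatively compact sets $K'_{j}$, we send $\e\to 0$ and then $\eta\to 0$ to obtain the exponential bound $\mu(B^{g}_{r}(x_{0}))\leq C(K,N)\exp(C(K,N)\,r)$ for all $r>0$. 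Condition \eqref{condition_on_measure} then follows by taking $V(x):=C'\sfd_{g}(x,x_{0})$ for a sufficiently large constant $C'$ and rescaling so that $\int_{M} e^{-V^{2}}\di\mu\leq 1$.

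The main obstacle is the transfer step. The Bakry--\'Emery tensor depends nonlinearly on $g$ (through $g^{-1}$ and quadratic products of Christoffel symbols) and its second-derivative terms are genuinely distributional under only $C^{1}$ regularity, so mollification does not commute with the Bakry--\'Emery operation. Controlling the discrepancy $\Ric_{\mu_\e,N}-(\Ric_{\mu,N})*\rho_\e$ in $L^{\infty}$ on compacts requires the full strength of the $C^{1}$ hypothesis on both $g$ and $h$, together with careful commutator bookkeeping to exploit the cancellation between $\|g_\e-g\|_{C^{0}}=O(\e)$ and the $O(1/\e)$-growth of the mollified second derivatives; under only $C^{0}+L^{2}_{\rm loc}$-Christoffel regularity, this cancellation would fail, which is precisely why the proposition requires $C^{1}$ data.
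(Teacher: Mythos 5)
Your proposal follows essentially the same route as the paper: mollify $g$ and $h$, transfer the distributional Bakry--\'Emery bound to an approximate pointwise lower bound on the smoothed weighted manifolds (the paper cites \cite[Lemma 4.6]{graf2020singularity} for exactly this step, whereas you sketch the underlying Friedrichs-type commutator argument that proves it), apply the weighted Bishop--Gromov comparison to the smooth approximations, and pass to the limit using the local comparability of $\sfd_{g_\e}$, $\mu_\e$ with $\sfd_g$, $\mu$. The only cosmetic differences are that the paper works with an exhaustion by closed balls $\overline{B^{\sfd_g}_{R_j}(p)}$ and settles for a Gaussian-type bound $C e^{D R^2}$ (which still suffices for \eqref{condition_on_measure}), while you state a linear-exponential bound and keep the dependence of the Bishop--Gromov constants on the unit-ball volume and $\sup h$ implicit; both versions are fine.
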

\begin{proof}
    Fix a point $p \in M$ and a sequence $R_j \in (0, \infty)$ such that $\lim_{j \to \infty}R_j = \infty$. For each $j$, the set $B_j := \overline{B^{\sfd_g}_{R_j}(p)}$ is compact. For $\e \in (0,1)$, define $g_\e = \rho_\e * g$ and $h_\e = \rho_\e * h$. Noting that \cite[Lemma 4.6] {graf2020singularity} holds similarly for $K < 0$, we get that, for each $j\in \N$, there exists an $\e_j \in (0,1)$ such that 
    \begin{itemize}
        \item for each Borel set $\Omega \subset B_j$, it holds that $\frac{1}{2}h^2\vol_g(\Omega) \leq h^2_{\e_j}\vol_{g_{\e_j}}(\Omega) \leq 2 h^2\vol_g(\Omega)$;
        \item for each $x \in B_j$, it holds  that $g$ and $g_{\e_j}$ are $2$-equivalent; i.e., $\frac{1}{2} g\leq g_{\e_j}\leq 2 g$;
        \item for each $X \in TB_j$, it holds that $\Ric_{\mu, \infty}[g_{\e_j}](X,X) \geq (K-1)g_{\e_j}(X,X)$. 
    \end{itemize}
    Fix  $j \i \N$.  To keep notation short,  write $g_j$ for $g_{\e_j}$ and  $h_j$ for $h_{\e_j}$.  For each $R < R_j$, we can apply the  weighted Bishop-Gromov theorem for $(B_j, \sfd_{g_j}, h^2_j\di\vol_{g_{j}})$  together with the fact that $H:=\sup_j \sup_{B_1} |h_j|<\infty$ to get  constants $C, D>0$, depending only on $K$, $N$, $H$, and $\vol_g(B_1(p))$, such that
    \begin{align*}
        h^2_{j}\vol_{g_{j}}(B_R^{\sfd_{g_j}}(p)) \leq C e^{DR^2}.
    \end{align*}
    From the choice of $\e_j$, it follows that for all $R \leq \frac{1}{2} R_j$:
    \begin{align*}
        h^2\vol_{g}(B_R^{\sfd_{g}}(p)) \leq 2C e^{4DR^2}.
    \end{align*}
    As $j$ was arbitrary, the proof is complete. 
\end{proof}
{\begin{corollary}\label{cor:FinalEquivalence}
     Let $M$ be a smooth manifold and assume one of the following two conditions hold: 
     \begin{itemize}
         \item[(1)]\label{Hp1-Unweighted} Let $g$ be a $C^1$-Riemannian metric and let $h \in C^{1}$ be  a positive function. Denote $V := -2 \log h \in C^{1}$ and define the measure $\mu$ via $\di\mu:= e^{-V}\di\vol_g$.
         \item[(2)]  Let $g$ be a Riemannian metric such that $g \in C^0\cap W^{1,p}_{loc}(M)$, for some $p >n$. Define the measure $\mu$ via $\di \mu = \di \vol_g$. 
     \end{itemize}
       Let  $N \in [n, \infty)$ and $K \in \R$. Then the following are equivalent:
     \begin{itemize}
         \item[$\mathrm{(i)}$] $(M, \sfd_g, \mu)$ is a $\mathsf{CD}^*(K, N)$-space.
        \item[$\mathrm{(ii)}$]\label{Cond(ii)DisRic} The distributional Bakry-Émery $N$-Ricci curvature tensor is bounded below by $K$. 
     \end{itemize}
\end{corollary}

\begin{remark}[Equivalent formulations of Corollary \ref{cor:FinalEquivalence}]\label{rem:FinalEquiv}
Under the assumptions of Corollary \ref{cor:FinalEquivalence}, the $\mathsf{CD}^*(K,N)$ condition is equivalent to $\mathsf{RCD}^*(K,N)$ (which, in turn, is equivalent to $\mathsf{RCD}(K,N)$; see Remark \ref{rem:RCD*=RCD})  since the associated metric measure space is infinitesimally Hilbertian (see Corollary \ref{summary_chapter_4}).
\\Moreover in the unweighted case (i.e., assumption {\rm (1)}), the statement {\rm (ii)} in the equivalence is in turn equivalent to \emph{distributional Ricci curvature tensor bounded below by $K$.}
\end{remark}

\phantomsection
\addcontentsline{toc}{section}{References}
\bibliography{bibliographie}
\bibliographystyle{abbrv}
\end{document}